\DeclareTextSymbol{\cyrsftsn}{OT2}{126}
\DeclareTextSymbol{\textnumero}{OT2}{125}
\renewcommand{\theequation}{\thesection.\arabic{equation}}
\theoremstyle{definition}
\newtheorem{lemma}{Lemma}[section]
\newtheorem{proposition}{Proposition}[section]
\newtheorem{definition}{Definition}[section]
\newtheorem{remark}{Remark}[section]
\begin{document}
\title{{\LARGE\bf{ Asset Prices with Investor Protection and Survival Analysis of Shareholders in the Cross-Sectional Economy}\thanks{This work was supported by  the National Natural Science Foundation of China (11801462, 12171339).}}}
\author{Jia Yue$^a$, Ming-Hui Wang$^a$, Nan-Jing Huang$^b$\footnote{Corresponding author.  E-mail addresses: nanjinghuang@hotmail.com, njhuang@scu.edu.cn} and Ben-Zhang Yang$^c$ \\
{\small\it a. Department of Economic Mathematics, Southwestern University of Finance and Economics,}\\
{\small\it Chengdu, Sichuan 610074, P.R. China}\\{\small\it b. Department of Mathematics, Sichuan University, Chengdu,
Sichuan 610064, P.R. China}\\
{\small\it  c. National Institution for Finance $\&$ Development, Chinese Academy of Social Science, Beijing, P.R. China}}
\date{ }
\maketitle
\begin{flushleft}
\hrulefill\\
\end{flushleft}
 {\bf Abstract}.
In this paper, we consider a dynamic asset pricing model in a cross-sectional economy with two firms where a controlling shareholder cannot divert output in one firm with perfect investor protection for minority shareholders and where he can divert a fraction of output in the other firm with imperfect protection. After obtaining the parameters of asset prices by solving the shareholders' consumption-portfolio problems in equilibrium, our model features the effect of investor protection and cross-section in the economy. Furthermore, some survival analysis of the shareholders is presented and sufficient conditions on extinction of the shareholders are given in either firm. Our numerical results are in line with some empirical evidence: (i) poorer investor protection in the cross-sectional economy enables the controlling shareholder to hold less shares of the firm with perfect protection and more shares of the firm with imperfect protection, decreases stock gross returns of both firms, increases stock volatilities of both firms, and decreases interest rates of the economy; (ii) compared with the economy with the single relative firm, for the firm with perfect protection, cross-section enables the controlling shareholder to hold less shares, decreases stock returns, increases stock volatilities slightly and decreases interest rates, while for the firm with imperfect protection, cross-section enables the controlling shareholder to hold more shares, increases stock returns and volatilities and increases interest rates.
 \\ \ \\
{\bf Keywords:} Investor protection; Cross-section; Consumption-portfolio choice; Extinction of shareholders.
\\ \ \\
{\bf 2020 AMS Subject Classification:}  90C25; 91G10; 91G50. 

\section{Introduction}\label{section1} \noindent
\setcounter{equation}{0}
Controlling shareholders who dominate corporations in many countries can benefit from diverting resources, and hence it is of much significance to protect minority shareholders and study the effect of  such investor protection on ownership concentration and asset prices. In \cite{Basak}, Basak et al. develop a dynamic continuous-time model where a controlling shareholder dominates a firm and diverts a fraction of its output and where shareholders' consumption-portfolio choice and equilibrium asset prices are derived with myopic preferences. Their theory features the effect of investor protection on the firm ownership concentration, stock returns and volatilities, interest rates and so on. As for the economy with an additional firm and different levels of investor protection, their research adopts a simpler, one-period model because of ``additional hedging demands in shareholder portfolios, nonlinear differential equations for value functions, and the non-convexity of the controlling shareholder's portfolio optimization" (see Subsection 5.3 in \cite{Basak}). As is showed in the empirical evidence (see, for instance, \cite{Chabakauri}), the effect of investor protection on firm stock returns is largely cross-sectional. However, an overarching theory that simultaneously studies the effects of investor protection and cross-section on the firm ownership concentration, cross-sectional stock returns and volatilities, interest rates and survival of shareholders is barely found.

Available literature shows that investor protection indeed simultaneously affects how shareholders hold stock shares and how processes of asset prices diffuse. Shleifer and Wolfenzon \cite{Shleifer} assume that the entrepreneur diverts a fraction of revenue and is caught with a probability and that such a probability is used to measure the legal protection of investors. Under their static two-date economy model for various countries, their results in equilibrium show that, the country with better investor protection has lower ownership concentration, larger external capital markets, larger firms, less diversion and higher market interest rates. By assuming the controlling shareholder ``steals" a fraction from gross output by incurring a cost, Albuquerue and  Wang \cite{Albuquerue} introduce a parameter to measure investor protection and capture law enforcement protection of minority investors. In their dynamic continuous-time model for a single firm, equilibrium results reveal that better investor protection reduces over-investment and leads to lower stock volatility, smaller risk premia and lower interest rate and that perfect investor protection would increase the stock market's value. Giannetti and Koskinen \cite{Giannetti} establish a static model for two countries (Home and Foreign) with different shareholders and different levels of investor protection and then introduce investor protection by the positive dependence of the utility of controlling shareholders on the private benefits of control. They find in equilibrium that the relation between the stock price and investor protection is non-monotonic and that with same wealth distribution, the country with better investor protection has less concentrated ownership and higher security returns (i.e., stock returns and interest rates). Basak et al. \cite{Basak} solve both the controlling shareholder's and the minority shareholder's consumption-portfolio problems in equilibrium, and draw a conclusion that better investor protection results in lower stock holdings of controlling shareholders, higher stock returns and volatilities, and higher interest rates. For more literature related to investor protection, one can refer to \cite{Bebchuk,Doidge,La2} for static models and \cite{Dow,Gompers} for dynamic models.

Financial markets are complex and investors cannot completely ignore the phenomena that firm stocks are highly correlated (see \cite{Cont,Solnik,Zhu}), which would also affect ownership concentration and asset prices. For one thing, consumption strategies determine investors' consumption-portfolio choice in equilibrium directly, and hence the states of investors' share in the aggregate consumption have a profound effect on market states, including stock return correlations. For example, in a Lucas economy with two stocks, Chabakauri \cite{Giannetti} uses the states of consumption shares to study margin and leverage constraints in equilibrium. The equilibria are Markovian in two state variables including consumption shares, and the analysis of equilibrium shows that consumption shares would affect stock return correlations and volatilities, but the effect is complex and differs from conditions. For more related literature, one can also refer to \cite{Basak,Chabakauri2015,Chabakauri2021}. For another, since there exist different firms in the cross-sectional economy, empirical evidence shows that firm stocks differ from market capitalization of firms. Banz \cite{Banz} studies the empirical relationship between the stock return and the total market value finding that, on average, smaller firms have higher risk returns than big firms. By examining various regions, Fama and French \cite{Fama} find that value premiums in average stock returns are lager for small stocks than big stocks and spreads in average momentum returns also decrease from smaller to bigger stocks.  More discussion on ``size effect" of firms can be found in \cite{Cakici,Fama1992,Groot} and references therein.

From above researches, not only is investor protection theoretically studied and empirically examined in corporate finance, but the cross-sectional economy with multiple stocks are also adopted in investors' consumption-portfolio problems. Therefore, just like the economy with a single firm in \cite{Basak}, it is of significance to establish a dynamic continuous-time model for a cross-sectional economy (where the word ``cross-sectional" is adopted form \cite{Basak,Fama1992}) and then address the related effect of investor protection and cross-section. In order to establish an economy which accounts for investor protection and cross-section, we develop a dynamic continuous-time model based on the model used in \cite{Basak}. On the one hand, we introduce two firms with different levels of investor protection and two stocks with cross-sectional returns. One firm is with perfect investor protection while there may exist diversion in the other firm, which enables us to analyze the effect of investor protection and cross-section through comparison. On the other hand, following \cite{Basak2000,Basak2005}, we restrict the form of risks in the economy and divide them into two parts---the fundamental risk and the nonfundamental risk. This not only helps us know better how market risks affect stocks but also allows us to obtain shareholders' stock holdings and parameters of asset prices in equilibrium. More precisely, our main results and contribution are threefold as follows.

Firstly, similar to the results in \cite{Basak}, not only is shareholders' consumption-portfolio choice with various regions obtained in a partial equilibrium, but their optimal strategies and asset price are also derived in equilibrium. Compared with the economy with a single firm, there are more market clear conditions to maintain the equilibrium in the cross-sectional economy, and hence, shareholders' consumption-portfolio choice and asset prices have to be represented with more state variables. Indeed, the consumption share is the only state variable used in \cite{Basak} where the economy has investor protection and a single stock, and in \cite{Chabakauri2021} and \cite{Chabakauri}, the consumption share and the dividend ratio are used to derive dynamic equilibrium where the economy has two stocks. Since our cross-sectional economy has both investor protection and two stocks, except for the consumption share and the dividend ratio, we add an additional state variable--stock value ratio in equilibrium. As is discussed above, the stock value ratio represents the ``size effect" and enriches the results in equilibrium.
Moreover, with the aid of Karush-Kuhn-Tucker conditions, we give some sufficient conditions which are able to guarantee the unique existence of the partial equilibrium.

Secondly, we consider survival analysis of shareholders in the cross-sectional economy and give some sufficient conditions on extinction of shareholders in either firm. Such survival analysis in the cross-sectional economy can help us to know why shareholders have extreme consumption-portfolio choice. In contrast with the economy of \cite{Basak} where shareholders would always survive in the long run, the shareholders in our cross-sectional economy could survive in one firm and become extinct in the other firm. One reason is that different firms and investors in an economy are in competition. For one thing, numbers of researches are undertaken to study such competition (see \cite{Aghion,Dixit,Dumas,Lucas} and references therein). For another, there are also lots of researches on survival analysis of firms and investors (see, for example, \cite{Browne,Jiang,Laitinen,Lane}). Hence, it is of importance to study whether shareholders would survive, and our survival analysis of shareholders in the cross-sectional economy provides some new methods.

Finally, making use of numerical analysis, we study the effect of investor protection and cross-section in equilibrium. The results are consistent with existing researches and enable us to analyze how firms and shareholders behave in the cross-sectional economy. As is expected, investor protection indeed protects minority shareholders in the cross-sectional economy, for the controlling shareholder tends to hold more shares of the firm with imperfect protection and less shares of the firm with perfect protection. Better investor protection has a general effect on asset prices and leads to higher stock gross returns, lower stock volatilities and higher interest rates. Cross-section brings fiercer competition to the economy, but the competition differs from firms. Compared with the economy with the single relative firm, competition caused by cross-section decreases the growth of the firm stock with perfect protection and increases the growth of the firm stock with imperfect protection. Both negative and positive correlations between competition and growth exist in markets (see \cite{Aghion,Grossman,Nickell,Porter} and references therein). Furthermore, we also study the ``size effect" and extinction conditions of shareholders in the cross-sectional economy.

The rest of the paper is organized as follows. In next section, we introduce dynamics and shareholders' consumption-portfolio problems in a cross-sectional economy with investor protection. Then in Section \ref{section3}, market parameters of asset prices are derived and survival of shareholders in either firm is discussed. Further, the effect of investor protection and cross-section in equilibrium is analyzed through numerical methods in Section \ref{section4}. Before we summarize this paper in Section \ref{section6},  our basic economy is extended in Section \ref{section5} by considering investor protection in both firms.

\section{The cross-sectional economy with investor protection}\label{section2}\noindent
\setcounter{equation}{0}
Let $(\Omega,\mathcal{F},\mathbb{P})$ be a probability space and $\mathbb{F}=(\mathcal{F}_t,t\geq 0)$ be a given filtration on that space satisfying the usual conditions. We consider a continuous-time infinite-horizon economy on such a filtered probability space. More precisely, we assume, following \cite{Basak2000,Basak2005}, that the fundamental aggregate risk $Z_{Df}$ in the economy is driven by an $\mathbb{F}-$Brownian motion $W$ and that the nonfundamental risk $Z_{Dn}$ in the economy is driven by another $\mathbb{F}-$Brownian motion $B$ (independent of $W$), and they may affect output and stocks in firms differently.

Two firms (firm 1 and firm 2) in the economy produce exogenous streams of output $\widehat{D}_j\;(j=1,2)$ which can be viewed as the consumption supply, and there are two types of investors (controlling shareholders and  minority shareholders) in the economy. Two firms could have different levels of investor protection for minority shareholders: controlling shareholders in firm 1 cannot divert any output of firm 1 while controlling shareholders in firm 2 can divert a fraction of output in firm 2 (if firm 2 is with imperfect protection). Firms $1$ and $2$ may have different controlling shareholders. However, since controlling shareholders in firm $1$ cannot divert output in firm $1$, they faces the same consumption-portfolio choice as minority shareholders. Hence, such controlling shareholders in firm $1$ can be subsumed within minority shareholders, which is the same as the assumption in Subsection 5.3 of \cite{Basak}. Therefore, we assume, for the sake of simplicity, that there are only a representative controlling shareholder $C$ and a representative minority shareholder $M$ in both firms. Furthermore, firm 1 is affected only by the fundamental risk while firm 2 is affected by both the fundamental and the nonfundamental risk. Following the work \cite{Basak2000} and Subsection 2.1 of \cite{Garleanu}, we assume that the fundamental risk and the nonfundamental risk in the economy affect the output in the following way:
\begin{align*}
dZ_{Df}(t)=Z_{Df}(t)\sigma_DdW(t),\quad dZ_{Dn}(t)=Z_{Dn}(t)\delta_DdB(t),
\end{align*}
where $\sigma_D$ and $\delta_D$ are positive constants actually representing the volatility of output caused by relative risk. Then the dynamics of output can be determined as follows. Letting the output mean-growth of firm $1$ be $\mu_{1D}$ (i.e., without consideration of risk its output $\widetilde{D}_1$ can be expressed as $d\widetilde{D}_1(t)=\widetilde{D}_1(t)\mu_{1D}dt$) and its real output $\widehat{D}_1$ be given by $\widehat{D}_1=\widetilde{D}_1Z_{Df}$, It\^o's Lemma (see, for example, \cite{Oksendal}) shows that the output in firm $1$ satisfies the following stochastic differential equation (SDE),
\begin{equation}\label{D1}
d\widehat{D}_1(t)=\widehat{D}_1(t)[\mu_{1D}dt+\sigma_{D}dW(t)].
\end{equation}
On the other hand, letting the output mean-growth of firm $2$ be $\mu_{2D}$
(i.e., without consideration of risk its output $\widetilde{D}_2$ can be expressed as $d\widetilde{D}_2(t)=\widetilde{D}_2(t)\mu_{2D}dt$) and its real output $\widehat{D}_2$ be given by $\widehat{D}_2=\widetilde{D}_2Z_{Df}Z_{Dn}$, It\^o's Lemma similarly shows that the output in firm $2$ satisfies the following SDE,
\begin{equation}\label{D2}
d\widehat{D}_2(t)=\widehat{D}_2(t)[\mu_{2D}dt+\sigma_{D}dW(t)+\delta_DdB(t)].
\end{equation}

Three securities, a bond $S_0$ (normalized so that $S_0(0)=1$) and two stocks $S_1,S_2$ (normalized to $\tau>0$ units and one unit, respectively), are traded by shareholders in the security market, and the wealth processes of shareholders $C$ and $M$ at time $t$, denoted by $X_C(t)$ and $X_M(t)$, are made up of portfolios of the bond and the stocks. The fundamental risk and nonfundamental risk in the economy affect the stocks in the following way:
\begin{align*}
dZ_{Sf}(t)=Z_{Sf}(t)\sigma(t)dW(t),\quad dZ_{Sn}(t)=Z_{Sn}(t)\delta(t)dB(t),
\end{align*}
where $\sigma$ and $\delta$ are positive processes actually representing the volatility of stocks caused by relative risk. Then the dynamics of stocks can be similarly determined as follows. For the stock of firm $1$, the stock return (or the stock growth rate) is $\mu_{1}$ (i.e., without consideration of risk its stock price $\widetilde{S}_1$ can be expressed as $d\widetilde{S}_1(t)=\widetilde{S}_1(t)\mu_{1}(t)dt$) and its real price ${S}_1$ is given by ${S}_1=\widetilde{S}_1Z_{Sf}$. And for the stock of firm $2$, the stock return is $\mu_{2}$ (i.e., without consideration of risk its stock price $\widetilde{S}_2$ can be expressed as $d\widetilde{S}_2(t)=\widetilde{S}_2(t)\mu_{2}(t)dt$) and its real price ${S}_2$ is given by ${S}_2=\widetilde{S}_2Z_{Sf}Z_{Sn}$. Now we can express the dynamics of securities in the economy as the following system of SDEs:
\begin{align}
dS_0(t)=&S_0(t)r(t)dt,\label{bond}\\
dS_1(t)=&S_1(t)\left[\mu_1(t)dt+\sigma(t)dW(t)\right],\label{stock1}\\
dS_2(t)=&S_2(t)\left[\mu_2(t)dt+\sigma(t)dW(t)+\delta(t)dB(t)\right],\label{stock2}
\end{align}
where $r$ (the interest rate), $\mu_i\; (i=1,2),\;\sigma$ and $\delta$ are $\mathbb{F}$-adapted and are the price coefficients to be determined in equilibrium. We note here that the stock volatility of firm $2$ is $\sqrt{\sigma^2+\delta^2}$ and that the stock dynamics can prevent the stock in firm $2$ with more volatile output being priced with lower volatility than one in firm $1$.

With the dynamics of the bond and stocks, shareholders' wealth for $i=C,M$  can be expressed as following equations:
\begin{align}
X_i(t)=&b_{i}(t)S_0(t)+\sum_{j=1}^2n_{ji}(t)S_j(t);\label{W}\\
dX_i(t)=&\bigg[X_i(t)r(t)+n_{1i}(t)\left(S_1(t)(\mu_{1}(t)-r(t))+\frac{D_1(t)}{\tau}\right)\nonumber\\
&+n_{2i}(t)\left(S_2(t)(\mu_{2}(t)-r(t))+(1-x(t))D_2(t)\right)+l_{1i}\widehat{D}_1(t)+l_{2i}\widehat{D}_2(t)-c_i(t)\nonumber\\
&+\mathbf{1}_{\{i=C\}}(x(t)D_2(t)-f(x(t),D_2(t)))+\mathbf{1}_{\{i=M\}}f(x(t),D_2(t))\bigg]dt\nonumber\\
&+(n_{1i}(t)S_1(t)\sigma(t)+n_{2i}(t)S_2(t)\sigma(t))dW(t)
+n_{2i}(t)S_2(t)\delta(t)dB(t)\label{X},
\end{align}
where, for $j=1,2$, $b_{i}$ is the number of units of bounds in the portfolio of the shareholder $i$,
$n_{ji}$ is the number of stock shares of firm $j$ in the portfolio of the shareholder $i$,
$c_{i}$ is the consumption of the shareholder $i$, $l_{ji}$ is the fraction of the output paid to the shareholder $i$ as labor incomes in firm $j$,
$D_j(t)=(1-l_{jC}-l_{jM})\widehat{D}_j(t)$ represents the net output in firm $j$,
$f(x,D)=\frac{kx^2D}{2}$ is assumed to be a pecuniary cost from diverting output with a constant $k$ (see \cite{Basak}) and the parameter $k$ captures the magnitude of the cost, $\mathbf{1}$ is the indicator function, and $x$ is the fraction of diverted output satisfying the investor protection constraint with a parameter $p\in[0,1]$ (interpreted as protection of the minority shareholder)
\begin{equation}\label{protectC}
    0\leq x(t)\leq (1-p)n_{2C}(t).
\end{equation}

Following \cite{Basak}, we assume shareholders are myopic and introduce their objective functions (utility functions) in the cross-sectional economy as follows:
\begin{equation}\label{wealth}
V_i(c_{i}(t),X_{i}(t),X_{i}(t+dt))=\rho u_i(c_{i}(t))dt+(1-\rho dt)\mathbb{E}_t\left[u_i(X_{i}(t+dt))\right],
\end{equation}
for $i=C,M$, where $\rho>0$ is a time-preference parameter, $\mathbb{E}_t[\cdot]=\mathbb{E}[\cdot|\mathcal{F}_t]$ and $\mathbb{E}$ is the expectation relative to probability $\mathbb{P}$, and $u_i (i=C,M)$ are the following CRRA (constant relative risk aversion) utility functions:
\[
u_i(c)=\frac{c^{1-\gamma_i}-1}{1-\gamma_i}
\]
with the risk aversion parameters $\gamma_M\geq \gamma_C>0$ and $\gamma_C\neq 1,\gamma_M\neq 1$. The condition $\gamma_M\geq \gamma_C$ implies that the minority shareholder is more risk-averse than the controlling shareholder (see \cite{Basak,Kihlstrom} for more details).
As is demonstrated in \cite{Basak} and references therein, we suppose that ``the constraint $X_{t+dt} \geq 0$ naturally emerges in such economies to prevent defaults by requiring investors to collateralize their risky positions with pledgeable financial securities, even when investors have an infinite planning horizon".

The controlling shareholder maximizes $V_C$ through strategies of the investments $n_{1C}$ and $n_{2C}$, the consumption $c_{C}$ and the diverting fraction $x$:
\begin{equation}\label{problemC}
\max\limits_{n_{1C}(t),n_{2C}(t),c_{C}(t),x(t)}V_C(c_{C}(t),X_{C}(t),X_{C}(t+dt)),
\end{equation}
where $V_C$ is given by (\ref{wealth}) in the case $i=C$, subject to the constraints (\ref{X}) and (\ref{protectC}), and share constraints $0\leq n_{1C}(t)\leq \tau, 0\leq n_{2C}(t)\leq1$. Similarly, the minority shareholder $M$ maximizes $V_M$ through strategies of the investments $n_{1M}$ and $n_{2M}$ and the consumption $c_{M}$:
\begin{equation}\label{problemM}
\max\limits_{n_{1M}(t),n_{2M}(t),c_{M}(t)}V_M(c_{M}(t),X_{M}(t),X_{M}(t+dt)),
\end{equation}
where $V_M$ is given by (\ref{wealth}) in the case $i=M$, subject to the constraint (\ref{X}).

We do not require $0\leq n_{1M}(t)\leq \tau,0\leq n_{2M}(t)\leq 1$  in optimization problem (\ref{problemM}). This is because the constraints $0\leq n_{1C}(t)\leq \tau, 0\leq n_{2C}(t)\leq1$ simplify deriving of equilibrium asset prices and are also sufficient to guarantee share constraints $0\leq n_{1M}(t)\leq \tau$ and $0\leq n_{2M}(t)\leq 1$ in equilibrium. Furthermore, compared with the optimization problem used in \cite{Basak}, the controlling shareholder in the cross-sectional economy not only considers how to divert output in firm $2$ optimally, but also takes account of the effect of the cross-section setting (i.e., the existence of firm $1$) on consumption-portfolio choice and investor protection.

\section{Asset prices and survival analysis in cross-sectional economy}\label{section3}\noindent
\setcounter{equation}{0}
In this section, we solve shareholders' optimization problems (\ref{problemC}) and (\ref{problemM}) and derive shareholders' optimal strategies and dynamics of asset prices in equilibrium. To this end, we start from a partial equilibrium setting and solve shareholders' consumption-portfolio problems.
Then, making use of market clearing conditions, parameters of asset prices are truly obtained in equilibrium. Finally, we consider whether shareholders would survive in the cross-sectional economy.

\subsection{Optimal strategies in the partial equilibrium}\label{subsection3.1}
First of all, we assume $X_i(t)>0$ for $i=C,M$, and the extreme case $X_i(t)=0$ would be discussed later in equilibrium. Then It\^{o}'s Lemma allows us to decompose problems (\ref{problemC}) and (\ref{problemM}) into some equivalent but more precise problems. For the shareholder $i\;(i=C,M)$, applying It\^{o}'s Lemma to
\begin{equation*}
\mathbb{E}_t[u_i(X_i(t+dt)]=u_i(X_i(t))+\mathbb{E}_t[du_i(X_i(t))]
\end{equation*}
gives
\begin{align*}
&V_i(c_{i}(t),X_{i}(t),X_{i}(t+dt))\\
=&\left(\rho\frac{c_{i}(t)^{1-\gamma_i}-1}{1-\gamma_i}-X_{i}(t)^{-\gamma_i}c_{i}(t)\right)dt+(1-\rho dt)\frac{X_{i}(t)^{1-\gamma_i}-1}{1-\gamma_i}+X_{i}(t)^{-\gamma_i}\sum_{j=1}^2l_{ji}\widehat{D}_j(t)dt\\
&+X_{i}(t)^{1-\gamma_i}\left\{r(t)+\sum_{j=1}^2\frac{n_{ji}(t)S_j(t)}{X_i(t)}(\mu_{j}(t)-r(t))
+{n_{1i}(t)}\frac{D_1(t)}{\tau X_i(t)}+n_{2i}(t)(1-x(t))\frac{D_2(t)}{X_i(t)}\right.\\
&+l_{1i}\frac{\widehat{D}_1(t)}{X_i(t)}+l_{2i}\frac{\widehat{D}_2(t)}{X_i(t)}+\mathbf{1}_{\{i=C\}}\left(x(t)-\frac{kx(t)^2}{2}\right)\frac{D_2(t)}{X_i(t)}
+\mathbf{1}_{\{i=M\}}\frac{kx(t)^2}{2}\frac{D_2(t)}{X_i(t)}\\
&\left.-\frac{\gamma_i}{2}\left[ \left(\frac{n_{1i}(t)S_1(t)\sigma(t)}{X_i(t)}\right)^2+\frac{2n_{1i}(t)n_{2i}(t)S_1(t)S_2(t)\sigma(t)^2}{X_i(t)^2} +\left(\frac{n_{2i}(t)S_2(t)}{X_i(t)}\right)^2(\sigma(t)^2+\delta(t)^2)\right]\right\}dt.
\end{align*}
Hence, the optimization problem (\ref{problemC}) for shareholder $C$ is equivalent to
\begin{equation}\label{problemCc}
\max\limits_{c_{C}(t)}\left\{\rho\frac{c_{C}(t)^{1-\gamma_C}-1}{1-\gamma_C}-X_{C}(t)^{-\gamma_C}c_{C}(t)\right\};
\end{equation}
\begin{equation}\label{problemCn}
\max\limits_{x(t),n_{1C}(t),n_{2C}(t)}J_C(n_{1C}(t),n_{2C}(t),x(t)),
\end{equation}
where $J_C$ is given by
\begin{align}
J_C(x(t),n_{1C}(t),n_{2C}(t))&=\sum_{j=1}^2\frac{n_{jC}(t)S_j(t)}{X_C(t)}(\mu_{j}(t)-r(t))
+n_{2C}(t)(1-x(t))\frac{D_2(t)}{X_C(t)}\nonumber\\
&+{n_{1C}(t)}\frac{D_1(t)}{\tau X_C(t)}+\left(x(t)-\frac{kx(t)^2}{2}\right)\frac{D_2(t)}{X_C(t)}
-\frac{\gamma_C}{2}\left[ \left(\frac{n_{1C}(t)S_1(t)\sigma(t)}{X_C(t)}\right)^2\right.\nonumber\\
&\left.+\frac{2n_{1C}(t)n_{2C}(t)S_1(t)S_2(t)\sigma(t)^2}{X_C(t)^2} +\left(\frac{n_{2C}(t)S_2(t)}{X_C(t)}\right)^2(\sigma(t)^2+\delta(t)^2)\right].\label{JCC}
\end{align}
And similarly, the optimization problem (\ref{problemM}) for shareholder $M$ is equivalent to
\begin{equation}\label{problemMc}
\max\limits_{c_{M}(t)}\left\{\rho\frac{c_{M}(t)^{1-\gamma_M}-1}{1-\gamma_M}
-X_{M}(t)^{-\gamma_M}c_{M}(t)\right\};
\end{equation}
\begin{equation}\label{problemMn}
\max\limits_{n_{1M}(t),n_{2M}(t)}J_M(n_{1M}(t),n_{2M}(t)),
\end{equation}
where $J_M$ is given by
\begin{align}
&J_M(n_{1M}(t),n_{2M}(t))\nonumber\\
=&\sum_{j=1}^2\frac{n_{jM}(t)S_j(t)}{X_M(t)}(\mu_{j}(t)-r(t))
+{n_{1M}(t)}\frac{D_1(t)}{\tau X_M(t)}+n_{2M}(t)(1-x(t))\frac{D_2(t)}{X_M(t)}
-\frac{\gamma_M}{2}\left[ \left(\frac{n_{1M}(t)S_1(t)\sigma(t)}{X_M(t)}\right)^2\right.\nonumber\\
&\left.+\frac{2n_{1M}(t)n_{2M}(t)S_1(t)S_2(t)\sigma(t)^2}{X_M(t)^2} +\left(\frac{n_{2M}(t)S_2(t)}{X_M(t)}\right)^2(\sigma(t)^2+\delta(t)^2)\right].\label{JMM}
\end{align}

From now on, we would omit the index $t$ in this section for the sake of simplicity, i.e., whenever there is not any time index for the process $A$, we use it to represent $A(t)$ in SDEs and equations. Furthermore, the constraint (\ref{protectC}) deduces $n_{2C}\geq 0$ in the case of $0\leq p<1$, while it is naturally satisfied if conditions $p=1$ and $x=0$ are set in (\ref{problemCn}) and (\ref{JCC}). Summarizing, problems (\ref{problemCc}) and (\ref{problemMc}) are easy to solve, and we can rewrite problems (\ref{problemCn}) and (\ref{problemMn}) as the following optimization problems:
for the controlling shareholder in the case of $p=1$,
\begin{equation}\label{pCC}
\left\{
\begin{aligned}
\min \;&-J_{0C}(n_{1C},n_{2C})\equiv -J_{C}(0,n_{1C},n_{2C})\\
s.t.\;& g_1(n_{1C},n_{2C})\equiv -n_{1C}\leq 0;\\
&g_2(n_{1C},n_{2C})\equiv n_{1C}-\tau\leq 0;\\
&g_3(n_{1C},n_{2C})\equiv -n_{2C}\leq 0;\\
&g_4(n_{1C},n_{2C})\equiv n_{2C}- 1\leq 0,
\end{aligned}
\right.
\end{equation}
and in the case of $0\leq p<1$,
\begin{equation}\label{pC}
\left\{
\begin{aligned}
\min \;&-J_C(x,n_{1C},n_{2C})\\
s.t.\;&f_0(x,n_{1C},n_{2C})\equiv -x\leq 0;\\
& f_1(x,n_{1C},n_{2C})\equiv x-(1-p)n_{2C}\leq 0;\\
&f_2(x,n_{1C},n_{2C})\equiv -n_{1C}\leq 0;\\
&f_3(x,n_{1C},n_{2C})\equiv n_{1C}-\tau\leq 0;\\
&f_4(x,n_{1C},n_{2C})\equiv n_{2C}- 1\leq 0,
\end{aligned}
\right.
\end{equation}
and for the minority shareholder,
\begin{equation}\label{pM}
\min\limits_{n_{1M}\in \mathbb{R},n_{2M}\in \mathbb{R}} \;-J_M(n_{1M},n_{2M})
\end{equation}

For simplicity, we will always use the following notations in our paper. Denote the transpose of any matrix $A$ by $A^\top$. Let $\mathbf{e}_{ij}$ be the $2\times 2$-matrix with zero elements except that the $(i,j)$-element is one;
and set $\mathbf{e}_1=(1,0)^\top,\mathbf{e}_2=(0,1)^\top$ and for $i=C,M,$
\begin{align*}
&\mathbf{n}_i=\left(
\begin{aligned}
n_{1i}\\
n_{2i}
\end{aligned}
\right),
&&\mu=\left(
\begin{aligned}
\mu_1\\
\mu_2
\end{aligned}
\right),
&&\theta_i=\left(
\begin{aligned}
\theta_{1i}\\
\theta_{2i}
\end{aligned}
\right),
&&\xi_i=\left(
\begin{aligned}
&\xi_{1i}&&\quad \xi_{0i}\\
&\xi_{0i}&&\quad \xi_{2i}
\end{aligned}
\right),\\
&\alpha_{1i}=\frac{D_1}{\tau X_i},&&\alpha_{2i}=\frac{D_2}{X_i},
&&\theta_{1i}=\frac{S_1}{X_i}(\mu_{1}-r),&&\theta_{2i}=\frac{S_2}{X_i}(\mu_{2}-r),\\
&\xi_{0i}=\frac{\gamma_iS_1S_2\sigma^2}{X_i^2},&&\xi_{1i}=\gamma_i\left(\frac{S_1\sigma}{X_i}\right)^2,&&
\xi_{2i}=\gamma_i\left(\frac{S_2}{X_i}\right)^2(\sigma^2+\delta^2).&&
\end{align*}
Parameters $\alpha_{1i}$ and $\alpha_{2i}$ are ratios of output to shareholders' wealth, and similar ratios are also used in \cite{Basak}. Parameters $\theta_{1i}$ and $\theta_{1i}$ are ratios relative to stock risk premium of shareholders and parameters $\xi_{0i}, \xi_{1i}$ and $\xi_{2i}$ are ratios related to stock risk of shareholders, and we would discuss later how they determine shareholders' optimal consumption-portfolio strategies.
Then functions (\ref{JCC}) and (\ref{JMM}) can be rewritten as
\begin{align}
J_C(x,n_{1C},n_{2C})=&\mathbf{n}_C^\top\theta_C-\frac{1}{2}\mathbf{n}_C^\top\xi_C\mathbf{n}_C
+{n_{1C}}\alpha_{1C}+n_{2C}(1-x)\alpha_{2C}+\left(x-\frac{kx^2}{2}\right)\alpha_{2C};\label{JC}\\
J_M(n_{1M},n_{2M})=&\mathbf{n}_M^\top\theta_M-\frac{1}{2}\mathbf{n}_M^\top\xi_M\mathbf{n}_M
+{n_{1M}}\alpha_{1M}+n_{2M}(1-x)\alpha_{2M}. \label{JM}
\end{align}

Under the partial equilibrium setting where for $i=C,M$ and $j=1,2$, the processes $r,\mu_{ji},\sigma$ and $\delta$ are regarded as given processes, we can solve (\ref{pCC})-(\ref{pM}) to obtain shareholders' optimal strategies.
\begin{proposition}\label{th1}{\it
In the cross-sectional economy with perfect investor protection $p=1$, the optimal consumptions $\overline{c}_{i}^*$, the fraction of diverted output $\overline{x}^*$ and the optimal stock holding $\overline{\mathbf{n}}_{M}^*$ of the shareholder $M$ are given by
\begin{align}
\overline{c}_{i}^*=&\rho^{\frac{1}{\gamma_i}}X_{i},\quad i=C,M;\label{ps_c_p1}\\
\overline{x}^*=&0;\quad\quad \overline{\mathbf{n}}_M^*=(\xi_M)^{-1}\cdot\left[\theta_M+{\alpha_{1M}}\mathbf{e}_1+\alpha_{2M}\mathbf{e}_2\right],\nonumber
\end{align}
and the optimal stock holding $\overline{\mathbf{n}}_{C}^*$ of the shareholder $C$ exists uniquely and is determined as follows:
\begin{equation*}
\overline{\mathbf{n}}_{C}^*=\left\{
\begin{aligned}
&\overline{\eta}_1^*=\left(\xi_C\right)^{-1}\cdot\left[\theta_C
+{\alpha_{1C}}\mathbf{e}_1+\alpha_{2C}\mathbf{e}_{2}\right],\quad\text{if} \;\; 0\leq\overline{\eta}_{1,1}^*\leq \tau, \;0\leq\overline{\eta}_{2,1}^*\leq 1;&(\mathrm{Region\; 1})\\
&\overline{\eta}_2^*=\left(0,\frac{\theta_{2C}+\alpha_{2C}}{\xi_{2C}}
\right)^\top,\quad\text{if}\;\;
0\leq\overline{\eta}_{2,2}^*\leq1,
\;\frac{\theta_{1C}+{\alpha_{1C}}}{\xi_{0C}}<\overline{\eta}_{2,2}^*;&(\mathrm{Region\; 2})\\
&\overline{\eta}_3^*=\left(\tau,\frac{\theta_{2C}+\alpha_{2C}-\tau\xi_{0C}}{\xi_{2C}}
\right)^\top,\quad\text{if}\;\;
0\leq\overline{\eta}_{2,3}^*\leq 1,\;\overline{\eta}_{2,3}^*<\frac{\theta_{1C}+{\alpha_{1C}}-\tau\xi_{1C} }{\xi_{0C}};&(\mathrm{Region\; 3})\\
&\overline{\eta}_4^*=\left(\frac{\theta_{1C}+{\alpha_{1C}}}{\xi_{1C}},0\right)^\top,\quad\text{if}\;\;
0\leq\overline{\eta}_{1,4}^*\leq\tau,\;\theta_{2C}+\alpha_{2C}<\xi_{0C}\overline{\eta}_{1,4}^*;
&(\mathrm{Region\; 4})\\
&\overline{\eta}_5^*=\left(\frac{\theta_{1C}+{\alpha_{1C}}-\xi_{0C}}
{\xi_{1C}},1\right)^\top, \quad\text{if}\;\;
\overline{\eta}_{1,5}^*<\frac{\theta_{2C}+\alpha_{2C}-\xi_{2C}}{\xi_{0C}},\;0\leq\overline{\eta}_{1,5}^*\leq \tau;&(\mathrm{Region\; 5})\\
&\overline{\eta}_{6}^*=\left(0,0\right)^\top,\quad\text{if}\;\;
      \theta_{1C}+{\alpha_{1C}}<0,\;\theta_{2C}+\alpha_{2C}<0;&(\mathrm{Region\; 6})\\
&\overline{\eta}_{7}^*=\left(0,1\right)^\top,\quad\text{if}\;\;
      \theta_{1C}+{\alpha_{1C}}<\xi_{0C},\;\xi_{2C}<\theta_{2C}+\alpha_{2C};&(\mathrm{Region\; 7})\\
&\overline{\eta}_{8}^*=\left(\tau,0\right)^\top,\quad\text{if}\;\;
      \tau\xi_{1C}<\theta_{1C}+{\alpha_{1C}},\;\theta_{2C}+\alpha_{2C}<\tau\xi_{0C};&(\mathrm{Region\; 8})\\
&\overline{\eta}_{9}^*=\left(\tau,1\right)^\top,\quad\text{if}\;\;
      \xi_{0C}+\tau\xi_{1C}<\theta_{1C}+{\alpha_{1C}},\;\tau\xi_{0C}+\xi_{2C}<\theta_{2C}+\alpha_{2C},&(\mathrm{Region\; 9})
      \end{aligned}
      \right.
\end{equation*}
where for $v=1,\cdots,9$, $\overline{\eta}_{v}^*=(\overline{\eta}_{1,v}^*,\overline{\eta}_{2,v}^*)^\top$.
}\end{proposition}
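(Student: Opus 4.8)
The plan is to solve the four subproblems (\ref{problemCc}), (\ref{problemMc}), (\ref{pM}) and (\ref{pCC}) one at a time, exploiting that each is a smooth concave optimization. Since $p=1$, the protection constraint (\ref{protectC}) reduces to $0\le x\le 0$, so $\overline{x}^*=0$ is forced and (\ref{pCC}) is the operative portfolio problem for the controlling shareholder. For the consumption problems, the objective $c\mapsto\rho\frac{c^{1-\gamma_i}-1}{1-\gamma_i}-X_i^{-\gamma_i}c$ is strictly concave on $(0,\infty)$ because $\gamma_i>0$, so its unique maximizer solves the first-order condition $\rho c^{-\gamma_i}=X_i^{-\gamma_i}$, which gives $\overline{c}_i^*=\rho^{1/\gamma_i}X_i$ for $i=C,M$.

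Next I would record the algebraic fact that the Hessian matrices $\xi_i$ ($i=C,M$) appearing in (\ref{JC})--(\ref{JM}) are positive definite and that $\xi_{0i}>0$: indeed $\xi_{1i},\xi_{2i}>0$ and $\det\xi_i=\xi_{1i}\xi_{2i}-\xi_{0i}^2=\gamma_i^2 S_1^2 S_2^2\sigma^2\delta^2/X_i^4>0$, all of which follow from the standing positivity of $\gamma_i$, $\sigma$, $\delta$, the stock prices $S_1,S_2$ and the wealth $X_i$. Consequently $J_M$ in (\ref{JM}) with $x=0$ is a strictly concave quadratic on $\mathbb{R}^2$, so problem (\ref{pM}) has a unique maximizer, obtained by setting $\nabla J_M=\theta_M-\xi_M\mathbf{n}_M+\alpha_{1M}\mathbf{e}_1+\alpha_{2M}\mathbf{e}_2=0$; this yields $\overline{\mathbf{n}}_M^*=\xi_M^{-1}(\theta_M+\alpha_{1M}\mathbf{e}_1+\alpha_{2M}\mathbf{e}_2)$.

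The heart of the proof is problem (\ref{pCC}). There $-J_{0C}$ is a strictly convex quadratic on the nonempty compact convex box $[0,\tau]\times[0,1]$, so a unique minimizer exists; since all constraints $g_1,\dots,g_4$ are affine, the Karush--Kuhn--Tucker conditions are necessary and sufficient for global optimality, i.e. a feasible $\mathbf{n}_C$ maximizes $J_{0C}$ if and only if there are multipliers $\lambda_1,\dots,\lambda_4\ge0$ with $\xi_C\mathbf{n}_C=\theta_C+\alpha_{1C}\mathbf{e}_1+\alpha_{2C}\mathbf{e}_2+(\lambda_1-\lambda_2,\,\lambda_3-\lambda_4)^\top$ and $\lambda_\ell g_\ell(\mathbf{n}_C)=0$ for every $\ell$. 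I would then enumerate the admissible active sets: since $\tau>0$, neither $\{g_1,g_2\}$ nor $\{g_3,g_4\}$ can be active simultaneously, so the possibilities are the empty set (the interior, Region 1), the four singletons $\{g_1\},\{g_2\},\{g_3\},\{g_4\}$ (the edges, Regions 2--5), and the four mixed pairs $\{g_1,g_3\},\{g_1,g_4\},\{g_2,g_3\},\{g_2,g_4\}$ (the corners, Regions 6--9) --- exactly nine cases. In each case the active constraints fix the corresponding coordinates, the stationarity equations in the free directions determine the remaining ones (dividing by $\xi_{1C}$ or $\xi_{2C}$ where needed), which reproduces $\overline{\eta}_v^*$; feasibility of the inactive constraints gives the ``in-range'' conditions (such as $0\le\overline{\eta}_{2,2}^*\le1$), and the sign requirements $\lambda_\ell\ge0$, after substituting the computed multipliers, become exactly the remaining inequalities listed for Region $v$ (the divisions by $\xi_{0C}$ there being legitimate because $\xi_{0C}>0$). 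Conversely, if the parameters satisfy Region $v$'s hypotheses then $\overline{\eta}_v^*$, together with the multipliers read off from those hypotheses, satisfies KKT and hence is the maximizer; since the maximizer exists, its active set is one of the nine, so the list is exhaustive and $\overline{\mathbf{n}}_C^*$ is uniquely determined as stated.

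The only genuine obstacle is the bookkeeping of this nine-case analysis: writing the stationarity equations on each face correctly, solving the reduced linear systems, and checking that multiplier nonnegativity together with inactive-constraint feasibility reproduce precisely the inequalities tabulated in Regions 1--9. The mild asymmetry between strict and non-strict inequalities on the region boundaries is a harmless convention --- the closed ``in-range'' conditions of Region 1 absorb the overlaps --- and no analytic difficulty arises beyond the positive-definiteness of $\xi_C$ and $\xi_M$, which is immediate from the positivity of $\sigma$, $\delta$, $S_1$, $S_2$ and $X_i$.
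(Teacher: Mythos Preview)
Your proposal is correct and follows essentially the same approach as the paper: establish $\overline{x}^*=0$ from $p=1$, solve the consumption problems by first-order conditions, use strict concavity (positive definiteness of $\xi_M$) to get $\overline{\mathbf{n}}_M^*$ from the unconstrained gradient condition, and for $\overline{\mathbf{n}}_C^*$ combine compactness/strict convexity for existence-uniqueness with a KKT enumeration over the nine possible active sets on the box $[0,\tau]\times[0,1]$. The paper's own proof is terser---it defers the KKT case analysis to the analogous argument for Proposition~\ref{th2}---but your more explicit enumeration of the active-set cases and multiplier-sign conditions is exactly what that deferred argument amounts to.
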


Since the minority shareholder's portfolio choice is simply similar to the controlling shareholder's in Region 1, we only discuss the case of the controlling shareholder. According to (\ref{X}), for the controlling shareholder in the cross-sectional economy with perfect investor protection (i.e., $p=1,x^*=0$), the risk premium related to his stock holdings could be divided into two parts: $\theta_{1C}+\alpha_{1C}$ caused by the stock and the output in firm $1$, and $\theta_{2C}+\alpha_{2C}$ caused by the stock and the output in firm $2$. However, holding stock shares also brings three kinds of risk: $\xi_{1C}$ caused by the stock variance in firm $1$, $\xi_{2C}$ caused by the stock variance in firm $2$, and $\xi_{0C}$ caused by the stock covariance between firm $1$ and firm $2$.  When the risk premium $\theta_{1C}+\alpha_{1C}$ (resp., $\theta_{2C}+\alpha_{2C}$) is negative, the stock in firm $1$ (resp., firm $2$) is not worth holding, i.e. $n^*_{1C}=0$\;(resp., $n^*_{2C}=0$). Hence, we focus on the case of $\theta_{1C}+\alpha_{1C}>0, \theta_{2C}+\alpha_{2C}>0$. If there is only firm $1$ (resp., firm $2$) in the economy, then it is shown in lots of researches, for example \cite{Basak,Merton}, that the optimal stock holdings are mainly determined by the ratio of the risk premium to the stock variance, i.e., the benchmark stock holding $\frac{\theta_{1C}+\alpha_{1C}}{\xi_{1C}}$
(resp., $\frac{\theta_{2C}+\alpha_{2C}}{\xi_{2C}}$) in Proposition \ref{th1}. In the cross-sectional economy, the controlling shareholder, however, has to take the covariance between two firms into account and balances the profit and related risk, which is threefold  as follows:
\begin{description}
  \item[(1)] If the benchmark stock holding $\frac{\theta_{1C}+\alpha_{1C}}{\xi_{1C}}$ in firm $1$ (resp., $\frac{\theta_{2C}+\alpha_{2C}}{\xi_{2C}}$ in firm $2$) is even greater than the maximal shares leading to much higher profit with much lower risk, i.e.,
\begin{equation}\label{conditiona}
\frac{\theta_{1C}+\alpha_{1C}}{\xi_{1C}}\geq \tau
\end{equation}
holds for related parameters (resp.,
\begin{equation}\label{conditionb}
\frac{\theta_{2C}+\alpha_{2C}}{\xi_{2C}}\geq 1
\end{equation}
holds for related parameters),
then the controlling shareholder would hold the maximal shares in firm $1$ (resp., firm $2$) in spite of the covariance risk of holding the stocks in firm $2$ (resp., firm $1$).
  \item[(2)] If holding the stocks in firm $1$ brings more profit with lower risk (resp., less profit with higher risk) and holding the stocks in firm $2$ simultaneously leads to less profit with higher covariance risk (resp., more profit with lower covariance risk), i.e.,
\begin{equation}\label{condition1}
\frac{\theta_{1C}+\alpha_{1C}}{\theta_{2C}+\alpha_{2C}}\geq\frac{\xi_{1C}}{\xi_{0C}}
\end{equation}
holds for related parameters (resp.,
\begin{equation}\label{condition2}
\frac{\theta_{1C}+\alpha_{1C}}{\theta_{2C}+\alpha_{2C}}<\frac{\xi_{1C}}{\xi_{0C}}
\end{equation}
holds for related parameters),
then the controlling shareholder would prefer the stocks in firm $1$ (resp., firm $2$) and hold no stocks in firm $2$ (resp., hold maximal shares of firm $2$ and keep less shares of firm $1$).
  \item[(3)] If holding the stocks in firm $2$ brings more profit with lower risk (resp., less profit with higher risk) and holding the stocks in firm $1$ simultaneously leads to less profit with higher covariance risk (resp., more profit with lower covariance risk), i.e.,
\begin{equation}\label{condition3}
\frac{\theta_{2C}+\alpha_{2C}}{\theta_{1C}+\alpha_{1C}}\geq\frac{\xi_{2C}}{\xi_{0C}}
\end{equation}
holds for related parameters (resp.,
\begin{equation}\label{condition4}
\frac{\theta_{2C}+\alpha_{2C}}{\theta_{1C}+\alpha_{1C}}<\frac{\xi_{2C}}{\xi_{0C}}
\end{equation}
holds for related parameters),
then the controlling shareholder would prefer the stocks in firm $2$ (resp., firm $1$) and hold no stocks in firm $1$ (resp., hold maximal shares of firm 1 and keep less shares of firm 2).
\end{description}

Therefore, the reasons why the controlling shareholder divides his optimal stock holdings into different regions in Proposition \ref{th1} can be explained as follows. In Region 1, none of (\ref{conditiona})-(\ref{condition4}) is satisfied such that no portfolio constraints are binding, so the controlling shareholder could optimally hold both stocks in two firms. In this region, $\theta_C
+{\alpha_{1C}}\mathbf{e}_1+\alpha_{2C}\mathbf{e}_{2}$ is the vector of the risk premium and $\xi_C$ is the related covariance matrix. In Region 2, (\ref{condition3}) is satisfied, and hence the controlling shareholder holds no stocks in firm 1 and holds the stocks in firm 2 optimally. In Region 3, the condition $\overline{\eta}_{2,3}^*<\frac{\theta_{1C}+{\alpha_{1C}}-\tau\xi_{1C} }{\xi_{0C}}$ gives
\[
\frac{\theta_{2C}+\alpha_{2C}}{\theta_{1C}+\alpha_{1C}}
+\tau\frac{\xi_{1C}\xi_{2C}-\xi_{0C}^2}{\xi_{0C}(\theta_{1C}+\alpha_{1C})}<\frac{\xi_{2C}}{\xi_{0C}},
\]
which implies (\ref{condition4}). Thus, the controlling shareholder holds maximal shares of the stocks in firm 1 and decreases shares of the stocks in firm 2. In Region 4, (\ref{condition1}) is satisfied, and hence the controlling shareholder holds no stocks in firm 2 and holds the stocks in firm 1 optimally. In Region 5, the condition $\overline{\eta}_{1,5}^*<\frac{\alpha_{2C}+\theta_{2C}-\xi_{2C}}{\xi_{0C}}$ gives
\[
\frac{\theta_{1C}+\alpha_{1C}}{\theta_{2C}+\alpha_{2C}}
+\frac{\xi_{1C}\xi_{2C}-\xi_{0C}^2}{\xi_{0C}(\theta_{2C}+\alpha_{2C})}<\frac{\xi_{1C}}{\xi_{0C}},
\]
which implies (\ref{condition2}). Thus, the controlling shareholder holds maximal shares of the stocks in firm 2 and decreases shares of the stocks in firm 1. In Region 6, since the risk premia $\theta_{1C}+\alpha_{1C}$ and $\theta_{2C}+\alpha_{2C}$ are both negative, the controlling shareholder holds the stocks neither in firm 1 nor in firm 2. In Region 7, for firm 2, since (\ref{conditionb}) holds, the controlling shareholder holds all stock shares in firm 2, and as for firm 1, the reason is twofold: if $\theta_{1C}+\alpha_{1C}\leq0$, then the controlling shareholder holds no stocks in firm 1, and if $\theta_{1C}+\alpha_{1C}>0$, then it is easy to see that (\ref{condition3}) is satisfied such that the controlling shareholder holds no stocks in firm 1. In Region 8, for firm 1, since (\ref{conditiona}) holds, the controlling shareholder holds all stock shares in firm 1, and as for firm 2, the reason is also twofold: if $\theta_{2C}+\alpha_{2C}\leq0$, then the controlling shareholder holds no stocks in firm 2, and if $\theta_{2C}+\alpha_{2C}>0$, then it is easy to see that (\ref{condition1}) holds such that the controlling shareholder holds no stocks in firm 2. In Region 9, since both (\ref{conditiona}) and (\ref{conditionb}) hold, the controlling shareholder holds all stock shares in both firm 1 and firm 2.

\begin{proposition}\label{th2}{\it
In the cross-sectional economy with investor protection $0\leq p<1$, the optimal consumptions $c_{i}^*$, the fraction of diverted output $x^*$ and the optimal stock holdings $\mathbf{n}_{i}^*$ for $i=C,M$ are given by
\begin{align}
c_{i}^*=&\rho^{\frac{1}{\gamma_i}}X_{i},\quad i=C,M;\label{ps_c}\\
x^*=&x^*(n_{2C}^*)=\min\left\{\frac{1-n_{2C}^*}{k},(1-p)n_{2C}^*\right\};\label{xx}\\
\mathbf{n}_{C}^*=&\eta_{v}^*=(\eta_{1,v}^*,\eta_{2,v}^*)^\top,\quad (\mathrm{Region\;}v) \quad\text{if}\; J_C^*=J_C(x^*,\eta_{v}^*), v=1,2,\cdots,12;\label{ps_nC}\\
\mathbf{n}_M^*=&(\xi_M)^{-1}\cdot\left[\theta_M+{\alpha_{1M}}\mathbf{e}_1+(1-x^*)\alpha_{2M}\mathbf{e}_2\right],\label{ps_nM}
\end{align}
where Regions $1-12$ are given by
\begin{description}
  \item [] {$\mathrm{Region\; 1}:\; \eta_1^*=\left(\xi_C+(1-p)(2+k(1-p))\alpha_{2C}\mathbf{e}_{22}\right)^{-1}\cdot\left[\theta_C+
{\alpha_{1C}}\mathbf{e}_1+\left(2-p\right)\alpha_{2C}\mathbf{e}_{2}\right]$,\\
\text{\qquad\qquad\qquad\quad if}\; $0\leq\eta_{1,1}^*\leq \tau,\; 0\leq\eta_{2,1}^*<\frac{1}{1+(1-p)k}$;}
  \item [] {$\mathrm{Region\; 2}:\;
 \eta_2^*=\left(0,\frac{\theta_{2C}+(2-p)\alpha_{2C}}{\xi_{2C}+(1-p)[2+(1-p)k]\alpha_{2C}}
\right)^\top$,\text{\quad if}\; $0\leq\eta_{2,2}^*\leq 1,\;\frac{\theta_{1C}+{\alpha_{1C}}}{\xi_{0C}}<\eta_{2,2}^*<\frac{1}{1+(1-p)k}$;}
  \item [] {$\mathrm{Region\; 3}:\;
 \eta_3^*=\left(\tau,\frac{\theta_{2C}+(2-p)\alpha_{2C}-\tau\xi_{0C}}{\xi_{2C}+(1-p)[2+(1-p)k]\alpha_{2C}}
\right)^\top$,\text{\quad if}\;
$0\leq\eta_{2,3}^*<\frac{1}{1+(1-p)k},\;\eta_{2,3}^*<\frac{\theta_{1C}+{\alpha_{1C}}-\tau\xi_{1C} }{\xi_{0C}};$}
  \item [] {$\mathrm{Region\; 4}:\;
  \eta_4^*=\left(\frac{\theta_{1C}+{\alpha_{1C}}}{\xi_{1C}},0\right)^\top$,\text{\quad if}\;
$0\leq\theta_{1C}+{\alpha_{1C}}\leq\tau\xi_{1C},\;\xi_{1C}(\theta_{2C}+(2-p)\alpha_{2C})<\xi_{0C}(\theta_{1C}+{\alpha_{1C}})$;}
  \item [] {$\mathrm{Region\; 5}:\;
  \eta_5^*=\left(\frac{\theta_{1C}+{\alpha_{1C}}-\xi_{0C}}
{\xi_{1C}},1\right)^\top$,\text{\quad if}\;
$\eta_{1,5}^*<\frac{\theta_{2C}+\alpha_{2C}-\xi_{2C}}{\xi_{0C}},\;0\leq\eta_{1,5}^*\leq \tau$;}
  \item [] {$\mathrm{Region\; 6}:\;
  \eta_{6}^*=\left(0,0\right)^\top$,\text{\quad  if}\; $\theta_{1C}+{\alpha_{1C}}<0,\;\theta_{2C}+(2-p)\alpha_{2C}<0$;}
  \item [] {$\mathrm{Region\; 7}:\;
  \eta_{7}^*=\left(0,1\right)^\top$,\text{\quad if}\;
  $\theta_{1C}+{\alpha_{1C}}<\xi_{0C},\;\xi_{2C}<\theta_{2C}+\alpha_{2C}$;}
  \item [] {$\mathrm{Region\; 8}:\;
  \eta_{8}^*=\left(\tau,0\right)^\top$,\text{\quad if}\;
  $\tau\xi_{1C}<\theta_{1C}+{\alpha_{1C}},\;\theta_{2C}+(2-p)\alpha_{2C}<\tau\xi_{0C}$;}
  \item [] {$\mathrm{Region\; 9}:\;
  \eta_{9}^*=\left(\tau,1\right)^\top$,\text{\quad if}\;
  $\xi_{0C}+\tau\xi_{1C}<\theta_{1C}+{\alpha_{1C}},\;\tau\xi_{0C}+\xi_{2C}<\theta_{2C}+\alpha_{2C}$;}
  \item [] {$\mathrm{Region\; 10}:\;
  \eta_{10}^*=\left(\xi_C-\frac{\alpha_{2C}}{k}\mathbf{e}_{22}\right)^{-1}\cdot\left[\theta_C
+{\alpha_{1C}}\mathbf{e}_1+\left(1-\frac{1}{k}\right)\alpha_{2C}\mathbf{e}_{2}\right]$,\text{\quad if}\;
$0\leq\eta_{1,10}^*\leq \tau$,\;\\
\text{\qquad\qquad\qquad\quad}\;$\frac{1}{1+(1-p)k}\leq \eta_{2,10}^*\leq 1,\;\det\left(\xi_C-\frac{\alpha_{2C}}{k}\mathbf{e}_{22}\right)\neq 0$;\\
  \text{\quad} or\;\; $\eta_{10}^*=(\eta_{1,10}^*,\eta_{2,10}^*)^\top$,\text{\quad if}\; $0\leq\eta_{1,10}^*\leq \tau,\frac{1}{1+(1-p)k}\leq \eta_{2,10}^*\leq 1,\det\left(\xi_C-\frac{\alpha_{2C}}{k}\mathbf{e}_{22}\right)=0$,\\
\text{\qquad\qquad\qquad\quad}\;$\xi_{1C}\eta_{1,10}^*+\xi_{0C}\eta_{2,10}^*=\theta_{1C}+{\alpha_{1C}},\;
\frac{\xi_{0C}(\theta_{1C}+{\alpha_{1C}})}{\xi_{1C}}
=\theta_{2C}+\left(1-\frac{1}{k}\right)\alpha_{2C}$;}
  \item [] {$\mathrm{Region\; 11}:\;
  \eta_{11}^*=\left(0,\frac{\theta_{2C}+(1-\frac{1}{k})\alpha_{2C}}{\xi_{2C}-
\frac{\alpha_{2C}}{k}}\right)^\top
$,\text{\quad if}\;
$\frac{1}{1+(1-p)k}\leq\eta_{2,11}^*\leq 1,\;\eta_{2,11}^*>\frac{\theta_{1C}+{\alpha_{1C}}}{\xi_{0C}},\;\xi_{2C}-
\frac{\alpha_{2C}}{k}\neq 0$;\\
  \text{\quad} or\;\; $\eta_{11}^*=(0,\eta_{2,11}^*)^\top$,\text{\quad if}\;
$\frac{1}{1+(1-p)k}\leq\eta_{2,11}^*\leq 1,\;\eta_{2,11}^*>\frac{\theta_{1C}+{\alpha_{1C}}}{\xi_{0C}},\;\xi_{2C}-
\frac{\alpha_{2C}}{k}=0,$\\
\text{\qquad\qquad\qquad\quad}\;$\theta_{2C}+(1-\frac{1}{k})\alpha_{2C}=0$;}
  \item [] {$\mathrm{Region\; 12}:\;
  \eta_{12}^*=\left(\tau,\frac{\theta_{2C}+(1-\frac{1}{k})\alpha_{2C}-\tau\xi_{0C}}
{\xi_{2C}-\frac{\alpha_{2C}}{k}}\right)^\top$,\text{\quad if}\;$\frac{1}{1+(1-p)k}\leq\eta_{2,12}^*\leq 1,\;\eta_{2,12}^*<\frac{\theta_{1C}+{\alpha_{1C}}-\tau\xi_{1C}}{\xi_{0C}}$,\\
\text{\qquad\qquad\qquad\quad}\;$\xi_{2C}-
\frac{\alpha_{2C}}{k}\neq 0$;\\
  \text{\quad} or\;\; $\eta_{12}^*=(\tau,\eta_{2,12}^*)^\top$,\text{\quad if}\;$\frac{1}{1+(1-p)k}\leq\eta_{2,12}^*\leq 1,\;\eta_{2,12}^*<\frac{\theta_{1C}+{\alpha_{1C}}-\tau\xi_{1C}}{\xi_{0C}}$,\\
\text{\qquad\qquad\qquad\quad}\;$\xi_{2C}-
\frac{\alpha_{2C}}{k}= 0,\;\theta_{2C}+(1-\frac{1}{k})\alpha_{2C}-\tau\xi_{0C}=0$;}
\end{description}
and in Region $v$ we have $J_C(x^*_v,\eta_{v}^*)=-\infty$ if the related conditions do not hold, and $J_{C}^*$ is defined by $J_{C}^*=\max\{J_{C}(x^*_v,\eta_{v}^*):v=1,2,\cdots,12\}$.
Furthermore, if, in Region $v$ ($v=1,2,\cdots,11,12$), the condition $\det(\xi_C)>\frac{\alpha_{2C}\xi_{1C}}{k}$ holds and the related conditions are satisfied, then the optimal stock holding $\mathbf{n}_{C}^*$ exists uniquely and is located in Region $v$.
}\end{proposition}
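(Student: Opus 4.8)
The plan is to prove the four conclusions in the order stated, with essentially all of the work concentrated in the characterization of $\mathbf{n}_C^*$. The consumption rules and the minority shareholder's portfolio are immediate: each map $c\mapsto \rho\frac{c^{1-\gamma_i}-1}{1-\gamma_i}-X_i^{-\gamma_i}c$ is strictly concave on $(0,\infty)$ since $\gamma_i>0$, so the maximizer in (\ref{problemCc})--(\ref{problemMc}) is the unique solution of $\rho c^{-\gamma_i}=X_i^{-\gamma_i}$, which is (\ref{ps_c}); and for (\ref{ps_nM}) one checks $\xi_{1M}>0$ and $\det\xi_M=\gamma_M^2(S_1S_2/X_M^2)^2\sigma^2\delta^2>0$, so $\xi_M$ is positive definite, $J_M$ in (\ref{JM}) is a strictly concave quadratic in $\mathbf{n}_M$, and its first-order condition $\xi_M\mathbf{n}_M=\theta_M+\alpha_{1M}\mathbf{e}_1+(1-x^*)\alpha_{2M}\mathbf{e}_2$ gives (\ref{ps_nM}) once $x^*$ is known.

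Next I would dispose of the diversion fraction. Fixing $n_{1C},n_{2C}$ with $0\le n_{2C}\le 1$, the only $x$-dependent term in (\ref{JC}) is $\alpha_{2C}\big[(1-n_{2C})x-\tfrac{k}{2}x^2\big]$, a strictly concave parabola (as $\alpha_{2C}>0$, $k>0$) with unconstrained maximizer $\frac{1-n_{2C}}{k}\ge 0$; on $[0,(1-p)n_{2C}]$ this maximizer is therefore $\frac{1-n_{2C}}{k}$ when $\frac{1-n_{2C}}{k}\le(1-p)n_{2C}$ and the right endpoint otherwise, which is (\ref{xx}). Substituting $x=x^*(n_{2C})$ into $J_C$ turns (\ref{pC}) into the maximization of a function of $(n_{1C},n_{2C})$ over the box $[0,\tau]\times[0,1]$: on $\{n_{2C}\le\frac{1}{1+(1-p)k}\}$, where $x^*=(1-p)n_{2C}$, a short computation yields the quadratic with premium vector $\theta_C+\alpha_{1C}\mathbf{e}_1+(2-p)\alpha_{2C}\mathbf{e}_2$ and matrix $\xi_C+(1-p)(2+k(1-p))\alpha_{2C}\mathbf{e}_{22}$; on $\{n_{2C}\ge\frac{1}{1+(1-p)k}\}$, where $x^*=\frac{1-n_{2C}}{k}$, one gets, up to an additive constant, the quadratic with premium vector $\theta_C+\alpha_{1C}\mathbf{e}_1+(1-\tfrac1k)\alpha_{2C}\mathbf{e}_2$ and matrix $\xi_C-\frac{\alpha_{2C}}{k}\mathbf{e}_{22}$.

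I would then run through the KKT/active-set cases for each of these two box-constrained quadratics --- both coordinates interior; $n_{1C}\in\{0,\tau\}$ with $n_{2C}$ interior; $n_{2C}$ at $0$ or at $1$ with $n_{1C}$ interior; and the four corners --- reading off the interior stationary points (the formulas $\eta_v^*$) and the multiplier sign/complementary-slackness conditions (the displayed inequalities). Here one sees, for instance, that on the face $n_{2C}=1$ the $\pm\frac{\alpha_{2C}}{k}$ contributions cancel and leave conditions in $\theta_{2C}+\alpha_{2C}$ (Regions 5, 7, 9), whereas on $n_{2C}=0$ the first-slice curvature survives and leaves conditions in $\theta_{2C}+(2-p)\alpha_{2C}$ (Regions 4, 6, 8), and the degenerate subcases in Regions 10--12 appear precisely when $\xi_{2C}-\frac{\alpha_{2C}}{k}$ or $\det(\xi_C-\frac{\alpha_{2C}}{k}\mathbf{e}_{22})$ vanishes. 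Since the feasible set of (\ref{pC}) is a compact polytope ($n_{2C}\ge0$ is forced by $x\ge0$, $x\le(1-p)n_{2C}$, $p<1$) and $J_C$ is continuous, an optimum exists and so is attained at whichever feasible $\eta_v^*$ maximizes $J_C$, which is exactly (\ref{ps_nC}) together with the definition of $J_C^*$.

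Finally, for the uniqueness statement: ordering the variables as $(x,n_{1C},n_{2C})$, the Hessian of $-J_C$ is
\[
H=\begin{pmatrix} k\alpha_{2C} & 0 & \alpha_{2C}\\ 0 & \xi_{1C} & \xi_{0C}\\ \alpha_{2C} & \xi_{0C} & \xi_{2C}\end{pmatrix},
\]
whose leading principal minors are $k\alpha_{2C}$, $k\alpha_{2C}\xi_{1C}$ and $\alpha_{2C}\big(k\det\xi_C-\alpha_{2C}\xi_{1C}\big)$. When $\det\xi_C>\frac{\alpha_{2C}\xi_{1C}}{k}$ these are all positive, so $-J_C$ is strictly convex and (\ref{pC}) is a strictly convex program with affine constraints over a compact set; hence its minimizer is unique and the KKT conditions are necessary and sufficient. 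Consequently, if the conditions attached to Region $v$ hold, then $(x_v^*,\eta_v^*)$ is a KKT point, hence the unique solution, so $\mathbf{n}_C^*=\eta_v^*$ and it lies in Region $v$. The main obstacle is the bookkeeping in the third paragraph: verifying that substituting $x^*(n_{2C})$ yields exactly the two stated matrices and premium vectors, that each region's inequalities are the precise KKT conditions for the corresponding active set (in particular the $\frac{\alpha_{2C}}{k}$ cancellations on $n_{2C}=1$), and that the twelve regions, overlapping along $n_{2C}=\frac{1}{1+(1-p)k}$, together cover $[0,\tau]\times[0,1]$ so that $J_C^*$ is the true global value.
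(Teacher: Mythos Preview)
Your proposal is correct and covers all four parts of the proposition. The consumption rules, the minority shareholder's portfolio, and the final uniqueness argument via the $3\times3$ Hessian are handled exactly as in the paper.

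Where you differ from the paper is in the treatment of the controlling shareholder's problem. The paper works directly with the three-variable problem (\ref{pC}): it writes a single Lagrangian $L_0(x,\mathbf{n}_C,\lambda)$ with five multipliers $\lambda_0,\dots,\lambda_4$, records the KKT system (\ref{KKTx})--(\ref{KKT0}), and then enumerates all admissible sign patterns of $(\lambda_0^*,\dots,\lambda_4^*)$, exhibiting for each pattern the explicit $(x_v^*,\eta_v^*,\lambda^*)$ and ruling out the impossible patterns (e.g.\ $\lambda_0^*>0,\lambda_1^*=0$ forces $\lambda_0^*\le 0$). You instead \emph{profile out} $x$ first: for fixed $n_{2C}$ the $x$-problem is a one-dimensional concave parabola, so $x^*(n_{2C})$ is obtained in closed form, and substitution yields a piecewise quadratic in $(n_{1C},n_{2C})$ on the box $[0,\tau]\times[0,1]$, split along $n_{2C}=\frac{1}{1+(1-p)k}$. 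You then run a two-dimensional active-set analysis on each piece. Your route is arguably more transparent --- it explains at once why the ``premium'' and ``curvature'' in the $n_{2C}$-direction take the two modified forms $(2-p)\alpha_{2C},\ \xi_{2C}+(1-p)(2+k(1-p))\alpha_{2C}$ versus $(1-\tfrac1k)\alpha_{2C},\ \xi_{2C}-\tfrac{\alpha_{2C}}{k}$, and why the face $n_{2C}=1$ reverts to $\theta_{2C}+\alpha_{2C}$ (since $x^*=0$ there). The paper's route is more mechanical but has the virtue of being a single exhaustive case split in which no interface matching is needed; your approach requires (as you note) checking that the two pieces agree on the seam $n_{2C}=\frac{1}{1+(1-p)k}$ so that no KKT point is missed there, which follows from the envelope theorem since the profiled objective is $C^1$ across the seam.
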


Taking $p=1$ and $x^*=0$, Regions 1-9 in Proposition \ref{th2} essentially degenerate to Regions 1-9 in Proposition \ref{th1}, which is why we use the same region marks in two propositions. Comparing Proposition \ref{th2} with Proposition \ref{th1}, diverting output enables the controlling shareholder to have more optimization choices. For one thing, there are new regions in Proposition \ref{th2} related to imperfect protection directly. For another, owing to diverting output, the risk premium and related risk of controlling shareholder's wealth in firm 2 have been modified as $\theta_{2C}+(2-p)\alpha_{2C}$ and $\xi_{2C}+\alpha_{2C}(1-p)[2+(1-p)k]$ in the case of $x^*=(1-p)n_{2C}^*$ (or equivalently, $0\leq n_{2C}^*\leq\frac{1}{1+(1-p)k}$), and $\theta_{2C}+(1-\frac{1}{k})\alpha_{2C}$ and $\xi_{2C}-\frac{\alpha_{2C}}{k}$ in the case of $x^*=\frac{1-n_{2C}^*}{k}$ (or equivalently, $\frac{1}{1+(1-p)k}\leq n_{2C}^*\leq1$). In the former case, the investor protection constraint (\ref{protectC}) binds and  the controlling shareholder has lower stock holding and diverting output brings him more profit with higher risk, i.e., \[\theta_{2C}+(2-p)\alpha_{2C}>\theta_{2C},\;\xi_{2C}+\alpha_{2C}(1-p)[2+(1-p)k]>\xi_{2C},\]
while in the later case, the investor protection constraint (\ref{protectC}) does not bind and the controlling shareholder has higher stock holding and diverting output brings him more profit with lower risk, i.e.,
\[\theta_{2C}+\left(1-\frac{1}{k}\right)\alpha_{2C}>\theta_{2C},\;\xi_{2C}-\frac{\alpha_{2C}}{k}<\xi_{2C}.\]
Hence, similar to the discussion of Proposition \ref{th1}, the controlling shareholder balances the profit and related risk in a threefold way as follows:
\begin{description}
  \item[(1)] Conditions (\ref{conditiona})-(\ref{conditionb}) and their related conclusions still hold true, because they are irrelevant to $x^*$.
  \item[(2)] If holding the stocks in firm 1 brings more profit with lower risk (resp., less profit with higher risk) and holding the stocks in firm 2 leads to less profit with higher covariance risk (resp., more profit with lower covariance risk), i.e.,
\begin{equation}\label{con1}
\left\{
\begin{aligned}
&\frac{\theta_{1C}+\alpha_{1C}}{\theta_{2C}+(2-p)\alpha_{2C}}\geq\frac{\xi_{1C}}{\xi_{0C}},&& \text{if}\;x^*=(1-p)n_{2C}^*;\\
&\frac{\theta_{1C}+\alpha_{1C}}{\theta_{2C}+(1-\frac{1}{k})\alpha_{2C}}\geq\frac{\xi_{1C}}{\xi_{0C}},&& \text{if}\;x^*=\frac{1-n_{2C}^*}{k}
\end{aligned}
\right.
\end{equation}
holds for related parameters (resp.,
\begin{equation}\label{con2}
\left\{
\begin{aligned}
&\frac{\theta_{1C}+\alpha_{1C}}{\theta_{2C}+(2-p)\alpha_{2C}}<\frac{\xi_{1C}}{\xi_{0C}},&& \text{if}\;x^*=(1-p)n_{2C}^*;\\
&\frac{\theta_{1C}+\alpha_{1C}}{\theta_{2C}+(1-\frac{1}{k})\alpha_{2C}}<\frac{\xi_{1C}}{\xi_{0C}},&& \text{if}\;x^*=\frac{1-n_{2C}^*}{k}
\end{aligned}
\right.
\end{equation}
holds for related parameters),
then the controlling shareholder would prefer the stocks in firm 1 (resp., firm 2) and hold optimal shares of firm 1 and keep minimal shares of firm 2 (resp., hold maximal shares of firm 2 and keep less shares of firm 1).
  \item[(3)] If holding the stocks in firm 2 brings more profit with lower risk (resp., less profit with higher risk) and holding the stocks in firm 1 leads to less profit with higher covariance risk (resp., more profit with lower covariance risk), i.e.,
\begin{equation}\label{con3}
\left\{
\begin{aligned}
&\frac{\theta_{2C}+(2-p)\alpha_{2C}}{\theta_{1C}+\alpha_{1C}}\geq\frac{\xi_{2C}
+\alpha_{2C}(1-p)[2+(1-p)k]}{\xi_{0C}},&& \text{if}\;x^*=(1-p)n_{2C}^*;\\
&\frac{\theta_{2C}+(1-\frac{1}{k})\alpha_{2C}}{\theta_{1C}+\alpha_{1C}}\geq\frac{\xi_{2C}-
\frac{\alpha_{2C}}{k}}{\xi_{0C}},&& \text{if}\;x^*=\frac{1-n_{2C}^*}{k}
\end{aligned}
\right.
\end{equation}
holds for related parameters (resp.,
\begin{equation}\label{con4}
\left\{
\begin{aligned}
&\frac{\theta_{2C}+(2-p)\alpha_{2C}}{\theta_{1C}+\alpha_{1C}}<\frac{\xi_{2C}
+\alpha_{2C}(1-p)[2+(1-p)k]}{\xi_{0C}},&& \text{if}\;x^*=(1-p)n_{2C}^*;\\
&\frac{\theta_{2C}+(1-\frac{1}{k})\alpha_{2C}}{\theta_{1C}+\alpha_{1C}}<\frac{\xi_{2C}-
\frac{\alpha_{2C}}{k}}{\xi_{0C}},&& \text{if}\;x^*=\frac{1-n_{2C}^*}{k}
\end{aligned}
\right.
\end{equation}
holds for related parameters),
then the controlling shareholder would prefer the stocks in firm 2 (resp., firm 1) and hold optimal shares of firm 2 and keep minimal shares of firm 1 (resp., hold maximal shares of firm 1 and keep less shares of firm 2).
\end{description}

Thus, regions in Proposition \ref{th2} can be explained as follows. In Region 1, owing to the investor protection constraint, the controlling shareholder achieves the optimal strategy by modifying the vector of the risk premium and the related covariance matrix as $\theta_C+{\alpha_{1C}}\mathbf{e}_1+\left(2-p\right)\alpha_{2C}\mathbf{e}_{2}$ and $\xi_C+(1-p)(2+k(1-p))\alpha_{2C}\mathbf{e}_{22}$ respectively. In Region 2, the first case of (\ref{con3}) is satisfied, so the controlling shareholder holds no shares of firm 1 and optimal shares of firm 2. In Region 3, the condition $\eta_{2,3}^*<\frac{\theta_{1C}+{\alpha_{1C}}-\tau\xi_{1C} }{\xi_{0C}}$ gives
\[
\frac{\theta_{2C}+(2-p)\alpha_{2C}}{\theta_{1C}+\alpha_{1C}}
+\tau\frac{\xi_{1C}\xi_{2C}-\xi_{0C}^2}{\xi_{0C}(\theta_{1C}+\alpha_{1C})}
<\frac{\xi_{2C}+(1-p)(2+k(1-p))\alpha_{2C}}{\xi_{0C}},
\]
which implies the first case of (\ref{con4}). Thus, the controlling shareholder holds maximal shares of  firm 1 and decreases shares of firm 2.
In Regions 4-9, $x^*=0$ is always kept and making use of (\ref{conditiona})-(\ref{conditionb}) and (\ref{con1})-(\ref{con2}), the analysis is similar to Regions 4-9 in Proposition \ref{th1}. In Region 10, due to diverting output, the controlling shareholder adjusts the vector of the risk premium and the related covariance matrix as $\theta_C
+{\alpha_{1C}}\mathbf{e}_1+\left(1-\frac{1}{k}\right)\alpha_{2C}\mathbf{e}_{2}$ and $\xi_C-\frac{\alpha_{2C}}{k}\mathbf{e}_{22}$ respectively. In Region 11, the second case of (\ref{con3}) is satisfied, so the controlling shareholder holds no shares of firm 1 and optimal shares of firm 2. In Region 12, since $\frac{1}{(1-p)k+1}\leq\eta_{2,12}^*\leq 1$ means higher $\eta_{2,12}^*$ and ownership concentration usually causes higher $\xi_{2C}$ (see Section \ref{section4}), we may assume $\xi_{1C}(\xi_{2C}-\frac{\alpha_{2C}}{k})\geq\xi_{0C}^2$. Then  the condition $\eta_{2,12}^*<\frac{(\theta_{1C}+{\alpha_{1C}})-\tau\xi_{1C}}{\xi_{0C}}$ gives
the second case of (\ref{con4}). Thus, the controlling shareholder holds maximal shares of firm 1 and decreases shares of firm 2.

\begin{remark}{\it
The conditions in all regions of both Propositions \ref{th1} and \ref{th2} help us to know which regions the controlling shareholder's optimal stock holdings are located in, and some of them are not indispensable, for example, no conditions in related regions of \cite{Basak}. However, we shall see later in Subsection \ref{subsection3.3} that such conditions play a crucial role in survival analysis of shareholders.
}\end{remark}

\subsection{Asset prices in equilibrium}\label{subsection3.2}
We define the equilibrium in the cross-sectional economy with investor protection as follows.

\begin{definition}\label{equilibrium}{\it
An equilibrium in the cross-sectional economy with investor protection is the consumption-portfolio processes $c_i^*,b_i^*$ and $\mathbf{n}_{i}^*$ for $i=C,M$ and a price system $r,\mu,\sigma$ and $\delta$ satisfying the following market clearing conditions:
  \begin{align}
&n_{1C}^*+n_{1M}^*=\tau,\label{ec-1}\\
&n_{2C}^*+n_{2M}^*=1,\label{ec-2}\\
&b_{C}^*+b_{M}^*=0,\label{ec-3}\\
&c_{C}^*+c_{M}^*=\widehat{D}.\label{ec-4}
\end{align}
}\end{definition}

In order to obtain asset prices in equilibrium, we first define three state variables:
\[y(t)=\frac{c_M^*(t)}{\widehat{D}(t)},\;\; y_1(t)=\frac{\widehat{D}_1(t)}{\widehat{D}(t)},\;\;y_2(t)=\frac{\tau S_1(t)}{\tau S_1(t)+S_2(t)},\]
and it is seen $y\in[0,1],y_1\in(0,1)$ and $y_2\in(0,1)$.
The process $y$ is the ratio of the minority shareholder's share in the aggregate consumption to the aggregate output, and the process $y_1$ is the ratio of the output in firm 1 to the aggregate output, and the process $y_2$ is the ratio of the stock value in firm 1 to the aggregate stock value. The ratio $y$ is used in \cite{Basak}, and ratios $y$ and $y_1$ are adopted by \cite{Chabakauri}.
All processes (except $c_i^*,\;b_i^*,\;i=C, M$) in equilibrium of the cross-sectional economy are derived as functions of $y,y_1$ and $y_2$, and following \cite{Basak}, we assume and then verify that they satisfy the form of
\begin{align}
dy(t)=&\mu_{y}(t)dt+\sigma_{y}(t)dW(t)+\delta_{y}(t)dB(t),\label{dy}\\
dy_1(t)=&\mu_{1y}(t)dt+\sigma_{1y}(t)dW(t)+\delta_{1y}(t)dB(t),\label{dy1}\\
dy_2(t)=&\mu_{2y}(t)dt+\sigma_{2y}(t)dW(t)+\delta_{2y}(t)dB(t),\label{dy2}
\end{align}
where processes $\mu_{y},\sigma_{y},\delta_{y}$ and $\mu_{2y},\sigma_{2y},\delta_{2y}$ are functions of $y,y_1$ and $y_2$ to be determined in equilibrium and $\mu_{1y},\sigma_{1y}$ and $\delta_{1y}$ are exogenous functions of $y_1$. For the sake of simplicity, denote a parameter of shareholders' risk aversion by
\[\Gamma_0=y\rho^{-\frac{1}{\gamma_M}}+(1-y)\rho^{-\frac{1}{\gamma_C}}.\]

\begin{proposition}\label{th3}{\it
Provided $y\in(0,1)$ in equilibrium with perfect investor protection ($p=1$), the shareholders' optimal consumptions $\overline{c}_{i}^*$ ($i=C,M$) are given by (\ref{ps_c_p1}), and the stock growth rate $\overline{\mu}$, the interest rate $\overline{r}$, and the stock volatilities $\overline{\sigma}$ and $\overline{\delta}$ are given by:
\begin{align}
\overline{\mu}=&\overline{r}(\mathbf{e}_1+\mathbf{e}_2)+\frac{\frac{\tau y}{y_2}\mathbf{e}_{11}+\frac{y}{1-y_2}\mathbf{e}_{22}}
{\rho^{\frac{1}{\gamma_M}}\Gamma_0}[\overline{\xi}_M(\tau\mathbf{e}_1+\mathbf{e}_2-\overline{\mathbf{n}}_C^*)
-\alpha_{1M}\mathbf{e}_1-\alpha_{2M}\mathbf{e}_2];\label{bmu}\\
\overline{r}=&y_1\mu_{1D}+(1-y_1)\mu_{2D}-y_1\left(\rho^{\frac{1}{\gamma_C}}l_{1C}+\rho^{\frac{1}{\gamma_M}}l_{1M}\right)
-(1-y_1)\left(\rho^{\frac{1}{\gamma_C}}l_{2C}+\rho^{\frac{1}{\gamma_M}}l_{2M}\right)
+(1-y)\rho^{\frac{1}{\gamma_C}}+y\rho^{\frac{1}{\gamma_M}}\nonumber\\
-&\frac{\overline{\Gamma}_1}{\tau}\left[y_2\Gamma_0(\overline{\mu}_1-\overline{r})+(1-l_{1M}-l_{1C})y_1\right]
-\overline{\Gamma}_2[(1-y_2)\Gamma_0(\overline{\mu}_2-\overline{r})+(1-l_{2M}-l_{2C})(1-y_1)];\\
\overline{\sigma}=&\frac{\sigma_D}{\Gamma_0\left[\frac{y_2}{\tau}\overline{\Gamma}_1+(1-y_2)\overline{\Gamma}_2\right]};\\
\overline{\delta}=&\frac{(1-y_1)\delta_D}{(1-y_2)\Gamma_0\overline{\Gamma}_2}\label{bdelta}
\end{align}
(it is seen from (\ref{bmu}) that $\overline{\mu}_1-\overline{r}$ and $\overline{\mu}_2-\overline{r}$ are irrelevant to $\overline{r}$, and here $\overline{\xi}_M$ is the notation $\xi_M$ related to $\overline{\sigma}$ and $\overline{\delta}$) with
\[\quad\overline{\Gamma}_1=\rho^{\frac{1}{\gamma_M}}(\tau-\overline{n}^*_{1C})+\rho^{\frac{1}{\gamma_C}}\overline{n}^*_{1C},
\quad \overline{\Gamma}_2=\rho^{\frac{1}{\gamma_M}}(1-\overline{n}^*_{2C})+\rho^{\frac{1}{\gamma_C}}\overline{n}^*_{2C}.
\]
The parameters $\overline{\mu}_y,\overline{\sigma}_y$ and $\overline{\delta}_y$ in (\ref{dy}) are given by
\begin{equation}\label{py}
\left\{
\begin{aligned}
\overline{\mu}_y=&yr-yy_1\mu_{1D}-y(1-y_1)\mu_{2D}-\overline{\sigma}_y\sigma_D
-(1-y_1)\delta_D\overline{\delta}_y+\rho^{\frac{1}{\gamma_M}}
[l_{1M}y_1+l_{2M}(1-y_1)-y]\\
&+\rho^{\frac{1}{\gamma_M}}\frac{\tau-\overline{n}_{1C}^*}{\tau}
\left[{y_2}\Gamma_0(\overline{\mu}_1-\overline{r})+(1-l_{1M}-l_{1C})y_1\right]\\
&+\rho^{\frac{1}{\gamma_M}}(1-n_{2C}^*)[(1-y_2)\Gamma_0(\overline{\mu}_2-\overline{r})+(1-l_{2M}-l_{2C})(1-y_1)];\\
\overline{\sigma}_y=&\rho^{\frac{1}{\gamma_M}}\Gamma_0\left[(\tau-\overline{n}_{1C}^*)\frac{y_2}{\tau}+(1-\overline{n}_{2C}^*)(1-y_2)\right]\sigma-y\sigma_D;\\
\overline{\delta}_y=&\rho^{\frac{1}{\gamma_M}}\Gamma_0(1-\overline{n}_{2C}^*)(1-y_2)\overline{\delta}-y(1-y_1)\delta_D,
\end{aligned}
\right.
\end{equation}
and parameters $\mu_{1y},\sigma_{1y}$ and $\delta_{1y}$ in (\ref{dy1}) are given by
\begin{equation}\label{py1}
\overline{\mu}_{1y}=y_1(1-y_1)(\mu_{1D}-\mu_{2D})+y_1(1-y_1)^2(\delta_D)^2;
\quad\overline{\sigma}_{1y}=0;
\quad\overline{\delta}_{1y}=-y_1(1-y_1)\delta_D,
\end{equation}
and parameters $\mu_{2y},\sigma_{2y}$ and $\delta_{2y}$ in (\ref{dy2}) are given by
\begin{equation}\label{py2}
\overline{\mu}_{2y}=y_2(1-y_2)(\overline{\mu}_1-\overline{\mu}_2)+y_2(1-y_2)^2\overline{\delta}^2;
\quad\overline{\sigma}_{2y}=0;
\quad\overline{\delta}_{2y}=-y_2(1-y_2)\overline{\delta}.
\end{equation}
The minority shareholder's optimal stock holding $\overline{n}_{M}^*$ is given by
\begin{equation}
\overline{n}_{1M}^*=\tau-\overline{n}_{1C}^*,\quad \overline{n}_{2M}^*=1-\overline{n}_{2C}^*,\label{bnmt}
\end{equation}
and the controlling shareholder's optimal stock holding $\overline{n}_{C}^*$ can be obtained by solving fixed-point equation
\begin{align*}
\overline{\mathbf{n}}_C^*=&\mathop{\mathrm{argmax}}\limits_{\mathbf{n}_C\in[0,\tau]\times[0,1]}
\left\{\sum_{j=1}^2n_{jC}(\mu_{j}-r)\frac{S_j}{X_C}
+n_{2C}\frac{D_2}{X_C}+{n_{1C}}\frac{D_1}{\tau X_C}\right.\\
&\left.-\frac{\gamma_C}{2}\left[ \left(\frac{n_{1C}S_1\sigma}{X_C}\right)^2+\frac{2n_{1C}n_{2C}S_1S_2\sigma^2}{X_C^2} +\left(\frac{n_{2C}S_2}{X_C}\right)^2(\sigma^2+\delta^2)\right]\right\}
\end{align*}
with parameters $\overline{\mu},\overline{r},\overline{\sigma},\overline{\delta}$ given by (\ref{bmu})-(\ref{bdelta}) and the following ratios
\begin{equation}\label{ratio}
\left\{
\begin{aligned}
&\frac{D_1}{X_M}=\frac{(1-l_{1M}-l_{1C})y_1}{\rho^{-\frac{1}{\gamma_M}}y},
&&\frac{D_1}{X_C}=\frac{(1-l_{1M}-l_{1C})y_1}{\rho^{-\frac{1}{\gamma_C}}(1-y)};\\
&\frac{D_2}{X_M}=\frac{(1-l_{2M}-l_{2C})(1-y_1)}{\rho^{-\frac{1}{\gamma_M}}y},
&&\frac{D_2}{X_C}=\frac{(1-l_{2M}-l_{2C})(1-y_1)}{\rho^{-\frac{1}{\gamma_C}}(1-y)};\\
&\frac{S_1}{X_M}=\frac{\Gamma_0y_2}
{\tau\rho^{-\frac{1}{\gamma_M}}y},
&&\frac{S_1}{X_C}=\frac{\Gamma_0y_2}
{\tau\rho^{-\frac{1}{\gamma_C}}(1-y)};\\
&\frac{S_2}{X_M}=\frac{\Gamma_0(1-y_2)}
{\rho^{-\frac{1}{\gamma_M}}y},
&&\frac{S_2}{X_C}=\frac{\Gamma_0(1-y_2)}
{\rho^{-\frac{1}{\gamma_C}}(1-y)}.
\end{aligned}\right.
\end{equation}
}
\end{proposition}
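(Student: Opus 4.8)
The plan is to combine the partial-equilibrium strategies of Proposition \ref{th1} with the market clearing conditions of Definition \ref{equilibrium}, and then identify the price coefficients by matching drift and diffusion terms. Since $y\in(0,1)$ forces $X_C,X_M>0$, Proposition \ref{th1} applies and already delivers $\overline{c}_i^*=\rho^{1/\gamma_i}X_i$, $\overline{x}^*=0$, and $\overline{\mathbf{n}}_M^*=(\overline{\xi}_M)^{-1}[\theta_M+\alpha_{1M}\mathbf{e}_1+\alpha_{2M}\mathbf{e}_2]$, the matrix $\overline{\xi}_M$ being invertible because $\det\overline{\xi}_M=\gamma_M^2(S_1S_2/X_M^2)^2\overline{\sigma}^2\overline{\delta}^2>0$. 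Clearing consumption (\ref{ec-4}) reads $\rho^{1/\gamma_C}X_C+\rho^{1/\gamma_M}X_M=\widehat{D}$, while (\ref{ec-3}) together with (\ref{W}), (\ref{ec-1}) and (\ref{ec-2}) gives $X_C+X_M=\tau S_1+S_2$; solving these two scalar relations and inserting the definitions of $y,y_1,y_2$ yields $X_M=\rho^{-1/\gamma_M}y\widehat{D}$, $X_C=\rho^{-1/\gamma_C}(1-y)\widehat{D}$, $\tau S_1+S_2=\Gamma_0\widehat{D}$, $\widehat{D}_1=y_1\widehat{D}$, $\tau S_1=y_2\Gamma_0\widehat{D}$, and hence, using $D_j=(1-l_{jC}-l_{jM})\widehat{D}_j$, all of the ratios in (\ref{ratio}).

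Next I would pin down the volatilities. Writing $y=\rho^{1/\gamma_M}X_M/\widehat{D}$ and applying It\^o's Lemma, using $dX_M$ from (\ref{X}) with $x^*=0$ and $d\widehat{D}=\widehat{D}[(y_1\mu_{1D}+(1-y_1)\mu_{2D})dt+\sigma_D dW+(1-y_1)\delta_D dB]$, expresses $\overline{\sigma}_y$ and $\overline{\delta}_y$ linearly in $\overline{\sigma},\overline{\delta}$; doing the same with $1-y=\rho^{1/\gamma_C}X_C/\widehat{D}$ gives a second such pair of expressions. Since both describe the diffusion of the same process $y$ (this is exactly the consumption budget constraint), equating the $dW$- and $dB$-coefficients eliminates $\overline{\sigma}_y,\overline{\delta}_y$ and, after substituting $\overline{n}_{1M}^*=\tau-\overline{n}_{1C}^*$, $\overline{n}_{2M}^*=1-\overline{n}_{2C}^*$ and the ratios, yields $\sigma_D=\Gamma_0\overline{\sigma}[\tfrac{y_2}{\tau}\overline{\Gamma}_1+(1-y_2)\overline{\Gamma}_2]$ and $(1-y_1)\delta_D=(1-y_2)\Gamma_0\overline{\delta}\,\overline{\Gamma}_2$, i.e.\ the stated formulas for $\overline{\sigma}$ and $\overline{\delta}$; feeding these back into the $X_M$-representation recovers $\overline{\sigma}_y,\overline{\delta}_y$ as in (\ref{py}). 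For the premia I would use the minority shareholder's first-order condition $\overline{\xi}_M\overline{\mathbf{n}}_M^*=\theta_M+\alpha_{1M}\mathbf{e}_1+\alpha_{2M}\mathbf{e}_2$: substituting $\overline{\mathbf{n}}_M^*=\tau\mathbf{e}_1+\mathbf{e}_2-\overline{\mathbf{n}}_C^*$ and $\theta_{jM}=(S_j/X_M)(\overline{\mu}_j-\overline{r})$ with the ratios gives (\ref{bmu}) and shows $\overline{\mu}_j-\overline{r}$ is independent of $\overline{r}$; the drift part of $dy=d(\rho^{1/\gamma_M}X_M/\widehat{D})$ then produces $\overline{\mu}_y$ in (\ref{py}). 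To obtain $\overline{r}$ I would apply It\^o to $\rho^{1/\gamma_C}X_C+\rho^{1/\gamma_M}X_M=\widehat{D}$, equate drifts, notice the stock-holding terms aggregate into $\overline{\Gamma}_1=\rho^{1/\gamma_M}(\tau-\overline{n}_{1C}^*)+\rho^{1/\gamma_C}\overline{n}_{1C}^*$ and $\overline{\Gamma}_2=\rho^{1/\gamma_M}(1-\overline{n}_{2C}^*)+\rho^{1/\gamma_C}\overline{n}_{2C}^*$, substitute the ratios and the already-known $\overline{\mu}_j-\overline{r}$, and solve for $\overline{r}$.

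The remaining items are routine. Applying It\^o's Lemma to the ratios $y_1=\widehat{D}_1/(\widehat{D}_1+\widehat{D}_2)$ and $y_2=\tau S_1/(\tau S_1+S_2)$ via (\ref{D1})--(\ref{D2}) and (\ref{stock1})--(\ref{stock2}) is a standard computation: the $W$-diffusions cancel because both firms carry the same $W$-loading, giving $\overline{\sigma}_{1y}=\overline{\sigma}_{2y}=0$, and collecting the $B$-diffusion together with the It\^o correction yields (\ref{py1}) and (\ref{py2}). The minority holding (\ref{bnmt}) is merely (\ref{ec-1})--(\ref{ec-2}). For $\overline{\mathbf{n}}_C^*$, I would substitute the equilibrium ratios and coefficients into $J_C$; the region-by-region optimum of Proposition \ref{th1} then becomes exactly the stated $\mathop{\mathrm{argmax}}$ over $[0,\tau]\times[0,1]$, and since $\overline{\sigma},\overline{\delta}$ — hence $\overline{\xi}_M$ and the premia $\overline{\mu}_j-\overline{r}$ — depend on $\overline{\mathbf{n}}_C^*$ through $\overline{\Gamma}_1,\overline{\Gamma}_2$, this is a genuine fixed-point equation. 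Finally one checks the ansatz (\ref{dy})--(\ref{dy2}): every coefficient derived above is a function of $(y,y_1,y_2)$ only, so the assumed Markovian form is self-consistent.

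The main obstacle is the volatility step. Unlike the single-firm economy, neither $\overline{\sigma}$ nor $\overline{\delta}$ can be read off from one agent's wealth dynamics alone; one must exploit the two-sided consistency between the $X_C$- and $X_M$-representations of the common process $y$ (equivalently, the aggregate budget constraint) to produce the aggregator weights $\overline{\Gamma}_1,\overline{\Gamma}_2$. Keeping track of which of the several clearing relations are genuinely independent — the system of equations looks over-determined — and handling the implicit dependence of the equilibrium coefficients on $\overline{\mathbf{n}}_C^*$ is where the real care is needed.
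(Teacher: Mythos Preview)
Your proposal is correct and follows essentially the same route as the paper: derive the wealth and price ratios from consumption clearing and the aggregate wealth identity, read off $\overline{\mu}-\overline{r}$ from the minority shareholder's first-order condition, obtain $\overline{r},\overline{\sigma},\overline{\delta}$ by applying It\^o's lemma to the aggregate relation $\widehat{D}=\rho^{1/\gamma_C}X_C+\rho^{1/\gamma_M}X_M$, and then recover the dynamics of $y,y_1,y_2$ by It\^o on the respective ratios. The one place you organize things slightly differently is the volatility step: you compute the diffusion of $y$ twice (via $X_M$ and via $X_C$) and equate, whereas the paper simply matches the $dW$- and $dB$-coefficients of the \emph{linear} identity $\widehat{D}=\rho^{1/\gamma_C}X_C+\rho^{1/\gamma_M}X_M$ directly, which avoids the quotient rule and yields $\sigma_D\widehat{D}=(\overline{\Gamma}_1S_1+\overline{\Gamma}_2S_2)\overline{\sigma}$ and $\delta_D\widehat{D}_2=\overline{\Gamma}_2S_2\overline{\delta}$ in one line; since you already invoke this same linear identity for the drift (to get $\overline{r}$), you may as well use it for the diffusions too and skip the two-sided $y$-computation.
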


\begin{proposition}\label{th4}{\it
Provided $p\in[0,1)$ and $y\in(0,1)$ in equilibrium, the shareholders' optimal consumptions $c_{i}^*$ ($i=C,M$) are given by (\ref{ps_c}), and the stock growth rate $\mu$, the interest rate $r$, and the stock volatilities $\sigma$ and $\delta$ are given by:
\begin{align}
\mu=&r(\mathbf{e}_1+\mathbf{e}_2)+\frac{\frac{\tau y}{y_2}\mathbf{e}_{11}+\frac{y}{1-y_2}\mathbf{e}_{22}}
{\rho^{\frac{1}{\gamma_M}}\Gamma_0}[\xi_M(\tau\mathbf{e}_1+\mathbf{e}_2-\mathbf{n}_C^*)
-\alpha_{1M}\mathbf{e}_1-(1-x^*)\alpha_{2M}\mathbf{e}_2];\label{mu}\\
r=&y_1\mu_{1D}+(1-y_1)\mu_{2D}-y_1\left(\rho^{\frac{1}{\gamma_C}}l_{1C}+\rho^{\frac{1}{\gamma_M}}l_{1M}\right)
-(1-y_1)\left(\rho^{\frac{1}{\gamma_C}}l_{2C}+\rho^{\frac{1}{\gamma_M}}l_{2M}\right)\nonumber\\
&-\frac{\Gamma_1}{\tau}\left[y_2\Gamma_0(\mu_1-r)+(1-l_{1M}-l_{1C})y_1\right]
-\Gamma_2[(1-y_2)\Gamma_0(\mu_2-r)+(1-x^*)(1-l_{2M}-l_{2C})(1-y_1)]\nonumber\\
&+\rho^{\frac{1}{\gamma_C}}(1-y)+\rho^{\frac{1}{\gamma_M}}y
-(1-l_{2M}-l_{2C})(1-y_1)\left[\rho^{\frac{1}{\gamma_C}}\left(x^*-\frac{k}{2}(x^*)^2\right)
+\rho^{\frac{1}{\gamma_M}}\frac{k}{2}(x^*)^2\right];\label{r}\\
\sigma=&\frac{\sigma_D}{\Gamma_0\left[\frac{y_2}{\tau}\Gamma_1+(1-y_2)\Gamma_2\right]};\label{sigma}\\
\delta=&\frac{(1-y_1)\delta_D}{(1-y_2)\Gamma_0\Gamma_2}\label{delta}
\end{align}
(it is seen from (\ref{mu}) that $\mu_1-r$ and $\mu_2-r$ are irrelevant to $r$) with
\[\Gamma_1=\rho^{\frac{1}{\gamma_M}}(\tau-n_{1C}^*)+\rho^{\frac{1}{\gamma_C}}n_{1C}^*,\quad
\Gamma_2=\rho^{\frac{1}{\gamma_M}}(1-n_{2C}^*)+\rho^{\frac{1}{\gamma_C}}n_{2C}^*.\]
The parameters $\mu_y,\sigma_y$ and $\delta_y$ in (\ref{dy}) are given by
\begin{equation}\label{y}
\left\{
\begin{aligned}
\mu_y=&yr-yy_1\mu_{1D}-y(1-y_1)\mu_{2D}-\sigma_y\sigma_D-(1-y_1)\delta_D\delta_y+\rho^{\frac{1}{\gamma_M}}
[l_{1M}y_1+l_{2M}(1-y_1)-y]\\
&+\frac{k}{2}(x^*)^2\rho^{\frac{1}{\gamma_M}}(1-l_{2M}-l_{2C})(1-y_1)+\rho^{\frac{1}{\gamma_M}}\frac{\tau-n_{1C}^*}{\tau}
\left[{y_2}\Gamma_0(\mu_1-r)+(1-l_{1M}-l_{1C})y_1\right]\\
&+\rho^{\frac{1}{\gamma_M}}(1-n_{2C}^*)[(1-y_2)\Gamma_0(\mu_2-r)+(1-x^*)(1-l_{2M}-l_{2C})(1-y_1)];\\
\sigma_y=&\rho^{\frac{1}{\gamma_M}}\Gamma_0\left[(\tau-n_{1C}^*)\frac{y_2}{\tau}+(1-n_{2C}^*)(1-y_2)\right]\sigma-y\sigma_D;\\
\delta_y=&\rho^{\frac{1}{\gamma_M}}\Gamma_0(1-n_{2C}^*)(1-y_2)\delta-y(1-y_1)\delta_D,
\end{aligned}
\right.
\end{equation}
and parameters $\mu_{1y},\sigma_{1y}$ and $\delta_{1y}$ in (\ref{dy1}) are given by
\begin{equation}\label{y1}
\mu_{1y}=y_1(1-y_1)(\mu_{1D}-\mu_{2D})+y_1(1-y_1)^2(\delta_D)^2;
\quad\sigma_{1y}=0;
\quad\delta_{1y}=-y_1(1-y_1)\delta_D,
\end{equation}
and parameters $\mu_{2y},\sigma_{2y}$ and $\delta_{2y}$ in (\ref{dy2}) are given by
\begin{equation}\label{y2}
\mu_{2y}=y_2(1-y_2)(\mu_1-\mu_2)+y_2(1-y_2)^2\delta^2;
\quad\sigma_{2y}=0;
\quad\delta_{2y}=-y_2(1-y_2)\delta.
\end{equation}
The minority shareholder's optimal stock holding $n_{M}^*$ is given by
\begin{equation}
n_{1M}^*=\tau-n_{1C}^*,\quad n_{2M}^*=1-n_{2C}^*,\label{nmt}
\end{equation}
and the controlling shareholder's optimal stock holding $n_{C}^*$ can be obtained by solving fixed-point equation
\begin{align*}
\mathbf{n}_C^*=&\mathop{\mathrm{argmax}}\limits_{\mathbf{n}_C\in[0,\tau]\times[0,1]}
\left\{\sum_{j=1}^2n_{jC}(\mu_{j}-r)\frac{S_j}{X_C}
+n_{2C}(1-x^*(n_{2C}))\frac{D_2}{X_C}+{n_{1C}}\frac{D_1}{\tau X_C}\right.\\
&\left.+\left(x^*(n_{2C})-\frac{kx^*(n_{2C})^2}{2}\right)\frac{D_2}{X_C}
-\frac{\gamma_C}{2}\left[ \left(\frac{n_{1C}S_1\sigma}{X_C}\right)^2+\frac{2n_{1C}n_{2C}S_1S_2\sigma^2}{X_C^2} +\left(\frac{n_{2C}S_2}{X_C}\right)^2(\sigma^2+\delta^2)\right]\right\}
\end{align*}
with ratios (\ref{ratio}), $x^*(n_{2C})=\min\left\{\frac{1-n_{2C}}{k},(1-p)n_{2C}\right\}$ and parameters ${\mu},{r},{\sigma},{\delta}$ given by (\ref{mu})-(\ref{delta}).

}\end{proposition}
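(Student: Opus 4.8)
The plan is to mirror the derivation of Proposition~\ref{th3}, now feeding in the partial-equilibrium characterization of Proposition~\ref{th2} and imposing the market clearing conditions (\ref{ec-1})--(\ref{ec-4}). Since $y\in(0,1)$ forces $X_C,X_M>0$, Proposition~\ref{th2} applies, so $c_i^*=\rho^{1/\gamma_i}X_i$; combining $c_C^*+c_M^*=\widehat D$ with the identity $X_C+X_M=\tau S_1+S_2$ (obtained from (\ref{ec-1}), (\ref{ec-2}), (\ref{ec-3}) and (\ref{W})) yields $X_M=\rho^{-1/\gamma_M}y\widehat D$, $X_C=\rho^{-1/\gamma_C}(1-y)\widehat D$ and $\tau S_1+S_2=\Gamma_0\widehat D$. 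Substituting $\widehat D_1=y_1\widehat D$, $\widehat D_2=(1-y_1)\widehat D$, $\tau S_1=y_2\Gamma_0\widehat D$, $S_2=(1-y_2)\Gamma_0\widehat D$ and $D_j=(1-l_{jC}-l_{jM})\widehat D_j$ one reads off every ratio in (\ref{ratio}); and since the share constraints in (\ref{problemC}) give $n_{1C}^*\in[0,\tau]$, $n_{2C}^*\in[0,1]$, market clearing forces $n_{1M}^*=\tau-n_{1C}^*\in[0,\tau]$ and $n_{2M}^*=1-n_{2C}^*\in[0,1]$, so the minority shareholder's constraints are slack and his optimum is the interior one (\ref{ps_nM}), $\xi_M$ being invertible because $\det\xi_M\propto S_1^2S_2^2\sigma^2\delta^2>0$.

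I would pin down $\mu$ first. Inserting $\mathbf n_M^*=\tau\mathbf e_1+\mathbf e_2-\mathbf n_C^*$ into (\ref{ps_nM}), multiplying by $\xi_M$ and solving for $\theta_M=\bigl((S_1/X_M)(\mu_1-r),(S_2/X_M)(\mu_2-r)\bigr)^\top$ gives $\theta_M=\xi_M(\tau\mathbf e_1+\mathbf e_2-\mathbf n_C^*)-\alpha_{1M}\mathbf e_1-(1-x^*)\alpha_{2M}\mathbf e_2$; using the ratios $S_j/X_M$ from (\ref{ratio}) and rearranging componentwise yields (\ref{mu}), in which $\mu_1-r$ and $\mu_2-r$ carry no $r$. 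For the diffusion coefficients I would apply It\^o's Lemma to $X_M=\rho^{-1/\gamma_M}y\widehat D$ and to $X_C=\rho^{-1/\gamma_C}(1-y)\widehat D$, using the posited dynamics (\ref{dy}) together with (\ref{D1})--(\ref{D2}) on the right-hand side and the wealth SDE (\ref{X}) with the clearing portfolios on the left-hand side: matching the $dW$- and $dB$-loadings of $dX_M$ expresses $\sigma_y,\delta_y$ through $\sigma,\delta$ (the last two lines of (\ref{y})), after which the same matching for $dX_C$ --- equivalently, matching the diffusion of $\tau S_1+S_2=\Gamma_0\widehat D$ --- eliminates $\sigma_y,\delta_y$ and produces (\ref{sigma})--(\ref{delta}) (the denominators being positive since $\Gamma_0,\Gamma_1,\Gamma_2>0$). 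The interest rate then comes from differentiating $\rho^{1/\gamma_C}X_C+\rho^{1/\gamma_M}X_M=\widehat D$: inserting the drifts of $X_C,X_M$ from (\ref{X}) with the clearing portfolios, using $\rho^{1/\gamma_C}n_{1C}^*+\rho^{1/\gamma_M}n_{1M}^*=\Gamma_1$, $\rho^{1/\gamma_C}n_{2C}^*+\rho^{1/\gamma_M}n_{2M}^*=\Gamma_2$ and the ratios (\ref{ratio}), and solving for $r$ gives (\ref{r}); note that the diversion terms $x^*D_2-\tfrac{k}{2}(x^*)^2D_2$ and $\tfrac{k}{2}(x^*)^2D_2$ in (\ref{X}) are exactly what produces the last bracket of (\ref{r}).

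It remains to identify the state-variable drifts. The drift of $dX_M$ from the two expressions above, once (\ref{mu})--(\ref{delta}) and (\ref{ratio}) are substituted, solves for $\mu_y$ and gives the first line of (\ref{y}); applying It\^o's Lemma directly to $y_1=\widehat D_1/\widehat D$ and to $y_2=\tau S_1/(\tau S_1+S_2)$ (with (\ref{stock1})--(\ref{stock2})) produces (\ref{y1}) and (\ref{y2}), and in particular $\sigma_{1y}=\sigma_{2y}=0$ since $\widehat D_1$ and $\tau S_1$ share the $dW$-loading of $\widehat D$ and $\tau S_1+S_2$. Finally $\mathbf n_C^*$ is characterized by inserting the equilibrium ratios (\ref{ratio}) and the derived $\mu,r,\sigma,\delta$ into $J_C$ of (\ref{JC}) with $x=x^*(n_{2C})$ and invoking Proposition~\ref{th2}, which is precisely the stated $\mathop{\mathrm{argmax}}$/fixed-point equation; the degenerate cases $y=0,1$ are excluded by hypothesis. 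The main obstacle is not any single computation but the self-referential structure: $\mu,r,\sigma,\delta,\mu_y,\sigma_y,\delta_y$ all depend on $n_{1C}^*,n_{2C}^*,x^*$, which themselves solve an optimization whose coefficients are built from those same quantities, so one must argue the resulting system is consistent and that every coefficient is a genuine function of $(y,y_1,y_2)$, thereby verifying the Markovian ansatz (\ref{dy})--(\ref{dy2}) a posteriori --- exactly as done for Proposition~\ref{th3}; beyond this, the remaining work is the lengthy but routine bookkeeping in the $r$- and $\mu_y$-identities.
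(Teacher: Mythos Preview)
Your proposal is correct and follows essentially the same route as the paper: derive the ratios (\ref{ratio}) from $c_i^*=\rho^{1/\gamma_i}X_i$ plus market clearing, invert (\ref{ps_nM}) with $\mathbf n_M^*=\tau\mathbf e_1+\mathbf e_2-\mathbf n_C^*$ to get (\ref{mu}), apply It\^o to the aggregate identity $\widehat D=\rho^{1/\gamma_C}X_C+\rho^{1/\gamma_M}X_M$ and to $y\widehat D=\rho^{1/\gamma_M}X_M$ to read off $r,\sigma,\delta$ and $\mu_y,\sigma_y,\delta_y$, and apply It\^o to $y_1=\widehat D_1/\widehat D$ and $y_2=\tau S_1/(\tau S_1+S_2)$ for (\ref{y1})--(\ref{y2}). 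The only difference is cosmetic ordering --- you extract $\sigma_y,\delta_y$ first and then eliminate, whereas the paper solves the aggregate diffusion match directly for $\sigma,\delta$ --- and you are slightly more explicit than the paper about the fixed-point/self-consistency issue, which the paper simply records without further comment.
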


Compared with the economy with a single firm in \cite{Basak}, the effect of cross-section is clear: not only do additional stock holdings of firm 2 emerge from the shareholders' consumption-portfolio problems, but the parameters in equilibrium could be divided into two parts---one related to parameters in the firm itself and the other associated with parameters in the other firm. Indeed, directly computing in Propositions \ref{th3} and \ref{th4} gives
\begin{align}
\overline{\mu}_1-\overline{r}=&\bigg\{\frac{\tau y}{y_2\rho^{\frac{1}{\gamma_M}}\Gamma_0}
\left[\xi_{1M}(\tau-\overline{n}_{1C}^*)-\alpha_{1M}\right]\bigg\}+\bigg\{\frac{\tau y}{y_2\rho^{\frac{1}{\gamma_M}}\Gamma_0}\xi_{0M}(1-\overline{n}_{2C}^*)\bigg\},\nonumber\\
\overline{\mu}_2-\overline{r}=&\bigg\{\frac{ y}{(1-y_2)\rho^{\frac{1}{\gamma_M}}\Gamma_0}
\left[\xi_{2M}(1-\overline{n}_{2C}^*)-\alpha_{2M}\right]\bigg\}+\bigg\{\frac{ y}{(1-y_2)\rho^{\frac{1}{\gamma_M}}\Gamma_0}\xi_{0M}(\tau-\overline{n}_{1C}^*)\bigg\},\nonumber\\
\overline{r}=&\bigg\{\rho^{\frac{1}{\gamma_M}}y+y_1\mu_{1D}-y_1\left(\rho^{\frac{1}{\gamma_C}}l_{1C}+\rho^{\frac{1}{\gamma_M}}l_{1M}\right)
-\frac{\overline{\Gamma}_1}{\tau}\left[y_2\Gamma_0(\overline{\mu}_1-\overline{r})+(1-l_{1M}-l_{1C})y_1\right]\bigg\}\nonumber\\
&+\bigg\{\rho^{\frac{1}{\gamma_C}}(1-y)+(1-y_1)\mu_{2D}-(1-y_1)\left(\rho^{\frac{1}{\gamma_C}}l_{2C}+\rho^{\frac{1}{\gamma_M}}l_{2M}\right)\nonumber\\
&-\overline{\Gamma}_2[(1-y_2)\Gamma_0(\overline{\mu}_2-\overline{r})+(1-l_{2M}-l_{2C})(1-y_1)]\bigg\},\nonumber\\
\frac{1}{\overline{\sigma}}=&\bigg\{\frac{\Gamma_0}{\sigma_D}{\frac{y_2}{\tau}\overline{\Gamma}_1\bigg\}
+\bigg\{\frac{\Gamma_0}{\sigma_D}(1-y_2)\overline{\Gamma}_2}\bigg\}
\end{align}
in the case of $p=1$, and
\begin{align}
{\mu}_1-{r}=&\bigg\{\frac{\tau y}{y_2\rho^{\frac{1}{\gamma_M}}\Gamma_0}
\left[\xi_{1M}(\tau-{n}_{1C}^*)-\alpha_{1M}\right]\bigg\}+\bigg\{\frac{\tau y}{y_2\rho^{\frac{1}{\gamma_M}}\Gamma_0}\xi_{0M}(1-{n}_{2C}^*)\bigg\},\nonumber\\
{\mu}_2-{r}=&\bigg\{\frac{ y}{(1-y_2)\rho^{\frac{1}{\gamma_M}}\Gamma_0}
\left[\xi_{2M}(1-{n}_{2C}^*)-(1-x^*)\alpha_{2M}\right]\bigg\}+\bigg\{\frac{ y}{(1-y_2)\rho^{\frac{1}{\gamma_M}}\Gamma_0}\xi_{0M}(\tau-{n}_{1C}^*)\bigg\},\nonumber\\
{r}=&\bigg\{\rho^{\frac{1}{\gamma_M}}y+y_1\mu_{1D}-y_1\left(\rho^{\frac{1}{\gamma_C}}l_{1C}+\rho^{\frac{1}{\gamma_M}}l_{1M}\right)
-\frac{{\Gamma}_1}{\tau}\left[y_2\Gamma_0(\mu_1-r)+(1-l_{1M}-l_{1C})y_1\right]\bigg\}\nonumber\\
&+\bigg\{\rho^{\frac{1}{\gamma_C}}(1-y)+(1-y_1)\mu_{2D}-(1-y_1)\left(\rho^{\frac{1}{\gamma_C}}l_{2C}+\rho^{\frac{1}{\gamma_M}}l_{2M}\right)\nonumber\\
&-{\Gamma}_2[(1-y_2)\Gamma_0(\mu_2-r)+(1-x^*)(1-l_{2M}-l_{2C})(1-y_1)]\nonumber\\
&-(1-l_{2M}-l_{2C})(1-y_1)\left[\rho^{\frac{1}{\gamma_C}}\left(x^*-\frac{k}{2}(x^*)^2\right)
+\rho^{\frac{1}{\gamma_M}}\frac{k}{2}(x^*)^2\right]\bigg\},\nonumber\\
\frac{1}{{\sigma}}=&\bigg\{\frac{\Gamma_0}{\sigma_D}{\frac{y_2}{\tau}{\Gamma}_1\bigg\}
+\bigg\{\frac{\Gamma_0}{\sigma_D}(1-y_2){\Gamma}_2}\bigg\}
\end{align}
in the case of $0\leq p<1$. On the other hand, the parameters of nonfundamental risk (i.e., $\overline{\delta}$ and $\delta$) are formally independent of firm $1$ and directly related to firm $2$. This can be understood easily, because nonfundamental risk only affects firm $2$ in the economy setting.

\begin{remark}\label{remark3.2}{\it
Taking advantage of Proposition \ref{th2} (respectively, Proposition \ref{th1}), the controlling shareholder's optimal stock holdings can be obtained by considering regions therein in the following steps:
\begin{description}
  \item[Step 1]{Substitute ratios (\ref{ratio}) and (\ref{mu})-(\ref{delta}) in each Region $v$ for $v=1,2,\cdots,12$ (resp., (\ref{bmu})-(\ref{bdelta}) in each Region $v$ for $v=1,2,\cdots,9$) and then solve related fixed-point equation to obtain $\eta_v^*$ if it exists.}
  \item[Step 2]{Define related optimization function in Region $v$
  \begin{align*}
  J^v(n_1,n_2)=&\sum_{j=1}^2n_{j}(\mu_{j}-r)\frac{S_j}{X_C}
+n_{2}(1-x^*(n_{2}))\frac{D_2}{X_C}+{n_{1}}\frac{D_1}{\tau X_C}\\
+&\left(x^*(n_{2})-\frac{kx^*(n_{2})^2}{2}\right)\frac{D_2}{X_C}
-\frac{\gamma_C}{2}\left[ \left(\frac{n_{1}S_1\sigma}{X_C}\right)^2+\frac{2n_{1}n_{2}S_1S_2\sigma^2}{X_C^2} +\left(\frac{n_{2}S_2}{X_C}\right)^2(\sigma^2+\delta^2)\right]
\end{align*}
where $x^*(n_{2})=\min\left\{\frac{1-n_{2}}{k},(1-p)n_{2}\right\}$, and related ratios and parameters ${\mu},{r},{\sigma},{\delta}$ are given by (\ref{ratio}) and (\ref{mu})-(\ref{delta}) with $\mathbf{n}_{C}^*=\eta_v^*$ obtained in Step 1. (Respectively, define
 \begin{align*}
  \overline{J}^v(n_1,n_2)=&\sum_{j=1}^2n_{j}(\overline{\mu}_{j}-\overline{r})\frac{S_j}{X_C}
+n_{2}\frac{D_2}{X_C}+{n_{1}}\frac{D_1}{\tau X_C}\\
&-\frac{\gamma_C}{2}\left[ \left(\frac{n_{1}S_1\overline{\sigma}}{X_C}\right)^2+\frac{2n_{1}n_{2}S_1S_2\overline{\sigma}^2}{X_C^2} +\left(\frac{n_{2}S_2}{X_C}\right)^2(\overline{\sigma}^2+\overline{\delta}^2)\right]
\end{align*}
where related ratios and parameters $\overline{\mu},{r},\overline{\sigma},\overline{\delta}$ are given by (\ref{ratio}) and (\ref{bmu})-(\ref{bdelta}) with $\overline{\mathbf{n}}_{C}^*=\overline{\eta}_v^*$ obtained in Step 1.) With above parameters, Proposition \ref{th2} (resp., Proposition \ref{th1}) also shows all candidate maximum-points of $J^v$ (resp., $\overline{J}^v$), denoted by $\eta^v_j$ for $j=1,2,\cdots,12$ (resp., $\overline{\eta}^v_j$ for $j=1,2,\cdots,9$). Noticing that $\eta_v^*=\eta_v^v\;(\text{resp.,}\;\overline{\eta}_v^*=\overline{\eta}_v^v).$}
  \item[Step 3] {In Region $v$, if
  \begin{equation}\label{verify1}
J^v(\eta_v^*)=\max\{J^v(\eta_j^v): j=1,2,\cdots,12\}\end{equation}
  \begin{equation}\label{verify2}
(\text{resp.,}\;\overline{J}^v(\overline{\eta}_v^*)=\max\{\overline{J}^v(\overline{\eta}_j^v): j=1,2,\cdots,9\})\end{equation}
  holds, then the controlling shareholder's optimal stock holding $\mathbf{n}^*_{C}$ (resp., $\overline{\mathbf{n}}^*_{C}$) is located in Region $v$.
  }
\end{description}

}\end{remark}

\begin{remark}\label{remark3.3}{\it
Due to the complexity of the equilibrium, there may not exist any solution $\eta_v^*$ (resp., $\overline{\eta}_v^*$) to the related fixed-point equation for some Regions $v$ in Step 1 of Remark \ref{remark3.2}. Fortunately, for the region where there exists a solution $\eta_v^*$ to the related fixed-point equation, whether such solution is the controlling shareholder's optimal stock holding in equilibrium is irrelevant to the existence of solution in other regions. Therefore, for Region $v$, if the solution $\eta_v^*$ (resp., $\overline{\eta}_v^*$) of Step 1 exists and satisfies (\ref{verify1})(resp., (\ref{verify2})), then $\eta_v^*$ (resp., $\overline{\eta}_v^*$) is indeed the controlling shareholder's optimal stock holding in equilibrium.
}\end{remark}

\begin{remark}\label{boundary}{\it
Since the optimal stock holdings in equilibrium may not exist uniquely (and even do not exist), the parameters $\mu,r,\sigma$ and $\delta$ could be discontinuous functions of $y,y_1$ or $y_2$. In our paper, we will, however, assume that the optimal stock holdings exist uniquely and that the parameters $\mu,r,\sigma$ and $\delta$ are continuous functions of $y,y_1$ and $y_2$ in Propositions \ref{th3} and \ref{th4}. As a result, the parameters in the extreme cases of $y=1$ and $y=0$ (or equivalently, $X_C=0$ and $X_M=0$) can be defined as the limits of associated parameters in the cases of $y\in(0,1)$, and similar methods are used in \cite{Chabakauri}.
In brief, for any process $A$ we use $A\langle0\rangle$ (resp., $A\langle1\rangle$) to denote the limit of $A$ when $y$ converges to $0$ (resp., $1$).
The case of $y=1$ is easy to deal with and the related parameters can be obtained by substituting $\mathbf{n}_C^*=(0,0)^\top$ and $y=1$ in (\ref{mu})-(\ref{delta}) of Proposition \ref{th4} and $\frac{D_j}{X_M},\frac{S_j}{X_M},j=1,2$ of (\ref{ratio}) respectively, because aforementioned parameters are actually well-defined functions in $\mathbf{n}^*=(0,0)^\top$ and $y=1$. The results are list as follows:
\begin{align}
\mathbf{n}^*\langle1\rangle&=(0,0)^\top;\nonumber\\
\mu\langle1\rangle&=\left\{(\mathbf{e}_1+\mathbf{e}_2)r+\left(\frac{\tau \mathbf{e}_{11}}{y_2}+\frac{\mathbf{e}_{22}}{1-y_2}\right)[\xi_M(\tau\mathbf{e}_1+\mathbf{e}_2)
-\alpha_{1M}\mathbf{e}_1-\alpha_{2M}\mathbf{e}_2]\right\}\langle1\rangle;\nonumber\\
r\langle1\rangle&=y_1\mu_{1D}+(1-y_1)\mu_{2D}+(\rho^{\frac{1}{\gamma_M}}-\rho^{\frac{1}{\gamma_C}})(y_1l_{1C}+(1-y_1)l_{2C})
-(y_2\mathbf{e}_1^\top+(1-y_2)\mathbf{e}_2^\top)\Delta; \nonumber\\
\sigma\langle1\rangle&=\sigma_D;\nonumber\\
\delta\langle1\rangle&=\frac{1-y_1}{1-y_2}\delta_D;\nonumber\\
\mu_{y}\langle1\rangle&=-\rho^{\frac{1}{\gamma_C}}\left[y_1l_{1C}+(1-y_1)l_{2C}\right];\label{a1}\\
\sigma_y\langle1\rangle&=0;\label{a2}\\
\delta_y\langle1\rangle&=0,\label{a3}
\end{align}
where $\Delta=\left\{\left(\frac{\tau \mathbf{e}_{11}}{y_2}+\frac{\mathbf{e}_{22}}{1-y_2}\right)[\xi_M(\tau\mathbf{e}_1+\mathbf{e}_2)
-\alpha_{1M}\mathbf{e}_1-\alpha_{2M}\mathbf{e}_2]\right\}\langle1\rangle$ and the results for Proposition \ref{th3} are same as those in Proposition \ref{th4}. The case of $y=0$ is more complex, and the results are listed as follows (note that the former and later cases of Regions 10 and 12 are discussed separately, and the proofs can be found in Appendices, and for the results in Proposition \ref{th3}, we use the same notations as Proposition \ref{th4}):
\begin{align}
\mathbf{n}^*\langle0\rangle&=(\tau,1)^\top;\label{b1}\\
\mu\langle0\rangle&=(\mathbf{e}_1+\mathbf{e}_2)r\langle0\rangle+\Lambda;\label{b2}\\
r\langle0\rangle&=y_1\mu_{1D}+(1-y_1)\mu_{2D}-(\rho^{\frac{1}{\gamma_M}}-\rho^{\frac{1}{\gamma_C}})(y_1l_{1M}+(1-y_1)l_{2M})
-(y_2\mathbf{e}_1^\top+(1-y_2)\mathbf{e}_2^\top)\Lambda;\label{b3}\\
\sigma\langle0\rangle&=\sigma_D;\label{b4}\\
\delta\langle0\rangle&=\frac{1-y_1}{1-y_2}\delta_D;\label{b5}\\
\mu_{y}\langle0\rangle&=\rho^{\frac{1}{\gamma_M}}\left[y_1l_{1M}+(1-y_1)l_{2M}\right];\label{b6}\\
\sigma_y\langle0\rangle&=0;\label{b7}\\
\delta_y\langle0\rangle&=0,\label{b8}
\end{align}
where the process $\Lambda$ is given by following $\Lambda^v=(\Lambda^v_1,\Lambda^v_2)^\top\;$($v=5,9$ in the case of Proposition \ref{th3}, and $v=5,9,10,12$ in the case of Proposition \ref{th4}) if there exists a constant $0<\delta_v<1$ such that $\mathbf{n}_C^*$ is located in Region $v$ for all $y\in(0,\delta_v)$:
\begin{align}
&\Lambda^5:
\left\{
\begin{aligned}
\Lambda^5_1=&\left\{\frac{1}{S_1/X_C}(\tau\xi_{1C}-\alpha_{1C}+\xi_{0C})\right\}\langle0\rangle,\\
\Lambda^5_2=&\left\{\frac{\xi_{0M}/\xi_{1M}}{S_2/X_M}\bigg(\frac{S_1}{X_M}\Lambda^5_1+\alpha_{1M}\bigg)
-\frac{\alpha_{2M}}{S_2/X_M}\right\}\langle0\rangle;
\end{aligned}
\right.&& \label{bb5}\\
&\Lambda^9=-\rho^{\frac{1}{\gamma_C}}\left(
\begin{aligned}
&\frac{y_1}{y_2}(1-l_{1M}-l_{1C})\\
&\frac{1-y_1}{1-y_2}(1-l_{2M}-l_{2C})
\end{aligned}
\right);&& \label{bb9}\\
&\Lambda^{10}=\left\{\left(\frac{\tau\mathbf{e}_{11}}{y_2}
+\frac{\mathbf{e}_{22}}{1-y_2}\right)(\xi_C(\tau\mathbf{e}_{1}+\mathbf{e}_{2})
-\alpha_{1C}\mathbf{e}_{1}-\alpha_{2C}\mathbf{e}_{2})\right\}\langle0\rangle,&&\text{if\;} \det\left(\xi_C-\frac{\alpha_{2C}}{k}\mathbf{e}_{22}\right)=0; \label{bb10}\\
&\Lambda^{10}:
\left\{
\begin{aligned}
\Lambda^{10}_1=&\left\{\frac{1}{S_1/X_C}(\xi_{0C}+\tau\xi_{1C}-\alpha_{1C})\right\}\langle0\rangle,\\
\Lambda^{10}_2=&\left\{\frac{\xi_{0C}/\xi_{1C}}{S_2/X_C}(\xi_{0C}+\tau\xi_{1C})
-\left(1-\frac{1}{k}\right)\frac{\alpha_{2C}}{S_2/X_C}\right\}\langle0\rangle,
\end{aligned}
\right.&&\text{if\;} \det\left(\xi_C-\frac{\alpha_{2C}}{k}\mathbf{e}_{22}\right)\neq0;\label{bb0}\\
&\Lambda^{12}:
\left\{
\begin{aligned}
\Lambda^{12}_1=&\left\{\frac{\xi_{0M}/\xi_{2M}}{S_1/X_M}\left(\frac{S_2}{X_M}\Lambda^{12}_2+\alpha_{2M}\right)
-\frac{\alpha_{1M}}{S_1/X_M}\right\}\langle0\rangle,\\
\Lambda^{12}_2=&\left\{\frac{1}{S_2/X_C}(\xi_{2C}+\tau\xi_{0C}-\alpha_{2C})\right\}\langle0\rangle,
\end{aligned}
\right.&& \text{if\;} \;\xi_{2C}-\frac{\alpha_{2C}}{k}= 0;\\
&\Lambda^{12}:
\left\{
\begin{aligned}
\Lambda^{12}_1=&\left\{\frac{\xi_{0M}/\xi_{2M}}{S_1/X_M}\left(\frac{S_2}{X_M}\Lambda^{12a}_2+\alpha_{2M}\right)
-\frac{\alpha_{1M}}{S_1/X_M}\right\}\langle0\rangle,\\
\Lambda^{12}_2=&\left\{\frac{1}{S_2/X_C}(\tau\xi_{0C}-\left(1-\frac{1}{k}\right)\alpha_{2C})\right\}\langle0\rangle,
\end{aligned}
\right. &&\text{if\;} \;\xi_{2C}-\frac{\alpha_{2C}}{k}\neq 0.
\end{align}\label{bb12a}
Note that, from ratios in (\ref{ratio}), all limitations above are well-defined.
}\end{remark}

\subsection{Survival analysis of shareholders}\label{subsection3.3}
Remark \ref{boundary} shows that the volatilities of the state process $y$ (i.e., $\sigma_y$ and $\delta_y$) are zero at the boundaries $y=1$ and $y=0$, while the drift
$\mu_{y}\langle1\rangle=-\rho^{\frac{1}{\gamma_C}}\left[y_1l_{1C}+(1-y_1)l_{2C}\right]<0$ at the boundary $y=1$ and $\mu_{y}\langle0\rangle=\rho^{\frac{1}{\gamma_M}}\left[y_1l_{1M}+(1-y_1)l_{2M}\right]>0$ at the boundary $y=0$.
This means the boundaries $y=1$ and $y=0$ are repulsive: when the consumption share $y$ approaches the boundary $y=1$, its volatility decreases and the negative drift pulls it back into the internal region, while when the consumption share $y$ approaches the boundary $y=0$, its volatility decreases and the positive drift pushes it back into the internal region again. Hence, Remark \ref{boundary} indicates that no shareholders become extinct (here extinction means the shareholder holds stock shares in neither firm $1$ nor firm $2$) in the long run in the cross-sectional economy, and this is consistent with the results in \cite{Basak}. However, in the cross-sectional economy, the shareholders may become extinct in one of two firms (i.e., they hold no stock shares in firm $1$ or firm $2$), for example, Region $5$ in Proposition \ref{th3} (if the region is indeed the optimal region in equilibrium). Therefore, we give some sufficient conditions to identify whether the shareholders would become extinct in either firm.

\begin{definition}\label{def_ex}{\it
For the consumption share $y\in(0,1)$ at the time $t$, the extinction of shareholders in the short run is defined as follows:
\begin{itemize}
  \item[(1)] the minority shareholder is said to be extinct, or not survive, in firm $1$ (resp., firm $2$) in equilibrium, if his equilibrium stock holding of firm $1$ (resp., firm $2$) is zero, i.e., $n^*_{1M}=0$ (resp., $n^*_{2M}=0$);
  \item[(2)] the controlling shareholder is said to be extinct, or not survive, in firm $1$ (resp., firm $2$) in equilibrium, if his equilibrium stock holding of firm $1$ (resp., firm $2$) is zero, i.e., $n^*_{1C}=0$ (resp., $n^*_{2C}=0$).
\end{itemize}
}\end{definition}

Obviously, by the equilibrium conditions (\ref{ec-1}) and (\ref{ec-2}), the minority shareholder becomes extinct in firm $1$ (resp., firm $2$) in equilibrium if and only if $n^*_{1C}=\tau$ (resp., $n^*_{2C}=1$).  The controlling shareholder has full control over firm $1$ (resp., firm $2$) if the minority shareholder becomes extinct in firm $2$ (resp., firm $2$), while the minority shareholder holds all stocks of firm $1$ (resp., firm $2$) if the controlling shareholder becomes extinct in firm $2$ (resp., firm $1$).

Making use of results in Propositions \ref{th1}-\ref{th4}, we give some sufficient conditions on extinction of shareholders in either firm in equilibrium.

\begin{proposition}\label{th5}{\it
Assume $y\in(0,1)$ and $p=1$ hold at the time $t$. The controlling shareholder becomes extinct in firm $1$ if one of the following conditions holds:
\begin{description}
  \item [($\mathcal{C}'1$)] there exists a constant $n\in (0,1]$ such that, by substituting $\overline{\mathbf{n}}^*_{C}=(0,n)^\top$ into Proposition \ref{th3}, we have
        \[n=\frac{\theta_{2C}+\alpha_{2C}}{\xi_{2C}},\quad\frac{\theta_{1C}+\alpha_{1C}}{\xi_{0C}}<n;\]
  \item [($\mathcal{C}'2$)] substituting $\overline{\mathbf{n}}^*_{C}=(0,1)^\top$ into Proposition \ref{th3} gives
        \[\theta_{1C}+{\alpha_{1C}}<\xi_{0C},\quad\xi_{2C}<\theta_{2C}+\alpha_{2C};\]
\end{description}
where related parameters and ratios are all given in Proposition \ref{th3}. The controlling shareholder becomes extinct in firm $2$ if one of the following conditions holds:
\begin{description}
  \item [($\mathcal{C}'3$)] there exists a constant $n\in (0,\tau]$ such that, by substituting $\overline{\mathbf{n}}^*_{C}=(n,0)^\top$ into Proposition \ref{th3}, we have
        \[n=\frac{\theta_{1C}+\alpha_{1C}}{\xi_{1C}},\quad\frac{\theta_{2C}+\alpha_{2C}}{\xi_{0C}}<n;\]
  \item [($\mathcal{C}'4$)] substituting $\overline{\mathbf{n}}^*_{C}=(\tau,0)^\top$ into Proposition \ref{th3} gives
        \[\theta_{2C}+{\alpha_{2C}}<\tau\xi_{0C},\quad \tau\xi_{1C}<\theta_{1C}+\alpha_{1C};\]
\end{description}
where related parameters and ratios are all given in Proposition \ref{th3}.
}\end{proposition}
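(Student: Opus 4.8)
The plan is to exploit the fixed-point characterisation of equilibrium implicit in Remarks~\ref{remark3.2}--\ref{remark3.3}, together with the region decomposition and the uniqueness clause of Proposition~\ref{th1}. Under the standing assumption recorded in Remark~\ref{boundary} that the equilibrium stock holdings exist uniquely, a vector $\overline{\mathbf{n}}_C^*\in[0,\tau]\times[0,1]$ is the controlling shareholder's equilibrium holding (with $p=1$) exactly when, after substituting it into the equilibrium formulas (\ref{bmu})--(\ref{bdelta}) and the ratios (\ref{ratio}) of Proposition~\ref{th3} to obtain $\overline{r},\overline{\mu},\overline{\sigma},\overline{\delta}$ and hence $\theta_{iC},\alpha_{iC},\xi_{0C},\xi_{1C},\xi_{2C}$, the associated partial-equilibrium problem (\ref{pCC}) has $\overline{\mathbf{n}}_C^*$ as its unique maximiser; by Proposition~\ref{th1} this is the same as saying the post-substitution parameters satisfy the defining conditions of the region in which $\overline{\mathbf{n}}_C^*$ lies. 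Since $y,y_1,y_2\in(0,1)$ and $\overline{\Gamma}_1,\overline{\Gamma}_2$ are strictly positive convex combinations of $\rho^{1/\gamma_C}$ and $\rho^{1/\gamma_M}$, all ratios in (\ref{ratio}) are finite and positive, so $\overline{\sigma}>0$, $\overline{\delta}>0$, and in particular $\xi_{0C},\xi_{1C},\xi_{2C}>0$; thus every quotient in $(\mathcal{C}'1)$--$(\mathcal{C}'4)$ is well defined and the equilibrium formulas never degenerate.

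First I would treat $(\mathcal{C}'1)$, which is the template for all four cases. By hypothesis there is $n\in(0,1]$ such that substituting $\overline{\mathbf{n}}_C^*=(0,n)^\top$ into Proposition~\ref{th3} produces parameters satisfying $n=(\theta_{2C}+\alpha_{2C})/\xi_{2C}$ and $(\theta_{1C}+\alpha_{1C})/\xi_{0C}<n$. Setting $\overline{\eta}_{2,2}^*:=(\theta_{2C}+\alpha_{2C})/\xi_{2C}=n\in(0,1]$, these are exactly the conditions ``$0\le\overline{\eta}_{2,2}^*\le1$ and $(\theta_{1C}+\alpha_{1C})/\xi_{0C}<\overline{\eta}_{2,2}^*$'' defining Region~2 of Proposition~\ref{th1}, and $\overline{\eta}_2^*=(0,n)^\top$. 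By Proposition~\ref{th1} the partial-equilibrium optimiser for those parameters exists uniquely and equals $\overline{\eta}_2^*$; uniqueness forces the parameters out of every other region, so the verification step of Remark~\ref{remark3.2} is automatic and $(0,n)^\top$ is a fixed point, hence the equilibrium holding. Therefore $\overline{n}_{1C}^*=0$, which by Definition~\ref{def_ex} is exactly the statement that the controlling shareholder is extinct in firm~$1$ (equivalently $\overline{n}_{1M}^*=\tau$ by (\ref{ec-1})). Condition $(\mathcal{C}'2)$ is identical with Region~7 in place of Region~2: the inequalities $\theta_{1C}+\alpha_{1C}<\xi_{0C}$ and $\xi_{2C}<\theta_{2C}+\alpha_{2C}$ obtained from $\overline{\mathbf{n}}_C^*=(0,1)^\top$ are precisely the Region-7 conditions, so $\overline{\mathbf{n}}_C^*=\overline{\eta}_7^*=(0,1)^\top$ and again $\overline{n}_{1C}^*=0$.

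The conditions $(\mathcal{C}'3)$ and $(\mathcal{C}'4)$ for extinction in firm~$2$ are handled symmetrically using Regions~4 and~8. For $(\mathcal{C}'3)$, substituting $\overline{\mathbf{n}}_C^*=(n,0)^\top$ with $n\in(0,\tau]$ and obtaining $n=(\theta_{1C}+\alpha_{1C})/\xi_{1C}$, $(\theta_{2C}+\alpha_{2C})/\xi_{0C}<n$ matches the Region-4 conditions ``$0\le\overline{\eta}_{1,4}^*\le\tau$ and $\theta_{2C}+\alpha_{2C}<\xi_{0C}\overline{\eta}_{1,4}^*$'' with $\overline{\eta}_{1,4}^*=n$, whence $\overline{\mathbf{n}}_C^*=\overline{\eta}_4^*=(n,0)^\top$ by Proposition~\ref{th1}; for $(\mathcal{C}'4)$, $\overline{\mathbf{n}}_C^*=(\tau,0)^\top$ together with $\theta_{2C}+\alpha_{2C}<\tau\xi_{0C}$, $\tau\xi_{1C}<\theta_{1C}+\alpha_{1C}$ matches Region~8, so $\overline{\mathbf{n}}_C^*=\overline{\eta}_8^*=(\tau,0)^\top$. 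In both cases $\overline{n}_{2C}^*=0$, so the controlling shareholder is extinct in firm~$2$ by Definition~\ref{def_ex}.

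I expect the only genuinely delicate point to be the self-referential structure of the argument: the coefficients $\theta_{iC},\xi_{jC},\alpha_{iC}$ are themselves functions of $\overline{\mathbf{n}}_C^*$ through (\ref{bmu})--(\ref{bdelta}) and (\ref{ratio}), so one cannot simply ``maximise and read off the region''. The hypotheses $(\mathcal{C}'1)$--$(\mathcal{C}'4)$ are formulated precisely so that the \emph{post-substitution} parameters land in the intended region, and it is the uniqueness clause of Proposition~\ref{th1} (the nine region conditions determine $\overline{\mathbf{n}}_C^*$ unambiguously), together with the standing uniqueness-of-equilibrium assumption, that upgrades ``self-consistent candidate holding'' to ``\emph{the} equilibrium holding''. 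Apart from this bookkeeping and the positivity of $\xi_{0C},\xi_{1C},\xi_{2C}$ noted above, no further estimates are required.
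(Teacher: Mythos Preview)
Your proof is correct and follows essentially the same route as the paper: you match $(\mathcal{C}'1)$--$(\mathcal{C}'4)$ to Regions~2, 7, 4, 8 of Proposition~\ref{th1}, then use the fixed-point characterisation of equilibrium from Remarks~\ref{remark3.2}--\ref{remark3.3} to conclude that the substituted candidate is the equilibrium holding. The only minor difference is expository: the paper simply invokes Remark~\ref{remark3.3}, whereas you make explicit that the uniqueness clause of Proposition~\ref{th1} renders the verification condition~(\ref{verify2}) automatic, which is a helpful clarification.
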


\begin{proposition}\label{th6}{\it
Assume $y\in(0,1)$ and $p=1$ hold at the time $t$. The minority shareholder becomes extinct in firm $1$ if one of the following conditions holds:
\begin{description}
  \item [($\mathcal{M}'1$)] there exists a constant $n\in [0,1)$ such that, by substituting $\overline{\mathbf{n}}^*_{C}=(\tau,n)^\top$ into Proposition \ref{th3}, we have
        \[n=\frac{\theta_{2C}+\alpha_{2C}-\tau\xi_{0C}}{\xi_{2C}},\quad n<\frac{\theta_{1C}+\alpha_{1C}-\tau\xi_{1C}}{\xi_{0C}};\]
  \item [($\mathcal{M}'2$)] substituting $\overline{\mathbf{n}}^*_{C}=(\tau,0)^\top$ into Proposition \ref{th3} gives
        \[\theta_{2C}+{\alpha_{2C}}<\tau\xi_{0C},\quad \tau\xi_{1C}<\theta_{1C}+\alpha_{1C};\]
\end{description}
where related parameters and ratios are all given in Proposition \ref{th3}. The minority shareholder becomes extinct in firm $2$ if one of the following conditions holds:
\begin{description}
  \item [($\mathcal{M}'3$)] there exists a constant $n\in [0,\tau)$ such that, by substituting $\overline{\mathbf{n}}^*_{C}=(n,1)^\top$ into Proposition \ref{th3}, we have
        \[n=\frac{\theta_{1C}+\alpha_{1C}-\xi_{0C}}{\xi_{1C}},\quad n<\frac{\theta_{2C}+\alpha_{2C}-\xi_{2C}}{\xi_{0C}};\]
  \item [($\mathcal{M}'4$)] substituting $\overline{\mathbf{n}}^*_{C}=(0,1)^\top$ into Proposition \ref{th3} gives
        \[\theta_{1C}+{\alpha_{1C}}<\xi_{0C},\quad\xi_{2C}<\theta_{2C}+\alpha_{2C};\]
\end{description}
where related parameters and ratios are all given in Proposition \ref{th3}.
}\end{proposition}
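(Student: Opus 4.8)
The plan is to turn the statement about extinction of the minority shareholder into a statement about the controlling shareholder's equilibrium holding, and then to read the latter off Proposition \ref{th1} after the price coefficients have been rendered self-consistent. First I would invoke the market clearing conditions (\ref{ec-1}) and (\ref{ec-2}): they give $\overline{n}^*_{1M}=\tau-\overline{n}^*_{1C}$ and $\overline{n}^*_{2M}=1-\overline{n}^*_{2C}$, so that the minority shareholder is extinct in firm $1$ precisely when $\overline{n}^*_{1C}=\tau$ and extinct in firm $2$ precisely when $\overline{n}^*_{2C}=1$. It therefore suffices to show that each hypothesis forces $\overline{\mathbf{n}}^*_{C}$ into a region of Proposition \ref{th1} whose defining vector has a maximal first or second component: Region $3$, with point $(\tau,\cdot)^\top$, for $(\mathcal{M}'1)$; Region $8$, with point $(\tau,0)^\top$, for $(\mathcal{M}'2)$; Region $5$, with point $(\cdot,1)^\top$, for $(\mathcal{M}'3)$; and Region $7$, with point $(0,1)^\top$, for $(\mathcal{M}'4)$.

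The core of the argument is a fixed-point/self-consistency step. Take $(\mathcal{M}'1)$ as the model case: let $n\in[0,1)$ be the constant whose existence is assumed, and substitute $\overline{\mathbf{n}}^*_{C}=(\tau,n)^\top$ into the equilibrium formulas (\ref{bmu})-(\ref{bdelta}) of Proposition \ref{th3} and into the ratios (\ref{ratio}); this produces definite values of $\overline{r},\overline{\mu},\overline{\sigma},\overline{\delta}$, hence of the auxiliary quantities $\theta_{iC},\xi_{iC},\alpha_{iC}$ as functions of $(y,y_1,y_2)$. The two hypothesized relations then read $n=\overline{\eta}^{*}_{2,3}$ and $\overline{\eta}^{*}_{2,3}<(\theta_{1C}+\alpha_{1C}-\tau\xi_{1C})/\xi_{0C}$, together with $0\le n\le 1$, which are exactly the conditions defining Region $3$ in Proposition \ref{th1} for the partial-equilibrium problem (\ref{pCC}) with the price coefficients just computed. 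Since Proposition \ref{th1} asserts that this partial-equilibrium optimum exists and is unique, it must be $\overline{\eta}^{*}_{3}=(\tau,n)^\top$; equivalently, $(\tau,n)^\top$ solves the controlling shareholder's equilibrium fixed-point equation in Proposition \ref{th3}, so $\overline{\mathbf{n}}^*_{C}=(\tau,n)^\top$ and $\overline{n}^*_{1C}=\tau$. By the reduction above, the minority shareholder is extinct in firm $1$. The remaining three cases are structurally identical: for $(\mathcal{M}'2)$ one substitutes $(\tau,0)^\top$ and checks that the hypotheses are the Region $8$ conditions; for $(\mathcal{M}'3)$ one substitutes $(n,1)^\top$ with $n\in[0,\tau)$ and recovers the Region $5$ conditions; for $(\mathcal{M}'4)$ one substitutes $(0,1)^\top$ and recovers the Region $7$ conditions. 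If one prefers, all four verifications can be phrased through the three-step recipe of Remark \ref{remark3.2} together with Remark \ref{remark3.3}, which is precisely the mechanism that converts ``Region $v$ holds at the price coefficients generated by the candidate holding'' into ``the candidate holding is the equilibrium holding''.

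The step I expect to be the main obstacle is this self-consistency claim: one must be careful that substituting a candidate $\overline{\mathbf{n}}^*_{C}$ into Proposition \ref{th3} and then checking the region inequalities really is equivalent to solving the fixed-point equation for $\overline{\mathbf{n}}^*_{C}$, and this hinges on the uniqueness clause in Proposition \ref{th1} (which itself rests on strict concavity of $J_C$, i.e.\ positive definiteness of $\xi_C$, available here since $\det\xi_C=\gamma_C^2(S_1S_2/X_C^2)^2\sigma^2\delta^2>0$ whenever $\sigma,\delta>0$). A minor but necessary bookkeeping point is to confirm that the range restriction on the free constant ($n\in[0,1)$ in $(\mathcal{M}'1)$, $n\in[0,\tau)$ in $(\mathcal{M}'3)$) together with the two displayed inequalities implies the full set of defining inequalities of the corresponding region; since the strict bounds on $n$ are slightly stronger than the non-strict bounds in Proposition \ref{th1}, this implication is automatic, and nothing further is required.
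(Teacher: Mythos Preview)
Your proposal is correct and follows essentially the same approach as the paper: reduce extinction of the minority shareholder via market clearing to $\overline{n}^*_{1C}=\tau$ or $\overline{n}^*_{2C}=1$, then verify that each hypothesis $(\mathcal{M}'1)$--$(\mathcal{M}'4)$ places the self-consistent candidate into Region $3$, $8$, $5$, $7$ respectively of Proposition~\ref{th1}, invoking the uniqueness there and the fixed-point mechanism of Remarks~\ref{remark3.2}--\ref{remark3.3}. Your write-up is in fact more explicit than the paper's own proof, which simply lists the region correspondences and cites Remark~\ref{remark3.3}.
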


\begin{proposition}\label{th7}{\it
Assume $y\in(0,1)$ and $0\leq p<1$ hold at the time $t$. The controlling shareholder becomes extinct in firm $1$ if one of the following conditions holds:
\begin{description}
  \item [($\mathcal{C}1$)] there exists a constant $n\in\left(0,\frac{1}{1+(1-p)k}\right)$ such that, by substituting ${\mathbf{n}}^*_{C}=(0,n)^\top$ into Proposition \ref{th4}, we have
        \[n=\frac{\theta_{2C}+(2-p)\alpha_{2C}}{\xi_{2C}+(1-p)[2+(1-p)k]\alpha_{2C}},
\quad\frac{\theta_{1C}+\alpha_{1C}}{\xi_{0C}}<n,\quad \det(\xi_C)>\frac{\alpha_{2C}\xi_{1C}}{k};\]
  \item [($\mathcal{C}2$)] substituting ${\mathbf{n}}^*_{C}=(0,1)^\top$ into Proposition \ref{th4} gives
        \[\theta_{1C}+{\alpha_{1C}}<\xi_{0C},\quad\xi_{2C}<\theta_{2C}+\alpha_{2C},\quad \det(\xi_C)>\frac{\alpha_{2C}\xi_{1C}}{k};\]
  \item [($\mathcal{C}3$)] there exists a constant $n\in\left[\frac{1}{1+(1-p)k},1\right]$ such that, by substituting ${\mathbf{n}}^*_{C}=(0,n)^\top$ into Proposition \ref{th4}, we have
        \[n=\frac{\theta_{2C}+(1-\frac{1}{k})\alpha_{2C}}{\xi_{2C}-\frac{\alpha_{2C}}{k}},
\quad\frac{\theta_{1C}+\alpha_{1C}}{\xi_{0C}}<n,\quad \xi_{2C}-
\frac{\alpha_{2C}}{k}\neq 0,\quad\det(\xi_C)>\frac{\alpha_{2C}\xi_{1C}}{k};\]
\end{description}
where related parameters and ratios are all given in Proposition \ref{th4}. The controlling shareholder becomes extinct in firm $2$ if one of the following conditions holds:
\begin{description}
  \item [($\mathcal{C}4$)] there exists a constant $n\in (0,\tau]$ such that, by substituting ${\mathbf{n}}^*_{C}=(n,0)^\top$ into Proposition \ref{th4}, we have
        \[n=\frac{\theta_{1C}+\alpha_{1C}}{\xi_{1C}},\quad\frac{\theta_{2C}+(2-p)\alpha_{2C}}{\xi_{0C}}<n,
\quad\det(\xi_C)>\frac{\alpha_{2C}\xi_{1C}}{k};\]
  \item [($\mathcal{C}5$)] substituting $\overline{\mathbf{n}}^*_{C}=(\tau,0)^\top$ into Proposition \ref{th4} gives
        \[\theta_{2C}+(2-p){\alpha_{2C}}<\tau\xi_{0C},\quad \tau\xi_{1C}<\theta_{1C}+\alpha_{1C},\quad\det(\xi_C)>\frac{\alpha_{2C}\xi_{1C}}{k};\]
\end{description}
where related parameters and ratios are all given in Proposition \ref{th4}.
}\end{proposition}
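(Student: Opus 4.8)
The plan is to read each of ($\mathcal{C}1$)--($\mathcal{C}5$) as a self-consistency certificate pinning the controlling shareholder's equilibrium holding $\mathbf{n}_C^*$ to one specific region of Proposition \ref{th2}: Region 2 under ($\mathcal{C}1$), Region 7 under ($\mathcal{C}2$), Region 11 under ($\mathcal{C}3$), Region 4 under ($\mathcal{C}4$), and Region 8 under ($\mathcal{C}5$). In the first three of these regions the candidate holding has first component identically $0$, so $n^*_{1C}=0$ and the controlling shareholder is extinct in firm $1$; in the last two the second component is $0$, so $n^*_{2C}=0$ and he is extinct in firm $2$. Thus the proposition reduces to five applications of one template.

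For ($\mathcal{C}1$) I would fix the $n\in\left(0,\frac{1}{1+(1-p)k}\right)$ supplied by the hypothesis and set $\mathbf{n}_C^*=(0,n)^\top$. Substituting this vector into Proposition \ref{th4} --- the formulas (\ref{mu})--(\ref{delta}) together with the ratios (\ref{ratio}), all finite because $y\in(0,1)$ --- produces well-defined equilibrium coefficients $r,\mu,\sigma,\delta$ and hence well-defined values of the partial-equilibrium quantities $\alpha_{iC},\theta_{iC},\xi_{0C},\xi_{1C},\xi_{2C}$ that enter Proposition \ref{th2}. The first hypothesis of ($\mathcal{C}1$), $n=\frac{\theta_{2C}+(2-p)\alpha_{2C}}{\xi_{2C}+(1-p)[2+(1-p)k]\alpha_{2C}}$, says precisely that $(0,n)^\top$ equals the vector $\eta_2^*$ of Region 2 built from these parameters, i.e.\ the fixed-point equation of Proposition \ref{th4} is solved at $(0,n)^\top$; and the inequalities $0<n<\frac{1}{1+(1-p)k}$ and $\frac{\theta_{1C}+\alpha_{1C}}{\xi_{0C}}<n$ are exactly the defining inequalities of Region 2. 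Since $\det(\xi_C)>\frac{\alpha_{2C}\xi_{1C}}{k}$ is also assumed, the final assertion of Proposition \ref{th2} then guarantees that, for these parameters, the controlling shareholder's optimal stock holding exists uniquely and lies in Region 2; hence $\mathbf{n}_C^*=(0,n)^\top$ is the genuine equilibrium holding and $n^*_{1C}=0$. Conditions ($\mathcal{C}2$) and ($\mathcal{C}3$) follow by the same argument with Region 7 --- the corner $(0,1)^\top$, which carries no fixed-point equation, only its two region inequalities, reproduced verbatim in ($\mathcal{C}2$) --- and with Region 11, whose non-degenerate branch is the relevant one since $\xi_{2C}-\frac{\alpha_{2C}}{k}\neq0$ is assumed.

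For firm-$2$ extinction, ($\mathcal{C}4$) and ($\mathcal{C}5$) are treated symmetrically by substituting $\mathbf{n}_C^*=(n,0)^\top$, respectively $(\tau,0)^\top$, into Proposition \ref{th4} and matching with Region 4, respectively Region 8, of Proposition \ref{th2}. Under ($\mathcal{C}4$) the equality $n=\frac{\theta_{1C}+\alpha_{1C}}{\xi_{1C}}$ identifies $(n,0)^\top$ with $\eta_4^*$; the range $0<n\leq\tau$ gives $0\leq\theta_{1C}+\alpha_{1C}\leq\tau\xi_{1C}$ (using $\xi_{1C}>0$); and, after substituting the value of $n$ and clearing the positive denominators $\xi_{0C},\xi_{1C}$, the inequality $\frac{\theta_{2C}+(2-p)\alpha_{2C}}{\xi_{0C}}<n$ becomes the second Region-4 inequality $\xi_{1C}(\theta_{2C}+(2-p)\alpha_{2C})<\xi_{0C}(\theta_{1C}+\alpha_{1C})$; together with $\det(\xi_C)>\frac{\alpha_{2C}\xi_{1C}}{k}$ the uniqueness clause of Proposition \ref{th2} gives $n^*_{2C}=0$. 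Condition ($\mathcal{C}5$) is the pure corner $(\tau,0)^\top$ of Region 8 and needs only its two listed inequalities plus the determinant hypothesis.

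The main obstacle is not computational but bookkeeping: one must verify that the inequalities written in ($\mathcal{C}1$)--($\mathcal{C}5$) are, after the substitution $\mathbf{n}_C^*=\eta_v^*$, literally the inequalities that define the corresponding Region $v$ in Proposition \ref{th2} --- including the sign conventions that justify the rearrangements (these rest on $\xi_{0C},\xi_{1C}>0$, valid for $y\in(0,1)$) --- and that $\det(\xi_C)>\frac{\alpha_{2C}\xi_{1C}}{k}$ is exactly the hypothesis under which the last sentence of Proposition \ref{th2} upgrades a candidate point to the unique optimum located in that region. Once these matchings are confirmed, each case is immediate and no estimates beyond Propositions \ref{th2} and \ref{th4} are needed.
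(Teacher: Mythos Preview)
Your proposal is correct and follows essentially the same approach as the paper: you identify the region correspondences ($\mathcal{C}1\leftrightarrow$ Region 2, $\mathcal{C}2\leftrightarrow$ Region 7, $\mathcal{C}3\leftrightarrow$ Region 11, $\mathcal{C}4\leftrightarrow$ Region 4, $\mathcal{C}5\leftrightarrow$ Region 8), verify that each hypothesis is precisely the fixed-point condition plus region inequalities after substitution into Proposition \ref{th4}, and invoke the uniqueness clause of Proposition \ref{th2} via $\det(\xi_C)>\frac{\alpha_{2C}\xi_{1C}}{k}$. The paper's own proof is in fact terser than yours---it simply states the region-to-condition correspondence and appeals to Remark \ref{remark3.3}---so your detailed bookkeeping is, if anything, a fuller write-up of the same argument.
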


\begin{proposition}\label{th8}{\it
Assume $y\in(0,1)$ and $0\leq p<1$ hold at the time $t$. The minority shareholder becomes extinct in firm $1$ if one of the following conditions holds:
\begin{description}
  \item [($\mathcal{M}1$)] there exists a constant $n\in\left[0,\frac{1}{1+(1-p)k}\right)$ such that, by substituting ${\mathbf{n}}^*_{C}=(\tau,n)^\top$ into Proposition \ref{th4}, we have
        \[n=\frac{\theta_{2C}+(2-p)\alpha_{2C}-\tau\xi_{0C}}{\xi_{2C}+(1-p)[2+(1-p)k]\alpha_{2C}},
\quad n<\frac{\theta_{1C}+\alpha_{1C}-\tau\xi_{1C}}{\xi_{0C}},\quad \det(\xi_C)>\frac{\alpha_{2C}\xi_{1C}}{k};\]
  \item [($\mathcal{M}2$)] substituting ${\mathbf{n}}^*_{C}=(\tau,0)^\top$ into Proposition \ref{th4} gives
        \[\theta_{2C}+(2-p){\alpha_{2C}}<\tau\xi_{0C},\quad \tau\xi_{1C}<\theta_{1C}+\alpha_{1C};\]
  \item [($\mathcal{M}3$)] there exists a constant $n\in\left[\frac{1}{1+(1-p)k},1\right)$ such that, by substituting ${\mathbf{n}}^*_{C}=(\tau,n)^\top$ into Proposition \ref{th4}, we have
        \[n=\frac{\theta_{2C}+(1-\frac{1}{k})\alpha_{2C}-\tau\xi_{0C}}
{\xi_{2C}-\frac{\alpha_{2C}}{k}},
\quad n<\frac{\theta_{1C}+\alpha_{1C}-\tau\xi_{1C}}{\xi_{0C}},\quad \xi_{2C}-
\frac{\alpha_{2C}}{k}\neq 0,\quad\det(\xi_C)>\frac{\alpha_{2C}\xi_{1C}}{k};\]
\end{description}
where related parameters and ratios are all given in Proposition \ref{th4}. The minority shareholder becomes extinct in firm $2$ if one of the following conditions holds:
\begin{description}
  \item [($\mathcal{M}4$)] there exists a constant $n\in [0,\tau)$ such that, by substituting ${\mathbf{n}}^*_{C}=(n,1)^\top$ into Proposition \ref{th4}, we have
        \[n=\frac{\theta_{1C}+\alpha_{1C}-\xi_{0C}}{\xi_{1C}},\quad n<\frac{\theta_{2C}+\alpha_{2C}-\xi_{2C}}{\xi_{0C}},\quad \det(\xi_C)>\frac{\alpha_{2C}\xi_{1C}}{k};\]
  \item [($\mathcal{M}5$)] substituting ${\mathbf{n}}^*_{C}=(0,1)^\top$ into Proposition \ref{th4} gives
        \[\theta_{1C}+{\alpha_{1C}}<\xi_{0C},\quad\xi_{2C}<\theta_{2C}+\alpha_{2C},\quad \det(\xi_C)>\frac{\alpha_{2C}\xi_{1C}}{k};\]
\end{description}
where related parameters and ratios are all given in Proposition \ref{th4}.
}\end{proposition}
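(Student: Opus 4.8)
The plan is to reduce the extinction statements to assertions about the controlling shareholder's equilibrium holding $\mathbf{n}_C^*$ and then to read them off from the region decomposition of Proposition \ref{th4}. By the market clearing conditions, equivalently by (\ref{nmt}), we have $n_{1M}^*=\tau-n_{1C}^*$ and $n_{2M}^*=1-n_{2C}^*$, so the minority shareholder is extinct in firm $1$ exactly when $n_{1C}^*=\tau$ and extinct in firm $2$ exactly when $n_{2C}^*=1$. Since $y\in(0,1)$, Proposition \ref{th4} applies: $\mathbf{n}_C^*$ solves the argmax fixed-point equation, and by Proposition \ref{th2} evaluated at the equilibrium coefficients (\ref{mu})--(\ref{delta}) and ratios (\ref{ratio}), every such optimiser equals one of the candidates $\eta_v^*$, $v=1,\dots,12$. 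Hence it suffices to show that each hypothesis $(\mathcal{M}i)$ pins $\mathbf{n}_C^*$ down to a region whose first coordinate is $\tau$ (for $(\mathcal{M}1)$--$(\mathcal{M}3)$) or whose second coordinate is $1$ (for $(\mathcal{M}4)$ and $(\mathcal{M}5)$).

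Next I would treat the five conditions one at a time, matching each to a region of Proposition \ref{th4}: $(\mathcal{M}1)$ to Region $3$ with $\eta_3^*=(\tau,\cdot)^\top$, $(\mathcal{M}2)$ to Region $8$ with $\eta_8^*=(\tau,0)^\top$, $(\mathcal{M}3)$ to Region $12$ with $\eta_{12}^*=(\tau,\cdot)^\top$, $(\mathcal{M}4)$ to Region $5$ with $\eta_5^*=(\cdot,1)^\top$, and $(\mathcal{M}5)$ to Region $7$ with $\eta_7^*=(0,1)^\top$. In each case the displayed equation in $(\mathcal{M}i)$ is precisely the assertion that substituting the indicated vector for $\mathbf{n}_C^*$ in the equilibrium formulas of Proposition \ref{th4} reproduces $\eta_v^*$, i.e. that vector is a self-consistent fixed point of the argmax map (Step $1$ of Remark \ref{remark3.2}); for the corner candidates $(\tau,0)^\top$ and $(0,1)^\top$ this self-consistency is automatic, since $\eta_8^*$ and $\eta_7^*$ do not depend on the coefficients. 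The remaining inequalities in $(\mathcal{M}i)$ are exactly the membership conditions for Region $v$. With both facts in place, the hypothesis $\det(\xi_C)>\frac{\alpha_{2C}\xi_{1C}}{k}$, present in $(\mathcal{M}1),(\mathcal{M}3),(\mathcal{M}4),(\mathcal{M}5)$, activates the uniqueness clause at the end of Proposition \ref{th4}: the optimal $\mathbf{n}_C^*$ then exists uniquely and lies in Region $v$, so $\mathbf{n}_C^*=\eta_v^*$, and this substitutes for the direct verification (\ref{verify1}) as noted in Remark \ref{remark3.3}. For $(\mathcal{M}2)$ the candidate $(\tau,0)^\top$ is a vertex of the box $[0,\tau]\times[0,1]$ lying in the half $n_{2C}\le\frac{1}{1+(1-p)k}$, where $x^*=0$ and the relevant Hessian $\xi_C+(1-p)(2+k(1-p))\alpha_{2C}\mathbf{e}_{22}$ is automatically positive definite ($\det\xi_C>0$ always, being a covariance-type determinant, and $\xi_{1C}>0$), so the two strict inequalities by themselves identify the maximiser on that piece and no determinant hypothesis is needed. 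In every case one concludes $n_{1C}^*=\tau$ (resp. $n_{2C}^*=1$), hence $n_{1M}^*=0$ (resp. $n_{2M}^*=0$) by (\ref{nmt}).

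The bulk of the write-up is the bookkeeping that matches, term by term, the equalities and inequalities in $(\mathcal{M}1)$--$(\mathcal{M}5)$ with the definitions of $\eta_3^*,\eta_5^*,\eta_7^*,\eta_8^*,\eta_{12}^*$ and their region conditions in Proposition \ref{th4}; this is a transcription and carries no new content. The one genuine obstacle is to justify that a single self-consistent candidate suffices in equilibrium, i.e. that one need not compare $J_C$ over all twelve regions to locate $\mathbf{n}_C^*$. This is exactly what the uniqueness clause of Proposition \ref{th4} (equivalently Proposition \ref{th2}) provides: under $\det(\xi_C)>\frac{\alpha_{2C}\xi_{1C}}{k}$ one also has $\det\bigl(\xi_C-\frac{\alpha_{2C}}{k}\mathbf{e}_{22}\bigr)=\det\xi_C-\frac{\alpha_{2C}\xi_{1C}}{k}>0$, so the objective is strictly concave on the upper half $n_{2C}\ge\frac{1}{1+(1-p)k}$ as well, globalising concavity over the whole box and ruling out rival maximisers; for $(\mathcal{M}2)$, where the candidate sits in the lower half, positive definiteness of $\xi_C$ already does this job. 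Finally, the companion statements (Propositions \ref{th5}--\ref{th7}) are proved along the same lines --- for $p=1$ with Proposition \ref{th3} and $x^*=0$ in place of Proposition \ref{th4}, and for the controlling shareholder's extinction by instead landing $\mathbf{n}_C^*$ in a region with a vanishing coordinate --- which is why the region labels are shared across Propositions \ref{th5}--\ref{th8}.
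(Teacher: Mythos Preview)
Your proposal is essentially the paper's own argument: reduce extinction to $n_{1C}^*=\tau$ (resp.\ $n_{2C}^*=1$) via market clearing (\ref{nmt}), then match $(\mathcal{M}1)$--$(\mathcal{M}5)$ to Regions $3$, $8$, $12$, $5$, $7$ of Proposition \ref{th2}, and invoke the uniqueness clause under $\det(\xi_C)>\frac{\alpha_{2C}\xi_{1C}}{k}$ so that a single self-consistent candidate suffices (cf.\ Remark \ref{remark3.3}). The paper's appendix proof does exactly this, only more tersely.

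One small caution on your treatment of $(\mathcal{M}2)$: you correctly notice that the determinant hypothesis is absent there and argue that on the lower piece $n_{2C}\le\frac{1}{1+(1-p)k}$ the objective is automatically strictly concave, so $(\tau,0)^\top$ is the maximiser \emph{on that piece}. But this alone does not pin down the \emph{global} maximiser over $[0,\tau]\times[0,1]$; without $\det(\xi_C)>\frac{\alpha_{2C}\xi_{1C}}{k}$ the upper piece could in principle harbour a competitor. The paper's own proof does not close this gap either --- it simply imposes the determinant condition uniformly in the argument while the stated $(\mathcal{M}2)$ omits it --- so you have gone at least as far as the paper. If you want to fully justify $(\mathcal{M}2)$ as stated, you would need an additional comparison showing $J_C(x^*(\cdot),\tau,0)\ge J_C(x^*(\cdot),n_1,n_2)$ for all $(n_1,n_2)$ in the upper piece as well, or else add the determinant hypothesis to $(\mathcal{M}2)$.
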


According to proofs in Appendices, with perfect protection, extinction conditions of the controlling shareholder, i.e., $(\mathcal{C}'1)-(\mathcal{C}'4)$, imply his optimal stock holdings are located in Regions 2, 7, 4 and 8 respectively, and extinction conditions of the minority shareholder, i.e., $(\mathcal{M}'1)-(\mathcal{M}'4)$, imply his optimal stock holdings are located in Regions 3, 8, 5 and 7 respectively. Similarly, from proofs in Appendices, with imperfect protection, extinction conditions of the controlling shareholder, i.e., $(\mathcal{C}1)-(\mathcal{C}5)$, imply his optimal stock holdings are located in Regions 2, 7, 11, 4 and 8 respectively, and extinction conditions of the minority shareholder, i.e., $(\mathcal{M}1)-(\mathcal{M}5)$, imply his optimal stock holdings are located in Regions 3, 8, 12, 5 and 7 respectively. Therefore, the reasons why shareholders become extinct in firm 1 or firm 2 have been shown in Subsection \ref{subsection3.1}.

\section{Numerical results}\label{section4}\noindent
\setcounter{equation}{0}
In this section, we analyze, by numerical results, how investor protection and cross-section setting in the economy affect the equilibrium. For the sake of argument, we choose the fundamental parameters used in \cite{Basak} (see Table \ref{table}).
\begin{table}[htbp]
\centering
\caption{Fundamental parameters in numerical analysis}
\begin{tabular}{cccccccccccc}
\hline
Parameters &$\mu_{1D}$&$\mu_{2D}$ & $\sigma_D$ & $\gamma_C$ & $\gamma_M$ & $\rho$ & $k$
& $l_{1C}$ & $l_{2C}$ & $l_{1M}$ & $l_{2M}$  \\ \hline
Values     &$1.5\%$   &$1.5\%$    & $13\%$     & $3$        & $3.5$      & $0.01$ & $6$
& $0.1$    & $0.1$    & $0.5$    &$0.5$   \\ \hline
\end{tabular}
\label{table}
\end{table}
Moreover, we set $\tau=1,y_1=0.5,\delta_D=10\%$ and vary $y$ from $0$ to $1$ with the step length $0.01$, and other parameters would be given in associated figures. For the sake of simplicity, in the case of $p=1$, we would use, in all following figures, the same notations $n^*_{1C}, n^*_{2C}, x^*, \mu, \sigma$ and $\delta$ to denote $\overline{n}^*_{1C}, \overline{n}^*_{2C}, \overline{x}^*, \overline{\mu}, \overline{\sigma}$ and $\overline{\delta}$, respectively.

\subsection{Stock holdings and diverted output}
We start our numerical analysis in equilibrium with Figure \ref{fig_sx} which demonstrates the effect of investor protection on the controlling shareholder's equilibrium stock holdings and fractions of diverted output in the cross-sectional economy.  To be precise, we explain Figure \ref{fig_sx} as follows: stock holdings $n^*_{1C}$ in panels (a) and (b), stock holdings $n^*_{2C}$ in panels (c) and (d) and fractions of diverted output $x^*$ in panel (e) are obtained by substituting each $y$ and $\tau=1,y_1=0.5,y_2=0.5,\delta_D=10\%$, and parameters in Table \ref{table} into Proposition \ref{th3} in the case of $p=1$ and Proposition \ref{th4} in the cases of $p=0.9$ and $p=0.6$, respectively.

\begin{figure}[htb]
\centering
\subfigure[Stock holdings in firm $1$]{
\includegraphics[ width = 0.31\textwidth]{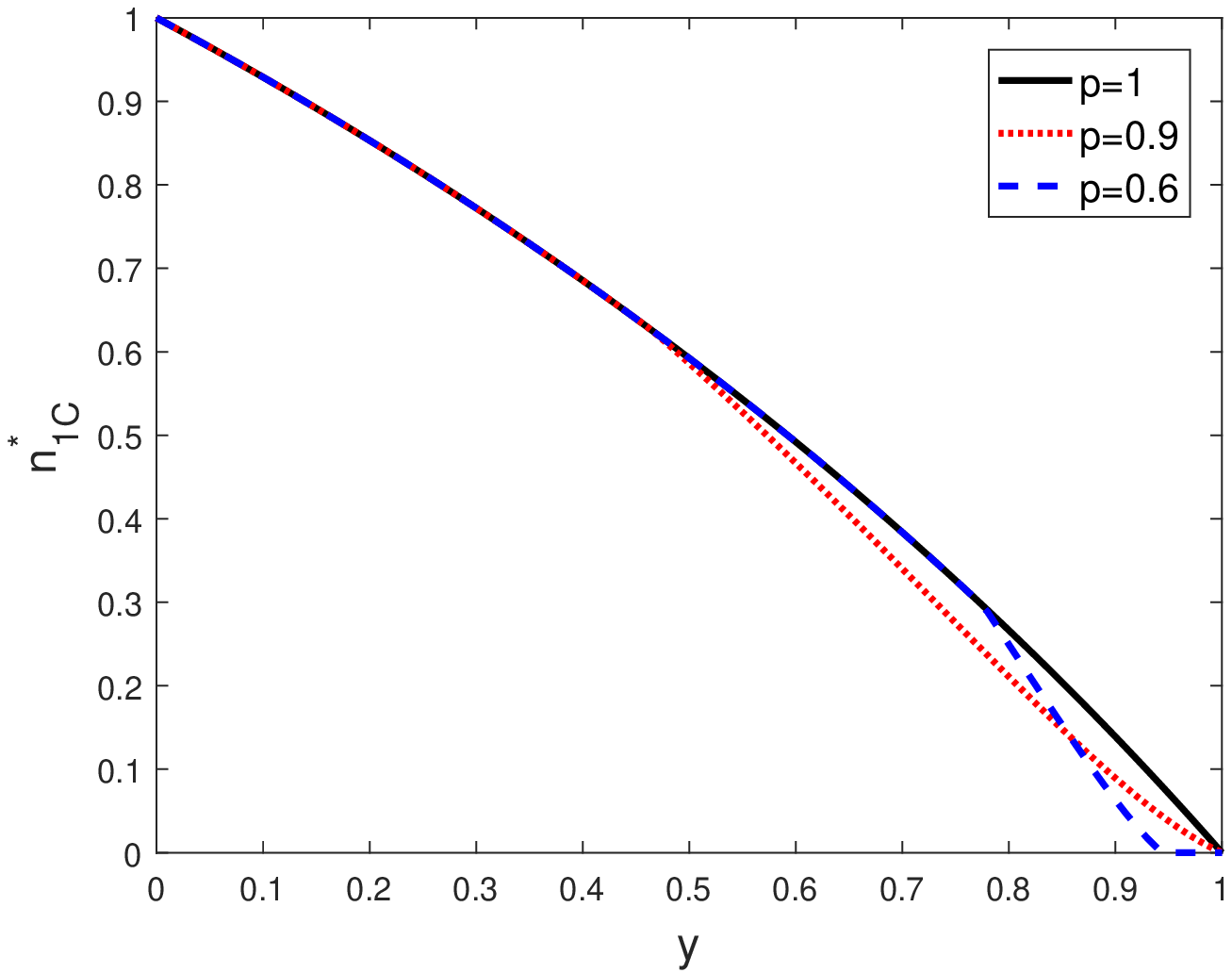}
}
\subfigure[Share difference in firm $1$]{
\includegraphics[ width = 0.31\textwidth]{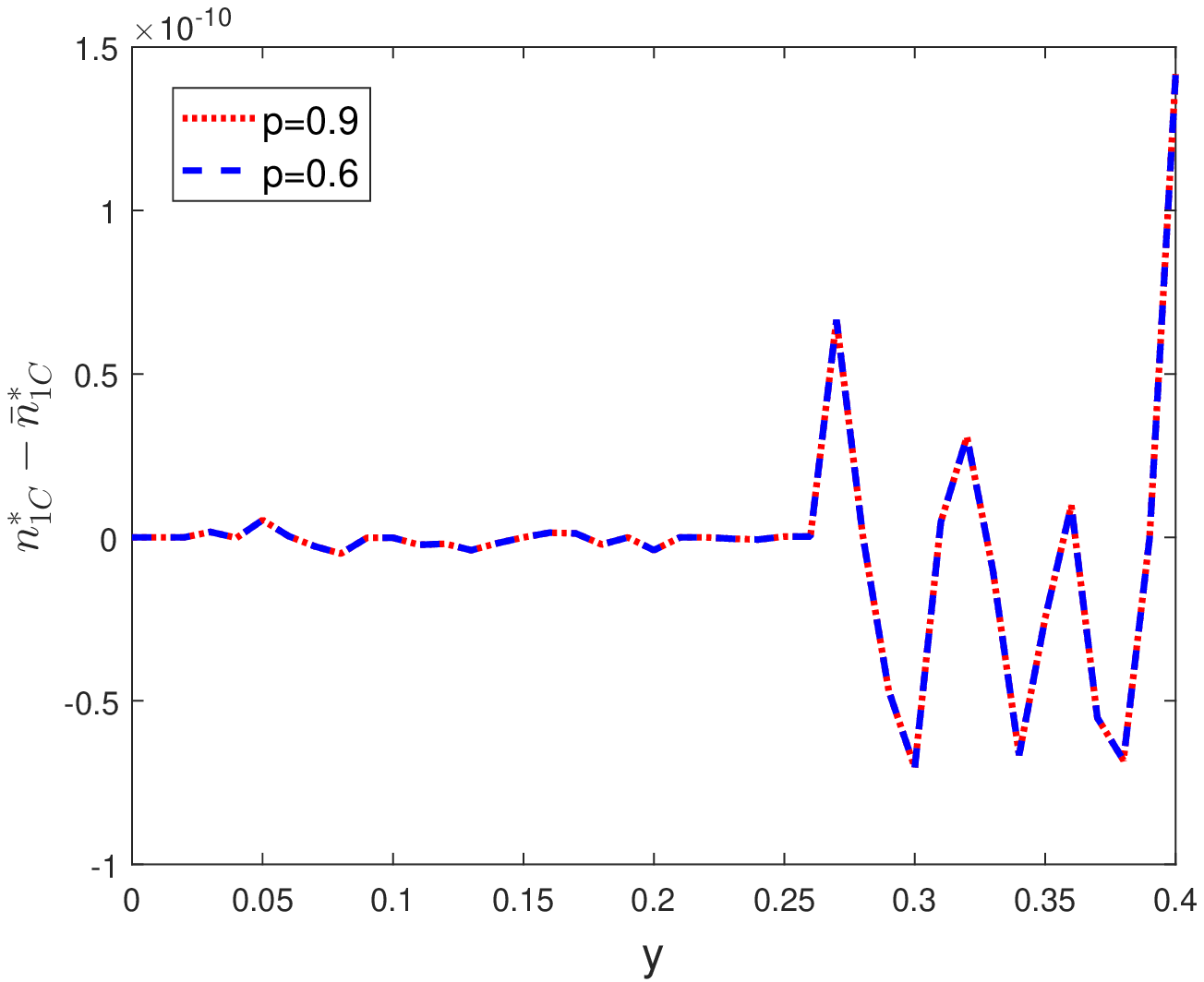}
}\\
\subfigure[Stock holdings in firm $2$]{
\includegraphics[ width = 0.31\textwidth]{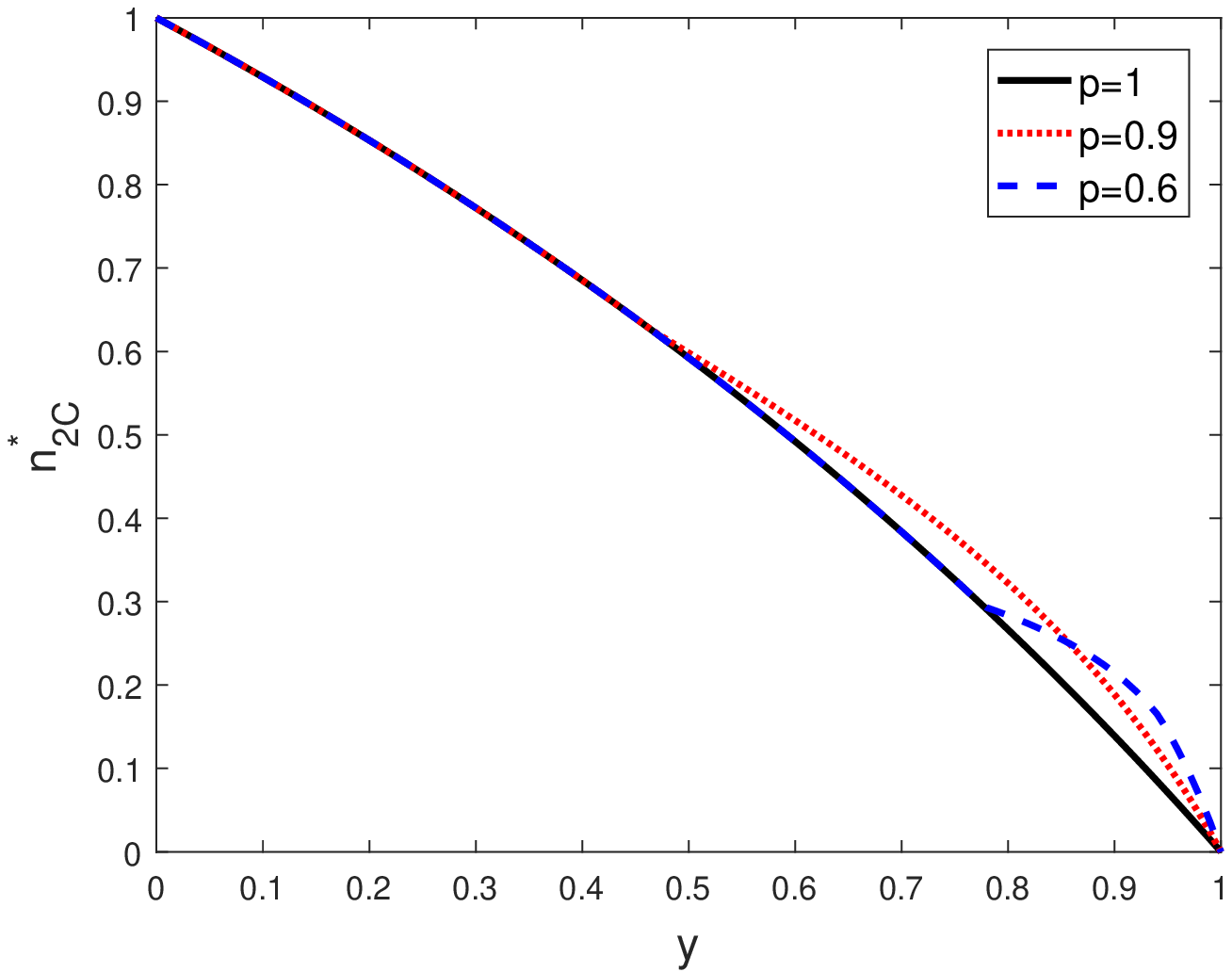}
}
\subfigure[Share difference in firm $2$]{
\includegraphics[ width = 0.31\textwidth]{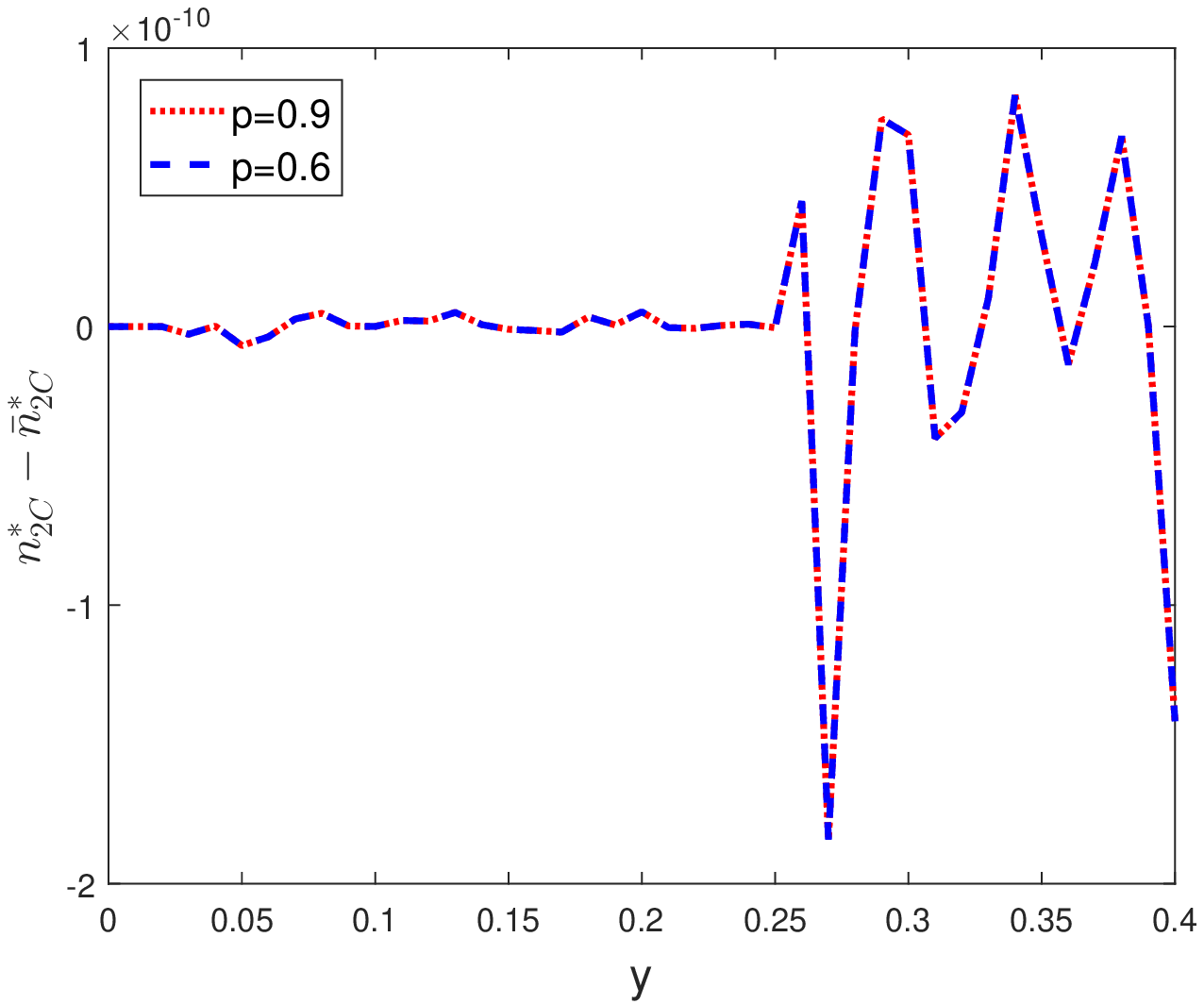}
}
\subfigure[Fractions of of diverted output]{
\includegraphics[ width = 0.31\textwidth]{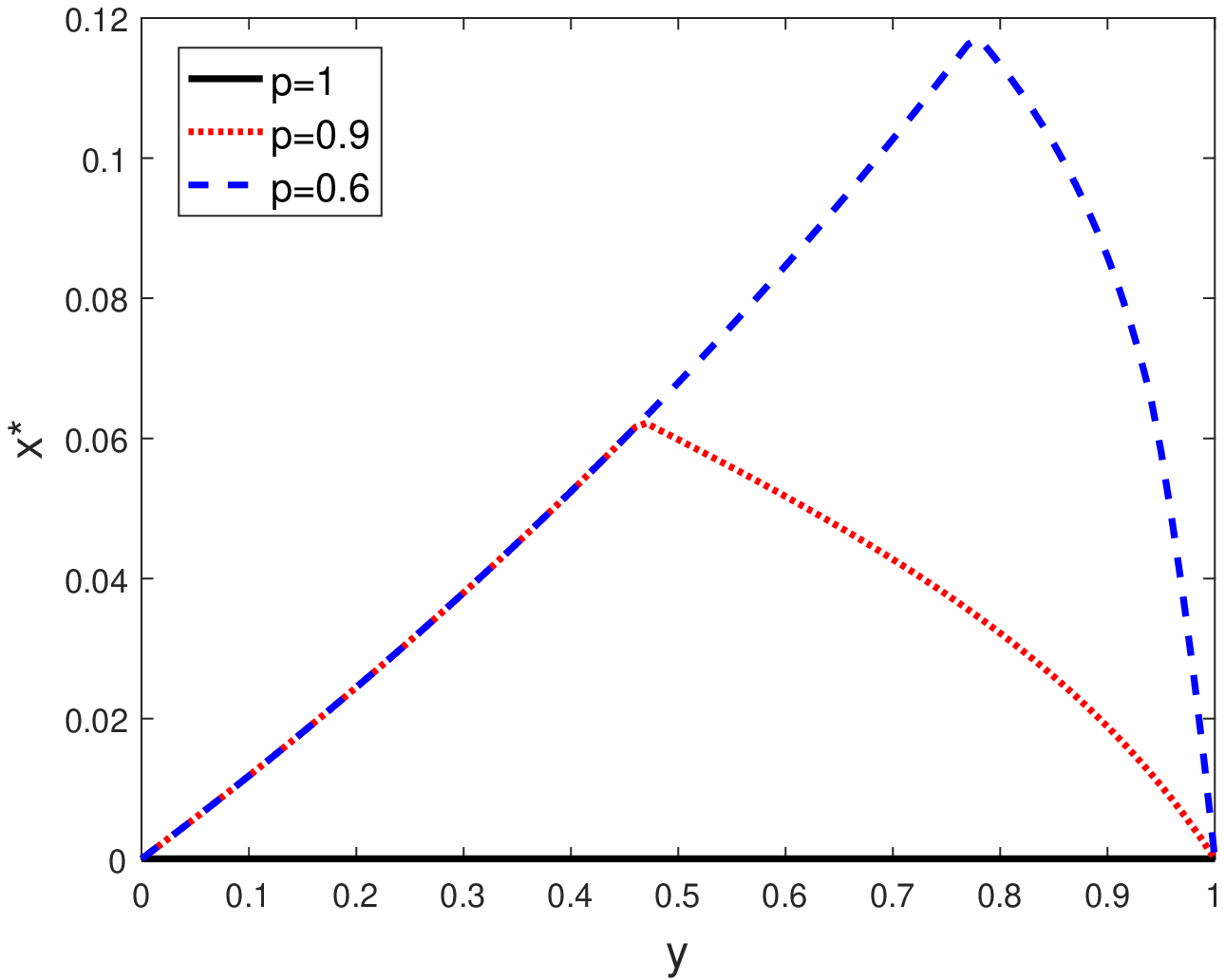}
}
\caption{ The effect of investor protection in the cross-sectional economy on the controlling shareholder's stock holdings and fractions of diverted output}
\label{fig_sx}
\end{figure}

First of all, compared with the case of perfect investor protection ($p=1$), there are kinks in the case of imperfect investor protection ($p=0.9$ and $p=0.6$), especially the controlling shareholder's stock holdings of firm 1 in panel (a) of Figure \ref{fig_sx}, and these kinks imply the change of regions for shareholders' optimal strategies in equilibrium. Actually, along with the decrease of the controlling shareholder's consumption share (i.e., increase of $y$), shareholders' optimal strategies (parameters in Figure \ref{fig_sx}, as well as in Figure \ref{fig_mvr}) in equilibrium are located in different regions for $y=0.01i,\;i\in\{1,2,\cdots,99\}$: in the case of $p=1$, related parameters are all located in Region $1$; in the case of $p=0.9$, related parameters with $1\leq i\leq 46$ are located in Region $10$ and others lie in Region $1$; in the case of $p=0.6$, related parameters with $1\leq i\leq 77$, $78\leq i\leq 94$ and $95\leq i\leq 99$ are respectively located in Regions $10$, $1$ and $2$.

Panels (a) and (c) of Figure \ref{fig_sx} show that poorer protection in firm $2$ tends to decrease the controlling shareholder's stock holding $n^*_{1C}$ in firm $1$ and increase his stock holding $n^*_{2C}$ in firm $2$ relative to the perfect protection in firm $2$, i.e., $n^*_{1C}\leq \overline{n}_{1C}^*,\;n^*_{2C}\geq \overline{n}_{2C}^*$. Such financial phenomenon in the cross-sectional economy is similar to that of home and foreign investors in Proposition 4 of \cite{Giannetti}, and the result in firm 2 is consistent with those in \cite{Basak,Shleifer} indicating the positive relation between poorer investor protection and higher ownership concentration. The reason is twofold. With imperfect investor protection in firm $2$, the controlling shareholder can divert a higher fraction of output in firm $2$ when he owns more shares of firm 2, and this drives him to acquire more shares of firm $2$ and less shares of firm $1$ in equilibrium when his consumption share is low (i.e., $y$ is high).  On the other hand, diverting output in firm $2$ naturally withholds the minority shareholder's stock holding in firm $2$ and drives him to acquire more shares of firm $1$ in equilibrium. Such phenomenon that the controlling shareholder prefers acquiring more shares of firm with imperfect investor protection while the minority shareholder prefers acquiring more shares of firm with perfect investor protection implies that the investor protection constraint in the cross-sectional economy indeed provides the minority shareholder with investor protection. However, when the controlling shareholder's consumption share is high (i.e., $y$ is low), his stock holdings are almost the same as in the benchmark economy, precisely, $n^*_{jC}\approx \overline{n}_{jC}^*$ for $j=1,2$. The investor protection constraint does not bind because owning lots of shares reduces the controlling shareholder's benefits of diverting the output, and hence the controlling shareholder would hold shares based on the benchmark level. This is similar to the economy without the cross-section setting in \cite{Basak}.

Panel (e) of Figure \ref{fig_sx} reveals that better investor protection in the cross-sectional economy considerably reduces the fraction of diverted output in firm 2. This result is what we expect and consistent with those in \cite{Basak,Shleifer}. Since the fraction of diverted output is directly related to the controlling shareholder's stock holding in firm 2, we could discuss it in a similar method used in panel (c) of Figure \ref{fig_sx} which is base on whether the investor protection constraint $x\leq (1-p)n_{2C}$ is binding. When the controlling shareholder's consumption share is low (i.e., $y$ is high), the investor protection constraint is binding and this implies poorer investor protection leads to higher fraction of diverted output. When the controlling shareholder's consumption share is high (i.e., $y$ is low), the investor protection constraint is not binding and the fraction of diverted output is tempered by the cost of stealing $k$ so that the controlling shareholder diverts the same fractions of output in the cross-sectional economies with different levels of imperfect protection.

Furthermore, compared with the case in the economy without cross-section setting, the cross-section in equilibrium can also cause essential difference in the effect of investor protection. For one thing, because of perfect investor protection in firm $1$ and extremely poor protection in firm $2$, the controlling shareholder in Region $2$ would hold no stocks in firm $1$ and acquire more shares of firm $2$ to divert a higher fraction of output in firm $2$. And this means the controlling shareholder becomes extinct in firm 1 (By examining, here condition ($\mathcal{C}1$) indeed holds) as the result of investor protection and cross-section, which never happens in the economy without the cross-section setting (see \cite{Basak}).
For another, contrast with \cite{Basak}, even when the controlling shareholder's consumption share is high (i.e., $y$ is low), the stock holdings, $n_{1C}^*$ and $n_{2C}^*$, are not the same as in the economy with perfect protection (see panels (b) and (d) of Figure \ref{fig_sx}). This is because along with his consumption share, the cross-section setting leads to different returns and volatilities of two stocks, and then this enables the controlling shareholder to adjust his shares between firms $1$ and $2$ so that his stock holdings are not the same as in the economy with perfect protection.

\subsection{Stock return, stock volatility and interest rate}
Figure \ref{fig_mvr} depicts the effect of investor protection in the cross-sectional economy on stock gross returns of firms $1$ and $2$, fundamental stock volatilities, nonfundamental stock volatilities and interest rates. More precisely, we explain Figure \ref{fig_mvr} as follows: stock gross returns in firms $1$ and $2$ are defined by $\mu_1+\frac{D_1}{\tau S_1}$ and $\mu_2+(1-x^*)\frac{D_2}{S_2}$ where $\mu_1$ and $\mu_2$ are the capital gains and $\frac{D_1}{\tau S_1}$ and $(1-x^*)\frac{D_2}{S_2}$ are dividend yields in firms $1$ and $2$ respectively; and gross stock returns of firm $1$ in panel (a), gross stock returns of firm $2$ in panel (b), fundamental stock volatilities $\sigma$ in panel (c), nonfundamental stock volatilities $\delta$ in panel (d) and interest rates $r$ in panel (e) are obtained by substituting each $y$ and $\tau=1,y_1=0.5,y_2=0.5,\delta_D=10\%$, and parameters in Table \ref{table} into Proposition \ref{th3} in the case of $p=1$ and Proposition \ref{th4} in the cases of $p=0.9$ and $p=0.6$, respectively.

\begin{figure}[htb]
\centering
\subfigure[Stock gross returns in firm $1$]{
\includegraphics[ width = 0.31\textwidth]{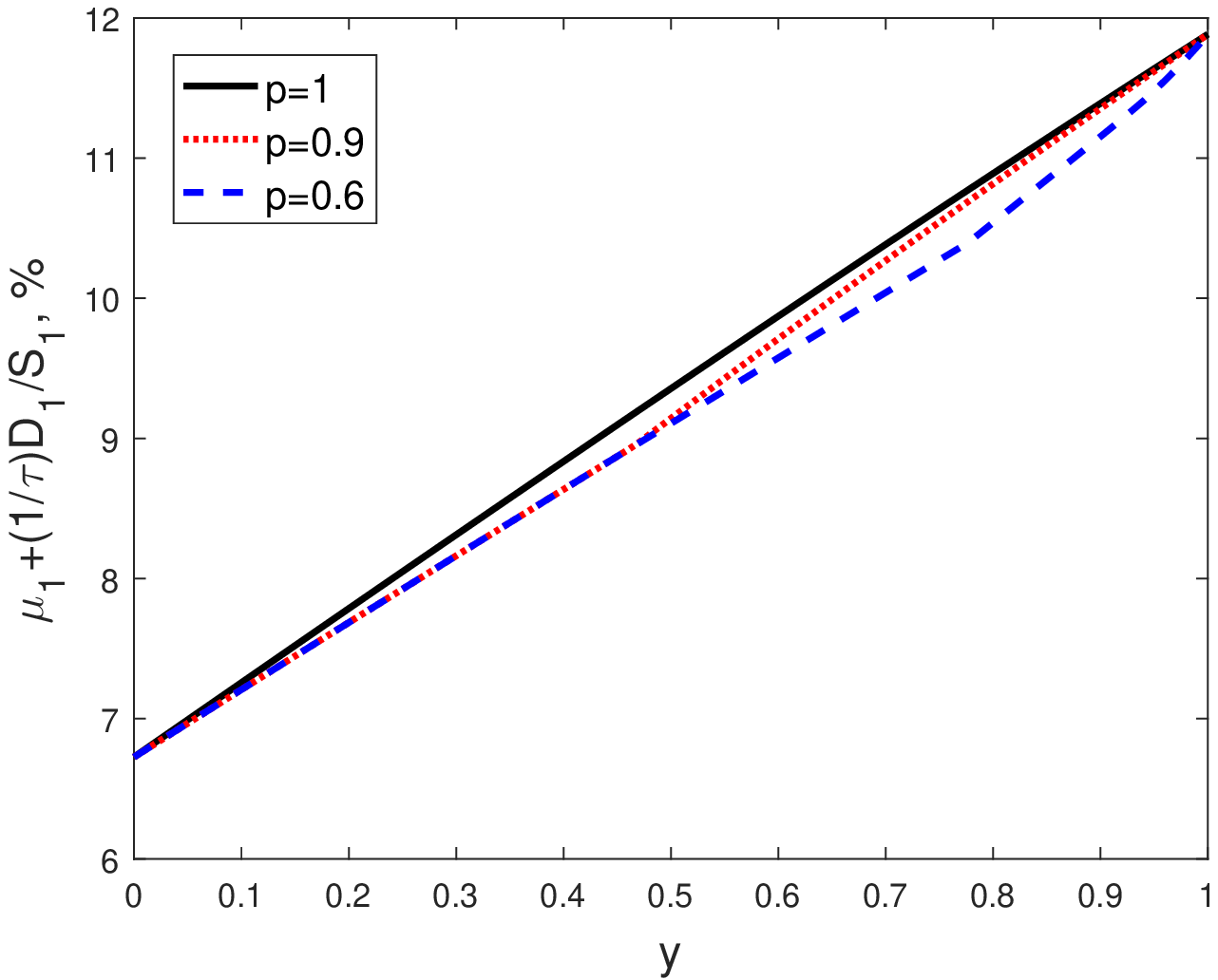}
}
\subfigure[Stock gross returns in firm $2$]{
\includegraphics[ width = 0.31\textwidth]{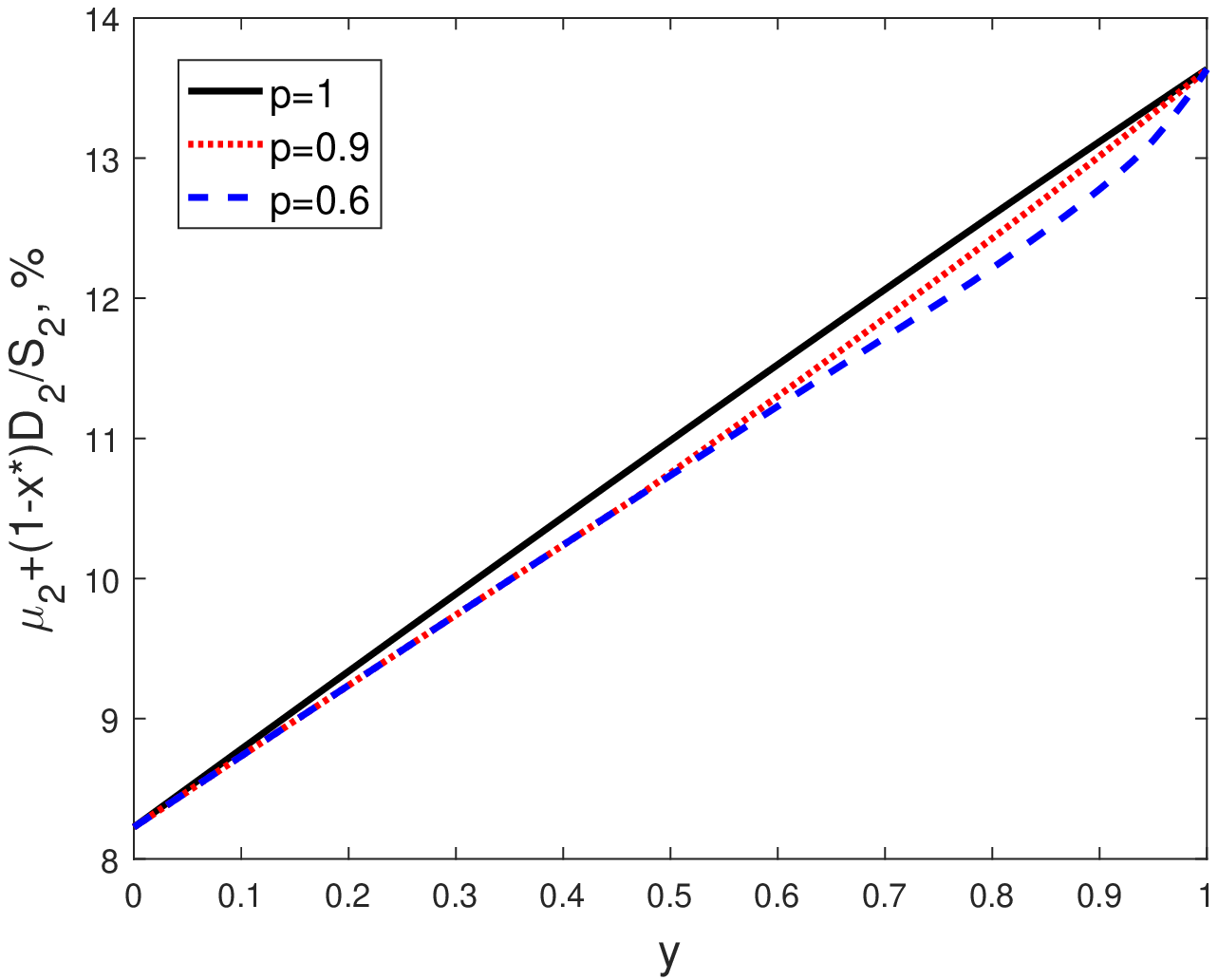}
}\\
\subfigure[Fundamental stock volatilities]{
\includegraphics[ width = 0.31\textwidth]{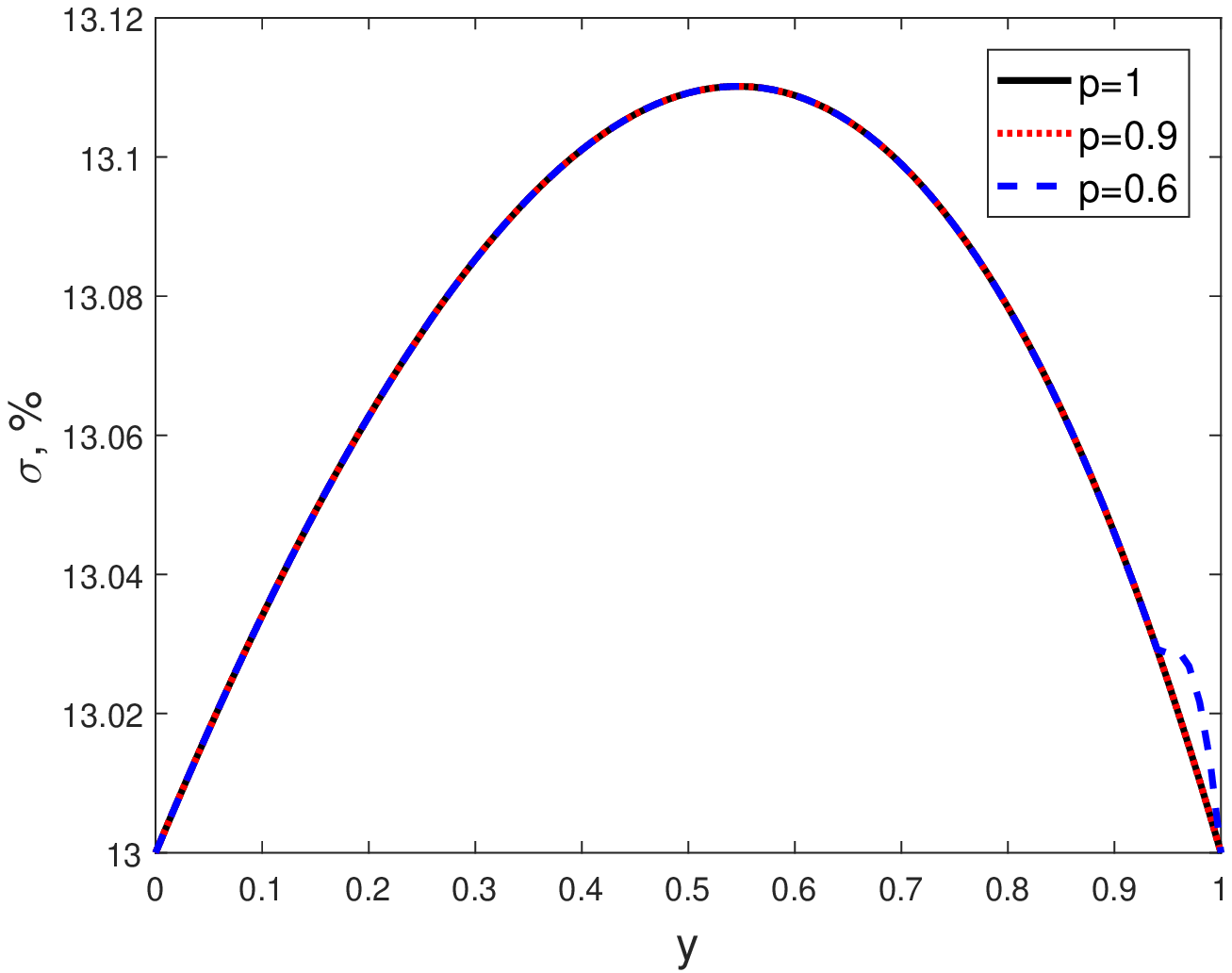}
}
\subfigure[Nonfundamental stock volatilities]{
\includegraphics[ width = 0.31\textwidth]{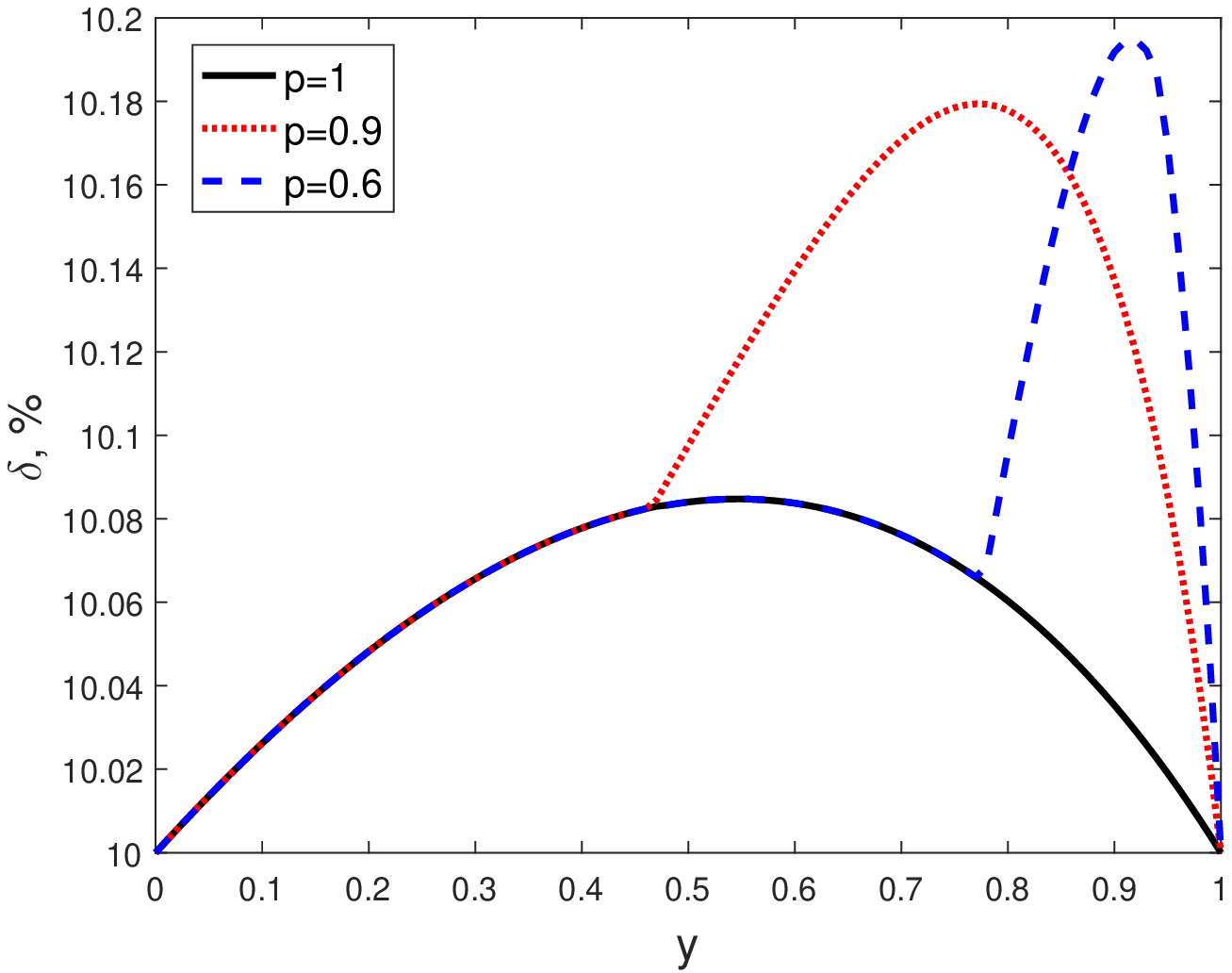}
}
\subfigure[Interest rates]{
\includegraphics[ width = 0.31\textwidth]{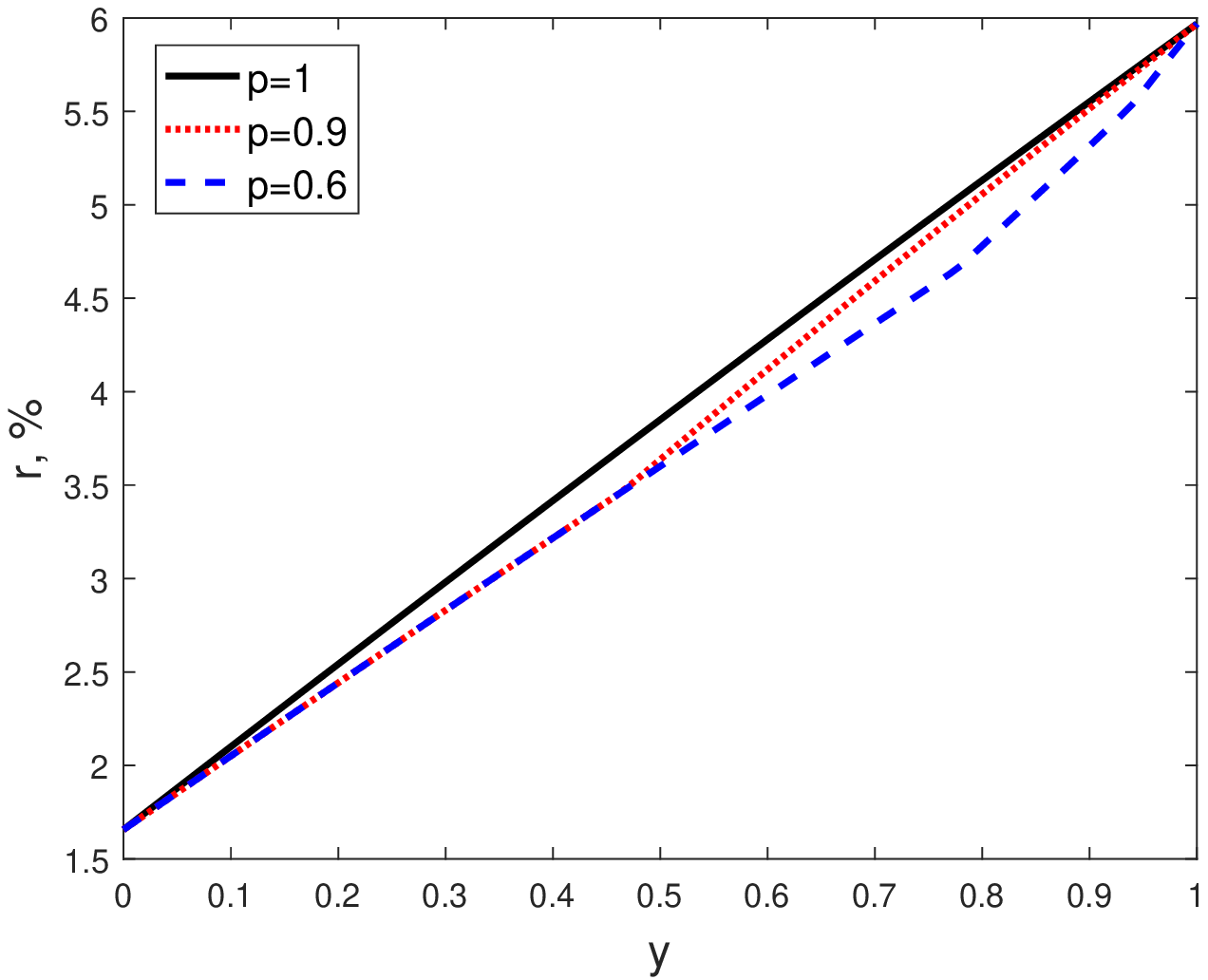}
}
\caption{The effect of investor protection in the cross-sectional economy on stock returns and volatilities, and interest rates }
\label{fig_mvr}
\end{figure}

Panels (a) and (b) of Figure \ref{fig_mvr} demonstrate that better investor protection in firm $2$ leads to higher gross stock returns in both firms $1$ and $2$. This is the same as results in \cite{Basak,Giannetti} documenting the positive relation between the realized stock returns and investor protection, and it is also consistent with the empirical evidence (for example, \cite{Gompers}) on the relation between the realized stock returns and corporate governance. The reason for firm $2$ can be explained similarly as \cite{Basak}. Perfect investor protection makes gross stock returns in firm $2$ be determined by shareholders' risk aversions, and such gross stock returns are sufficiently high to compensate shareholders for risk taking. However, poor investor protection enables the controlling shareholder to divert the output in firm $2$ so that the controlling shareholder can be compensated for excessive risk taking through additional stealing and lower risk premia. Based on discussion of firm $2$, gross stock returns in firm $1$ can be explained. With poorer investor protection in firm $2$, the controlling shareholder would hold less shares of firm $1$ and then take lower excessive risk (including stock volatilities of firm $1$ coming from fundamental risk and additional stock covariance caused by cross-section). Hence, lower risk premia and then lower gross stock returns in firm $1$ are enough to compensate the controlling shareholder for excessive risk taking relative to firm $1$.

Panels (c) and (d) show that, in the economy with imperfect investor protection, both fundamental and nonfundamental stock volatility are higher than in the benchmark economy with perfect protection, i.e., $\sigma\geq\overline{\sigma},\;\delta\geq\overline{\delta}$. And this clearly implies in the economy with imperfect investor protection in firm $2$, stock volatilities in firms $1$ and $2$ are are higher than in the benchmark economy with perfect protection. The result is consistent with that in \cite{Basak}, but the reasons for firms $1$ and $2$ are different. On the one hand, the main reason for higher stock volatility in firm $2$ is that, with imperfect investor protection, the controlling shareholder holds more shares of firm $2$ than in the benchmark economy with perfect protection such that the shares of firm $2$ are under-diversified. Hence, the controlling shareholder's volatile wealth and consumption can translate into the stock volatility of firm $2$ via the market clearing conditions. On the other hand, investor protection actually has few effects on the stock volatility of firm $1$ except for some extreme conditions (see, Region $2$ of the case $p=0.6$ in panel (c)). As is analyzed in panel (a) of Figure \ref{fig_sx}, the shareholders' stock holdings in firm $1$ are almost the same as in the benchmark economy when $y$ is low. Hence, similar diversity of stock holdings leads to almost the same stock volatility. when $y$ is high, higher covariance between two firms (caused by higher volatility of firm $2$ through cross-section) neutralizes higher diversity of stock holdings in firm $1$, leading to similar stock volatility in firm $1$. However, in Region $2$ of the case $p=0.6$ in panel (c), the controlling shareholder in firm $1$ is extinct and then all risk of stock covariance is taken by the minority shareholder, which leads to higher stock volatility of firm $1$. Furthermore, better investor protection can be regarded as tighter constraints on the controlling shareholder's portfolios, and such tighter constraints would reduce stock volatilities (see \cite{Chabakauri,Hardouvelis}), which is in agreement with our results.

Panel (e) reveals that better investor protection leads to higher interest rate in the cross-sectional economy. This is consistent with the results of Figure 8 in \cite{Basak} and Proposition 7 in \cite{Shleifer}. With poor investor protection in firm $2$, not only does the minority shareholder face lower risk premia (with no diverted output for him) in firm $2$, but he also endures lower risk premia in firm $1$. Then for the minority shareholder, investing stocks in firms $1$ and $2$ is less attractive and turning to investing bonds is a better choice. Therefore, the minority shareholder is willing to invest more in bonds and provide cheaper credit, which results in lower interest rate in the bond market.

\subsection{The effect of cross-section: comparison to the economy with the single relative firm}

Compared with the economy with the single relative firm (i.e., the firm has the same economy parameters: the output return and volatilities, the fraction of the output paid to the shareholders as labor incomes, the time-preference parameter and pecuniary cost from diverting output), Figure \ref{fig_compare1} (resp., Figure \ref{fig_compare2}) demonstrates the effect of cross-section on the controlling shareholder's  optimal strategies and asset prices in firm $1$ (resp., firm $2$). Figures \ref{fig_compare1} and \ref{fig_compare2} are further explained as follows:
in Figure \ref{fig_compare1}, parameters $n_{1C}^*,\mu_1,\sigma$ and $r$ are obtained in the same way as Figures \ref{fig_sx} and \ref{fig_mvr}, and parameters $n_{1C}^{s*},\mu_1^s,\sigma^s_1$ and $r^s_1$ are obtained by substituting each $y$ and parameters $\mu_D=0.015,\sigma_D=0.13,\gamma_C=3,\gamma_M=3.5,\rho=0.01,k=6,l_{C}=0.1,l_{M}=0.5$ into Proposition 2 (the perfect protection case) in \cite{Basak};
in Figure \ref{fig_compare2}, stock volatilities $\sigma_2=\sqrt{\sigma^2+\delta^2}$ and parameters $n_{2C}^*,x^*,\mu_2,\sigma,\delta$ and $r$ are obtained in the same way as Figures \ref{fig_sx} and \ref{fig_mvr}, and parameters $n_{2C}^{s*},x^{s*},\mu_2^s,\sigma^s_2$ and $r^s_2$ are obtained by substituting  each $y$ and parameters $\mu_D=0.015,\sigma_D=\sqrt{0.13^2+0.1^2},\gamma_C=3,\gamma_M=3.5,\rho=0.01,k=6,l_{C}=0.1,l_{M}=0.5$ into Proposition 2 of \cite{Basak} in the case of $p=1$ and  Proposition 3 of \cite{Basak} in the cases of $p=0.9$ and $p=0.6$, respectively.

\begin{figure}[htb]
\centering
\subfigure[Stock holdings]{
\includegraphics[ width = 0.31\textwidth]{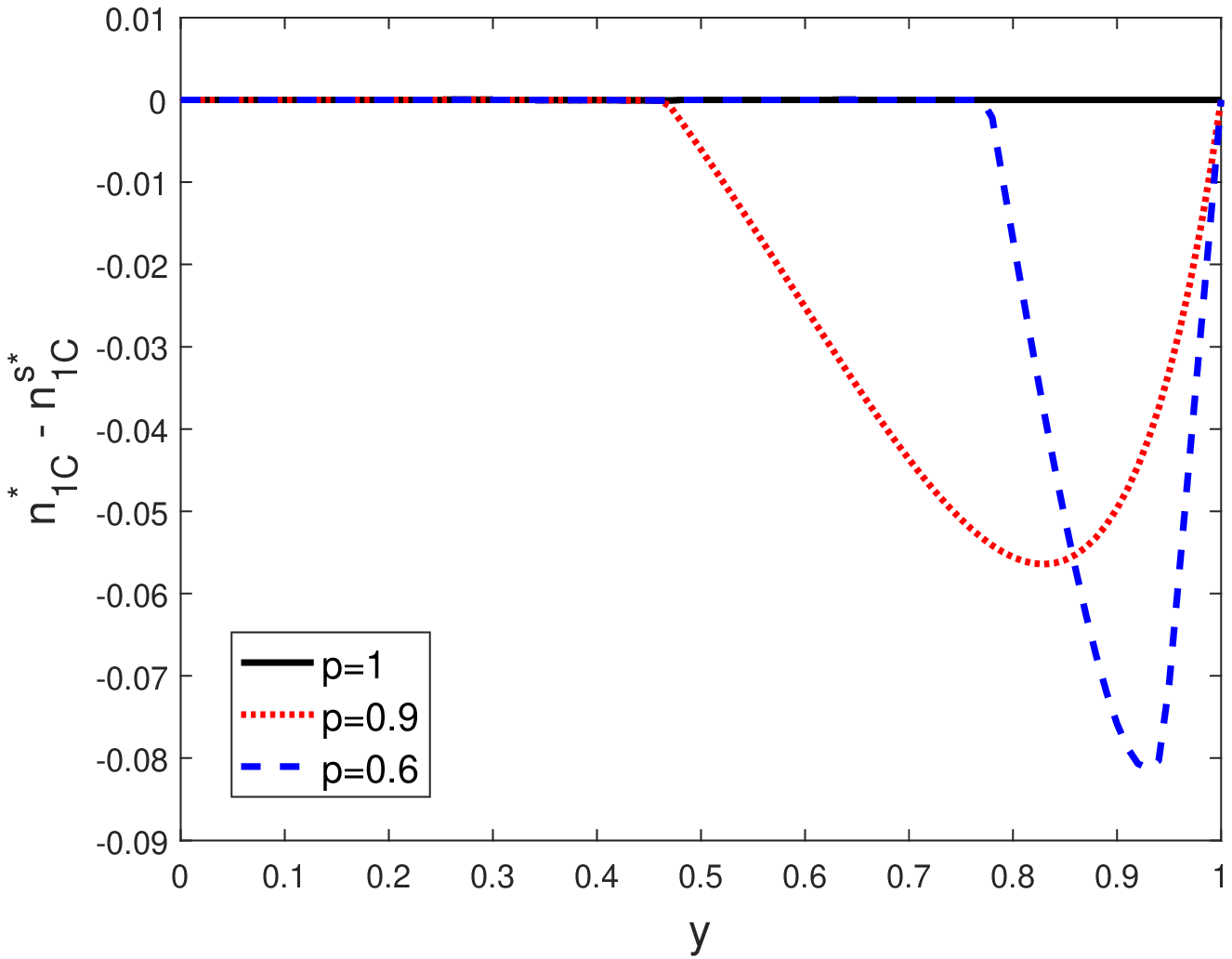}
}
\subfigure[Stock returns]{
\includegraphics[ width = 0.31\textwidth]{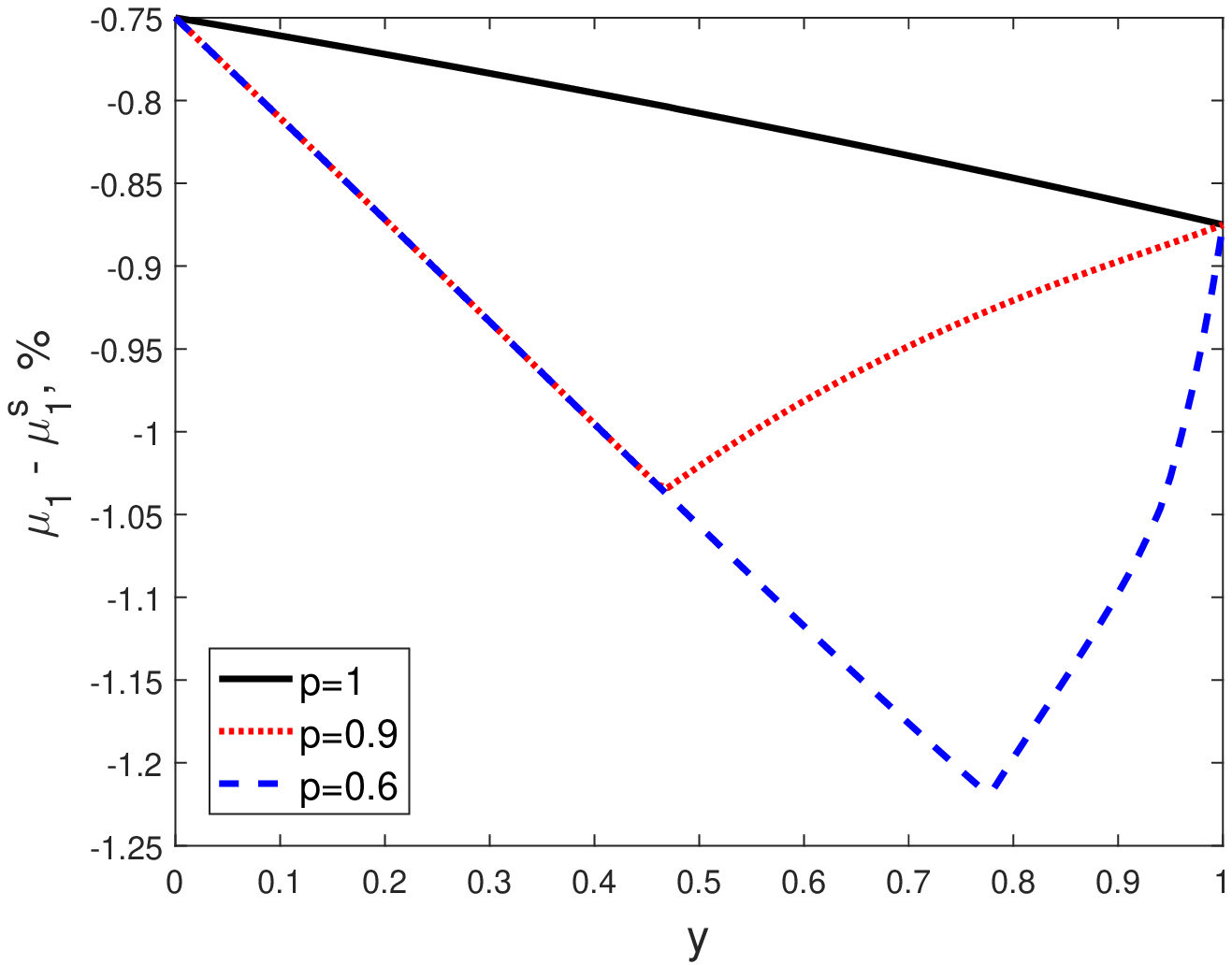}
}\\
\subfigure[Stock volatilities]{
\includegraphics[ width = 0.31\textwidth]{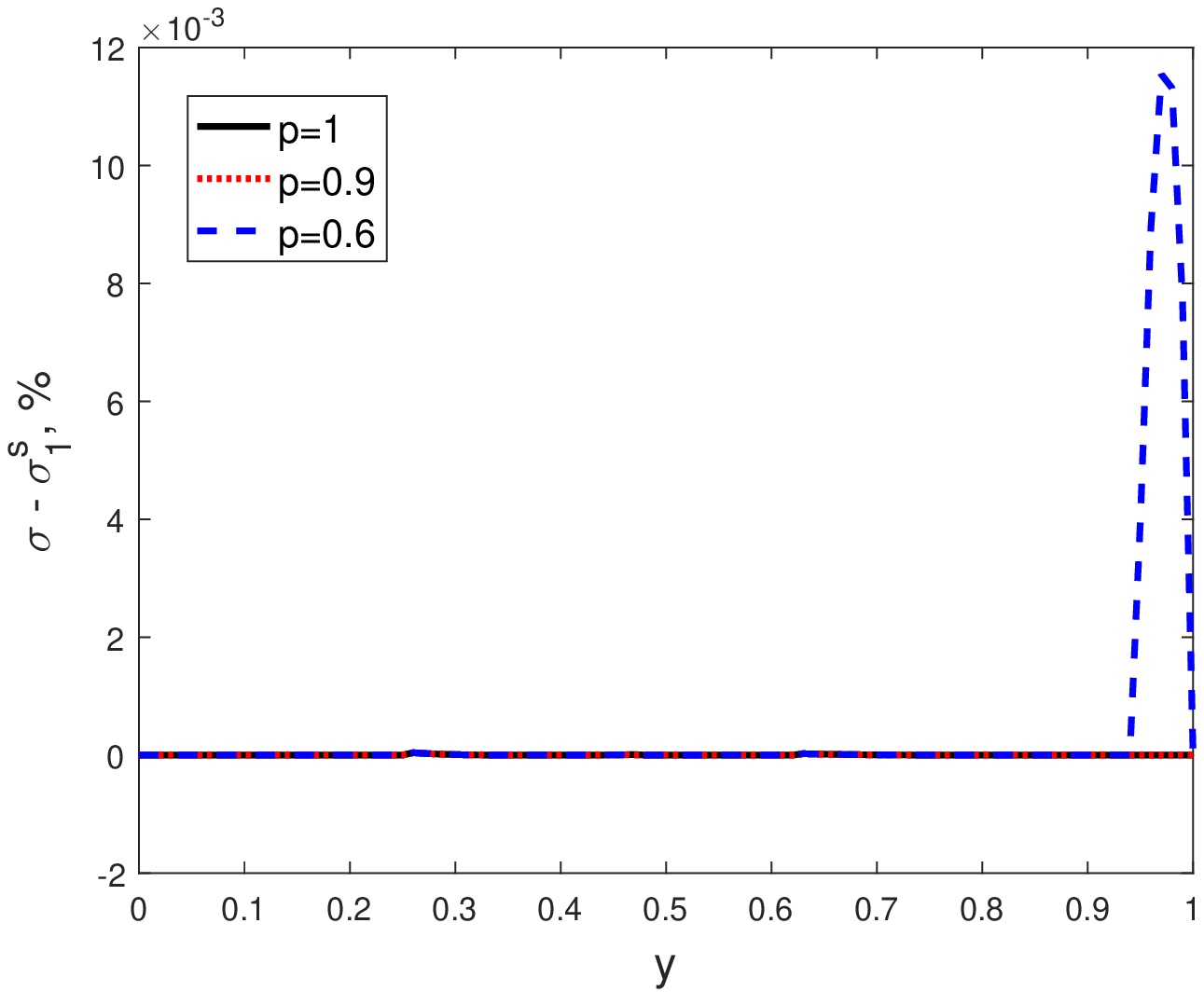}
}
\subfigure[Interest rates]{
\includegraphics[ width = 0.31\textwidth]{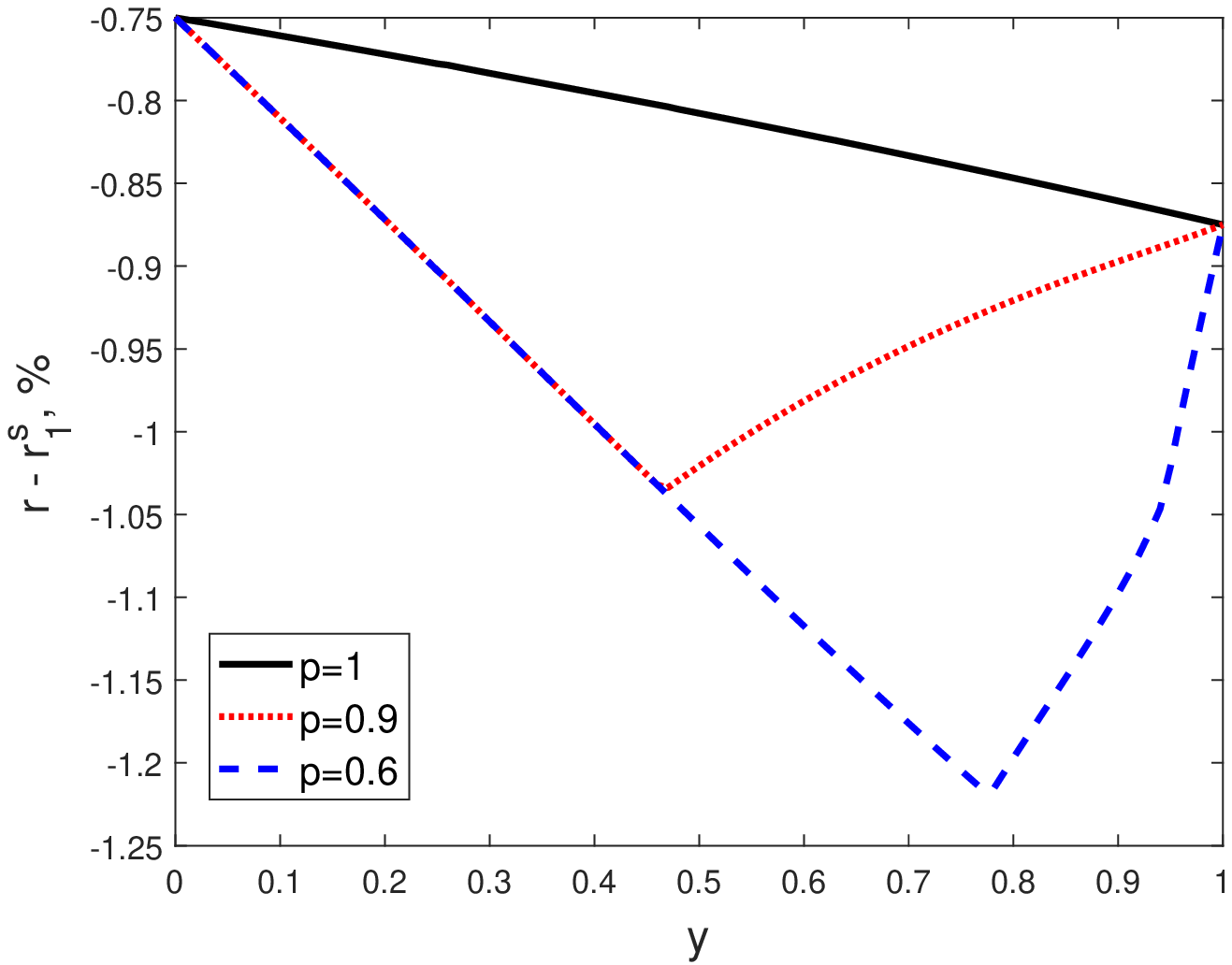}
}
\caption{Comparison of the equilibrium in firm $1$ between the cross-sectional economy and the economy with the single relative firm }
\label{fig_compare1}
\end{figure}

\begin{figure}[htb]
\centering
\subfigure[Stock holdings]{
\includegraphics[ width = 0.31\textwidth]{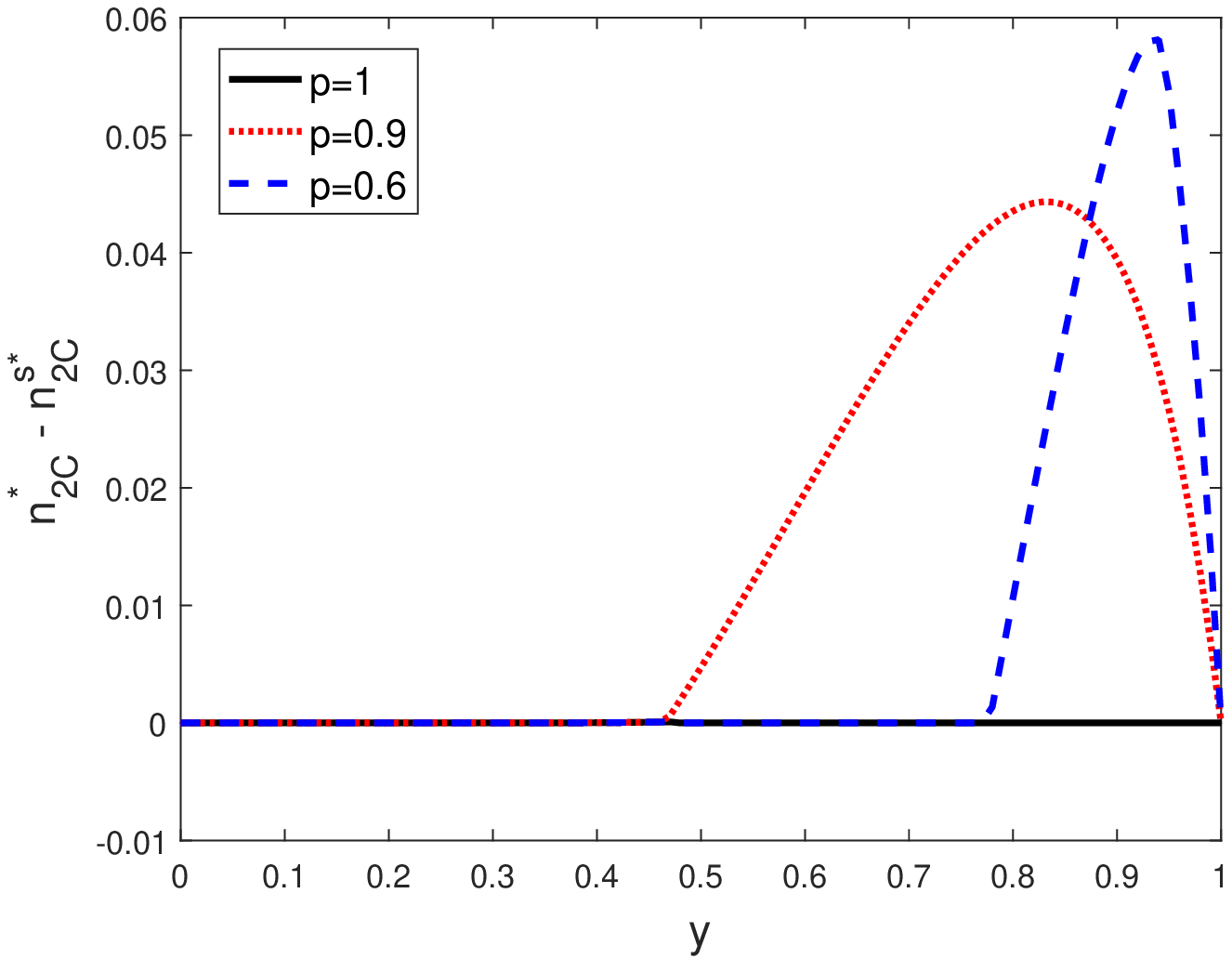}
}
\subfigure[Fractions of diverted output]{
\includegraphics[ width = 0.31\textwidth]{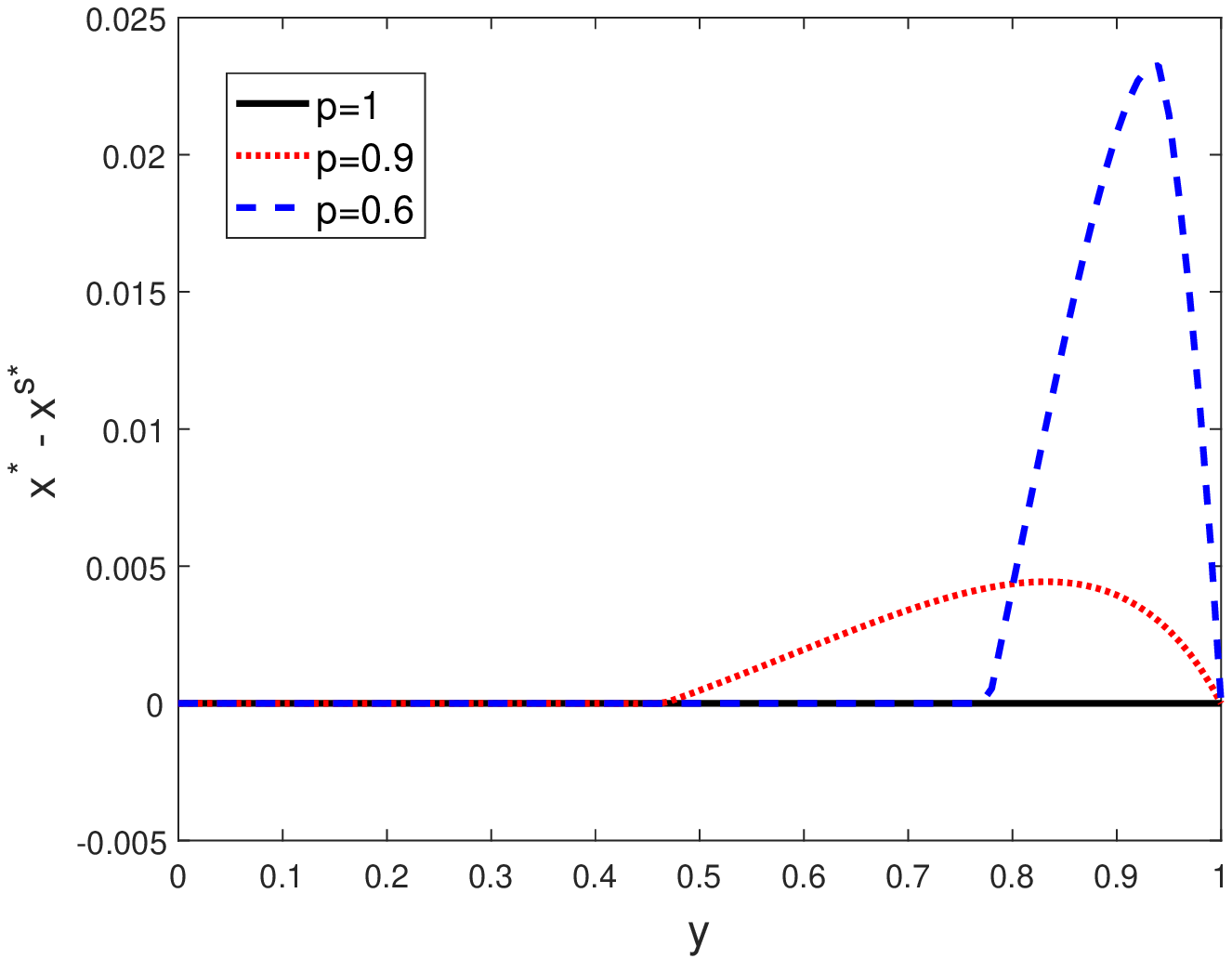}
}\\
\subfigure[Stock returns]{
\includegraphics[ width = 0.31\textwidth]{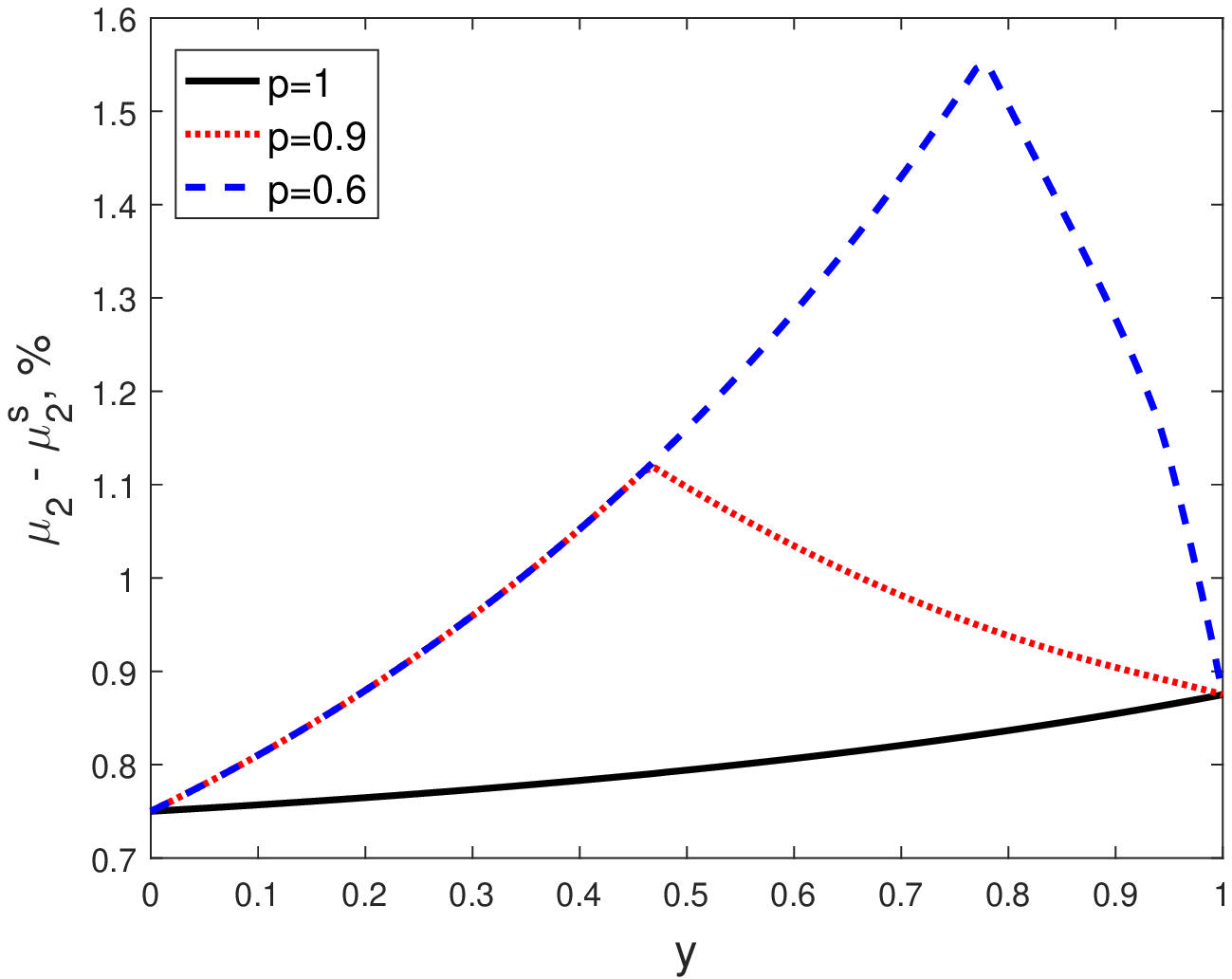}
}
\subfigure[Stock volatilities]{
\includegraphics[ width = 0.31\textwidth]{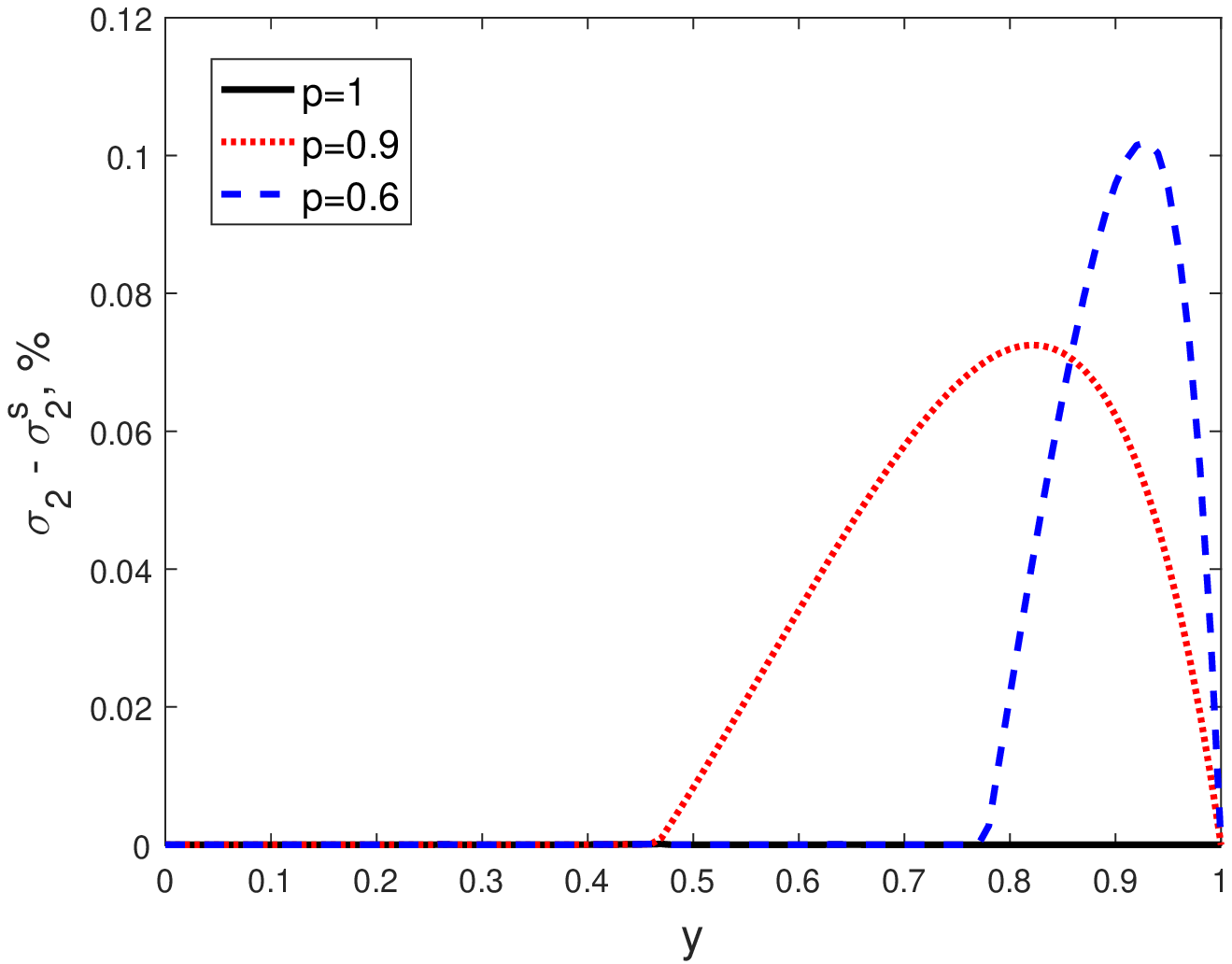}
}
\subfigure[Interest rates]{
\includegraphics[ width = 0.31\textwidth]{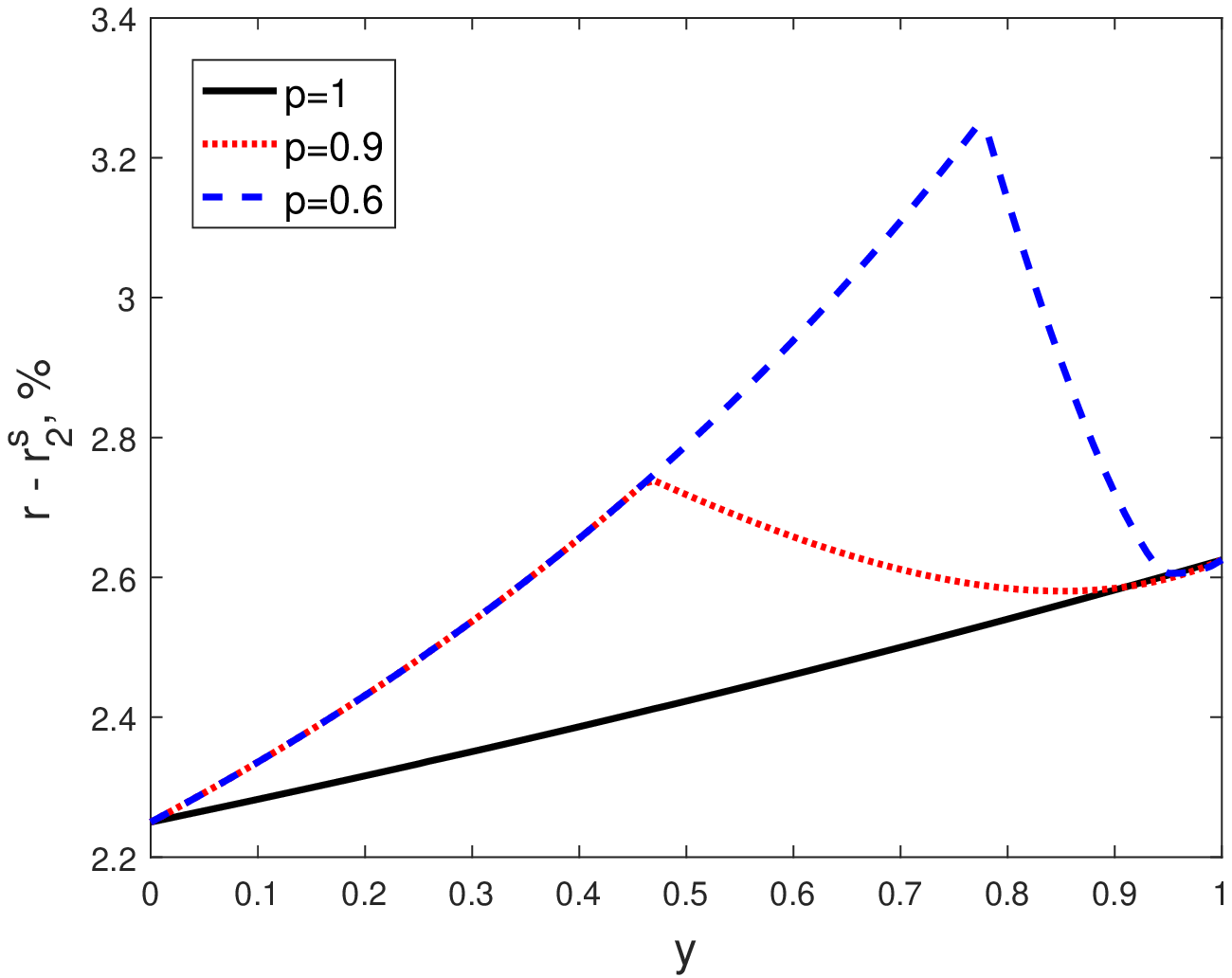}
}
\caption{ Comparison of the equilibrium in firm $2$ between the cross-sectional economy and the economy with the relative single firm }
\label{fig_compare2}
\end{figure}

Figure \ref{fig_compare1} demonstrates that compared to the single relative firm with perfect protection, the cross-section setting in equilibrium with poorer protection in firm $2$ decreases the controlling shareholder's stock holdings of firm $1$, leads to lower stock returns and higher stock volatilities of firm $1$, and reduces interest rates in the cross-sectional economy. Panel (a) shows that, in the cross-sectional economy with perfect protection for both firms the controlling shareholder holds the same shares of firm $1$ as the economy with the single relative firm, while in the cross-sectional economy with imperfect protection for firm $2$ he would hold less shares of firm $1$ than the economy with the single relative firm. The reasons are expressed as follows. For the economy with perfect protection for both firms, there is no difference between the controlling shareholder and the minority shareholder to determine optimal strategies, and hence the cross-section setting in equilibrium has few effects on the controlling shareholder's stock holdings of firm $1$. However, in the cross-sectional economy with imperfect protection for firm $2$, diverting output of firm $2$ drives the controlling shareholder to acquire more shares of firm $2$ and makes, through the cross-section setting in equilibrium, stocks of firm $1$ without diverting output less attractive to the controlling shareholder. Panel (c) implies that the cross-section setting in equilibrium has few effects on stock volatilities of firm $1$. This is because stock volatilities originate from uncertainty of the output in firm $1$ (or equivalently, the fundamental risk in the economy) and the output volatility is same in both economies. Panel (d) reveals that no matter whether investor protection of firm $2$ is perfect in the cross-sectional economy, interest rates are lower than those in the economy with the single relative firm. The reason is twofold. For one thing, due to the cross-section setting in equilibrium, there exists extra risk of covariance and this makes shareholders acquire more bonds and provide cheaper credit, which leads to lower interest rates even in the cross-sectional economy with perfect protection for firm $2$. For another, when there exists diverting output in firm $2$, the minority shareholder would invest more in bonds with cheaper credit, which, to a larger extent, lowers interest rates. As for stock returns, almost same stock volatilities should cause almost same excessive risk and then stock returns are mainly determined by interest rates. Hence, Panel (b) verifies that, just like interest rates, no matter whether investor protection of firm $2$ is perfect in the cross-sectional economy, stock returns are lower than those in the economy with the single relative firm.

Figure \ref{fig_compare2} demonstrates that compared to the single relative firm with the same level of investor protection, the cross-section setting in equilibrium with poorer protection in firm $2$ leads to higher stock holdings of firm $2$ for the controlling shareholder, higher fractions of diverted output in firm $2$, higher stock returns and volatilities of firm $2$, and higher interest rates in the cross-sectional economy. Panel (a) shows that, in the cross-sectional economy with perfect protection for both firms the controlling shareholder holds the same shares of firm $2$ as the economy with the single relative firm (which has perfect protection), while in the cross-sectional economy with imperfect protection for firm $2$ he would hold more shares of firm $2$ than the economy with the single relative firm (which has the same level of investor protection). The reasons are clear. Similar to discussion in Figure \ref{fig_compare1} for the economy with perfect protection for both firms, the cross-section setting in equilibrium has few effects on the controlling shareholder's stock holdings of firm $2$. However, in the cross-sectional economy with imperfect protection for firm $2$, the cross-section setting in equilibrium provides the minority shareholder with alternative choice---investing more in stocks of firm $1$ with perfect protection, and consequently, though with same level of investor protection, the controlling shareholder is able to hold more shares in firm $2$ than those in the economy with single relative firm.
Panel (b) depicts that imperfect protection in firm $2$ enables the controlling shareholder to divert more output of firm $2$ in the cross-sectional economy than that in the economy with the single relative firm. This is because, due to the cross-section setting, more shares of firm $2$ offer the controlling shareholder looser constraints of diverting output of firm $2$. Panel (d) implies that, compared with the single relative firm with the same level of investor protection, the cross-section setting in equilibrium has few effects on stock volatilities of firm $2$ in the cross-sectional economy with perfect protection for both firms and increases stock volatilities of firm $2$ in the cross-sectional economy with imperfect protection for firm $2$. The reason for the former is that with perfect protection for both firms, the same output volatility leads to the same stock volatility. And the reason for the later is that the fact that the controlling shareholder holds more shares of firm $2$ makes stocks more under-diversified and then stocks in firm $2$ is more volatile. Panel (e) reveals that no matter whether investor protection of firm $2$ is perfect in the cross-sectional economy, interests rates are higher than those in the economy with the single relative firm. The main reason can be expressed as follows. Though there is extra covariance risk, the cross-section setting in equilibrium provides shareholders with alternative choices---investing in stocks of firm $1$ with lower stock volatility and poorer protection in firm $2$ reinforces such choice for the minority shareholder, while shareholders in the economy with the single relative firm can only invest in stocks of firm $2$ with higher stock volatility. Hence less bonds are required in the cross-sectional economy than those in the economy with the single relative firm, and this leads to higher interest rates.
As for stock returns, by noticing that the difference of stock volatilities between two economies is small, similar to Figure \ref{fig_compare1}, stock returns are mainly determined by interest rates and then an approximate level of stock volatilities should cause an approximate level of excessive risk. Hence, panel (c) verifies that, just like interest rates, stock returns in the cross-sectional economy are higher than those in the economy with single relative firm.

From the point of competition in the case of imperfect protection $p<1$, though cross-section introduces fiercer competition into firms, it has opposite effects on firm stock returns.  On the one hand, competition between firms barely change the stock volatilities of firm 1 but it hurts development and growth of firm 1 leading to lower stock returns. This is similar to negative relation between competition and growth in product markets (see, for example, \cite{Aghion,Grossman}). On the other hand, stock returns of firm 2 are mainly determined by its stock volatilities instead of competition between firms, i.e., although fiercer competition between firm may decrease stock returns of firm 2, diverting output causes higher stock volatilities resulting in higher stock returns. The result in firm 2 is similar to ``Darwinian" view of competition (see, for example, \cite{Nickell,Porter}) pointing at a positive correlation between competition and growth within a firm.

Summarizing, compared to the economy with the single relative firm, the cross-section setting in equilibrium provides shareholders with more choice of optimal strategies and reinforces the importance of better investor protection for the minority shareholder in the cross-sectional economy. Therefore, the cross-sectional economy plays a different and important role in asset prices and investor protection.

\subsection{The effect of stock-value ratio $y_2$}
Figure \ref{fig_y2} depicts the effect of stock-value ratio $y_2$ on the controlling shareholder's optimal strategies and parameters of asset prices in equilibrium. We note that from our setting of numerical analysis, lower stock-value ratio $y_2$ equivalently means lower stock price and lower market capitalization of firm $1$, and higher stock price and higher market capitalization of firm $2$. Figure \ref{fig_y2} is further explained as follows: all parameters in panels (a)-(h) are obtained by substituting each $y$ and $p=0.6,\tau=1,y_1=0.5,\delta_D=10\%$, and parameters in Table \ref{table} into Proposition \ref{th4} in the cases of $y_2=0.9$, $y_2=0.5$ and $y_2=0.1$, respectively. Based on the kinks in panels (a) and (b), with the increase of $y$, regions of parameters change in the following forms: Region $10$ and then Region $1$  in the case of $y_2=0.9$; Region $10$, then Region $1$ and finally Region $2$ in the cases of $y_2=0.5$ and $y_2=0.1$.

\begin{figure}[htb]
\centering
\subfigure[Stock holdings in firm $1$]{
\includegraphics[ width = 0.31\textwidth]{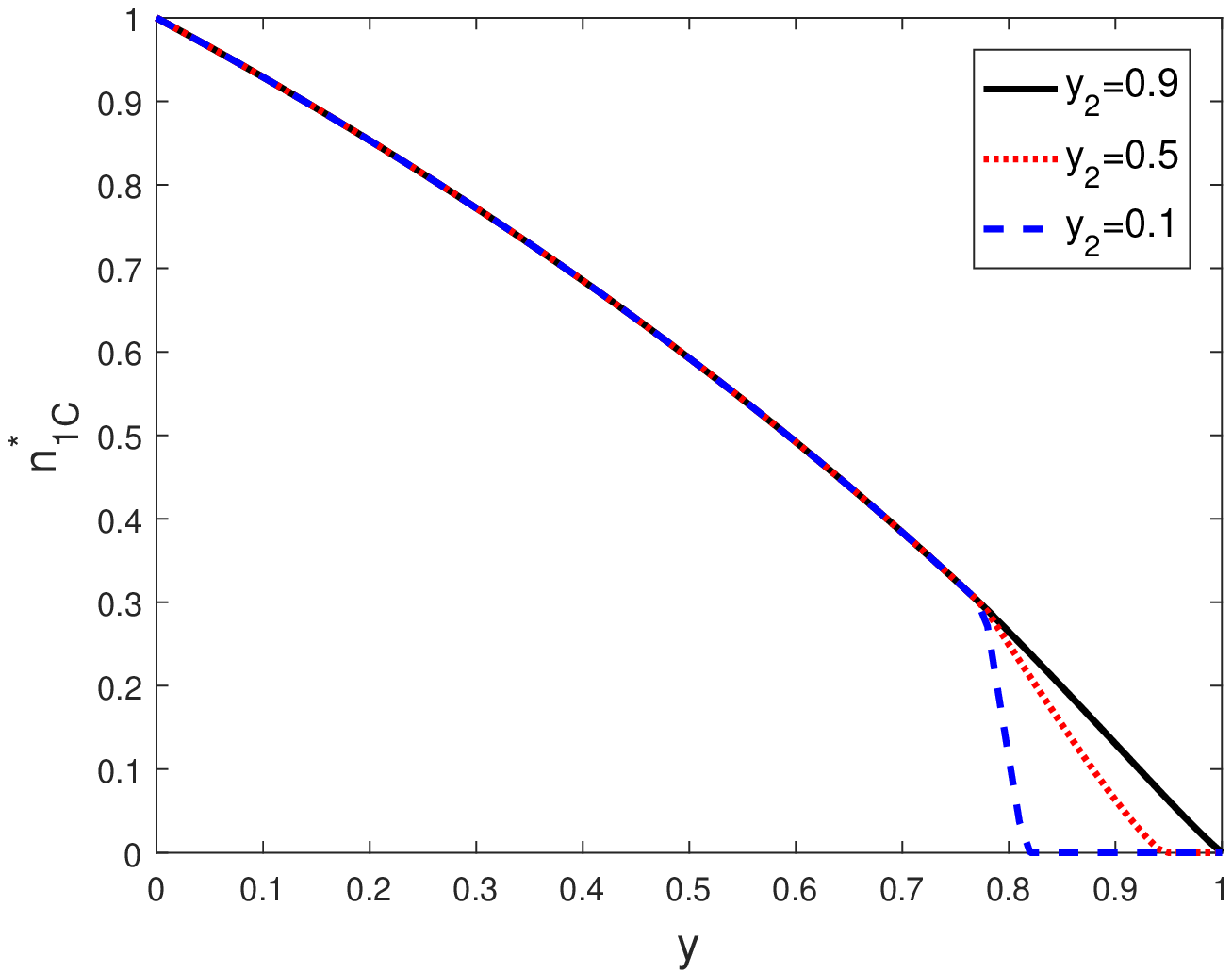}
}
\subfigure[Stock holdings in firm $2$]{
\includegraphics[ width = 0.31\textwidth]{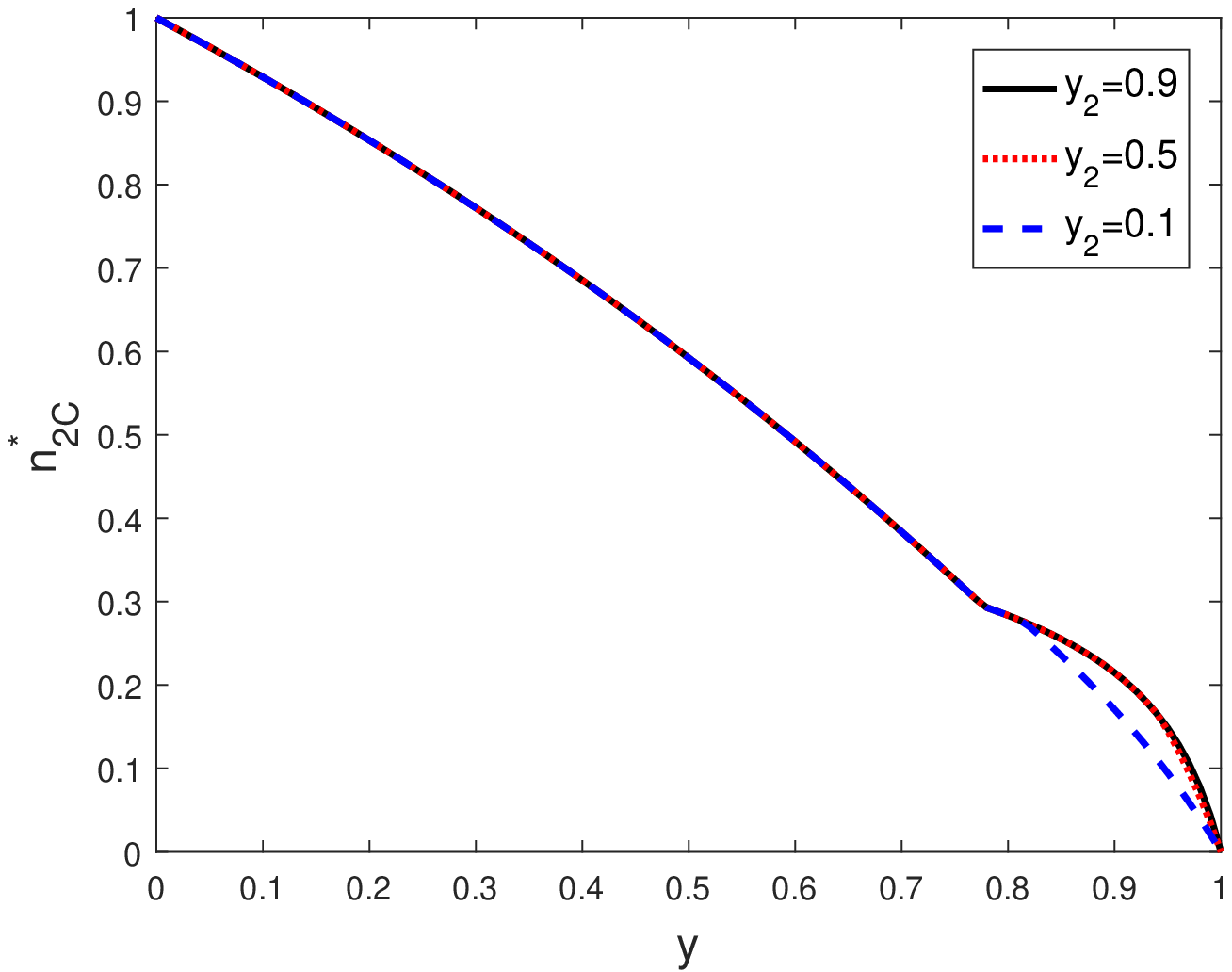}
}
\subfigure[Fractions of diverted output]{
\includegraphics[ width = 0.31\textwidth]{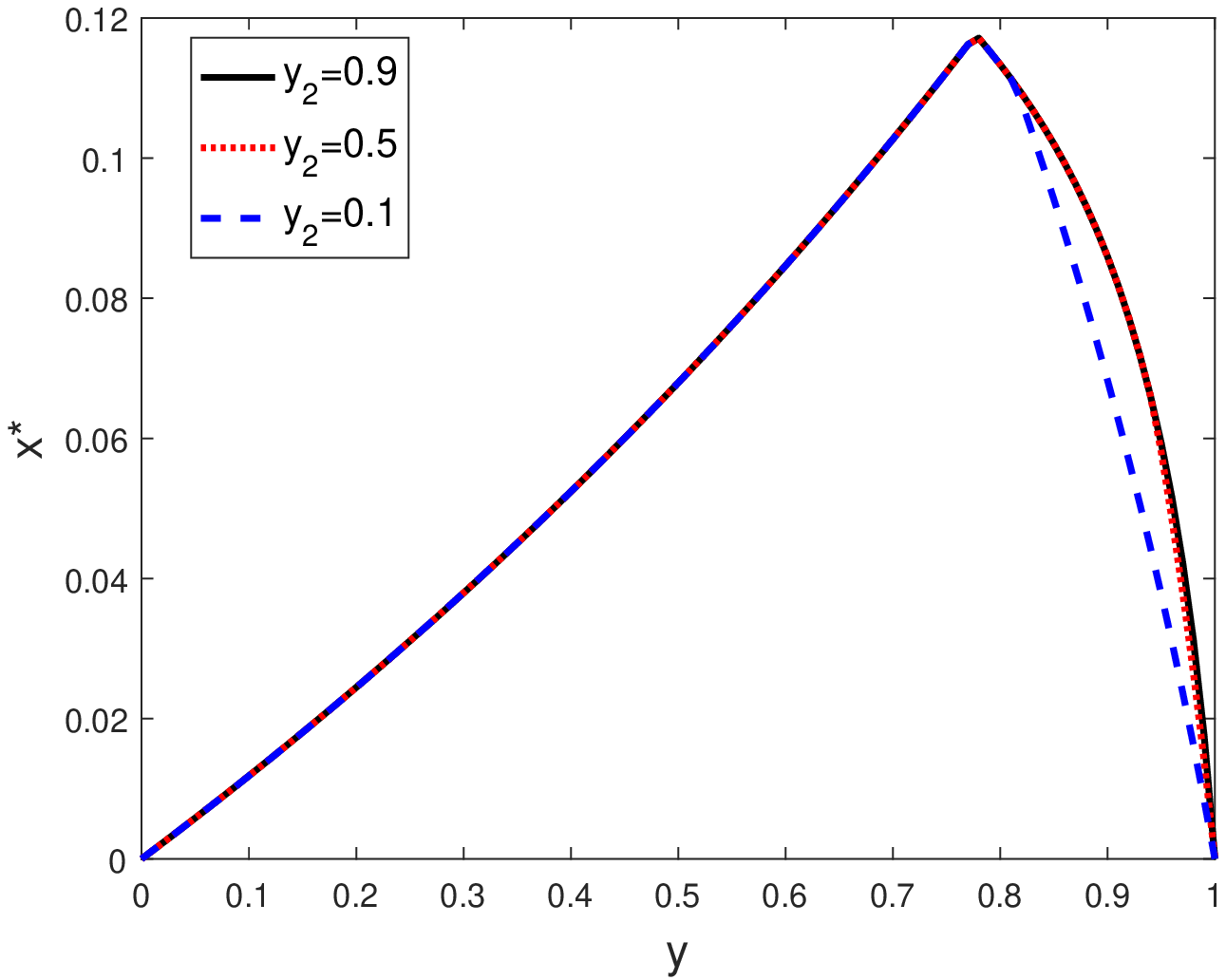}
}
\subfigure[Stock gross returns in firm $1$]{
\includegraphics[ width = 0.31\textwidth]{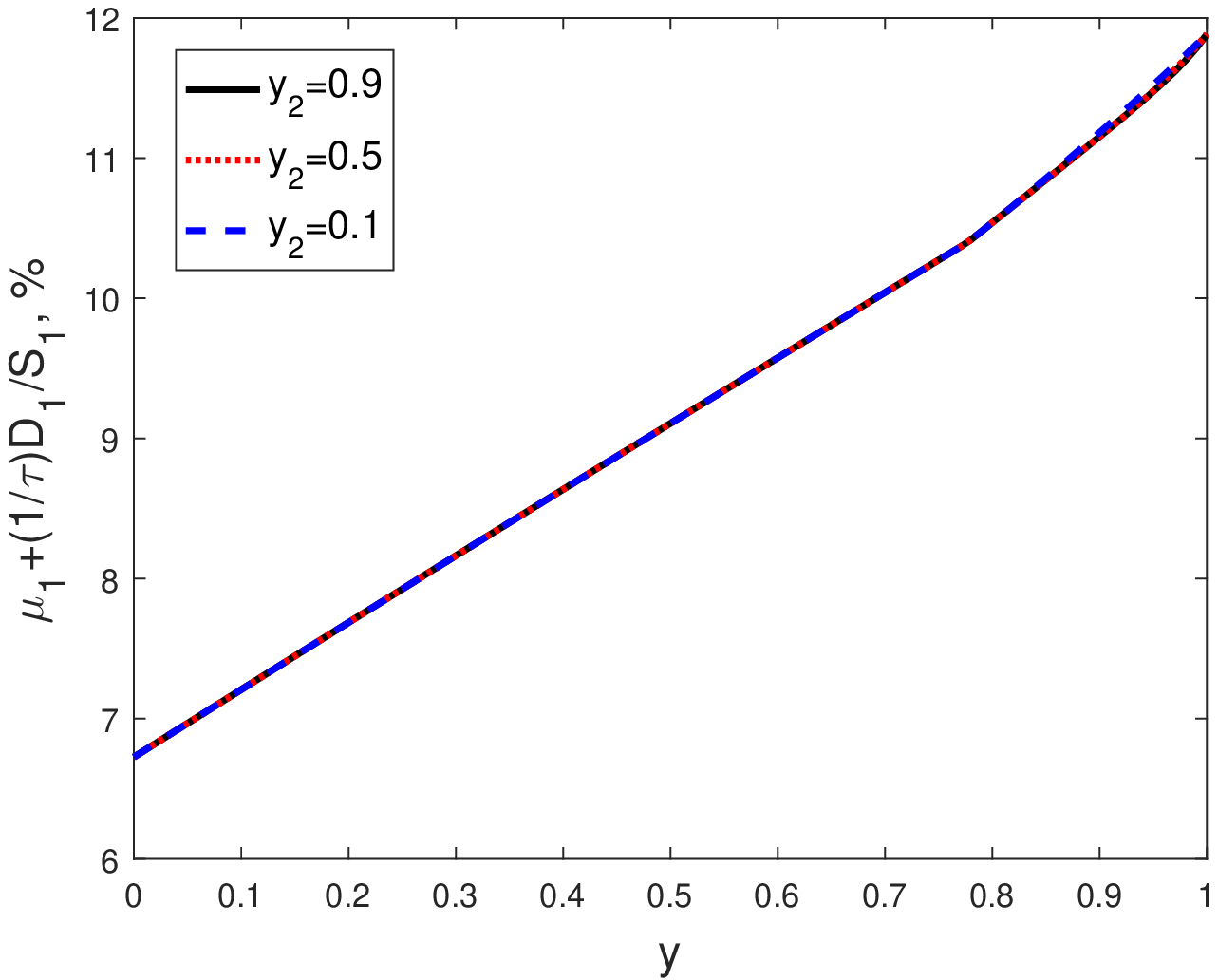}
}
\subfigure[Stock gross returns in firm $2$]{
\includegraphics[ width = 0.31\textwidth]{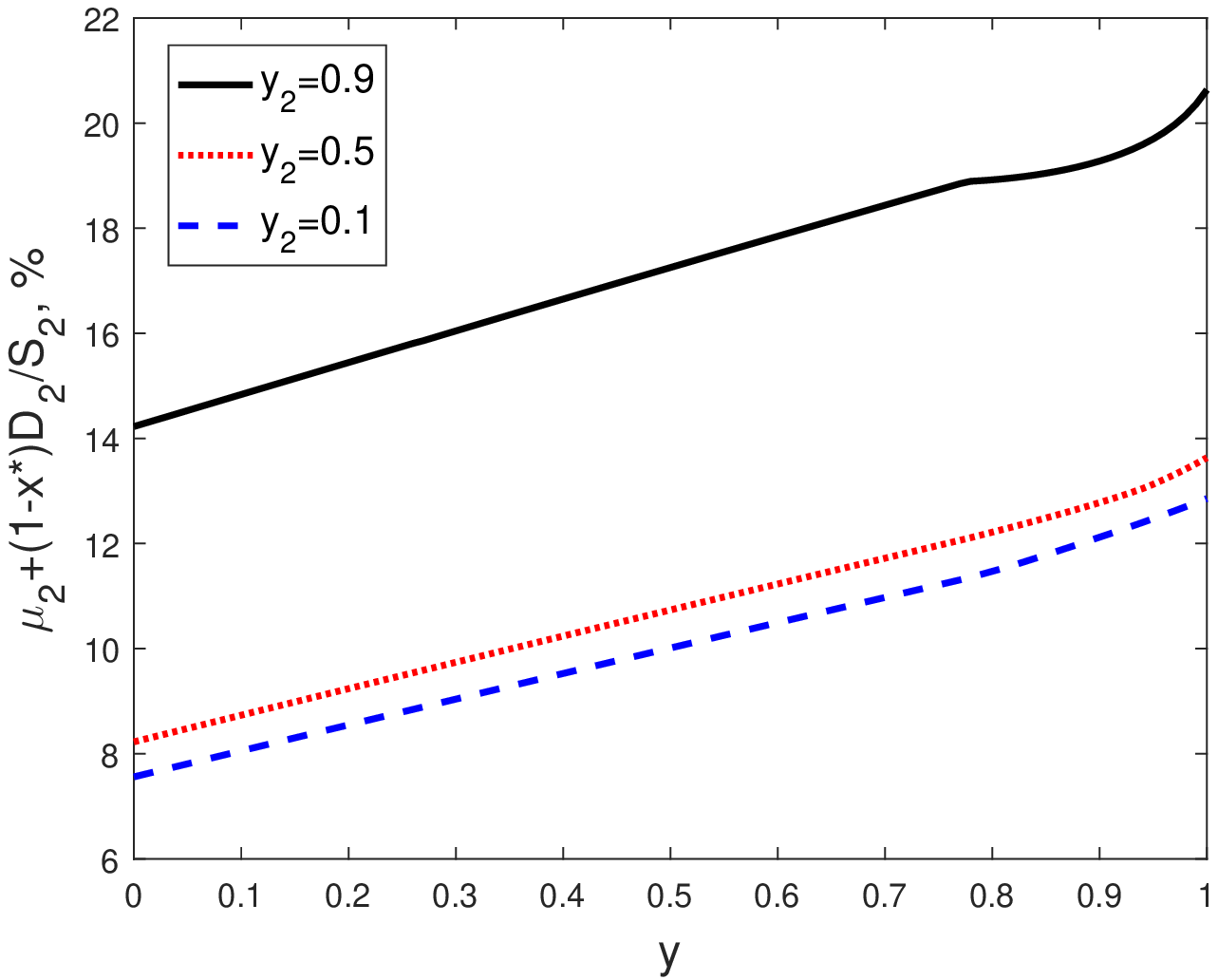}
}
\subfigure[Fundamental stock volatilities]{
\includegraphics[ width = 0.31\textwidth]{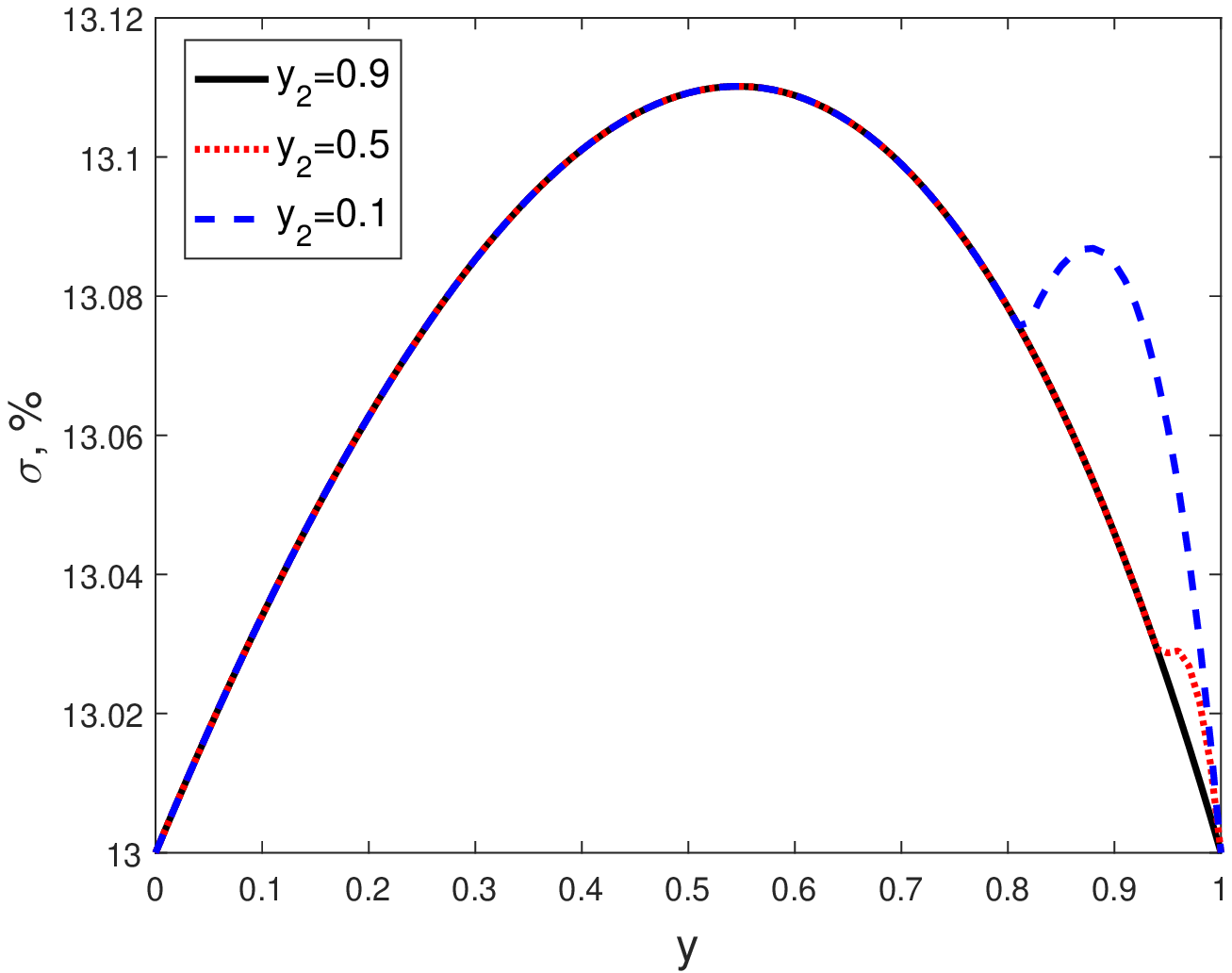}
}
\subfigure[Nonfundamental stock volatilities]{
\includegraphics[ width = 0.31\textwidth]{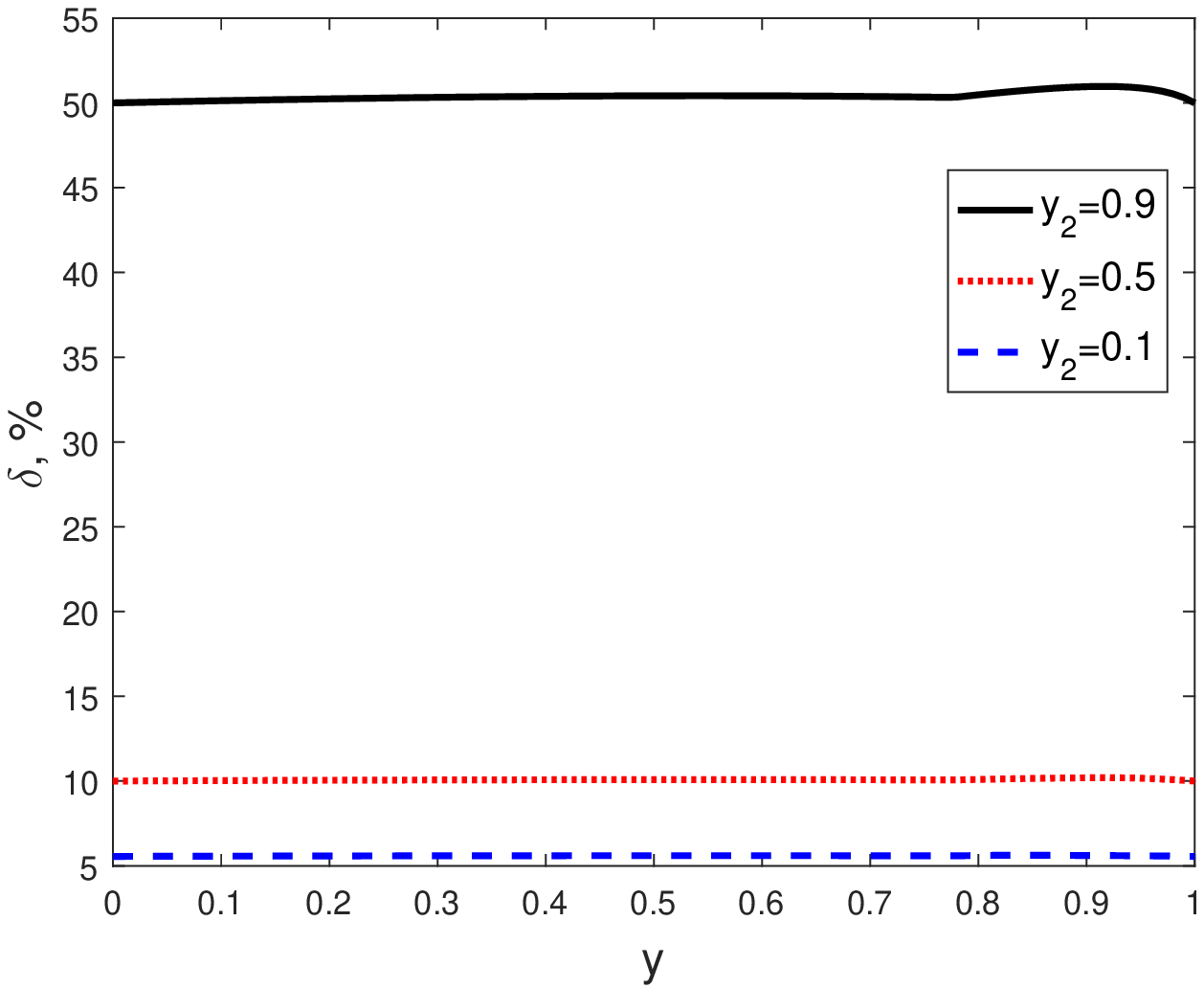}
}
\subfigure[Interest rates]{
\includegraphics[ width = 0.31\textwidth]{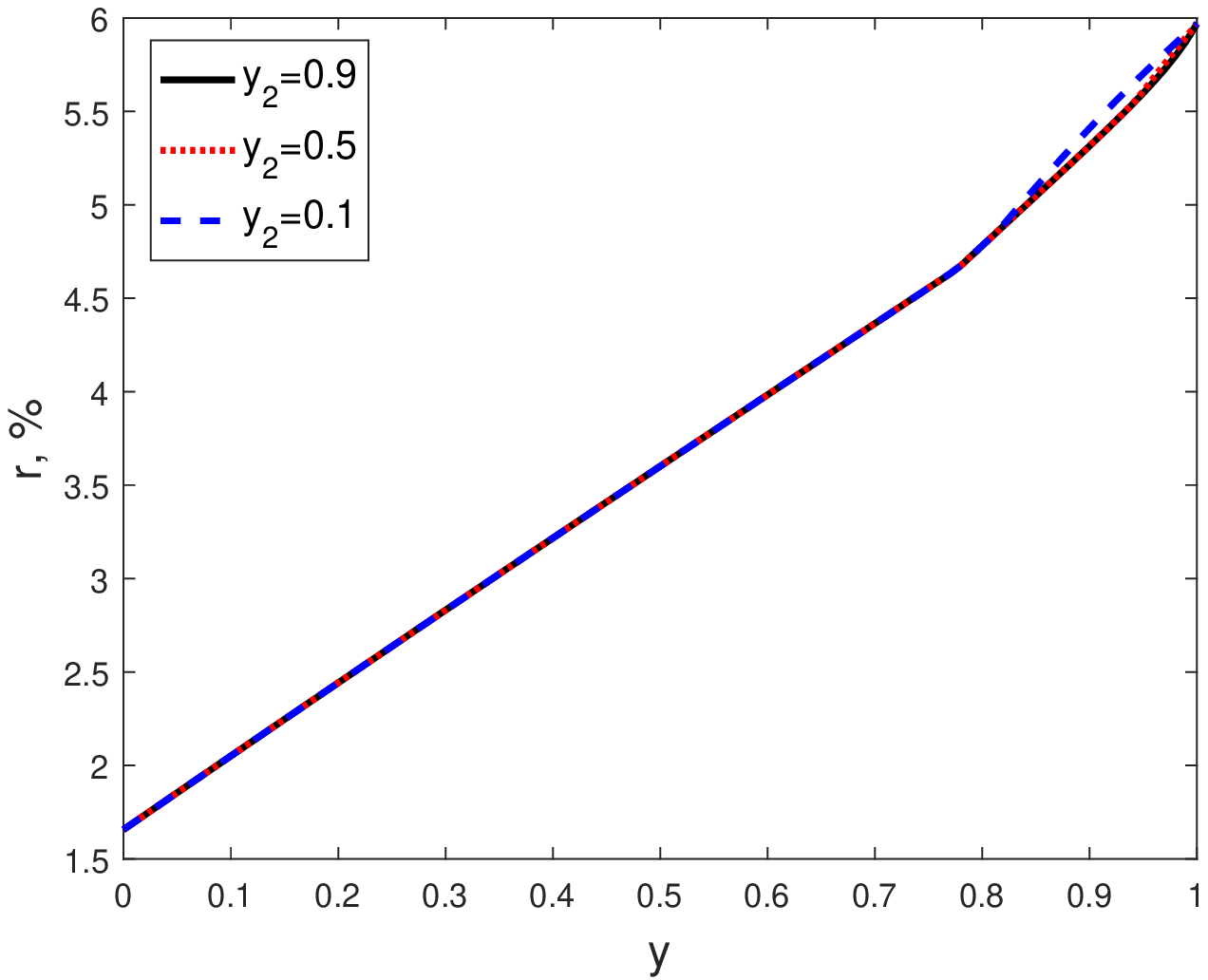}
}
\caption{The effect of the stock-value ratio in the cross-sectional economy}
\label{fig_y2}
\end{figure}

Panels (a), (d) and (f) demonstrate that in firm $1$, lower stock-value ratio $y_2$ leads to lower stock holdings of the controlling shareholder, slightly higher stock gross returns and higher stock volatilities. The main reason is twofold. On the one hand, when $y$ is low (Region 10), the investor protection constraint is not binding such that the controlling shareholder's consumption-portfolio choice of firm $1$ is determined by his risk aversion instead of stock value. And hence lower stock price and lower market capitalization of firm $1$ have few effects on the controlling shareholder's stock holding. The same stock holdings of the shareholders result in the same stock volatilities and then the same risk premia (stock gross returns) in firm $1$. On the other hand, when $y$ is low (Regions $1$ and $2$), the investor protection constraint is binding, and due to imperfect investor protection, the controlling shareholder would focus more on the stocks of firm $2$. Then based on risk aversion, lower stock price and lower market capitalization of firm $1$ make stocks of firm $1$ less attractive to the controlling shareholder, which leads to lower stock holdings of the controlling shareholder in firm $1$. As for stock volatility, since the minority shareholder would prefer the stocks of firm $1$ and hold much higher shares of firm $1$ (especially, in Region 2, the condition ($\mathcal{C}1$) holds and the controlling shareholder is extinct), higher covariance risk between two firms increases stock volatility of firm $2$. And finally, higher stock gross return compensates the shareholders for higher risk taking.

Panels (b), (e) and (g) (together with (f)) show that in firm $2$, lower stock-value ratio $y_2$ leads to lower stock holdings of the controlling shareholder, lower stock gross returns and lower stock volatilities. The reason for lower stock holdings can be explained as follows. For stock holdings with lower $y$ (Region 10), as is discussed in Figure \ref{fig_sx}, the shareholders keep almost the same shares as the case of perfect protection and this indicates that investor protection has few effects on the shareholders' stock holdings in firm $2$. Then similar to panel (a) of Figure \ref{fig_y2}, the same stock holdings of the shareholders are mainly determined by risk aversion instead of stock value. For stock holdings with higher $y$ (Regions 1 and 2), the investor protection constraint makes the controlling shareholder acquire more shares of firm 2, while higher stock price makes it harder for the controlling shareholder to hold more shares of firm 2 (because more wealth is needed to buy one unit of stock). Hence, higher stock price forces the controlling shareholder to keep less shares of firm 2. The reason for lower stock gross returns and lower stock volatilities is based on risk premium. Indeed, higher stock price decreases the risk premium relative to the output of firm 2 (i.e., the ratio $\frac{D_2}{S_2}$), which naturally decreases the stock gross return. And then lower stock gross return, in general, leads to lower stock volatility because of no free lunch with vanishing risk.

Panel (c) reveals that lower stock-value ratio $y_2$ would decrease the fractions of diverted output in firm $2$. Since the fractions of diverted output directly depend on the controlling shareholder's stock holdings of firm $2$, the reason is clear: with lower stock-value ratio $y_2$, the same stock holdings of the controlling shareholder in firm $2$ lead to the same fractions of diverted output ($x^*=\frac{1-n^*_{2C}}{k}$) in Region $10$ and lower stock holdings of the controlling shareholder in firm $2$ result in lower fractions of diverted output ($x^*=(1-p)n^*_{2C}$) in Regions $1$ and $2$.

Panel (h) implies that lower stock-value ratio $y_2$ would increase interest rates. In region $10$, the same diversity of stocks in firms $1$ and $2$ naturally leads to the same interest rates irrelative to stock price. With lower stock-value ratio in Regions $1$ and $2$, the minority shareholder holds more stocks in both firms $1$ and $2$, or equivalently, stocks in the economy are more attractive to him. Therefore, the minority shareholder would invest less in bonds and is not willing to provide cheaper credit, which cause higher interest rates in Regions $1$ and $2$.

When the stock-value ratio is small (i.e., $y_2=0.1$), stocks in firm 1 are called small stocks and stocks in firm 2 are called big stocks (see, \cite{Fama}). Conversely, when the stock-value ratio is big (i.e., $y_2=0.9$), stocks in firm 1 are called big stocks and stocks in firm 2 are called small stocks. Hence, the effect of stock-value ratio $y_2$ can be viewed as ``size effect". The size effect is of little difference in stock returns of firm 1, while it has a significant effect on stock returns of firm 2. The result of stock returns of firm 2 is consistent with empirical evidence in \cite{Banz} finding that smaller firms have higher risk adjusted returns, on average, than larger firms; it is also in agreement with \cite{Fama} showing that spreads in average momentum returns decrease from smaller to bigger stocks.

\section{An extension: investor protection in both firms}\label{section5}\noindent
\setcounter{equation}{0}
In this section, we extend the investor protection constraint in firm $2$ (i.e., (\ref{protectC}) in Section \ref{section3}) by assuming that the controlling shareholder can also divert output in firm $1$ with a similar investor protection constraint.

Providing that the economy settings (\ref{D1})-(\ref{W}) hold and that the controlling shareholder can divert the output of both firms $1$ and $2$ satisfying the investor protection constraints---(\ref{protectC}) in firm $2$ and
\begin{equation}\label{protectC1}
0\leq x'(t)\leq (1-p')\frac{n_{1C}(t)}{\tau}
\end{equation}
in firm $1$, then shareholders' wealth for $i=C,M$ can be expressed as follows:
\begin{align}
dX_i(t)=&\bigg[X_i(t)r(t)+l_{1i}\widehat{D}_1(t)+l_{2i}\widehat{D}_2(t)-c_i(t)
+n_{1i}(t)\left(S_1(t)(\mu_{1}(t)-r(t))+(1-x'(t))\frac{D_1(t)}{\tau}\right)\nonumber\\
&+n_{2i}(t)\left(S_2(t)(\mu_{2}(t)-r(t))+(1-x(t))D_2(t)\right)+\mathbf{1}_{\{i=M\}}(f'(x'(t),D_1(t))+f(x(t),D_2(t)))\nonumber\\
&+\mathbf{1}_{\{i=C\}}(x'(t)D_1(t)-f'(x'(t),D_1(t))+x(t)D_2(t)-f(x(t),D_2(t)))\bigg]dt\nonumber\\
&+(n_{1i}(t)S_1(t)\sigma(t)+n_{2i}(t)S_2(t)\sigma(t))dW(t)
+n_{2i}(t)S_2(t)\delta(t)dB(t),\label{Xx}
\end{align}
where $f'(x',D_1)=\frac{k'(x')^2D_1}{2}$ is the pecuniary cost of firm $1$ from diverting output with a constant $k'$ and the parameter $k'$ captures the magnitude of the cost of firm $1$, $x'$ is the fraction of diverted output in firm 1 with the parameter $p'\in[0,1]$ (interpreted as the protection of the minority shareholder in firm $1$), and other parameters are the same as those in (\ref{X}). Here, the pecuniary function $f'(x',D_1)=\frac{k'(x')^2D_1}{2}$ is equivalent to the form $\frac{k'(x')^2}{2}\frac{D_1}{\tau}$, and we adopt the former to keep the similar expression as that in firm 2. It follows that the optimization problem of the controlling shareholder is (\ref{problemC}) with additional constraint (\ref{protectC1}) and the optimization problem of the minority shareholder remains to be (\ref{problemM}), i.e.,
\begin{align}
\max\limits_{c_{C}(t),\mathbf{x}(t),\mathbf{n}_{C}(t)}V'_C(c_{C}(t),X_{C}(t),X_{C}(t+dt)),\label{proC}\\
\max\limits_{c_{M}(t),\mathbf{n}_{M}(t)}V'_M(c_{M}(t),X_{M}(t),X_{M}(t+dt)),\label{proM}
\end{align}
where $\mathbf{x}=(x',x)^\top$ and $V'_C$ and $V'_M$ are still given by (\ref{wealth}), subject to (\ref{Xx}), the investor protection constraints (\ref{protectC1}) and (\ref{protectC}), and share constraints $\mathbf{n}_{C}(t)\in[0,\tau]\times[0,1], \mathbf{n}_{M}(t)\in \mathbb{R}\times\mathbb{R}$.

It is clear that when the investor protect is perfect in firm $1$, i.e., $p'=1$, above economy is the same as that in Section \ref{section2} and related results are derived in Propositions \ref{th3} and \ref{th4}. Hence, we just consider the case of imperfect protection in both firms $1$ and $2$, i.e., $0\leq p'<1,0\leq p<1$. With the similar method in Section \ref{section3}, consumption problems for shareholders remain to be (\ref{problemCc}) and (\ref{problemMc}) and then in order to solve problems (\ref{proC}) and (\ref{proM}), it suffices to study the following optimization problems with $0\leq p'<1,0\leq p<1$:
for the controlling shareholder,
\begin{equation}\label{pC1}
\left\{
\begin{aligned}
\min \;&-J'_C(\mathbf{x},\mathbf{n}_{C})\\
s.t.\;&h_1(\mathbf{x},\mathbf{n}_{C})\equiv -x'\leq 0;;\\
& h_2(\mathbf{x},\mathbf{n}_{C})\equiv x'-(1-p')\frac{n_{1C}}{\tau}\leq 0;\\
&h_3(\mathbf{x},\mathbf{n}_{C})\equiv -x\leq 0;\\
& h_4(\mathbf{x},\mathbf{n}_{C})\equiv x-(1-p)n_{2C}\leq 0;\\
&h_5(\mathbf{x},\mathbf{n}_{C})\equiv n_{1C}-\tau\leq 0;\\
&h_6(\mathbf{x},\mathbf{n}_{C})\equiv n_{2C}- 1\leq 0,
\end{aligned}
\right.
\end{equation}
and for the minority shareholder $M$,
\begin{equation}\label{pM1}
\min\limits_{\mathbf{n}_{M}\in \mathbb{R}^2} -J'_M(\mathbf{n}_{M}),
\end{equation}
where functions $J'_C$ and $J'_M$ are given by
\begin{align*}
J'_C(\mathbf{x},\mathbf{n}_{C})=&\mathbf{n}_C^\top\theta_C-\frac{1}{2}\mathbf{n}_C^\top\xi_C\mathbf{n}_C
+{n_{1C}}(1-x')\alpha_{1C}+\tau\left(x'-\frac{k'(x')^2}{2}\right)\alpha_{1C}\\
&+n_{2C}(1-x)\alpha_{2C}+\left(x-\frac{kx^2}{2}\right)\alpha_{2C};\\
J'_M(\mathbf{n}_{M})=&\mathbf{n}_M^\top\theta_M-\frac{1}{2}\mathbf{n}_M^\top\xi_M\mathbf{n}_M
+{n_{1M}}(1-x')\alpha_{1M}+n_{2M}(1-x)\alpha_{2M}.
\end{align*}

Solving optimization problems (\ref{problemCc}), (\ref{problemMc}), (\ref{pC1}) and (\ref{pM1}) under the partial equilibrium setting, the shareholders' optimal strategies are obtained as the following proposition.

\begin{proposition}\label{th9}{\it
Provided that there exists investor protection in both firms with $0\leq p'<1, 0\leq p<1$ in the cross-sectional economy, then the optimal consumptions $c_{i}^*$, the fractions of diverted output $(x')^*, x^*$ and the optimal stock holdings $\mathbf{n}_{i}^*$ for $i=C,M$ are given by
\begin{align*}
c_{i}^*=&\rho^{\frac{1}{\gamma_i}}X_{i},\quad i=C,M;\\
(x')^*=&(x')^*(n_{1C}^*)=\min\left\{\frac{1-n_{1C}^*/\tau}{k'},(1-p')\frac{n_{1C}^*}{\tau}\right\};\\
x^*=&x^*(n_{2C}^*)=\min\left\{\frac{1-n_{2C}^*}{k},(1-p)n_{2C}^*\right\};\\
\mathbf{n}_{C}^*=&\eta_{v}^*=(\eta_{1,v}^*,\eta_{2,v}^*)^\top,\quad (\mathrm{Region\;}v) \quad\text{if}\; (J_C')^*=J'_C(x^*,\eta_{v}^*), v=1,\cdots,16;\\
\mathbf{n}_M^*=&(\xi_M)^{-1}\cdot\left[\theta_M+(1-(x')^*){\alpha_{1M}}\mathbf{e}_1+(1-x^*)\alpha_{2M}\mathbf{e}_2\right],
\end{align*}
where Regions $1$-$16$ are given by
\begin{description}
  \item [] {$\mathrm{Region\; 1}:\; \eta_1^*=\left(\xi_C+(1-p')(2+k'(1-p'))\frac{\alpha_{1C}}{\tau}\mathbf{e}_{11}
+(1-p)(2+k(1-p))\alpha_{2C}\mathbf{e}_{22}\right)^{-1}$\\
\text{\qquad\qquad}\; $\cdot\left[\theta_C+
(2-p'){\alpha_{1C}}\mathbf{e}_1+\left(2-p\right)\alpha_{2C}\mathbf{e}_{2}\right],$\text{\quad if}\; $0\leq\eta_{1,1}^*< \frac{\tau}{1+k'(1-p')},\; 0\leq\eta_{2,1}^*<\frac{1}{1+(1-p)k}$;}
  \item [] {$\mathrm{Region\; 2}:\;
 \eta_2^*=\left(0,\frac{\theta_{2C}+(2-p)\alpha_{2C}}{\xi_{2C}+(1-p)[2+(1-p)k]\alpha_{2C}}
\right)^\top$,\text{\quad if}\; $0\leq\eta_{2,2}^*\leq 1,\;\frac{\theta_{1C}+(2-p'){\alpha_{1C}}}{\xi_{0C}}<\eta_{2,2}^*<\frac{1}{1+(1-p)k}$;}
  \item [] {$\mathrm{Region\; 3}:\;
 \eta_3^*=\left(\tau,\frac{\theta_{2C}+(2-p)\alpha_{2C}-\tau\xi_{0C}}{\xi_{2C}+(1-p)[2+(1-p)k]\alpha_{2C}}
\right)^\top$,\text{\quad if}\;
$0\leq\eta_{2,3}^*< \frac{1}{1+(1-p)k},\;\eta_{2,3}^*<\frac{\theta_{1C}+{\alpha_{1C}}-\tau\xi_{1C} }{\xi_{0C}};$}
  \item [] {$\mathrm{Region\; 4}:\;
  \eta_4^*=\left(\frac{\theta_{1C}+(2-p'){\alpha_{1C}}}{\xi_{1C}+(1-p')(2+k'(1-p'))\frac{\alpha_{1C}}{\tau}},
0\right)^\top$,\text{\quad if}\;
$0\leq\eta_{1,4}^*,\;\frac{\theta_{2C}+(2-p){\alpha_{2C}}}{\xi_{0C}}<\eta_{1,4}^*<\frac{\tau}{1+k'(1-p')}$;}
  \item [] {$\mathrm{Region\; 5}:\;
  \eta_5^*=\left(\frac{\theta_{1C}+(2-p'){\alpha_{1C}}-\xi_{0C}}
{\xi_{1C}+(1-p')(2+k'(1-p'))\frac{\alpha_{1C}}{\tau}},1\right)^\top$,\text{\quad if}\;
$\eta_{1,5}^*<\frac{\theta_{2C}+\alpha_{2C}-\xi_{2C}}{\xi_{0C}},\;0\leq\eta_{1,5}^*< \frac{\tau}{1+k'(1-p')}$;}
  \item [] {$\mathrm{Region\; 6}:\;
  \eta_{6}^*=\left(0,0\right)^\top$,\text{\quad  if}\; $\theta_{1C}+(2-p')\alpha_{1C}<0,\;\theta_{2C}+(2-p)\alpha_{2C}<0$;}
  \item [] {$\mathrm{Region\; 7}:\;
  \eta_{7}^*=\left(0,1\right)^\top$,\text{\quad if}\;
  $\theta_{1C}+(2-p'){\alpha_{1C}}<\xi_{0C},\;\xi_{2C}<\theta_{2C}+\alpha_{2C}$;}
  \item [] {$\mathrm{Region\; 8}:\;
  \eta_{8}^*=\left(\tau,0\right)^\top$,\text{\quad if}\;
  $\tau\xi_{1C}<\theta_{1C}+{\alpha_{1C}},\;\theta_{2C}+(2-p)\alpha_{2C}<\tau\xi_{0C}$;}
  \item [] {$\mathrm{Region\; 9}:\;
  \eta_{9}^*=\left(\tau,1\right)^\top$,\text{\quad if}\;
  $\xi_{0C}+\tau\xi_{1C}<\theta_{1C}+{\alpha_{1C}},\;\tau\xi_{0C}+\xi_{2C}<\alpha_{2C}+\theta_{2C}$;}
  \item [] {$\mathrm{Region\; 10}:\;
  \eta_{10}^*=\left(\xi_C+(1-p')(2+k'(1-p'))\frac{\alpha_{1C}}{\tau}\mathbf{e}_{11}-\frac{\alpha_{2C}}{k}
\mathbf{e}_{22}\right)^{-1}\cdot\big[\theta_C
+(2-p'){\alpha_{1C}}\mathbf{e}_1$\;\\
\text{\qquad\qquad}\;$+\left(1-\frac{1}{k}\right)\alpha_{2C}\mathbf{e}_{2}\big]$,\text{\quad if}\;
$0\leq\eta_{1,10}^*< \frac{\tau}{1+k'(1-p')},\;\frac{1}{1+(1-p)k}\leq \eta_{2,10}^*\leq 1$,\\
\text{\qquad\qquad}$\det\left(\xi_C+(1-p')(2+k'(1-p'))\frac{\alpha_{1C}}{\tau}\mathbf{e}_{11}-\frac{\alpha_{2C}}{k}
\mathbf{e}_{22}\right)\neq 0$;\\
  \text{\quad or}\quad$\eta_{10}^*=(\eta_{1,10}^*,\eta_{2,10}^*)^\top$,\text{\quad if}\;$0\leq\eta_{1,10}^*< \frac{\tau}{1+k'(1-p')},\;\frac{1}{1+(1-p)k}\leq \eta_{2,10}^*\leq 1$,\\
\text{\qquad\qquad}\;$\xi_{0C}\eta_{1,10}^*+\left(\xi_{2C}-\frac{\alpha_{2C}}{k}\right)\eta_{2,10}^*
=\theta_{2C}+\left(1-\frac{1}{k}\right){\alpha_{2C}},\;
\left(\xi_{2C}-\frac{\alpha_{2C}}{k}\right)(\theta_{1C}+(2-p'){\alpha_{1C}})=$\\
\text{\qquad\qquad}\;$\xi_{0C}\left(\theta_{2C}+\left(1-\frac{1}{k}\right)\alpha_{2C}\right),\;\det\left(\xi_C+(1-p')(2+k'(1-p'))
\frac{\alpha_{1C}}{\tau}\mathbf{e}_{11}-\frac{\alpha_{2C}}{k}
\mathbf{e}_{22}\right)=0$;}
  \item [] {$\mathrm{Region\; 11}:\;
  \eta_{11}^*=\left(0,\frac{\theta_{2C}+(1-\frac{1}{k})\alpha_{2C}}{\xi_{2C}-
\frac{\alpha_{2C}}{k}}\right)^\top
$,\text{\quad if}\;
$\frac{1}{1+(1-p)k}\leq\eta_{2,11}^*\leq 1,\;\eta_{2,11}^*>\frac{\theta_{1C}+(2-p'){\alpha_{1C}}}{\xi_{0C}},\;\xi_{2C}-
\frac{\alpha_{2C}}{k}\neq 0$;\\
 \text{\quad or}\quad$\eta_{11}^*=(0,\eta_{2,11}^*)^\top$,\text{\quad if}\;
$\frac{1}{1+(1-p)k}\leq\eta_{2,11}^*\leq 1,\;\eta_{2,11}^*>\frac{\theta_{1C}+(2-p'){\alpha_{1C}}}{\xi_{0C}},\;\xi_{2C}-
\frac{\alpha_{2C}}{k}=0$,\\
\text{\qquad\qquad}\;$\theta_{2C}+(1-\frac{1}{k})\alpha_{2C}=0$;}
  \item [] {$\mathrm{Region\; 12}:\;
  \eta_{12}^*=\left(\tau,\frac{\theta_{2C}+(1-\frac{1}{k})\alpha_{2C}-\tau\xi_{0C}}
{\xi_{2C}-\frac{\alpha_{2C}}{k}}\right)^\top$,\text{\quad if}\;$\frac{1}{1+(1-p)k}\leq\eta_{2,12}^*\leq 1,\;\eta_{2,12}^*<\frac{\theta_{1C}+{\alpha_{1C}}-\tau\xi_{1C}}{\xi_{0C}}$,\\
\text{\qquad\qquad}\;$\xi_{2C}-
\frac{\alpha_{2C}}{k}\neq 0$;\\
   \text{\quad or}\quad$\eta_{12}^*=(\tau,\eta_{2,12}^*)^\top$,\text{\quad if}\;$\frac{1}{1+(1-p)k}\leq\eta_{2,12}^*\leq 1,\;\eta_{2,12}^*<\frac{\theta_{1C}+{\alpha_{1C}}-\tau\xi_{1C}}{\xi_{0C}},\xi_{2C}-
\frac{\alpha_{2C}}{k}= 0$,\\
\text{\qquad\qquad}\;$\theta_{2C}+(1-\frac{1}{k})\alpha_{2C}-\tau\xi_{0C}=0$;}
  \item [] {$\mathrm{Region\; 13}:\;
  \eta_{13}^*=\left(\xi_C-\frac{\alpha_{1C}}{k'\tau}\mathbf{e}_{11}-\frac{\alpha_{2C}}{k}
\mathbf{e}_{22}\right)^{-1}\cdot\big[\theta_C
+\left(1-\frac{1}{k'}\right)\alpha_{1C}\mathbf{e}_1+\left(1-\frac{1}{k}\right)\alpha_{2C}\mathbf{e}_{2}\big]$,\;\\
\text{\qquad\qquad}\text{ if}\;
$\frac{\tau}{1+k'(1-p')}\leq\eta_{1,13}^*\leq \tau ,\;\frac{1}{1+(1-p)k}\leq \eta_{2,13}^*\leq 1,\;\det\left(\xi_C-\frac{\alpha_{1C}}{k'\tau}\mathbf{e}_{11}-\frac{\alpha_{2C}}{k}
\mathbf{e}_{22}\right)\neq 0$;\\
   \text{\quad or}\quad$\eta_{13}^*=(\eta_{1,13}^*,\eta_{2,13}^*)^\top$,\text{\quad if}\;$\frac{\tau}{1+k'(1-p')}\leq\eta_{1,13}^*\leq \tau ,\;\frac{1}{1+(1-p)k}\leq \eta_{2,13}^*\leq 1$,\\
\text{\qquad\qquad}\;$\left(\xi_{2C}-\frac{\alpha_{2C}}{k}\right)(\theta_{1C}+\left(1-\frac{1}{k'}\right)\alpha_{1C})
=\xi_{0C}\left(\theta_{2C}+\left(1-\frac{1}{k}\right)\alpha_{2C}\right)$,\\
\text{\qquad\qquad}\;$\;\det\left(\xi_C-\frac{\alpha_{1C}}{k'\tau}\mathbf{e}_{11}-\frac{\alpha_{2C}}{k}
\mathbf{e}_{22}\right)= 0,\;\xi_{0C}\eta_{1,13}^*+\left(\xi_{2C}-\frac{\alpha_{2C}}{k}\right)\eta_{2,13}^*
=\theta_{2C}+\left(1-\frac{1}{k}\right){\alpha_{2C}};$}
  \item [] {$\mathrm{Region\; 14:}\;
  \eta_{14}^*=\left(\xi_C-\frac{\alpha_{1C}}{k'\tau}
\mathbf{e}_{11}+(1-p)(2+k(1-p))\alpha_{2C}\mathbf{e}_{22}\right)^{-1}\cdot\big[\theta_C
+\left(1-\frac{1}{k'}\right)\alpha_{1C}\mathbf{e}_{1}+(2-p){\alpha_{2C}}\mathbf{e}_2\big],$\;\\
\text{\qquad\qquad}\;\text{if}\;
$\frac{\tau}{1+k'(1-p')}\leq\eta_{1,14}^*\leq\tau,\;0\leq \eta_{2,14}^*<\frac{1}{1+(1-p)k}$,\\
\text{\qquad\qquad}$\det\left(\xi_C-\frac{\alpha_{1C}}{k'\tau}
\mathbf{e}_{11}+(1-p)(2+k(1-p))\alpha_{2C}\mathbf{e}_{22}\right)\neq 0$;\\
   \text{\quad or}\quad$\eta_{14}^*=(\eta_{1,14}^*,\eta_{2,14}^*)^\top$,\text{\quad if}\;$\frac{\tau}{1+k'(1-p')}\leq\eta_{1,14}^*\leq\tau,\;0\leq \eta_{2,14}^*<\frac{1}{1+(1-p)k}$,\\
\text{\qquad\qquad}\;$\left(\xi_{1C}-\frac{\alpha_{1C}}{k'\tau}\right)\eta_{1,14}^*+\xi_{0C}\eta_{2,14}^*
=\theta_{1C}+\left(1-\frac{1}{k'}\right){\alpha_{1C}},\;
\left(\xi_{1C}-\frac{\alpha_{1C}}{k'\tau}\right)(\theta_{2C}+(2-p){\alpha_{2C}})=$\\
\text{\qquad\qquad}\;$\xi_{0C}\left(\theta_{1C}+\left(1-\frac{1}{k'}\right)\alpha_{1C}\right),\;
\det\left(\xi_C-\frac{\alpha_{1C}}{k'\tau}
\mathbf{e}_{11}+(1-p)(2+k(1-p))\alpha_{2C}\mathbf{e}_{22}\right)=0$;}
  \item [] {$\mathrm{Region\; 15}:\;
  \eta_{15}^*=\left(\frac{\theta_{1C}+(1-\frac{1}{k'})\alpha_{1C}-\xi_{0C}}
{\xi_{1C}-\frac{\alpha_{1C}}{k'\tau}},1\right)^\top$,\text{\quad if}\;$\frac{\tau}{1+(1-p')k'}\leq\eta_{1,15}^*\leq \tau,\;\eta_{1,15}^*<\frac{\theta_{2C}+{\alpha_{2C}}-\xi_{2C}}{\xi_{0C}}$,\\
\text{\qquad\qquad}\;$\xi_{1C}-
\frac{\alpha_{1C}}{k'\tau}\neq 0$;\\
   \text{\quad or}\quad$\eta_{15}^*=(\eta_{1,15}^*,1)^\top$,\text{\quad if}\;$\frac{\tau}{1+(1-p')k'}\leq\eta_{1,15}^*\leq \tau,\;\eta_{1,15}^*<\frac{\theta_{2C}+{\alpha_{2C}}-\xi_{2C}}{\xi_{0C}},\xi_{1C}-
\frac{\alpha_{1C}}{k'\tau}= 0$,\\
\text{\qquad\qquad}\;$\theta_{1C}+(1-\frac{1}{k'})\alpha_{1C}-\tau\xi_{0C}=0$;}
  \item [] {$\mathrm{Region\; 16}:\;
  \eta_{16}^*=\left(\frac{\theta_{1C}+(1-\frac{1}{k'})\alpha_{1C}}
{\xi_{1C}-\frac{\alpha_{1C}}{k'\tau}},0\right)^\top$,\text{\quad if}\;$\frac{\tau}{1+(1-p')k'}\leq\eta_{1,16}^*\leq \tau,\;\eta_{1,16}^*>\frac{\theta_{2C}+(2-p){\alpha_{2C}}}{\xi_{0C}}$,\\
\text{\qquad\qquad}\;$\xi_{1C}-
\frac{\alpha_{1C}}{k'\tau}\neq 0$;\\
   \text{\quad or}\quad$\eta_{16}^*=(\eta_{1,16}^*,0)^\top$,\text{\quad if}\;$\frac{\tau}{1+(1-p')k'}\leq\eta_{1,16}^*\leq \tau,\;\eta_{1,16}^*>\frac{\theta_{2C}+(2-p){\alpha_{2C}}}{\xi_{0C}},\xi_{1C}-
\frac{\alpha_{1C}}{k'\tau}= 0$,\\
\text{\qquad\qquad}\;$\theta_{1C}+(1-\frac{1}{k'})\alpha_{1C}=0$;}
\end{description}
and in Region $v$ we have $J'_C(x^*_v,\eta_{v}^*)=-\infty$ if the related conditions do not hold, and $(J'_{C})^*$ is defined by $(J'_{C})^*=\max\{J'_{C}(x^*_v,\eta_{v}^*):v=1,\cdots,16\}$.
}\end{proposition}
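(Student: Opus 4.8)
The plan is to mirror the proofs of Propositions~\ref{th1} and~\ref{th2}, the only new feature being the additional diversion variable $x'$. Since the reduction of (\ref{proC})--(\ref{proM}) to the consumption subproblems (\ref{problemCc}), (\ref{problemMc}) and the portfolio/diversion subproblems (\ref{pC1}), (\ref{pM1}) has already been carried out before the statement, I would begin with those. The consumption subproblems are one-dimensional and strictly concave, so their first-order conditions $\rho c_i^{-\gamma_i}=X_i^{-\gamma_i}$ give $c_i^*=\rho^{1/\gamma_i}X_i$ at once. For the minority shareholder, $-J'_M$ is a strictly convex quadratic in $\mathbf{n}_M\in\mathbb{R}^2$ (because $\xi_M$ is positive definite), hence has the unique unconstrained minimizer obtained from the linear system $\xi_M\mathbf{n}_M=\theta_M+(1-(x')^*)\alpha_{1M}\mathbf{e}_1+(1-x^*)\alpha_{2M}\mathbf{e}_2$, which is the asserted $\mathbf{n}_M^*$ once $(x')^*$ and $x^*$ are known.

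Next I would solve for the diversion fractions with the stock holdings frozen. In $J'_C$ the variable $x'$ appears only through $\alpha_{1C}\bigl[(\tau-n_{1C})x'-\tfrac{\tau k'}{2}(x')^2\bigr]$, a concave parabola with vertex $x'=(1-n_{1C}/\tau)/k'\ge 0$; maximizing it over the interval $\bigl[0,(1-p')n_{1C}/\tau\bigr]$ prescribed by (\ref{protectC1}) therefore gives $(x')^*=\min\{(1-n_{1C}/\tau)/k',(1-p')n_{1C}/\tau\}$, and by the identical argument for $x$ one gets $x^*=\min\{(1-n_{2C})/k,(1-p)n_{2C}\}$. Substituting these back, the holdings box $[0,\tau]\times[0,1]$ is partitioned by the lines $n_{1C}=\tau/(1+k'(1-p'))$ and $n_{2C}=1/(1+k(1-p))$ into four rectangles, on each of which exactly one of the four combinations ``(\ref{protectC1}) binding/slack'' $\times$ ``(\ref{protectC}) binding/slack'' holds and $J'_C$ reduces to a quadratic $Q$ in $(n_{1C},n_{2C})$ with Hessian $-\xi_C$ perturbed by the corresponding diversion terms --- ranging from $-(\xi_C+(1-p')(2+k'(1-p'))\tfrac{\alpha_{1C}}{\tau}\mathbf{e}_{11}+(1-p)(2+k(1-p))\alpha_{2C}\mathbf{e}_{22})$ when both constraints bind to $-(\xi_C-\tfrac{\alpha_{1C}}{k'\tau}\mathbf{e}_{11}-\tfrac{\alpha_{2C}}{k}\mathbf{e}_{22})$ when both are slack.

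The heart of the argument is then a Karush--Kuhn--Tucker analysis of $\max Q$ over $[0,\tau]\times[0,1]$ (intersected with each rectangle). I would enumerate active sets for the box constraints: (i) none active yields the interior stationary point, recovered by a $2\times2$ solve, giving Regions~1, 10, 13, 14 --- one per diversion regime --- with the signs of the KKT multipliers forcing the stated containment inequalities such as $0\le\eta_{1,1}^*<\tau/(1+k'(1-p'))$; (ii) exactly one of $n_{1C}\in\{0,\tau\}$, $n_{2C}\in\{0,1\}$ active collapses $Q$ to a one-variable parabola whose vertex on that edge yields Regions~2, 3, 4, 5, 11, 12, 15, 16, again with multiplier signs producing the attached inequalities; (iii) two active gives the corners $(0,0),(0,1),(\tau,0),(\tau,1)$, i.e.\ Regions~6--9, characterised by the condition that $\nabla Q$ points out of the box. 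Where a regime's Hessian degenerates one invokes the ``or''-clauses: a vanishing determinant leaves the stationary set a line (e.g.\ $\xi_{0C}n_{1C}+(\xi_{2C}-\alpha_{2C}/k)n_{2C}=\theta_{2C}+(1-1/k)\alpha_{2C}$), which is parametrised and intersected with the box; and when $Q$ is unbounded above on its rectangle the convention $J'_C=-\infty$ is attached. Finally, comparing the finitely many finite candidate values $J'_C(x_v^*,\eta_v^*)$ for $v=1,\dots,16$ identifies $(J'_C)^*$ and the region containing $\mathbf{n}_C^*$.

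The main obstacle is the bookkeeping in (i)--(iii): one must check that after inserting the $2\times2$ inverse (or, in the degenerate cases, the line parametrisation) each KKT system collapses to exactly the displayed closed form for $\eta_v^*$, and that the complementary-slackness inequalities are precisely those attached to Region~$v$; and one must confirm that any KKT point feasible in its own regime is in fact the global maximiser over the whole box --- the reason the conclusion is phrased as a maximum over all sixteen candidates rather than resting on region membership alone. This verification is long but entirely mechanical, being the two-dimensional analogue of the nine-region computation behind Proposition~\ref{th1} and the twelve-region one behind Proposition~\ref{th2}; no new analytic input is required.
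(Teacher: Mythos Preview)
Your proposal is correct and reaches the same conclusion as the paper, whose proof simply states that the argument is ``analogous to that of Proposition~\ref{th2}''. The difference is organizational: the paper's proof of Proposition~\ref{th2} (via Lemma~\ref{lemma1}) writes down the Lagrangian on the full variable $(x,\mathbf{n}_C)$ with all five multipliers and enumerates the KKT cases directly according to which $\lambda_j$ vanish; for Proposition~\ref{th9} this would become a four-variable KKT system with six multipliers $h_1,\dots,h_6$. You instead exploit the separability of $J'_C$ in $(x',x)$ to solve the two diversion subproblems first, obtaining the $\min$ formulas, and only then run the KKT enumeration on the reduced two-variable problem in $(n_{1C},n_{2C})$ over the four diversion-regime rectangles. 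This two-stage reduction is equivalent (the envelope argument --- $\partial J'_C/\partial x=0$ at $x=(1-n_{2C})/k$ and likewise for $x'$ --- guarantees the reduced objective is $C^1$ across the threshold lines, so no stationary points are lost), and it makes the sixteen-region bookkeeping more transparent: four interior critical points, eight edge segments, four corners. The paper's single-stage KKT is more mechanical but hides this structure; your decomposition buys clarity at no cost in rigor.
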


In the cross-sectional economy with investor protection in both firms, we adopt the same equilibrium in Definition \ref{equilibrium} and then derive parameters of asset prices.

\begin{proposition}\label{th10}{\it
Provide that there exists investor protection in both firms with $0\leq p'<1, 0\leq p<1$ in the cross-sectional economy. Then under the equilibrium in Definition \ref{equilibrium} and the condition $y\in (0,1)$, the shareholders' optimal consumptions $c_{i}^*$ ($i=C,M$) are given by (\ref{ps_c}), and the stock growth rate $\mu$ and the interest rate $r$ are given by:
\begin{align}
\mu=&r(\mathbf{e}_1+\mathbf{e}_2)+\frac{\frac{\tau y}{y_2}\mathbf{e}_{11}+\frac{y}{1-y_2}\mathbf{e}_{22}}
{\rho^{\frac{1}{\gamma_M}}\Gamma_0}[\xi_M(\tau\mathbf{e}_1+\mathbf{e}_2-\mathbf{n}_C^*)
-(1-(x')^*)\alpha_{1M}\mathbf{e}_1-(1-x^*)\alpha_{2M}\mathbf{e}_2];\label{uu}\\
r=&y_1\mu_{1D}+(1-y_1)\mu_{2D}-\frac{\Gamma_1}{\tau}\left[y_2\Gamma_0(\mu_1-r)
+(1-(x')^*)(1-l_{1M}-l_{1C})y_1\right]\nonumber\\
&-\Gamma_2[(1-y_2)\Gamma_0(\mu_2-r)+(1-x^*)(1-l_{2M}-l_{2C})(1-y_1)]\nonumber\\
&-y_1\left(\rho^{\frac{1}{\gamma_C}}l_{1C}+\rho^{\frac{1}{\gamma_M}}l_{1M}\right)
-(1-y_1)\left(\rho^{\frac{1}{\gamma_C}}l_{2C}+\rho^{\frac{1}{\gamma_M}}l_{2M}\right)
+\rho^{\frac{1}{\gamma_C}}(1-y)+\rho^{\frac{1}{\gamma_M}}y\nonumber\\
&-(1-l_{1M}-l_{1C})y_1\left[\rho^{\frac{1}{\gamma_C}}\left((x')^*-\frac{k'}{2}(x'^*)^2\right)
+\rho^{\frac{1}{\gamma_M}}\frac{k'}{2}(x'^*)^2\right]\nonumber\\
&-(1-l_{2M}-l_{2C})(1-y_1)\left[\rho^{\frac{1}{\gamma_C}}\left(x^*-\frac{k}{2}(x^*)^2\right)
+\rho^{\frac{1}{\gamma_M}}\frac{k}{2}(x^*)^2\right];\label{rr}
\end{align}
and the parameter $\mu_y$ in (\ref{dy}) are given by
\begin{align*}
\mu_y=&yr-yy_1\mu_{1D}-y(1-y_1)\mu_{2D}-\sigma_y\sigma_D-(1-y_1)\delta_D\delta_y+\rho^{\frac{1}{\gamma_M}}
[l_{1M}y_1+l_{2M}(1-y_1)-y]\\
&+\rho^{\frac{1}{\gamma_M}}\left[\frac{k'}{2}(x'^*)^2(1-l_{1M}-l_{1C})y_1
+\frac{k}{2}(x^*)^2(1-l_{2M}-l_{2C})(1-y_1)\right]\\
&+\rho^{\frac{1}{\gamma_M}}\frac{\tau-n_{1C}^*}{\tau}
\left[{y_2}\Gamma_0(\mu_1-r)+(1-(x')^*)(1-l_{1M}-l_{1C})y_1\right]\\
&+\rho^{\frac{1}{\gamma_M}}(1-n_{2C}^*)[(1-y_2)\Gamma_0(\mu_2-r)+(1-x^*)(1-l_{2M}-l_{2C})(1-y_1)];
\end{align*}
and the stock volatilities $\sigma$ and $\delta$, parameters $\sigma_y$ and $\delta_y$ in (\ref{dy}), parameters $\mu_{1y}$, $\sigma_{1y}$ and $\delta_{1y}$ in (\ref{dy1}), and parameters $\mu_{2y},\sigma_{2y}$ and $\delta_{2y}$ in (\ref{dy2}) are respectively given by the same expressions of (\ref{sigma}), (\ref{delta}), (\ref{y}), (\ref{y1}) and (\ref{y2}) in Proposition \ref{th4}. The minority shareholder's optimal stock holding $\mathbf{n}_{M}^*$ is given by
\begin{equation*}
n_{1M}^*=\tau-n_{1C}^*,\quad n_{2M}^*=1-n_{2C}^*,
\end{equation*}
and the controlling shareholder's optimal stock holding $\mathbf{n}_{C}^*$ can be obtained by solving fixed-point equation
\begin{align*}
\mathbf{n}_C^*=&\mathop{\mathrm{argmax}}\limits_{\mathbf{n}_C\in[0,\tau]\times[0,1]}
\left\{\sum_{j=1}^2n_{jC}(\mu_{j}-r)\frac{S_j}{X_C}
+{n_{1C}}(1-x'^*(n_{1C}))\frac{D_1}{\tau X_C}+n_{2C}(1-x^*(n_{2C}))\frac{D_2}{X_C}\right.\\
&+\left(x'^*(n_{1C})-\frac{k'x'^*(n_{1C})^2}{2}\right)\frac{D_1}{X_C}
+\left(x^*(n_{2C})-\frac{kx^*(n_{2C})^2}{2}\right)\frac{D_2}{X_C}\\
&\left.-\frac{\gamma_C}{2}\left[ \left(\frac{n_{1C}S_1\sigma}{X_C}\right)^2+\frac{2n_{1C}n_{2C}S_1S_2\sigma^2}{X_C^2} +\left(\frac{n_{2C}S_2}{X_C}\right)^2(\sigma^2+\delta^2)\right]\right\}
\end{align*}
with ratios (\ref{ratio}), $x'^*(n_{2C})=\min\left\{\frac{1-n_{1C}/\tau}{k'},(1-p')\frac{n_{1C}}{\tau}\right\}$, $x^*(n_{2C})=\min\left\{\frac{1-n_{2C}}{k},(1-p)n_{2C}\right\}$ and parameters ${\mu},{r},{\sigma},{\delta}$ given by (\ref{uu}), (\ref{rr}), (\ref{sigma}) and (\ref{delta}).
}\end{proposition}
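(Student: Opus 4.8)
The plan is to follow the same route that establishes Proposition \ref{th4}, carrying along the two extra diversion terms coming from firm $1$. Starting from the partial-equilibrium consumptions $c_i^*=\rho^{1/\gamma_i}X_i$ of Proposition \ref{th9} and the consumption-clearing condition (\ref{ec-4}), I would write $\rho^{1/\gamma_C}X_C+\rho^{1/\gamma_M}X_M=\widehat D=\widehat D_1+\widehat D_2$; together with $y=c_M^*/\widehat D$ this gives $X_M=\rho^{-1/\gamma_M}y\widehat D$ and $X_C=\rho^{-1/\gamma_C}(1-y)\widehat D$. Using the bond-clearing condition (\ref{ec-3}) in the budget identity (\ref{W}), the aggregate stock value equals $X_C+X_M=\Gamma_0\widehat D$, and the definition of $y_2$ then yields $\tau S_1=y_2\Gamma_0\widehat D$, $S_2=(1-y_2)\Gamma_0\widehat D$. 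Dividing by $X_C$ and $X_M$, and recalling $D_j=(1-l_{jM}-l_{jC})\widehat D_j$ and $\widehat D_1/\widehat D=y_1$, produces exactly the ratios (\ref{ratio}), so that $\alpha_{1i},\alpha_{2i},\theta_i,\xi_i$ become explicit functions of $(y,y_1,y_2)$ and the still-unknown price coefficients.

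\textbf{Step 2 (stock returns) and Step 3a (volatilities and state diffusions).} The stock-clearing conditions (\ref{ec-1})--(\ref{ec-2}) give $\mathbf{n}_M^*=\tau\mathbf e_1+\mathbf e_2-\mathbf n_C^*$; substituting this into the closed form $\mathbf n_M^*=(\xi_M)^{-1}[\theta_M+(1-(x')^*)\alpha_{1M}\mathbf e_1+(1-x^*)\alpha_{2M}\mathbf e_2]$ of Proposition \ref{th9} and solving for $\theta_M$ expresses $\mu_1-r$ and $\mu_2-r$ through $\mathbf n_C^*,(x')^*,x^*$ and the ratios of Step 1, which rearranges to (\ref{uu}). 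Next, since $\tau S_1+S_2=\Gamma_0\widehat D$ with $\Gamma_0=\rho^{-1/\gamma_C}+y(\rho^{-1/\gamma_M}-\rho^{-1/\gamma_C})$, I would apply It\^o's Lemma to both sides (using (\ref{D1})--(\ref{D2}) and the postulated SDE (\ref{dy}) for $y$) and match the $dW$- and $dB$-coefficients against those from (\ref{stock1})--(\ref{stock2}); this gives two linear relations among $\sigma,\delta,\sigma_y,\delta_y$. A second pair comes from writing $X_M=\rho^{-1/\gamma_M}y\widehat D$, expanding $dX_M$ by It\^o, and matching it against the wealth SDE (\ref{Xx}) with $i=M$ after inserting the equilibrium relations: the diffusion match gives $\sigma_y,\delta_y$ in terms of $\sigma,\delta$ (the expressions in (\ref{y})), and back-substitution solves for $\sigma,\delta$, producing (\ref{sigma})--(\ref{delta}). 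The coefficients $\mu_{1y},\sigma_{1y},\delta_{1y}$ and $\mu_{2y},\sigma_{2y},\delta_{2y}$ are obtained by a direct It\^o computation on $y_1=\widehat D_1/\widehat D$ and $y_2=\tau S_1/(\tau S_1+S_2)$ exactly as in Proposition \ref{th4}, so (\ref{y1})--(\ref{y2}) carry over verbatim.

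\textbf{Step 3b (interest rate and $\mu_y$) and Step 4 (fixed point for $\mathbf n_C^*$).} The drift match in the It\^o expansion of $X_M=\rho^{-1/\gamma_M}y\widehat D$ delivers $\mu_y$, and the drift match in $\tau S_1+S_2=\Gamma_0\widehat D$ (equivalently, substituting everything into one clearing condition) delivers $r$, i.e.\ (\ref{rr}); the only genuinely new contributions relative to Proposition \ref{th4} are the firm-$1$ diversion-cost terms $\rho^{1/\gamma_C}\big((x')^*-\tfrac{k'}{2}(x'^*)^2\big)+\rho^{1/\gamma_M}\tfrac{k'}{2}(x'^*)^2$ weighted by $(1-l_{1M}-l_{1C})y_1$, which I would track carefully through each expansion. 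Finally, all the price coefficients obtained above still depend on $\mathbf n_C^*$ (through $\Gamma_1,\Gamma_2,\xi_C,\theta_C$ and through the induced $(x')^*(n_{1C}^*),x^*(n_{2C}^*)$); inserting them, together with the ratios (\ref{ratio}), into the controlling shareholder's partial-equilibrium problem of Proposition \ref{th9} turns the optimality characterisation into the stated fixed-point equation, while consistency of $\mathbf n_M^*$ with the share and non-negativity requirements follows as in the discussion after (\ref{problemM}).

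\textbf{Main obstacle.} The principal difficulty is not a single computation but the self-referential structure: the equilibrium price coefficients are themselves functions of $\mathbf n_C^*$ (and of the diversion fractions $(x')^*,x^*$ it induces), so one cannot solve for them outright and must instead verify that the system closes as a fixed-point problem whose candidate solutions are precisely the region-by-region formulas of Proposition \ref{th9}. Threading the firm-$1$ diversion terms correctly through every It\^o expansion, and checking that all coefficients remain functions of $(y,y_1,y_2)$ alone so that (\ref{dy})--(\ref{dy2}) is a legitimate Markovian ansatz, is where the care is required.
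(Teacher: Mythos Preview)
Your proposal is correct and follows essentially the same route as the paper, which simply states that the proof is analogous to that of Proposition~\ref{th4}: derive the ratios~(\ref{ratio}) from consumption clearing, read off $\mu$ from the minority shareholder's first-order condition, and obtain $r,\sigma,\delta$ and the state-variable coefficients by It\^o-matching the relevant identities, tracking the extra firm-$1$ diversion terms throughout. The only cosmetic difference is that the paper applies It\^o directly to $\widehat D=\rho^{1/\gamma_C}X_C+\rho^{1/\gamma_M}X_M$ (so that $r,\sigma,\delta$ drop out without first involving $\sigma_y,\delta_y$), whereas you go through $\tau S_1+S_2=\Gamma_0\widehat D$ and couple in the $y$-dynamics; both computations are equivalent and yield the same formulas.
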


The major difference between Proposition \ref{th10} and Proposition \ref{th4} is investor protection in firm 1 instead of cross-section in the economy. Hence, in the following part of this section, we discuss, by numerical methods, the effect of investor protection in the cross-sectional economy with investor protection in both firms. Figures \ref{fig_k6} and \ref{fig_k15} demonstrate the effect of investor protection of both firms $1$ and $2$ in the cross-sectional economy where parameters related to investor protection are same in each figure, i.e., $p'=p$ and $k'=k$. To be precise, we further explain Figures \ref{fig_k6} and \ref{fig_k15} as follows:
in Figure \ref{fig_k6}, all parameters are obtained by substituting each $y$ and $\tau=1,y_1=1,y_2=0.5,k'=k=6,\delta_D=10\%$, and parameters in Table \ref{table} into Proposition \ref{th3} in the case of $p'=p=1$ and Proposition \ref{th10} in the cases of $p'=p=0.9$ and $p'=p=0.6$, respectively;
in Figure \ref{fig_k15}, all parameters are obtained by substituting each $y$ and $\tau=1,y_1=1,y_2=0.5,k'=k=15,\delta_D=10\%$, and parameters in Table \ref{table} except for $k$ into Proposition \ref{th3} in the case of $p'=p=1$ and Proposition \ref{th10} in the cases of $p'=p=0.9$ and $p'=p=0.6$, respectively.

\begin{figure}[htb]
\centering
\subfigure[Stock holdings in firm $1$]{
\includegraphics[ width = 0.31\textwidth]{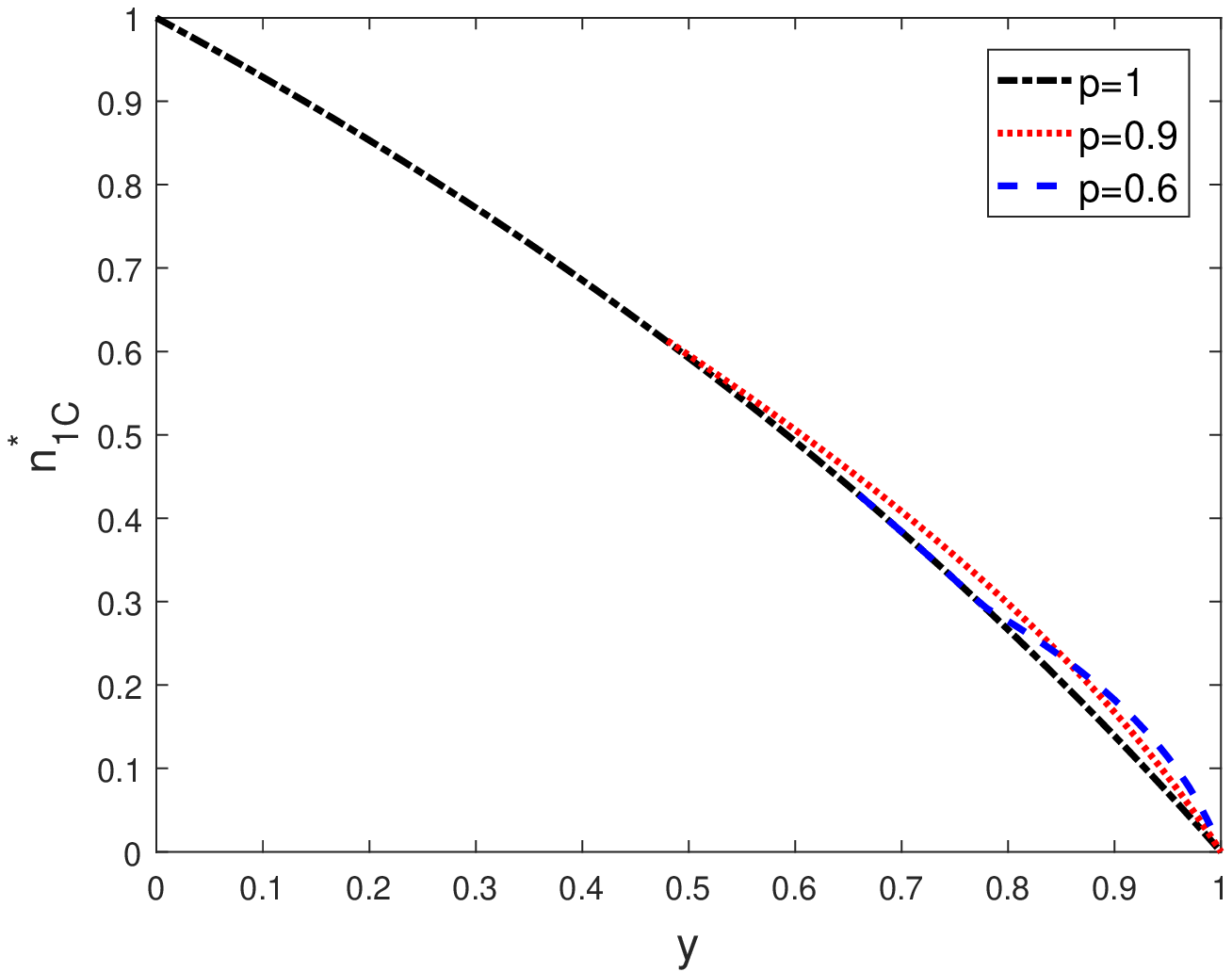}
}
\subfigure[Stock holdings in firm $2$]{
\includegraphics[ width = 0.31\textwidth]{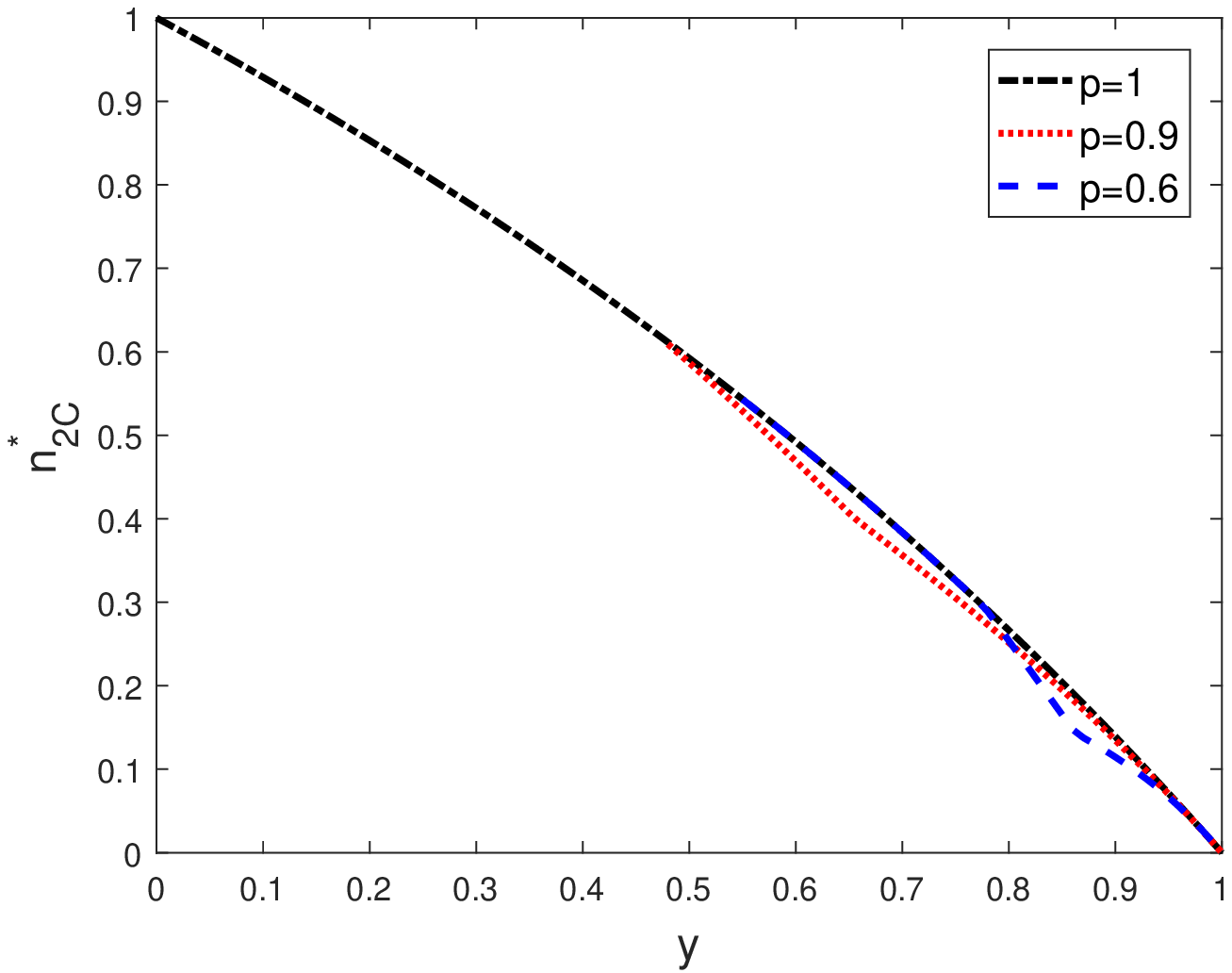}
}
\caption{ The effect of investor protection in both firms ($k=6$)}
\label{fig_k6}
\end{figure}

\begin{figure}[htb]
\centering
\subfigure[Stock holdings in firm $1$]{
\includegraphics[ width = 0.31\textwidth]{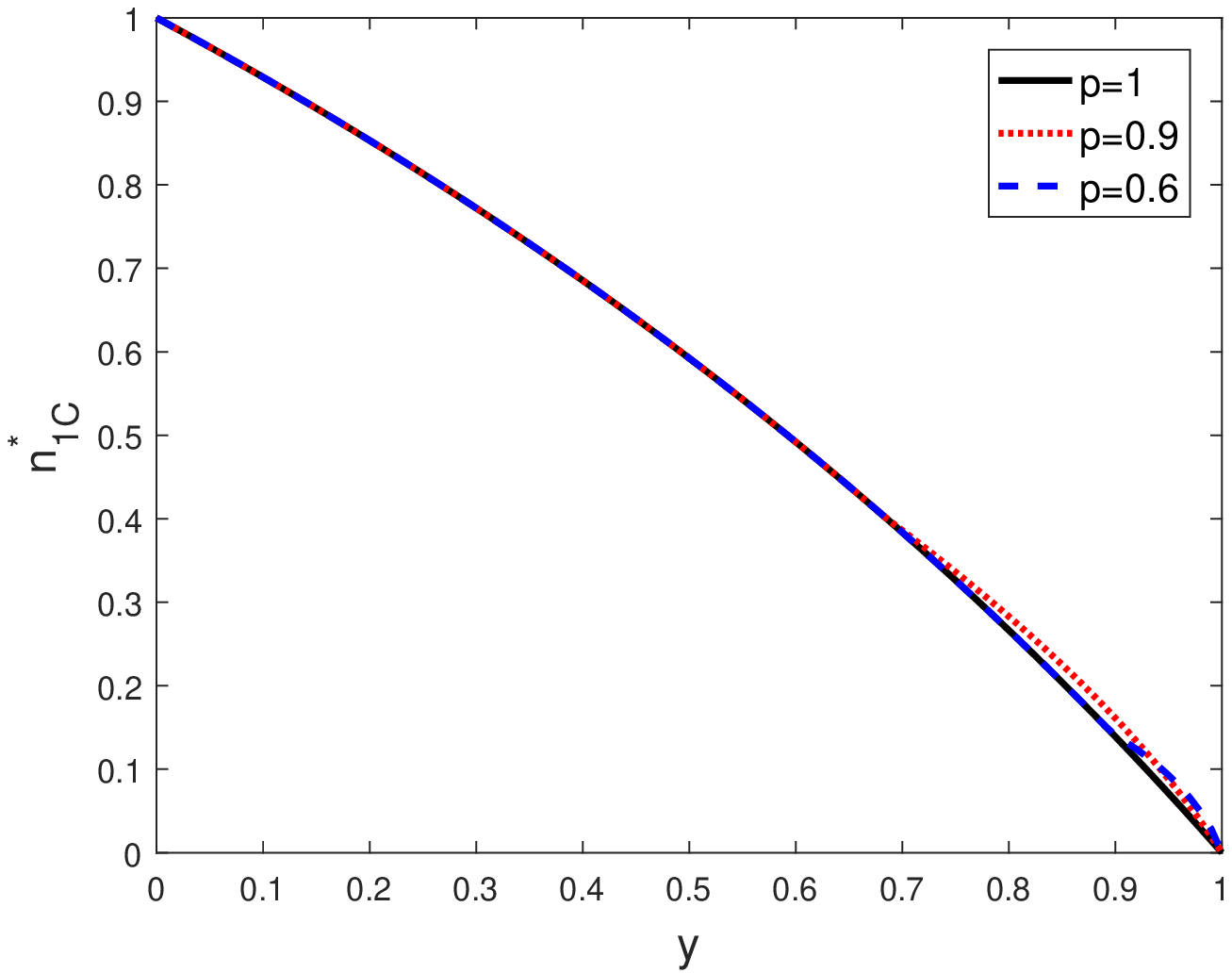}
}
\subfigure[Share difference in firm $1$]{
\includegraphics[ width = 0.31\textwidth]{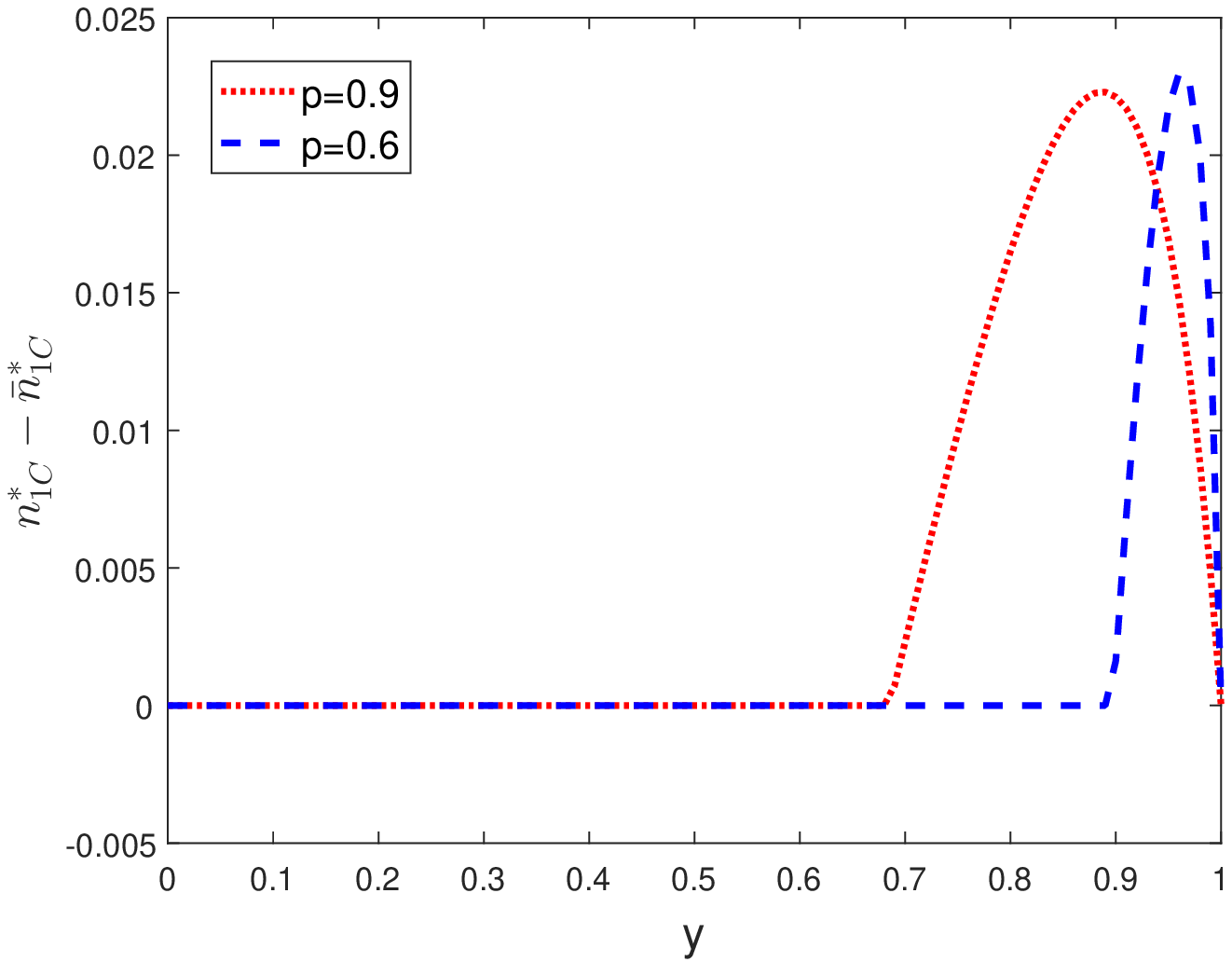}
}
\subfigure[Fractions of diverted output of firm 1]{
\includegraphics[ width = 0.31\textwidth]{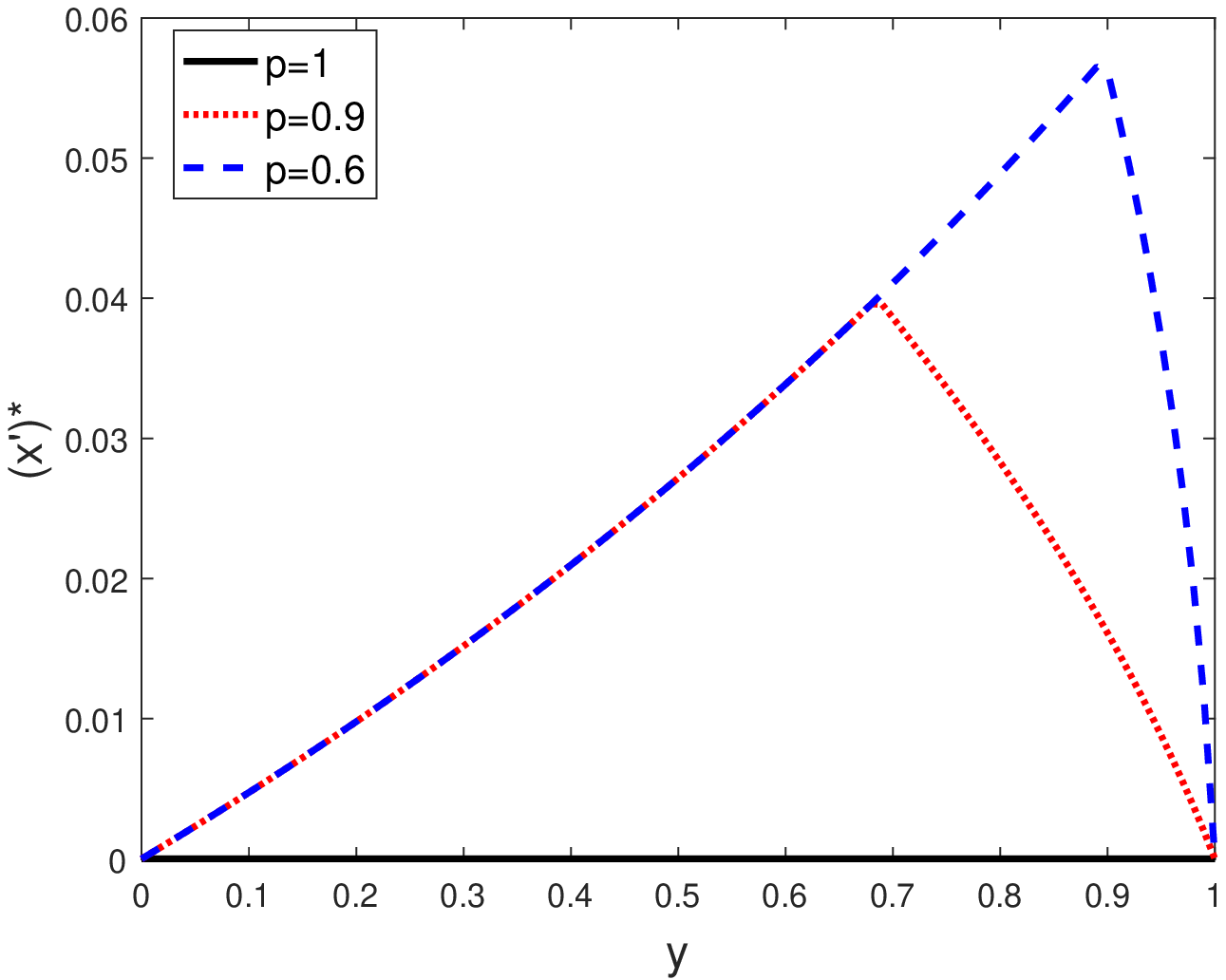}
}
\subfigure[Stock holdings in firm $2$]{
\includegraphics[ width = 0.31\textwidth]{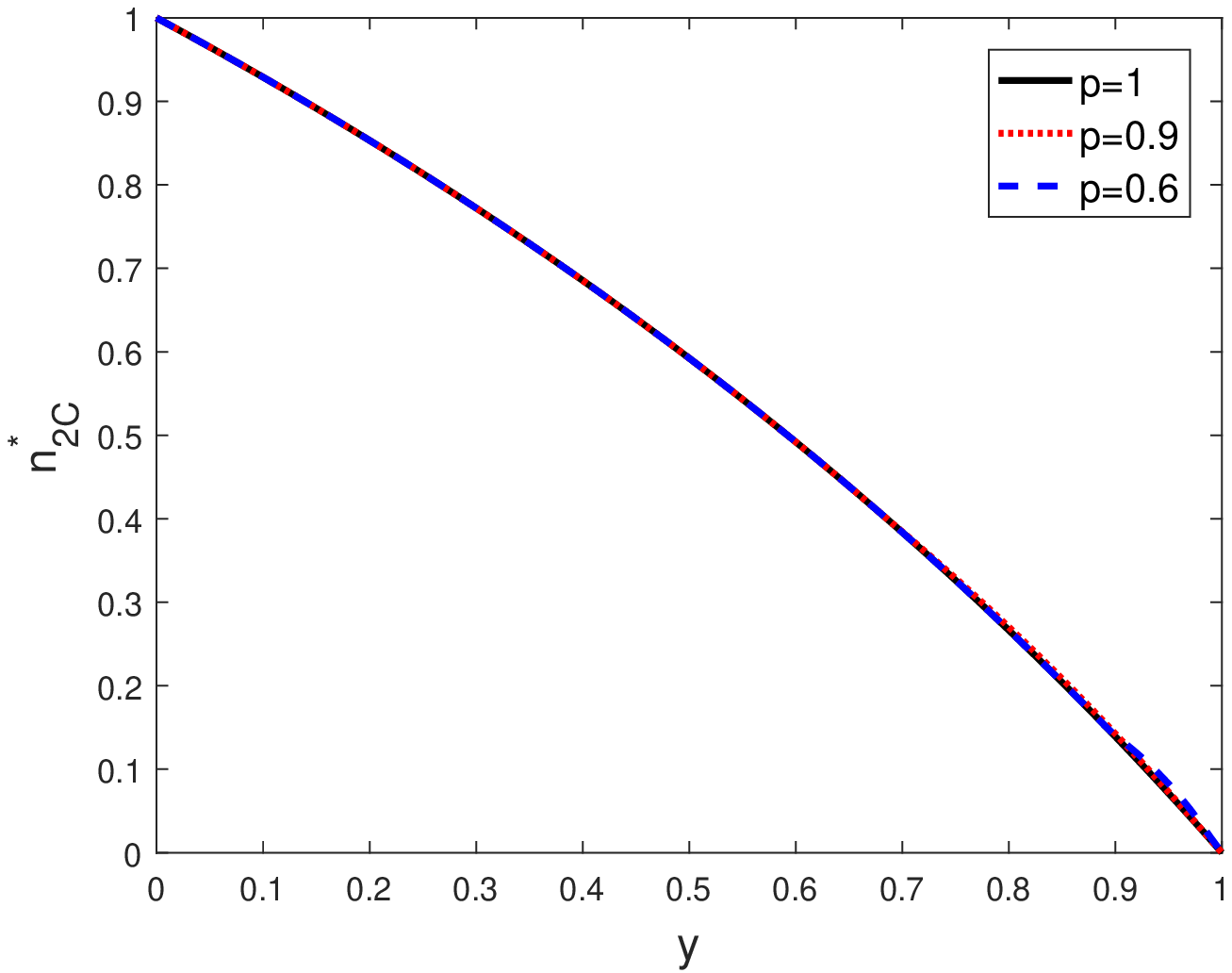}
}
\subfigure[Share difference in firm $2$]{
\includegraphics[ width = 0.31\textwidth]{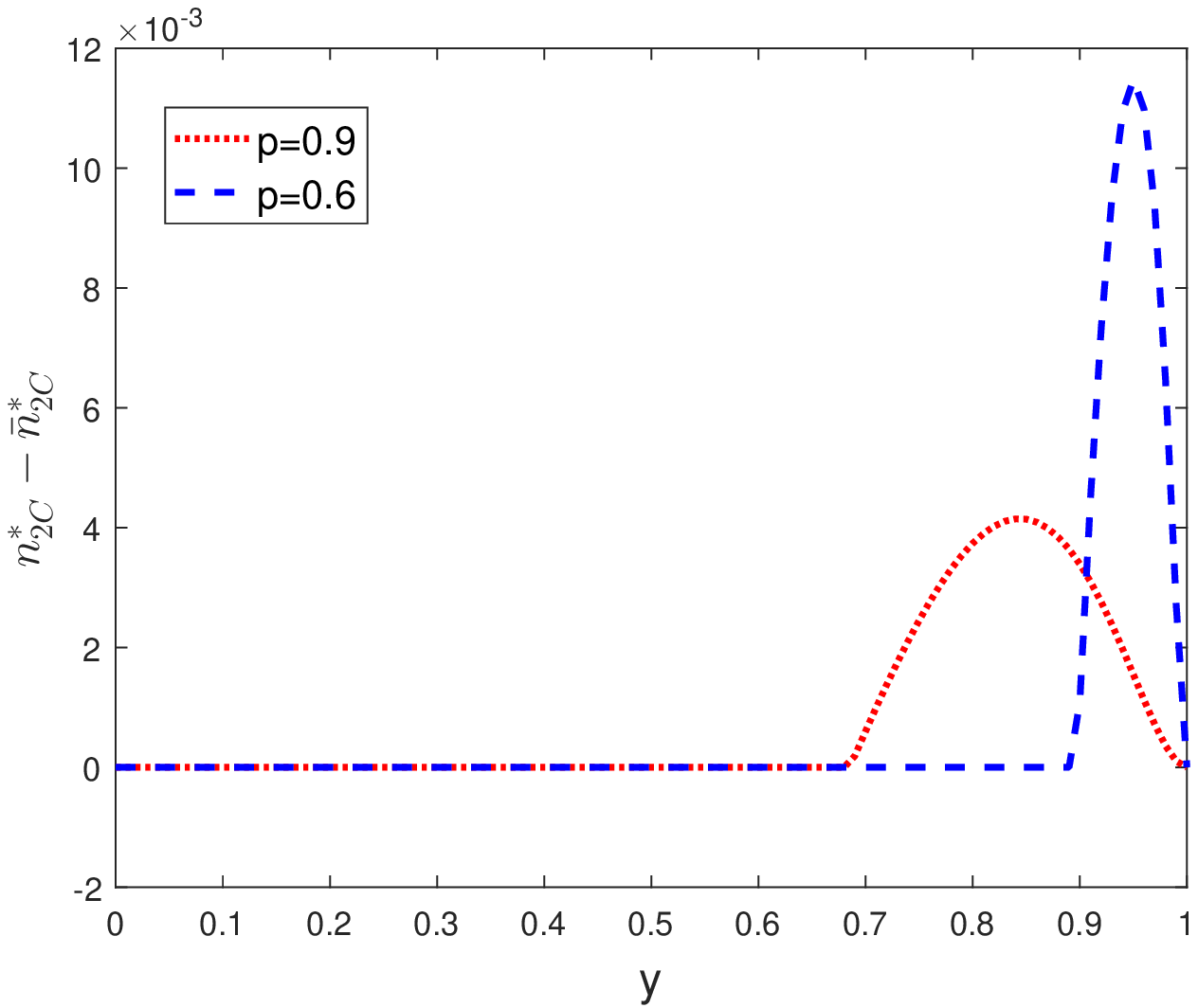}
}
\subfigure[Fractions of diverted output of firm $2$]{
\includegraphics[ width = 0.31\textwidth]{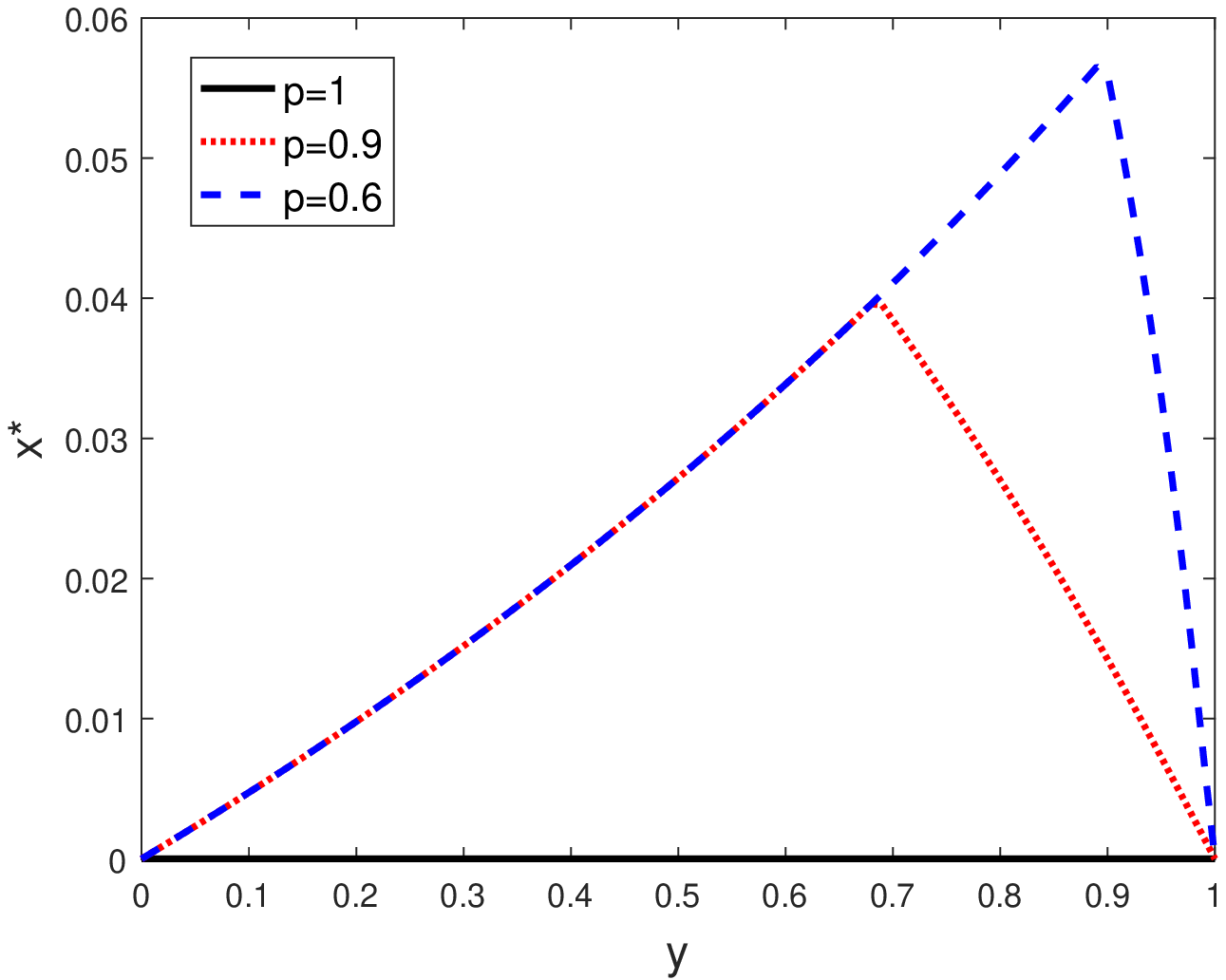}
}
\caption{ The effect of investor protection in both firms ($k=15$)}
\label{fig_k15}
\end{figure}

Figure \ref{fig_k6} displays a different economy where the equilibrium with imperfect investor protection could not be achieved for higher consumption share of the controlling shareholder (i.e., lower $y$). There are two main reasons: the parameters of stealing costs (i.e., $k'$ and $k$) which are too low to temper the diversion of output, and sufficiently high covariance risk which is cased by the cross-sectional economy. Sufficiently high consumption share of the controlling shareholder requires him to hold sufficiently high stock shares in both firms such that neither the investor protection constraint (\ref{protectC1}) nor (\ref{protectC}) binds, and hence stealing costs act crucially in determining the equilibrium. However,   sufficiently high stock shares could have adverse effects on the controlling shareholder's benefit. For one thing, lower parameters of stealing costs encourage the controlling shareholder to divert more output and this stops him from holding sufficiently high stock shares in both firms ($(x')^*=\frac{1-n_{1C}^*/\tau}{k'},x^*=\frac{1-n_{2C}^*}{k}$). For another, sufficiently high stock shares in both firms force the controlling shareholder to face excessive covariance risk, which discourages him to hold so many stock shares. Hence, the contradiction between low stealing costs and high covariance risk leads to the failure of equilibrium. Such contradiction can be avoided through raising the parameters of stealing costs, which is confirmed in Figure \ref{fig_k15} with $k'=k=15$.

Figure \ref{fig_k15} reveals the effect of investor protection in the cross-sectional economy where the equilibrium is achieved for imperfect protection in both firms. Panels (a) and (d) (or equivalently, panels (b) and (e)) show that comparing to the economy with perfect protection, the controlling shareholder requires more shares of both firms in the economy with imperfect protection. This is consistent with panel (c) of Figure \ref{fig_sx} and the result with single firm in \cite{Basak}, and the main reason is similar. However, as is showed in Figure \ref{fig_k6}, compared with the economy with imperfect protection only in firm $2$ (or the economy with a single firm), the equilibrium is more difficult to be achieved in the economy with imperfect protection in both firms. Furthermore, since the stock volatilities in firm $2$ are always higher than those in firm $1$, the controlling shareholder prefers holding more shares in firm $1$. Panels (c) and (f) verify that better invest protection decrease the fractions of diverted output in both firms, and the same reason has been analyzed in panel (e) of Figure \ref{fig_sx}.

\section{Conclusions}\label{section6}\noindent

In this paper, we establish a dynamic asset pricing model for a cross-sectional economy with two firms: one firm is with perfect protection and the controlling shareholder could not divert the output while the other firm is with different levels of protection and the controlling shareholder may divert the output. Via Karush-Kuhn-Tucker conditions and equilibrium market conditions, not only are shareholders' optimal consumption-portfolio choices and parameters of asset prices derived, but related survival analysis of shareholders in the cross-sectional economy is also considered. In numerical analysis, the results are mainly twofold. For one thing, better investor protection indeed protects minority shareholders in the cross-sectional economy and leads to higher stock gross returns, lower stock volatilities and higher interest rates. For another, cross-section brings complex competition to the economy, and it is negatively associated with the growth of the firm stock with perfect protection and positively correlated with the growth of the firm stock with imperfect protection. However, some problems remain unanswered. For one thing, since there always exists one solution to the related fixed-point equation in each region of \cite{Basak}, is it possible to give some sufficient conditions on the existence of related fixed-point equations in Subsection \ref{subsection3.2}? For another, the sufficient conditions on extinction of shareholders in Subsection \ref{subsection3.3} are in the form of short run, so survival analysis in the form of long run is still unsolved. Those problems could be our future work.

\section*{Appendices}
\setcounter{equation}{0}
\setcounter{subsection}{0}
\renewcommand{\theequation}{A.\arabic{equation}}
\renewcommand{\thelemma}{A.\arabic{lemma}}

\noindent{\bf Proof of Proposition \ref{th2}.}
In order to prove Proposition \ref{th2}, we first give two lemmas.
\begin{lemma}\label{lemma1}{\it
The global optimal solution $(x^*,\mathbf{n}_C^*)$ to (\ref{problemCn}) (or equivalently, (\ref{pC})) exists. The optimal fraction of diverted output $x^*$ is give by (\ref{xx}), and the optimal stock holding $\mathbf{n}_C^*$ is given by (\ref{ps_nC}).
}\end{lemma}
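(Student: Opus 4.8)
The plan is to prove Lemma \ref{lemma1} in three stages: (i) existence of a maximizer by compactness, (ii) elimination of the diversion variable $x$ through a one-dimensional quadratic subproblem, and (iii) resolution of the remaining quadratic program in $\mathbf{n}_C$ by a Karush-Kuhn-Tucker case analysis that reproduces exactly the twelve candidate points $\eta_1^*,\dots,\eta_{12}^*$.

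First I would check that the feasible set of (\ref{pC}) is nonempty and compact. It contains $(x,n_{1C},n_{2C})=(0,0,0)$; the constraints $f_2,f_3$ force $n_{1C}\in[0,\tau]$; in the case $0\le p<1$ combining $f_0$ with $f_1$ gives $0\le x\le(1-p)n_{2C}$, hence $n_{2C}\ge 0$, while $f_4$ gives $n_{2C}\le 1$, so that $0\le x\le(1-p)n_{2C}\le 1$. The feasible set is therefore closed and bounded, and since $J_C$ in (\ref{JC}) is a polynomial it is continuous, so the Weierstrass theorem yields a global maximizer $(x^*,\mathbf{n}_C^*)$. (When $p=1$ the constraint $f_1$ degenerates and forces $x^*=0$, which is the setting of Proposition \ref{th1} and is treated along the same lines.)

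Next I would use $\max_{x,\mathbf{n}_C}J_C=\max_{\mathbf{n}_C}\big(\max_{x}J_C\big)$. For fixed $\mathbf{n}_C$ with $n_{2C}\ge 0$, the $x$-dependent part of (\ref{JC}) is $\alpha_{2C}\big(x(1-n_{2C})-\tfrac{k}{2}x^2\big)$, a concave parabola (since $\alpha_{2C}>0$, $k>0$) with unconstrained maximizer $\tfrac{1-n_{2C}}{k}$; projecting onto $[0,(1-p)n_{2C}]$ gives $x^*(n_{2C})=\min\{\tfrac{1-n_{2C}}{k},(1-p)n_{2C}\}$, which is (\ref{xx}), the two branches meeting at $n_{2C}=\tfrac{1}{1+(1-p)k}$. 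Substituting $x^*=(1-p)n_{2C}$ into (\ref{JC}) and collecting terms I expect the objective $\mathbf{n}_C^\top\theta_C+\alpha_{1C}n_{1C}+(2-p)\alpha_{2C}n_{2C}-\tfrac12\mathbf{n}_C^\top\big(\xi_C+(1-p)(2+k(1-p))\alpha_{2C}\mathbf{e}_{22}\big)\mathbf{n}_C$, and substituting $x^*=\tfrac{1-n_{2C}}{k}$ the objective $\mathbf{n}_C^\top\theta_C+\alpha_{1C}n_{1C}+(1-\tfrac1k)\alpha_{2C}n_{2C}+\tfrac{\alpha_{2C}}{2k}-\tfrac12\mathbf{n}_C^\top\big(\xi_C-\tfrac{\alpha_{2C}}{k}\mathbf{e}_{22}\big)\mathbf{n}_C$; these are exactly the ``modified risk premium vector / modified covariance matrix'' forms that appear in Regions $1$ and $10$ of Proposition \ref{th2}. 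A short computation should show the resulting piecewise objective is $C^1$ across $n_{2C}=\tfrac{1}{1+(1-p)k}$, and that its second piece is concave precisely when $\det(\xi_C)\ge\tfrac{\alpha_{2C}\xi_{1C}}{k}$.

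Finally I would maximize this reduced objective over $[0,\tau]\times[0,1]$. Splitting along $n_{2C}=\tfrac{1}{1+(1-p)k}$ into the constraint-binding piece and the constraint-slack piece---noting that on $n_{2C}\in\{0,1\}$ both branches collapse to $x^*=0$---and writing the Karush-Kuhn-Tucker system for each piece, the stationary points are classified by which box constraints are active: one interior point, four edge points and four corner points per piece, which after identifying the shared $x^*=0$ edges and corners is precisely the list $\eta_1^*,\dots,\eta_{12}^*$; the inequalities attached to each region of Proposition \ref{th2} are the feasibility-plus-multiplier-sign conditions for the corresponding configuration, and the rank-deficient-Hessian sub-cases are what force the ``or'' clauses of Regions $10$--$12$ (arranged so that $J_C(x_v^*,\eta_v^*)=-\infty$ whenever the stated conditions fail). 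Since the global maximizer found in stage (i) must coincide with one of these finitely many candidates, $J_C^*=\max\{J_C(x_v^*,\eta_v^*):v=1,\dots,12\}$ and $\mathbf{n}_C^*=\eta_v^*$ at the maximizing index, which is (\ref{ps_nC}). The hard part will be the bookkeeping of this classification---deriving the exact region inequalities and disposing of the degenerate (singular-Hessian) sub-configurations---rather than any conceptual obstacle.
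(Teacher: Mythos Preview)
Your proposal is correct, and the overall architecture (compactness for existence, then a KKT classification yielding exactly the twelve candidates) matches the paper. The route, however, is genuinely different in one respect: you eliminate $x$ first by solving the inner one-dimensional concave subproblem, obtain the piecewise-quadratic reduced objective in $\mathbf{n}_C$ with the two branches $\xi_C+(1-p)(2+k(1-p))\alpha_{2C}\mathbf{e}_{22}$ and $\xi_C-\tfrac{\alpha_{2C}}{k}\mathbf{e}_{22}$, and only then run KKT on the two-dimensional box $[0,\tau]\times[0,1]$ split at $n_{2C}=\tfrac{1}{1+(1-p)k}$. The paper instead applies KKT directly to the full three-variable problem (\ref{pC}) with multipliers $\lambda_0,\dots,\lambda_4$ attached to $f_0,\dots,f_4$, verifies Slater's condition, and then exhausts the sign patterns of the $\lambda_j^*$; the formula $x^*=\min\{\tfrac{1-n_{2C}^*}{k},(1-p)n_{2C}^*\}$ is read off a posteriori from the KKT solutions rather than derived first. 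Your sequential reduction makes the economic interpretation of the two branches transparent and explains up front why Regions $1$--$9$ versus $10$--$12$ differ; the paper's joint-KKT route is more mechanical but avoids having to argue $C^1$-matching across the splitting line and keeps the case analysis in a single uniform framework. Both approaches rely on KKT only as a necessary condition and finish by searching the finite candidate list, so the possible non-concavity of the second branch (when $\det(\xi_C)\le\tfrac{\alpha_{2C}\xi_{1C}}{k}$) is harmless in either version.
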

\begin{proof}
Since the function $J_C$ is continuous on the compact set $\{(x,n_1,n_2)\in \mathbb{R}^3: 0\leq x\leq (1-p)n_2,0\leq n_1\leq \tau,n_2\leq 1\}$, there exists at least one solution to (\ref{pC}). In the following proof, we obtain all possible solutions through Karush-Kuhn-Tucker conditions, and then by searching over all possible solutions, we can obtain the optimal solution $(x^*,\mathbf{n}_C^*)$ to (\ref{pC}).

Define the associated Lagrangian function $L_0:\mathbb{R}^7\rightarrow[-\infty,+\infty)$ as
\[
L_0(x,\mathbf{n}_{C},\lambda)=\left\{\begin{aligned}
&-J_C(x,\mathbf{n}_{C})+\sum_{j=0}^4\lambda_jf_j(x,n_{1C},n_{2C}),&&\lambda_j\geq 0;\\
&-\infty,&&otherwise.
\end{aligned}
\right.
\]
Note that constraints in (\ref{pC}) satisfy the Slater constraint qualification, i.e.,
\begin{equation*}
\text{there exists}\; (x,n_{1C},n_{2C})^\top=\left(\frac{1-p}{4},\frac{\tau}{2},\frac{1}{2}\right)^\top \;\text{with}\; f_j<0\; \text{for all}\; j=0,1,\cdots,4.
\end{equation*}
By Karush-Kuhn-Tucker conditions (see, for example, Proposition 3.2.3 and Theorem 3.2.8 in \cite{Borwein}), if $(x^*,\mathbf{n}_{C}^*)$ is the global optimal solution to (\ref{pC}), then there must be a Lagrange multiplier vector $\lambda^*$ with $\lambda^*=(\lambda_0^*,\lambda_1^*,\cdots,\lambda^*_4)^\top$ such that $(x^*,\mathbf{n}_{C}^*,\lambda^*)$ satisfies
\begin{align}
&\frac{\partial}{\partial x}L_0=-(1-n_{2C}^*-kx^*)\alpha_{2C}-\lambda_0^*+\lambda_1^*=0;\label{KKTx}\\
&\frac{\partial}{\partial n_{1C}}L_0=-\theta_{1C}+n_{1C}^*\xi_{1C}+n_{2C}^*\xi_{0C}-{\alpha_{1C}}-\lambda_2^*+\lambda_3^*=0;\label{KKT1}\\
&\frac{\partial}{\partial n_{2C}}L_0=-\theta_{2C}+n_{2C}^*\xi_{2C}+n_{1C}^*\xi_{0C}-(1-x^*)\alpha_{2C}-\lambda_1^*(1-p)+\lambda_4^*=0;\label{KKT2}\\
&\lambda_j^*\geq0,\;f_j(x^*,n_{1C}^*,n_{2C}^*)\leq 0,\;\lambda_j^*f_j(x^*,n_{1C}^*,n_{2C}^*)=0,\; j=0,1,2,3,4.\label{KKT0}
\end{align}
In fact, whether $\lambda_j^*,j=0,1,\cdots,4$ are equal to zero divides the possible solutions to (\ref{KKTx})-(\ref{KKT0}) into following cases:

\begin{description}
  \item[Region 1:] $\lambda_0^*=0,\;\lambda_1^*=[1-\eta_{2,1}^*(1+k(1-p))]\alpha_{2C},\;\lambda_2^*=0,\;\lambda_3^*=0,\;
      \lambda_4^*=0,\;x_1^*=(1-p)\eta_{2,1}^*,\;
      \eta_1^*=\left(\xi_C+(1-p)(2+k(1-p))\alpha_{2C}\mathbf{e}_{22}\right)^{-1}\cdot\left[\theta_C
+{\alpha_{1C}}\mathbf{e}_1+\left(2-p\right)\alpha_{2C}\mathbf{e}_{2}\right]$ with conditions $\lambda_1^*>0,\;0\leq\eta_{1,1}^*\leq \tau,\; 0\leq\eta_{2,1}^*\leq 1$;
  \item[Region 2:]
      $\lambda_0^*=0,\;\lambda_1^*=[1-\eta_{2,2}^*(1+k(1-p))]\alpha_{2C},
      \;\lambda_2^*=\eta_{2,2}^*\xi_{0C}-\theta_{1C}-\alpha_{1C},
      \;\lambda_3^*=0,\;\lambda_4^*=0,\;x_2^*=(1-p)\eta_{2,2}^*,\;\eta_2^*=\left(0,\frac{\theta_{2C}+(2-p)\alpha_{2C}}{\xi_{2C}+(1-p)[2+(1-p)k]\alpha_{2C}}
      \right)^\top$ with conditions $\lambda_1^*>0,\;\lambda_2^*>0,\; 0\leq\eta_{2,2}^*\leq 1$;
  \item[Region 3:]
      $\lambda_0^*=0,\;\lambda_1^*=[1-\eta_{2,3}^*(1+k(1-p))]\alpha_{2C},
      \;\lambda_2^*=0,\;\lambda_3^*=\theta_{1C}+\alpha_{1C}-\eta_{2,3}^*\xi_{0C}-\tau \xi_{1C},
      \;\lambda_4^*=0,\;x_3^*=(1-p)\eta_{2,3}^*,\;\eta_3^*=\left(\tau,\frac{\theta_{2C}+(2-p)\alpha_{2C}-\tau\xi_{0C}}{\xi_{2C}+(1-p)[2+(1-p)k]\alpha_{2C}}
\right)^\top$ with conditions $\lambda_1^*>0,\;\lambda_3^*>0,\; 0\leq\eta_{2,2}^*\leq 1$;
\item[Region 4:]
      $\lambda_0^*=\lambda_1^*-\alpha_{2C},\;\lambda_1^*=\frac{\eta_{1,4}^*\xi_{0C}-\theta_{2C}-\alpha_{2C}}{1-p},
      \;\lambda_2^*=0,\;\lambda_3^*=0,
      \;\lambda_4^*=0,\;x_4^*=0,\;\eta_4^*=\left(\frac{\theta_{1C}+{\alpha_{1C}}}{\xi_{1C}},0\right)^\top$ with conditions $\lambda_0^*>0,\;\lambda_1^*>0,\; 0\leq\eta_{1,4}^*\leq \tau$;
\item[Region 5:]
      $\lambda_0^*=0,\;\lambda_1^*=0,
      \;\lambda_2^*=0,\;\lambda_3^*=0,
      \;\lambda_4^*=\theta_{2C}+\alpha_{2C}-\eta_{1,5}^*\xi_{0C}-\xi_{2C},\;x_5^*=0,
      \;\eta_5^*=\left(\frac{\theta_{1C}+{\alpha_{1C}}-\xi_{0C}}
{\xi_{1C}},1\right)^\top$ with conditions $\lambda_4^*>0,\; 0\leq\eta_{1,5}^*\leq \tau$;
\item[Region 6:]
      $\lambda_0^*=\lambda_1^*-\alpha_{2C},\;\lambda_1^*=-\frac{\theta_{2C}+\alpha_{2C}}{1-p},
      \;\lambda_2^*=-\theta_{1C}-\alpha_{1C},\;\lambda_3^*=0,
      \;\lambda_4^*=0,\;x_6^*=0,
      \;\eta_6^*=\left(0,0\right)^\top$ with conditions $\lambda_0^*>0,\; \lambda_1^*>0,\;\lambda_2^*>0$;
\item[Region 7:]
      $\lambda_0^*=0,\;\lambda_1^*=0,
      \;\lambda_2^*=\xi_{0C}-\theta_{1C}-\alpha_{1C},\;\lambda_3^*=0,
      \;\lambda_4^*=\theta_{2C}+\alpha_{2C}-\xi_{2C},\;x_7^*=0,
      \;\eta_7^*=\left(0,1\right)^\top$ with conditions $\lambda_2^*>0,\; \lambda_4^*>0$;
\item[Region 8:]
      $\lambda_0^*=\lambda_1^*-\alpha_{2C},\;\lambda_1^*=\frac{\tau\xi_{0C} -\theta_{2C}-\alpha_{2C}}{1-p},
      \;\lambda_2^*=0,\;\lambda_3^*=\theta_{1C}+\alpha_{1C}-\tau\xi_{1C},
      \;\lambda_4^*=0,\;x_8^*=0,
      \;\eta_8^*=\left(\tau,0\right)^\top$ with conditions $\lambda_0^*>0,\;\lambda_1^*>0,\; \lambda_3^*>0$;
\item[Region 9:]
      $\lambda_0^*=0,\;\lambda_1^*=0,
      \;\lambda_2^*=0,\;\lambda_3^*=\theta_{1C}+\alpha_{1C}-\tau\xi_{1C}-\xi_{0C},
      \;\lambda_4^*=\theta_{2C}+\alpha_{2C}-\tau\xi_{0C}-\xi_{2C},\;x_9^*=0,
      \;\eta_9^*=\left(\tau,1\right)^\top$ with conditions $\lambda_3^*>0,\;\lambda_4^*>0$;
\item[Region 10:]  in the case of $\det\left(\xi_C-\frac{\alpha_{2C}}{k}\mathbf{e}_{22}\right)\neq 0$,
      $\lambda_0^*=0,\;\lambda_1^*=0,
      \;\lambda_2^*=0,\;\lambda_3^*=0,
      \;\lambda_4^*=0,\;x_{10}^*=\frac{1-\eta_{2,10}^*}{k},
      \;\eta_{10}^*=\left(\xi_C-\frac{\alpha_{2C}}{k}\mathbf{e}_{22}\right)^{-1}\cdot\left[\theta_C
      +{\alpha_{1C}}\mathbf{e}_1+\left(1-\frac{1}{k}\right)\alpha_{2C}\mathbf{e}_{2}\right]$ with
      conditions $\eta_{2,10}^*\leq 1,\; 0\leq\eta_{2,10}^*\leq \tau,\; x_{10}^*\leq (1-p)\eta_{2,10}^*$;\\
      or  in the case of $\det\left(\xi_C-\frac{\alpha_{2C}}{k}\mathbf{e}_{22}\right)=0$,
      $\lambda_0^*=0,\;\lambda_1^*=0,
      \;\lambda_2^*=0,\;\lambda_3^*=0,
      \;\lambda_4^*=0,\;x_{10}^*=\frac{1-\eta_{2,10}^*}{k},
      \;\eta_{10}^*=(\eta_{1,10}^*,\eta_{2,10}^*)^\top$ with
      conditions $\eta_{2,10}^*\leq 1,\; 0\leq\eta_{2,10}^*\leq \tau,\; x_{10}^*\leq (1-p)\eta_{2,10}^*,\; \xi_{1C}\eta_{1,10}^*+\xi_{0C}\eta_{2,10}^*=\theta_{1C}+{\alpha_{1C}},\;
      \frac{\xi_{0C}(\theta_{1C}+{\alpha_{1C}})}{\xi_{1C}}
       =\theta_{2C}+\left(1-\frac{1}{k}\right)\alpha_{2C}$;
\item[Region 11:]  in the case of $\xi_{2C}-\frac{\alpha_{2C}}{k}\neq 0$,
      $\lambda_0^*=0,\;\lambda_1^*=0,
      \;\lambda_2^*=\eta_{2,11}^*\xi_{0C}-\theta_{1C}-\alpha_{1C},\;\lambda_3^*=0,
      \;\lambda_4^*=0,\;x_{11}^*=\frac{1-\eta_{2,11}^*}{k},
      \;\eta_{11}^*=\left(0,\frac{\theta_{2C}+(1-\frac{1}{k})\alpha_{2C}}{\xi_{2C}-
\frac{\alpha_{2C}}{k}}\right)^\top$ with
      conditions $\lambda_2^*>0,\;\eta_{2,11}^*\leq 1,\; x_{11}^*\leq (1-p)\eta_{2,11}^*$;\\
      or in the case of $\xi_{2C}-\frac{\alpha_{2C}}{k}= 0$,
      $\lambda_0^*=0,\;\lambda_1^*=0,
      \;\lambda_2^*=\eta_{2,11}^*\xi_{0C}-\theta_{1C}-\alpha_{1C},\;\lambda_3^*=0,
      \;\lambda_4^*=0,\;x_{11}^*=\frac{1-\eta_{2,11}^*}{k},
      \;\eta_{11}^*=(0,\eta_{2,11}^*)^\top$ with
      conditions $\lambda_2^*>0,\;\eta_{2,11}^*\leq 1,\; x_{11}^*\leq (1-p)\eta_{2,11}^*,\;\theta_{2C}+(1-\frac{1}{k})\alpha_{2C}=0$;
\item[Region 12:]  in the case of $\xi_{2C}-\frac{\alpha_{2C}}{k}\neq 0$,
      $\lambda_0^*=0,\;\lambda_1^*=0,
      \;\lambda_2^*=0,\;\lambda_3^*=\theta_{1C}+\alpha_{1C}-\tau\xi_{1C}-\eta_{2,12}^*\xi_{0C},
      \;\lambda_4^*=0,\;x_{12}^*=\frac{1-\eta_{2,12}^*}{k},
      \;\eta_{12}^*=\left(\tau,\frac{\theta_{2C}+(1-\frac{1}{k})\alpha_{2C}-\tau\xi_{0C}}
{\xi_{2C}-\frac{\alpha_{2C}}{k}}\right)^\top$ with
      conditions $\lambda_3^*>0,\;\eta_{2,12}^*\leq 1,\; x_{12}^*\leq (1-p)\eta_{2,12}^*$;\\
      in the case of $\xi_{2C}-\frac{\alpha_{2C}}{k}= 0$,
      $\lambda_0^*=0,\;\lambda_1^*=0,
      \;\lambda_2^*=0,\;\lambda_3^*=\theta_{1C}+\alpha_{1C}-\tau\xi_{1C}-\eta_{2,12}^*\xi_{0C},
      \;\lambda_4^*=0,\;x_{12}^*=\frac{1-\eta_{2,11}^*}{k},
      \;\eta_{12}^*=\left(\tau,\eta_{2,12}^*\right)^\top$ with
      conditions $\lambda_3^*>0,\;\eta_{2,12}^*\leq 1,\; x_{12}^*\leq (1-p)\eta_{2,12}^*,\;\theta_{2C}+(1-\frac{1}{k})\alpha_{2C}-\tau\xi_{0C}=0$;
\end{description}
where in Region $v$ for $v=1,2,\cdots, 12$, the solution $(x^*,\mathbf{n}_{C}^*)$ to (\ref{KKTx})-(\ref{KKT0}) is assumed to be $(x_v^*,\eta_{v}^*)$ with $\eta_{v}^*=(\eta_{1,v}^*,\eta_{2,v}^*)^\top$. It is not difficult to verify that, provided the region is given (i.e., whether $\lambda_i^*,i=0,1,\cdots,4$ are equal to zero has been determined),
there exists a solution to (\ref{KKTx})-(\ref{KKT0}) if and only if the equations and conditions in the related region are satisfied. Furthermore, with conditions in related region, we can check that $x^*=\min\left\{\frac{1-n_{2C}^*}{k},(1-p)n_{2C}^*\right\}$ always holds.

To complete the proof, it suffices to show that there are no solutions to (\ref{KKTx})-(\ref{KKT0}) in other cases of $\lambda$. If $\lambda_0^*>0,\lambda_1^*=0$, then $\lambda_0^*=-(1-n_{2C}^*)\alpha_{2C}\leq 0$ in (\ref{KKTx}) is a contradiction. Hence, no solutions to (\ref{KKTx})-(\ref{KKT0}) exist in the cases $\lambda^*\in\{\lambda:\lambda_0>0,\lambda_1=0,\lambda_j\geq0,j=2,3,4\}$. If $\lambda_2^*>0,\lambda_3^*>0$, then $n_{1C}^*=0,n_{1C}^*=\tau$ is a contradiction. Hence, no solutions to (\ref{KKTx})-(\ref{KKT0}) exist in the cases $\lambda^*\in\{\lambda:\lambda_2>0,\lambda_3>0,\lambda_j\geq0,j=0,1,4\}$. If $\lambda_0^*>0,\lambda_1^*>0,\lambda_4^*>0$, then $n_{2C}^*=0,n_{2C}^*=1$ is a contradiction.
If $\lambda_0^*=0,\lambda_1^*>0,\lambda_4^*>0$, then $\lambda_1^*=-k(1-p)\alpha_{2C}<0$ is a contradiction. Hence, no solutions to (\ref{KKTx})-(\ref{KKT0}) exist in the cases $\lambda^*\in\{\lambda:\lambda_1>0,\lambda_4>0,\lambda_j\geq0,j=0,2,3\}$.

Summarizing, the optimal fraction of diverted output $x^*$ is give by (\ref{xx}) and the optimal stock holding $\mathbf{n}_C^*$ is given by (\ref{ps_nC}) with Regions $1-12$ in Proposition \ref{th2}. And this completes the proof.
\hfill$\Box$
\end{proof}

\begin{lemma}\label{lemma2}{\it
The optimal stock holding $\mathbf{n}^*_M$ is given by (\ref{ps_nM}), i.e.,
\[
\mathbf{n}_M^*=\xi_M^{-1}\cdot\left[\theta_M+{\alpha_{1M}}\mathbf{e}_1+(1-x^*)\alpha_{2M}\mathbf{e}_2\right],
\]
where $x^*$ is given by (\ref{xx}).
}\end{lemma}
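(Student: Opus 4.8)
The plan is to recognize (\ref{pM}) for what it is---an unconstrained minimization of $-J_M$ over all of $\mathbb{R}^2$ with no binding constraints---so that, unlike the controlling shareholder's problem treated in Lemma~\ref{lemma1}, no Karush--Kuhn--Tucker machinery is needed: the maximizer is simply the unique stationary point of a strictly concave quadratic. First I would rewrite $J_M$ from (\ref{JM}) compactly as $J_M(\mathbf{n}_M)=\mathbf{n}_M^\top\big[\theta_M+\alpha_{1M}\mathbf{e}_1+(1-x)\alpha_{2M}\mathbf{e}_2\big]-\frac{1}{2}\mathbf{n}_M^\top\xi_M\mathbf{n}_M$, using $n_{1M}\alpha_{1M}+n_{2M}(1-x)\alpha_{2M}=\mathbf{n}_M^\top\big(\alpha_{1M}\mathbf{e}_1+(1-x)\alpha_{2M}\mathbf{e}_2\big)$ and the symmetry of $\xi_M$.

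The one point requiring care is that $\xi_M$ be positive definite, which I would check directly from its entries: $\xi_{1M}=\gamma_M\big(S_1\sigma/X_M\big)^2>0$ and $\det\xi_M=\xi_{1M}\xi_{2M}-\xi_{0M}^2=\gamma_M^2\frac{S_1^2S_2^2}{X_M^4}\big[\sigma^2(\sigma^2+\delta^2)-\sigma^4\big]=\gamma_M^2\frac{S_1^2S_2^2\sigma^2\delta^2}{X_M^4}>0$, since $\gamma_M,\sigma,\delta$ are strictly positive and $X_M>0$ is in force throughout Subsection~\ref{subsection3.1}. Hence $-J_M$ is a strictly convex, coercive function on $\mathbb{R}^2$, so it possesses a unique global minimizer, characterized by $\nabla_{\mathbf{n}_M}J_M=0$, i.e.\ $\theta_M+\alpha_{1M}\mathbf{e}_1+(1-x)\alpha_{2M}\mathbf{e}_2-\xi_M\mathbf{n}_M=0$; inverting $\xi_M$ (which exists by the determinant computation) gives $\mathbf{n}_M^*=\xi_M^{-1}\big[\theta_M+\alpha_{1M}\mathbf{e}_1+(1-x)\alpha_{2M}\mathbf{e}_2\big]$.

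It then remains only to note that the minority shareholder takes the diverting fraction as exogenous, and that at the solution of the controlling shareholder's optimization (\ref{problemCn}) this fraction equals $x^*$ as given in (\ref{xx}) by Lemma~\ref{lemma1}; substituting $x=x^*$ turns the displayed formula into (\ref{ps_nM}), completing the argument. The main obstacle---such as it is---is confirming $\xi_M\succ 0$, which guarantees both that the stationary point is genuinely a maximizer and that $\xi_M^{-1}$ is well defined; everything else is the textbook characterization of the optimizer of an unconstrained concave quadratic.
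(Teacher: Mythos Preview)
Your proof is correct and follows essentially the same approach as the paper: both recognize that $-J_M$ is a strictly convex, coercive quadratic on $\mathbb{R}^2$ (via positive definiteness of $\xi_M$), so the unique minimizer is the stationary point obtained by solving $\nabla_{\mathbf{n}_M}(-J_M)=0$. Your explicit computation of $\det\xi_M=\gamma_M^2 S_1^2 S_2^2\sigma^2\delta^2/X_M^4>0$ is in fact a bit more self-contained than the paper, which simply asserts positive definiteness before invoking the same first-order condition.
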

\begin{proof}
Notice that the function $-J_M$ is coercive on $\mathbb{R}^2$, i.e.,
\[
(n_1,n_2)\in \mathbb{R}^2, n_1^2+n_2^2\rightarrow+\infty\Rightarrow -J_M(n_1,n_2)\rightarrow+\infty,
\]
and then by Theorem 9.3-1 in \cite{Ciarlet}, this guarantees the existence of the solutions to (\ref{pM}).
From (\ref{JMM}), simple computation gives
\begin{equation*}
\bigtriangledown_{\mathbf{n}}(-J_M)=\left(\frac{\partial(-J_M)}{\partial n_{1M}},\frac{\partial(-J_M)}{\partial n_{2M}}\right)^\top=\xi_M\mathbf{n}_M-\left[\theta_M+{\alpha_{1M}}\mathbf{e}_1+(1-x^*)\alpha_{2M}\mathbf{e}_2\right],
\end{equation*}
and the Hesse matrix
\[
\bigtriangledown^2 (-J_M)=\left(\frac{\partial^2(-J_M)}{\partial n_{iM}\partial n_{jM}}\right)_{i,j=1,2}=\xi_M.
\]
Since $\bigtriangledown^2 (-J_M)$ is positive definite for each $(n_{1M},n_{2M})^\top\in \mathbb{R}^2$, the function $-J_M$ is strictly convex on $\mathbb{R}^2$ (see, for example, Theorem 3.3 of Chapter \uppercase\expandafter{\romannumeral3} in \cite{Berkovitz}). Then there exists at most one optimal solution to (\ref{pM}) (see, for example, Theorem 3.1 of Chapter \uppercase\expandafter{\romannumeral4} in \cite{Berkovitz}), and this implies that the optimal solution $\mathbf{n}^*_M$ to (\ref{pM}) exists uniquely.
Now making use of Karush-Kuhn-Tucker conditions, the unique optimal solution to the problem (\ref{pM}) is obtained by solving $\bigtriangledown_{\mathbf{n}}(-J_M)=0$ for $\mathbf{n}_M$ which is just (\ref{ps_nM}). \hfill$\Box$
\end{proof}

Now we turn to the proof of Proposition \ref{th2}. The optimal consumptions $c_{i}^* \;(i=C,M)$ are easily obtained from (\ref{problemCc}) and (\ref{problemMc}). The rest parts of proposition have been proved in Lemmas \ref{lemma1} and \ref{lemma2}. Furthermore, the Hesse matrix of $-J_{C}$ with respect to $(x,n_{1C},n_{2C})^\top$ is
\[
\bigtriangledown^2 (-J_{C})=\left(
\begin{aligned}
k\alpha_{2C}&&0\;&&\alpha_{2C}\\
0\;\;&&\xi_{1C}&&\xi_{0C}\\
\alpha_{2C}&&\xi_{0C}&&\xi_{2C}
\end{aligned}
\right).
\]
When the condition $\det(\xi_C)>\frac{\alpha_{2C}\xi_{1C}}{k}$ holds, the Hesse matrix $\bigtriangledown^2 (-J_{C})$ is positive definite for each $(x,n_{1C},n_{2C})\in[0,1]\times[0,1]\times [0,1]$. Then the optimal solution $\mathbf{n}_C^*$ exists uniquely (see, also, Theorem 3.1 of Chapter \uppercase\expandafter{\romannumeral4} in \cite{Berkovitz}) and is determined by the conditions in related region. We note here that in later cases of Regions $10,11$, and $12$, conditions therein always contradict  $\det(\xi_C)>\frac{\alpha_{2C}\xi_{1C}}{k}$.

\noindent{\bf Proof of Proposition \ref{th1}.}
Firstly, we prove that the optimal solution $(\overline{x},\overline{\mathbf{n}}_C^*)$ to (\ref{problemCn}) exists uniquely in the case of $p=1$. Since $\overline{x}^*=0$ holds obviously, then it is equivalent to prove that the optimal solution $\overline{\mathbf{n}}_C^*$ to (\ref{pCC}) exists uniquely. Indeed, since the function $-J_{0C}$ is continuous on the compact set $[0,\tau]\times[0,1]$, there exists at least one solution to (\ref{pCC}). On the other hand, similar to the proof of Lemma \ref{lemma2}, it is seen that the function $-J_{0C}$ is strictly convex on $[0,\tau]\times[0,1]$ such that there exists at most one optimal solution to (\ref{pCC}). Hence, the optimal solution $\overline{\mathbf{n}}_C^*$ to (\ref{pCC}) exists uniquely. Analogous to the proof of Proposition \ref{th2}, the rest parts of Proposition \ref{th1} (i.e., deriving the expressions of $\overline{c}_i^*$ and $\overline{\mathbf{n}}_i^*$ for $i=C,M$) are obtained via Karush-Kuhn-Tucker conditions.

\noindent{\bf Proof of Proposition \ref{th4}.}
Using (\ref{W}), (\ref{ps_c}) and the market clearing conditions (\ref{ec-1})-(\ref{ec-4}), we have
\[\tau S_1+S_2=X_{C}+X_{M},\quad X_{i}=c_{i}^*\rho^{-\frac{1}{\gamma_i}},\;i=C,M,\]
and
\begin{equation}
\widehat{D}=\rho^{\frac{1}{\gamma_C}}X_{C}+\rho^{\frac{1}{\gamma_M}}X_{M}.\label{dX}
\end{equation}
Then it is easy to see that
\begin{align*}
&\widehat{D}_1=y_1\widehat{D},&&c_{M}^*=y\widehat{D},&&X_M=y\widehat{D}\rho^{-\frac{1}{\gamma_M}},
&&S_1=\frac{y_2}{\tau}\Gamma_0\widehat{D},\\
&\widehat{D}_2=(1-y_1)\widehat{D},&&c_{C}^*=(1-y)\widehat{D},&&X_C=(1-y)\widehat{D}\rho^{-\frac{1}{\gamma_C}},
&& S_2=(1-y_2)\Gamma_0\widehat{D},
\end{align*}
and so (\ref{ratio}) is obtained.

Making use of (\ref{ps_nM}), it is seen that the stock growth rate $\mu$ can be expressed as (\ref{mu}). Applying It\^{o}'s Lemma to both sides of (\ref{dX}), then matching the terms of $dt,dW$ and $dB$ gives
\begin{align*}
\widehat{D}_1\mu_{1D}+\widehat{D}_2\mu_{2D}=&r\widehat{D}+\Gamma_1S_1(\mu_1-r)+\frac{D_1\Gamma_1}{\tau}
+\Gamma_2S_2(\mu_2-r)+(1-x^*)\Gamma_2D_2\\
&+\left(\rho^{\frac{1}{\gamma_C}}l_{1C}+\rho^{\frac{1}{\gamma_M}}l_{1M}\right)\widehat{D}_1
+\left(\rho^{\frac{1}{\gamma_C}}l_{2C}+\rho^{\frac{1}{\gamma_M}}l_{2M}\right)\widehat{D}_2\\
 &+\left[\rho^{\frac{1}{\gamma_C}}\left(x^*-\frac{k}{2}(x^*)^2\right)
+\rho^{\frac{1}{\gamma_M}}\frac{k}{2}(x^*)^2\right]D_2-\rho^{\frac{1}{\gamma_C}}c_C^*
-\rho^{\frac{1}{\gamma_M}}c_M^*,
\end{align*}
\[
\sigma_D\widehat{D}=(\Gamma_1S_1+\Gamma_2S_2)\sigma,\qquad
\delta_D\widehat{D}_2=\Gamma_2S_2\delta.
\]
And the interest rate $r$, the stock volatilities $\sigma$ and $\delta$ are shown to be (\ref{r}), (\ref{sigma}) and (\ref{delta}) respectively.

Using exactly the same steps as above, we obtain the parameters in $y$, $y_1$ and $y_2$ respectively:
in the case of $y\widehat{D}=\rho^{\frac{1}{\gamma_M}}X_M$, equations
\begin{align*}
y(\widehat{D}_1\mu_{1D}+\widehat{D}_2\mu_{2D})&+\widehat{D}(\mu_y+\sigma_D\sigma_y)+\widehat{D}_2\delta_D\delta_y\\
=&\rho^{\frac{1}{\gamma_M}}\bigg[X_Mr+n_{1M}^*\left(S_1(\mu_{1}-r)+\frac{D_1}{\tau}\right)
+n_{2M}^*\left(S_2(\mu_{2}-r)+(1-x^*)D_2\right)\\
&+l_{1M}\widehat{D}_1+l_{2M}\widehat{D}_2-c_M^*+\frac{k(x^*)^2}{2}D_2\bigg],
\end{align*}
\[y\widehat{D}\sigma_D+\widehat{D}\sigma_y=\rho^{\frac{1}{\gamma_M}}(n_{1M}^*S_1+n_{2M}^*S_2)\sigma,\quad
y\widehat{D}_2\delta_D+\widehat{D}\delta_y=\rho^{\frac{1}{\gamma_M}}n_{2M}^*S_2\delta\]
gives (\ref{y}); in the case of $\widehat{D}_1=y_1\widehat{D}$, equations
\begin{align*}
&\widehat{D}_1\mu_{1D}=y_1(\widehat{D}_1\mu_{1D}+\widehat{D}_2\mu_{2D})+\widehat{D}\mu_{1y}
+\widehat{D}\sigma_D\sigma_{1y}+\widehat{D}_2\delta_{1y}\delta_{D},\\
&\widehat{D}_1\sigma_{D}=y_1\widehat{D}\sigma_D+\widehat{D}\sigma_{1y},\quad
0=y_1\widehat{D}_2\delta_D+\widehat{D}\delta_{1y}
\end{align*}
gives (\ref{y1}); in the case of $\tau S_1=y_2(\tau S_1+S_2)$, equations
\begin{align*}
&\tau S_1\mu_1=y_2(\tau S_1\mu_1+S_2\mu_2)+(\tau S_1+S_2)(\mu_{2y}+\sigma_{2y}\sigma)+S_2\delta_{2y}\delta,\\
&\tau S_1\sigma=y_2(\tau S_1+S_2)\sigma+(\tau S_1+S_2)\sigma_{2y},\quad
0=y_2S_2\delta+(\tau S_1+S_2)\delta_{2y}
\end{align*}
gives (\ref{y2}).

Finally, the shareholders' optimal stock holdings are easily obtained through (\ref{problemCn}) and market clearing conditions (\ref{ec-1})-(\ref{ec-2}).

\noindent{\bf Proof of Proposition \ref{th3}.} The proof is analogous to that of Proposition \ref{th4}.

\noindent{\bf Proofs used in Remark \ref{boundary}.}
We just prove the results for Proposition \ref{th4}, and the proofs of results for Proposition \ref{th3} is analogous. It is clear that $\mathbf{n}_C^*$ converges to $(\tau,1)^\top$ when $y$ converges to $0$, i.e., (\ref{b1}) holds, which implies $x^*\langle0\rangle=0,\Gamma_0\langle0\rangle=\rho^{-\frac{1}{\gamma_C}},\Gamma_1\langle0\rangle=\tau\rho^{\frac{1}{\gamma_C}}$ and $\Gamma_2\langle0\rangle=\rho^{\frac{1}{\gamma_C}}$. Then (\ref{b4})-(\ref{b8}) are easily obtained from (\ref{sigma}), (\ref{delta}) and (\ref{y}). In fact, noticing that $\Lambda=\{\mu-r(\mathbf{e}_1+\mathbf{e}_2)\}\langle0\rangle$, direct compution gives the expressions (\ref{b2}) and (\ref{b3}). Hence, it suffices to derive $\Lambda$ to complete the proofs.

Observing regions in Proposition \ref{th2}, the facts $\mathbf{n}_C^*\langle0\rangle=(\tau,1)^\top$ and $\frac{1}{(1-p)k+1}<1=\mathbf{n}_{2C}^*\langle0\rangle$ hold if there exists a constant $0<\delta_v<1$ such that $\mathbf{n}_C^*$ is located in Region $v$ ($v=5,9,10,12$) for all $y\in(0,\delta_v)$. So we derive $\Lambda^v$ in Regions $v=5,9,10,12$ respectively.

In Region $5$, $\eta^*_{1,5}=\frac{\theta_{1C}+{\alpha_{1C}}-\xi_{0C}}
{\xi_{1C}}$ implies
\begin{equation}\label{eq1}
\mu_1-r=\frac{1}{S_1/X_C}(\xi_{1C}\eta^*_{1,5}- \alpha_{1C}+\xi_{0C}),
\end{equation}
which, by letting $y$ converge to $0$, gives $\Lambda^5_1$ in (\ref{bb5}). The parameter $\mu$ in (\ref{mu}) deduces
\[
\frac{S_1}{X_M}(\mu_1-r)=\xi_{1M}(\tau-\eta^*_{1,5})-\alpha_{1M},\quad
\frac{S_2}{X_M}(\mu_2-r)=\xi_{0M}(\tau-\eta^*_{1,5})-(1-x^*)\alpha_{2M}.
\]
Combining above two equations, it is seen
\begin{equation}\label{eq2}
\mu_2-r=\frac{\xi_{0M}/\xi_{1M}}{S_2/X_M}\bigg(\frac{S_1}{X_M}(\mu_1-r)+\alpha_{1M}\bigg)
-\frac{(1-x^*)\alpha_{2M}}{S_2/X_M}.
\end{equation}
Now substituting $\mu_1-r$ of (\ref{eq1}) in (\ref{eq2}) and then letting $y$ converges to $0$, $\Lambda^5_2$ in (\ref{bb5}) is obtained.

In Region 9, substituting $(\tau,1)^\top$ for $\eta^v$  in (\ref{mu}) gives
\[\mu-r(\mathbf{e}_1+\mathbf{e}_2)=\frac{\frac{\tau y}{y_2}\mathbf{e}_{11}+\frac{y}{1-y_2}\mathbf{e}_{22}}
{\rho^{\frac{1}{\gamma_M}}\Gamma_0}(
-\alpha_{1M}\mathbf{e}_1-(1-x^*)\alpha_{2M}\mathbf{e}_2).\]
Then letting $y$ converge to $0$, $\Lambda^9$ in (\ref{bb9}) is obtained.

In the former case of Region 10, $\Lambda^{10}$ in (\ref{bb10}) is easily derived by letting $y$ converge to $0$ in
\[\eta_{10}^*=\left(\xi_C-\frac{\alpha_{2C}}{k}\mathbf{e}_{22}\right)^{-1}\cdot\left[\theta_C
+{\alpha_{1C}}\mathbf{e}_1+\left(1-\frac{1}{k}\right)\alpha_{2C}\mathbf{e}_{2}\right].\]
In the later case of Region $10$, $\Lambda^{10}$ in (\ref{bb0}) can be derived by letting $y$ converge to $0$ in conditions
\[
\xi_{1C}\eta_{1,10a}^*+\xi_{0C}\eta_{2,10a}^*=\theta_{1C}+{\alpha_{1C}},
\frac{\xi_{0C}(\theta_{1C}+{\alpha_{1C}})}{\xi_{1C}}
=\theta_{2C}+\left(1-\frac{1}{k}\right)\alpha_{2C}.
\]

The proof in Region 12 is analogous to that in Region 5. And finally we complete the proofs.

\noindent{\bf Proofs of Propositions \ref{th5} and \ref{th6}.}
As is stated in Remark \ref{remark3.3}, in Region $v\;(v=1,2,\cdots,9)$, if the solution $\overline{\eta}_v^*$ to related fixed-point equation of Step 1 in Remark \ref{remark3.2} exists and if related conditions are satisfied for $\overline{\eta}_v^*$, then $\overline{\eta}_v^*$ is indeed the controlling shareholder's optimal stock holding in equilibrium. Hence, in order to verify the extinction of shareholders, we can give conditions on which not only does the controlling shareholder's optimal stock holding exist in related regions satisfying $\overline{n}^*_{jM}=0$ or $\overline{n}^*_{jC}=0$ ($j=1,2$) in Proposition \ref{th1} but associated conditions also hold in equilibrium. We complete the proof by listing the relations as follows:
\begin{description}
  \item[$(1)$] {in Regions 2 and 7, $\overline{n}^*_{1C}=0$ implies the controlling shareholder becomes extinct in firm $1$, which is relative to the conditions $(\mathcal{C}'1)$ and $(\mathcal{C}'2)$;}
  \item[$(2)$] in Regions 4 and 8, $\overline{n}^*_{2C}=0$ implies the controlling shareholder becomes extinct in firm $2$, which is relative to the conditions $(\mathcal{C}'3)$ and $(\mathcal{C}'4)$;
  \item[$(3)$] in Regions 3 and 8, $\overline{n}^*_{1C}=\tau$ implies the minority shareholder becomes extinct in firm $1$, which is relative to the conditions $(\mathcal{M}'1)$ and $(\mathcal{M}'2)$;
  \item[$(4)$] in Regions 5 and 7, $\overline{n}^*_{2C}=1$ implies the minority shareholder becomes extinct in firm $2$, which is relative to the conditions $(\mathcal{M}'3)$ and $(\mathcal{M}'4)$.
\end{description}
Note that Regions 6 ($y=1$) and 9 ($y=0$) are not included because it requires $0<y<1$ in Definition \ref{def_ex}.

\noindent{\bf Proofs of Propositions \ref{th7} and \ref{th8}.}
Similarly, in Region $v\;(v=1,2,\cdots,12)$, if the solution ${\eta}_v^*$ to related fixed-point equation of Step 1 in Remark \ref{remark3.2} exists and if related conditions and the condition $\det(\xi_C)>\frac{\alpha_{2C}\xi_{1C}}{k}$ are satisfied for ${\eta}_v^*$, then ${\eta}_v^*$ is indeed the controlling shareholder's optimal stock holding in equilibrium. Hence, in order to verify the extinction of shareholders, we can give conditions on which not only does the controlling shareholder's optimal stock holding exist in related regions satisfying $n^*_{jM}=0$ or $n^*_{jC}=0$ ($j=1,2$) in Proposition \ref{th2} but associated conditions and the condition $\det(\xi_C)>\frac{\alpha_{2C}\xi_{1C}}{k}$ also hold in equilibrium. We complete the proof by listing the relations as follows:
\begin{description}
  \item[$(1)$] {in Regions 2, 7 and 11, $n^*_{1C}=0$ implies the controlling shareholder becomes extinct in firm $1$, which is relative to the conditions $(\mathcal{C}1)$-$(\mathcal{C}3)$;}
  \item[$(2)$] in Regions 4 and 8, $n^*_{2C}=0$ implies the controlling shareholder becomes extinct in firm $2$, which is relative to the conditions $(\mathcal{C}4)$ and $(\mathcal{C}5)$;
  \item[$(3)$] in Regions 3, 8 and 12, $n^*_{1C}=\tau$ implies the minority shareholder becomes extinct in firm $1$, which is relative to the conditions $(\mathcal{M}1)$-$(\mathcal{M}3)$;
  \item[$(4)$] in Regions 5 and 7, $n^*_{2C}=1$ implies the minority shareholder becomes extinct in firm $2$, which is relative to the conditions $(\mathcal{M}4)$ and $(\mathcal{M}5)$.
\end{description}
The later cases of Regions $11$ and $12$ are excluded because the condition $\frac{\alpha_{2C}}{k}= 0$ contradicts $\det(\xi_C)>\frac{\alpha_{2C}\xi_{1C}}{k}$, and Regions 6 ($y=1$) and 9 ($y=0$) are not included because of $0<y<1$ in Definition \ref{def_ex}.

\noindent{\bf Proofs of Propositions \ref{th9} and \ref{th10}.} The proofs are analogous to those of Propositions \ref{th2} and \ref{th4}, respectively.

\bigskip
\noindent


\begin{thebibliography}{99}

\bibitem{Aghion} P. Aghion, M. Dewatripont and P. Rey,
Competition, Financial Discipline and Growth, {\it Review of Economic Studies,} {\bf 66} (1999), 825--852.

\bibitem{Albuquerue} R. Albuquerue and  N. Wang, Agency conflicts, investment, and asset pricing, {\it The Journal of Finance,} {\bf 63} (2008), 1--40.

\bibitem{Banz}R. W. Banz, The relationship between return and market value of common stocks, {\it Journal of Financial Economics,} {\bf 9} (1981), 3--18.

\bibitem{Basak2000}S. Basak, A model of dynamic equlibrium asset pricing with heterogeneous beliefs and extraneous risk, {\it Journal of Economic Dynamics $\&$ Control,} {\bf 24} (2000), 63--95.

\bibitem{Basak2005}S. Basak, Asset pricing with heterogeneous beliefs, {\it Journal of Banking  $\&$ Finance,} {\bf 29} (2005), 2849--2881.

\bibitem{Basak}S. Basak, G. Chabakauri and M. D. Yavuz, Investor protection and asset prices, {\it The Review of Financial Studies,} {\bf 32} (2019), 4905--4946.

\bibitem{Bebchuk} L. A. Bebchuk, R. Kraakman and G. G. Triantis, Stock pyramids, cross-ownership, and dual class equity, in R. K. Morck, ed.: {\it Concentrated Corporate Ownership,} University of Chicago Press, Chicago, 2000.

\bibitem{Berkovitz}L. D. Berkovitz, {\it Convexity and Optimization in $\mathbb{R}^n$,} John Wiley $\&$ Sons, New York, 2002.

\bibitem{Borwein}J. M. Borwein and A. S. Lewis, {\it Convex Analysis and Nonlinear Optimization: Theory and Examples,} Second Edition, Springer, New York, 2000.

\bibitem{Browne}S. Browne, Survival and growth with a liability: optimal portfolio strategies in continuous time, {\it Mathematics of Operations Research,} {\bf 22} (1997), 468--493.

\bibitem{Cakici}N. Cakici, F. J. Fabozzi and S. Tan, Size, value, and momentum in emerging market stock returns, {\it Emerging Markets Review,} {\bf 16} (2013), 46--65.

\bibitem{Chabakauri2015}G. Chabakauri, Asset pricing with heterogeneous preferences, beliefs, and portfolio constraints, {\it Journal of Monetary Economics,} {\bf 45} (2015), 21--34.

\bibitem{Chabakauri2021}G. Chabakauri and O. Rytchkov, Asset pricing with index investing, {\it Journal of Financial Economics,} (2021), https://doi.org/10.1016/j.jfineco.2020.06.023.

\bibitem{Chabakauri}G. Chabakauri, Dynamic equilibrium with two stocks, heterogeneous investors, and porfolio constraints, {\it The Review of Financial Studies,} {\bf 26} (2013), 3104--3141.

\bibitem{Ciarlet}P. G. Ciarlet, {\it Linear and Nonlinear Functional Analysis with Applications,} Society for Industrial and Applied Mathematics, Philadelphia, 2013.

\bibitem{Cont}R. Cont and J. Da Fonseca, Dynamics of implied volatility surfaces, {\it Quantitative Fiance,} {\bf 2} (2002), 45--60.

\bibitem{Dixit}A. K. Dixit and R. S. Pindyck, {\it Investement Under Uncertainty,} Princeton University Press, Princeton, 1993.

\bibitem{Doidge}C. Doidge, A. Karolyi and R. Stulz, Why are foreign firms listed in the U.S. worth more? {\it Journal of Financial Economics,} {\bf 71} (2004), 205--238.

\bibitem{Dow}J. Dow, G. B. Gorton and A. Krishnamurthy, Equilibriun investment and asset prices under imperfect corporate control, {\it American Economic Review,} {\bf 95} (2005), 659--681.

\bibitem{Dumas}B. Dumas, Super contact and related optimality conditions, {\it Journal of Economic Dynamics and Control,} {\bf 15} (1991), 675--695.

\bibitem{Fama1992}E. F. Fama and K. R. French, The cross-section of expected stock returns, {\it The Journal of Finance,} {\bf 47} (1992), 427--465.

\bibitem{Fama}E. F. Fama and K. R. French, Size, value, and momentum in international stock returns, {\it Journal of Financial Economics,} {\bf 105} (2012), 457--472.

\bibitem{Garleanu}N. G\^{a}rleanu and L. H. Pedersen, Margin-based asset pricing and deviations from the law of one price, {\it The Review of Financial Studies,} {\bf 24} (2011), 1980--2022.

\bibitem{Giannetti}M. Giannetti and Y. Koskinen, Investor protection, equity returns, and financial globalization, {\it Journal of Financial and Quantitative Analysis,} {\bf 45} (2010), 135--168.

\bibitem{Gompers}P. A. Gompers, J. L. Ishii and A. Metrick, Corporate governance and equity prices, {\it Quarterly Journal of Economics,} {\bf 118} (2003), 107--155.

\bibitem{Groot}W. de Groot, J. Pang and L. Swinkels, The cross-section of stock returns in frontier emerging markets, {\it Journal of Empirical Finance,} {\bf 19} (2012), 796--818.

\bibitem{Grossman}G. M. Grossman and E. Helpman, Quality ladders and product cycles, {\it Quarterly Journal of Economics,} {\bf 106} (1991), 557--586.

\bibitem{Hardouvelis}G. A. Hardouvelis and P. Theodossiou, The asymmetric relation between initial margin requirements and stock market volatility across bull and bear markets, {\it The Review of Financial Studies,} {\bf 15} (2002), 1525--1559.

\bibitem{Jiang}J. Jiang, C. Mu, J. Peng and J. Yang, Real options maximizing survival probability under incomplete markets, {\it Quantitative Finance,} {\bf 19} (2019), 1921--1931.

\bibitem{Kihlstrom}R. E. Kihlstrom and J.-J. Laffont, A general equilibrium entrepreneurial theory of firm formation based on risk aversion, {\it Journal of Political Economy,} {\bf 87} (1979), 719--748.

\bibitem{La2}R. La Porta, F. Lopez-de-Silanes, A. Shleifer and R. W. Vishny, Investor protection and corporate valuation, {\it The Journal of Finance,} {\bf 57} (2002), 1147--1170.

\bibitem{Laitinen}E. K. Laitinen, Survival analysis and financial distress prediction: finnish evidence, {\it Review of Accounting and Finance,} {\bf 4} (2005), 511--531.

\bibitem{Lane}W. R. Lane, S. W. Looney and J. W. Wansley, An application of the cox proportional hazards model to bank failure, {\it Journal of Banking $\&$ Finance,} {\bf 10} (1986), 511--531.

\bibitem{Lucas}R. E. Lucas, Jr. and E. C. Prescott, Investment under uncertainty, {\it Econometrica,} {\bf 39} (1971), 659--681.

\bibitem{Merton}R. C. Merton, Optimum consumption and portfolio rules in a continuous-time model, {\it Journal of Economic Theory,} {\bf 3} (1971), 373--413.

\bibitem{Nickell}S. J. Nickell, Competition and corporate performance, {\it Journal of Political Economy,} {\bf 104} (1996), 724--766.

\bibitem{Oksendal}B. {\O}ksendal, {\it Stochastic Differential Equations: An Introduction with
Applications,} 5th ed., Springer, New York, 2003.

\bibitem{Porter}M. Porter, {\it The competitive advantage of nations,}  The Free Press, New York, 1990.

\bibitem{Ruan}X. Ruan and J. Zhang, Asset pricing in a pure exchange economy with heterogeneous investors, {\it Mathematics and Financial Economics,} {\bf 14} (2020), 605--634.

\bibitem{Shleifer}A. Shleifer and D. Wolfenzon, Investor protection and equity markets, {\it Journal of Financial Economics,} {\bf 66} (2002), 3--27.

\bibitem{Solnik}B. Solnik, C. Boucrelle and Y. L. Fur, International market correlation and volatility, {\it Financial Analysts Journal,} \textbf{52} (1996), 17--34.

\bibitem{Zhu}Y. Z. Zhu and M. Avellaneda ,  An e-arch model for the term structure of implied of fx options, {\it Applied Mathematical Finance,} {\bf 4} (1997), 81--100.

\end{thebibliography}
\end{document}